\newenvironment{remark}{\noindent {\bf Remark}.}{\par\smallskip\par}
\newcommand{\Nat}{\mathbb{N}}
\newcommand{\prob}{\mathbb{P}}
\newcommand{\ex}[1]{\mathbb{E}\left[#1\right]}
\newcommand{\bigO}{{\mathcal O}}
\newcommand{\Gnh}{{G_{n, 1/2}}}
\newtheorem{firsttheorem}{Proposition}
\newtheorem{fact}[firsttheorem]{Fact}
\newtheorem{theorem}[firsttheorem]{Theorem}
\newtheorem{lemma}[firsttheorem]{Lemma}
\newtheorem{corollary}[firsttheorem]{Corollary}
\newtheorem{proposition}[firsttheorem]{Proposition}
\newtheorem{definition}[firsttheorem]{Definition}
\newtheorem*{lemma*}{Lemma}
\numberwithin{equation}{section}
\numberwithin{firsttheorem}{section}
\newcommand{\roundUp}[1]{\left\lceil#1\right\rceil}
\newcommand{\roundDown}[1]{\left\lfloor#1\right\rfloor}
\newcommand{\Gnm}{G_{n,m}}
\newcommand{\Gnp}{G_{n,p}}
\newcommand{\kp}{\mathbf{k}}
\newcommand{\bfk}{\mathbf{k}}
\newcommand{\bfr}{\mathbf{r}}
\newcommand{\bfl}{{\boldsymbol{\ell}}}
\newcommand{\bft}{\mathbf{t}}
\newcommand{\mc}{\mathcal}
\newcommand{\parenth}[1]{\left(#1\right)}
\newcommand{\ceilnk}{{\left\lceil \frac{n}{k}\right\rceil}}
\newcommand{\floornk}{{\left\lfloor \frac{n}{k}\right\rfloor}}
\newcommand{\boldkappa}{{\boldsymbol{\kappa}}}
\newcommand{\boldlambda}{{\boldsymbol{\lambda}}}
\newcommand{\Pb}{{\mathbb{P}}}
\newcommand{\E}{\mathbb{E}}
\newcommand{\kS}{{k_\text{S}}}
\newcommand{\kL}{{k_\text{L}}}
\newcommand{\lS}{{\ell_\text{S}}}
\newcommand{\lL}{{\ell_\text{L}}}
\newcommand{\lambdaS}{{\lambda_\text{S}}}
\newcommand{\lambdaL}{{\lambda_\text{L}}}
\newcommand{\gid}{{g_\mathrm{id}}}
\newcommand{\nid}{{n_\mathrm{id}}}
\newcommand{\gt}{{g_\mathrm{tr}}}
\newcommand{\ntr}{{n_\mathrm{tr}}}
\newcommand{\MA}{{M_2}}
\newcommand{\MB}{{M_1}}
\newcommand{\boldk}{{\boldsymbol{k}}}
\renewcommand{\le}{\leqslant}
\renewcommand{\ge}{\geqslant}
\renewcommand{\epsilon}{\varepsilon}
\newcommand{\R}{\mathbb{R}}
\newcommand{\bp}{\mathbf{p}}
\newcommand{\tP}{\widetilde{P}}
\newcommand{\tL}{\widetilde{L}}
\begin{document}

\pagenumbering{roman}

\title{Colouring random graphs: Tame colourings}

\author{
Annika Heckel\thanks{Matematiska institutionen, Uppsala universitet, Box 480, 751 06 Uppsala, Sweden. Email: \texttt{annika.heckel@math.uu.se}. Funded by the Swedish Research Council, Starting Grant 2022-02829, and the European Research Council, ERC Grant Agreements 772606-PTRCSP and 676632-RanDM.}
\and Konstantinos Panagiotou\thanks{Mathematisches Institut, Ludwig-Maximilians-Universität München, Theresienstr.~39, 80333 München, Germany. Email: \texttt{kpanagio@math.lmu.de}. Funded by the European Research Council, ERC Grant Agreement 772606-PTRCSP.} 
}

\date{\today}
\maketitle
	
\begin{abstract}
Given a graph $G$, a colouring is an assignment of colours to the vertices of $G$ so that no two
adjacent vertices are coloured the same.
If all colour classes have size at most~$t$, then we call the colouring \emph{$t$-bounded}, and the \emph{$t$-bounded chromatic number of $G$}, denoted by $\chi_t(G)$, is the minimum number of colours in such a colouring. Every colouring of $G$ is then $\alpha(G)$-bounded, where $\alpha(G)$ denotes the size of a largest independent set in $G$, and we denote by $\chi(G) = \chi_{\alpha(G)}(G)$ the \emph{chromatic number} of $G$.

We study colourings of the binomial random graph $G_{n,1/2}$  
and of the corresponding uniform random graph $G_{n,m}$ with $m=\left \lfloor \frac 12 \binom{n}{2} \right \rfloor$ edges. We show that for $t = \alpha(G_{n,m})-2$, $\chi_t(G_{n,m})$ is maximally concentrated on at most two explicit consecutive values.
This behaviour stands in stark contrast to that of the normal chromatic number, 
which was recently shown by the first author and Oliver Riordan~\cite{HRHowdoes} not to be concentrated on any sequence of intervals of length $n^{1/2-o(1)}$. 
Moreover, when $t = \alpha(G_{n, 1/2})-1$ and if the expected number of independent sets of size $t$ is not too small, we determine an explicit interval of length $n^{0.99}$ that contains $\chi_t(G_{n,1/2})$ with high probability.
Both results have profound consequences: the former is at the core of the intriguing Zigzag Conjecture on the distribution of $\chi(G_{n, 1/2})$ and justifies one of its main hypotheses, while the latter is an important ingredient in the proof of a non-concentration result in~\cite{HRHowdoes} for $\chi(G_{n,1/2})$ that is conjectured to be optimal.

Our two results are consequences of a more general statement. We consider a specific class of colourings that we call \emph{tame} and which fulfil some natural, albeit technical, conditions. We provide tight bounds for the probability of existence of such colourings via a delicate second moment argument, and then we apply these bounds to the case of $t$-bounded colourings, where $t\in\{\alpha(G)-1,\alpha(G)-2\}$. As a further consequence of our main result, we prove two-point concentration of the equitable chromatic number of~$G_{n,m}$.
\end{abstract}

{
\small
\tableofcontents
}

\pagebreak

\pagenumbering{arabic}

\section{Introduction}
Given $n, m \in \Nat$ and $p \in [0,1]$, the \emph{binomial random graph} $\Gnp$ is the graph on $n$ labelled vertices where each possible edge is included independently with probability $p$. The \emph{uniform random graph} $G_{n,m}$ is the graph on $n$ labelled vertices with exactly $m$ edges chosen uniformly at random from all possible edge sets of size $m$. In this paper we consider \emph{colourings} of random graphs, where a colouring of a graph $G$ is an assignment of colours to the vertices of $G$ so that no two neighbouring vertices are coloured the same. The smallest number of colours for which this is possible is called the \emph{chromatic number} of $G$, and is denoted by $\chi(G)$. In the rest of the paper we will abbreviate without further reference
\[
    N = \binom{n}{2}, \quad q=1-p \quad\text{and}\quad b=1/q.
\]
We denote by $\Pb_p$, $\E_p$, $\Pb_m$, $\E_m$ probability and expectation in $\Gnp$ and $\Gnm$, respectively; if we simply write $\Pb$ and $\E$ this refers to $\Gnm$.
Moreover, we will be interested in asymptotic properties of the random graph, that is, when $n\to\infty$. As usual in this context, for a sequence of discrete probability spaces $(\Omega_n, \Pb_n)_{n\in \Nat}$ we say that a sequence $(E_n)_{n \in \Nat}$ of events holds \emph{with high probability (whp)} if $\Pb_n(E_n) \rightarrow 1$ as $n \rightarrow \infty$. For two functions $f(x), g(x)$, we write $f(x) \lesssim g(x)$ if $f(x) \le (1+o(1)) g(x)$ as $x \rightarrow \infty$.

\paragraph{Some History.} In one of the seminal papers initiating the study of random graphs~\cite{erdos1960evolution}, Erd\H{o}s and R\'enyi raised the question of the chromatic number of a random graph. While originally the focus was on sparse random graphs $G_{n,p}$, where $p$ is of order~$n^{-1}$ and thus the graph has bounded average degree, attention soon also turned to dense random graphs where~$p$ is constant, and in particular to the special case~$p= 1/2$ corresponding to the uniform distribution on all graphs on $n$ vertices.

In 1975, Grimmett and McDiarmid~\cite{grimmett1975colouring} determined the likely order of magnitude of~$\chi(\Gnp)$ when $p$ is constant, 
showing that whp~$\chi(G_{n, p})= \Theta \left( {n}/{\ln n} \right)$. 
In a landmark contribution in 1987~\cite{bollobas1988chromatic}, Bollob\'as pinned down the asymptotic value of the chromatic number when $p$ is constant, showing that, whp, 
\begin{equation}\label{eq:bollobas87}
 \chi(G_{n,p}) \sim \frac{n}{2 \log_b n} \enspace .
\end{equation}
This result was sharpened several times \cite{mcdiarmid1989method,panagiotou2009note,fountoulakis2010t}. The currently sharpest bounds were given by the first author in \cite{heckel2018chromatic}, showing that if $0<p\le1-1/e^2$ is constant, whp
\begin{equation}\label{eq:previousbounds}
  \chi(G_{n,p}) = \frac{n}{2 \log_b n - 2 \log_b \log_b n - 2} + o \left(\frac{n}{\log^2 n} \right),
\end{equation}
with a similar but slightly more complicated expression given for the case $1-1/e^2<p<1$.

The case  $p \rightarrow 0$ has also been studied. In particular, if $p \gg 1/n$, then results of {\L}uczak~\cite{luczak1991chromatic} imply that whp
\begin{equation*}
    \chi(G_{n,p}) \sim \frac{np}{2 \ln(np)}.
\end{equation*}
It was noted early on that the chromatic number of the random graph $G_{n,p}$ is sharply \emph{concentrated}. In 1987, Shamir and Spencer \cite{shamir1987sharp} showed that for any function $p=p(n)$, whp $\chi(\Gnp)$ is contained in an interval of length about $\sqrt{n}$. Alon \cite{alonspencer, scott2008concentration}
improved this to about $\sqrt{n}/\ln n$ when $p > 0$ is constant.
Later it was observed that if $p\to 0$ quickly enough, then this can be improved dramatically. Alon and Krivelevich \cite{alon1997concentration} proved that if $p<n^{-1/2-\epsilon}$ for some $\epsilon > 0$, $\chi(G_{n,p})$ takes one of at most \emph{two} consecutive values whp. In other words, in this range $\chi(G_{n,p})$ behaves almost deterministically. However, neither of these results gives any clue about the \emph{location} of the concentration intervals.  Achlioptas and Naor~\cite{achlioptas2005two} determined two \emph{explicit} values for $\chi(G_{n,p})$ when $p=d/n$ and where $d$ is constant, and Coja-Oghlan, Steger and the second author~\cite{coja2008chromatic} extended this to three explicit values when $p< n^{-3/4 - \epsilon}$.

From the late 1980s, Bollob\'as raised, and he and Erd\H{o}s popularised, the opposite question: are there any non-trivial examples where $\chi(G_{n,p})$ is \emph{not} very narrowly concentrated? In the appendix of the first edition of the standard textbook on the probabilistic method \cite{alonspencerfirstedition}, Erd\H{o}s asked for a proof that $\chi(G_{n, 1/2})$ is not whp contained in any sequence of intervals of constant length. Bollob\'as highlighted the problem \cite{bollobas:concentrationfixed}, asking for any non-trivial results showing a lack of concentration, and, more generally, for the correct concentration interval length of $\chi(G_{n,p})$ as a function of~$n$ and $p$. In 2019, the first author \cite{heckel2019nonconcentration} gave the first result of this type: it turns out that, at least for some subsequence of the natural numbers, $\chi(G_{n, 1/2})$ is not whp contained in any sequence of intervals of length $n^{1/4 - \epsilon}$, for any fixed $\epsilon >0$. Together with Oliver Riordan~\cite{HRHowdoes}, this was improved to intervals of length $n^{1/2 - \epsilon}$, essentially matching Shamir and Spencer's aforementioned  upper bound on the concentration interval length.

Notably, the results in \cite{heckel2019nonconcentration,HRHowdoes} give no lower bound on the concentration interval length for any particular $n$, let alone all $n$. The method that is developed `only' asserts that for every $n$, there must be  some nearby integer $n^* \sim n$ such that $\chi(G_{n^*, 1/2})$ is not too narrowly concentrated. Furthermore, for most $n$, the lower bound on the concentration interval length at $n^*$ from the proof in \cite{HRHowdoes} is actually far away from the best upper bound of $\sqrt{n}/{\ln n}$: depending on $n$, the lower bound varies between $n^{o(1)}$ and $n^{1/2 -o(1)}$. It therefore remains a challenging widely open and timely problem to establish the correct concentration behaviour --- or, ultimately, the limiting distribution --- of $\chi(G_{n, p})$ for $p=1/2$, and more generally, for all constant $0 < p < 1$.

\paragraph{The Zigzag Conjecture.} 
An intriguing conjecture about the concentration interval length of $\chi(G_{n, 1/2})$ was made by B\'ela Bollob\'as, Rob Morris, Oliver Riordan, Paul Smith as well as the present authors. 
The conjecture, on which we will elaborate more below, proposes that the concentration interval length behaves in a rather unusual way: roughly speaking, it is conjectured to be a function of $n$ that `zigzags' between $n^{1/4+o(1)}$ and $n^{1/2 + o(1)}$.   
We refer to~\cite{HRHowdoes} for a detailed exposition and many further related results, observations and conjectures.

Before we continue, let us quickly review some background.
The chromatic number of any graph $G$ on $n$ vertices is closely related to the distribution of \emph{independent sets} in~$G$.  We call an independent set of size $t$ a \emph{$t$-set}, and denote by $N_t$ the number of $t$-sets in $\Gnp$. Let
\begin{equation}\label{eq:expNt}
    \mu_t = \E_p [N_t] = \binom{n}{t} q^{\binom{t}{2}}
\end{equation}
be the expected number of $t$-sets in $\Gnp$, and set
\begin{equation}
\label{eq:alpha0}
    \alpha_0 = \alpha_0(n) := 2 \log_b n -2 \log_b\log_b n +2 \log_b(e/2)+1,
    \qquad \alpha = \alpha(n) := \left\lfloor \alpha_0(n)\right \rfloor.
\end{equation}
We denote by $\alpha(G)$ the \emph{independence number} of a graph $G$, which is the size of largest independent set. 
Bollob\'as and Erd\H os~\cite{erdoscliques} proved that if $p$ is constant, then whp
\begin{equation}
\label{eq:alphaGnp}
    \alpha(G_{n, p}) = \left\lfloor \alpha_0(n) +o(1)\right \rfloor,    
\end{equation}
pinning down $\alpha(G_{n,  p})$ to at most two consecutive values. 
In fact, even more is true: 
for almost all $n$, whp $\alpha(G_{n, p}) = \alpha$, that is, except for a set of density zero in the natural numbers, the independence number is known exactly whp.

The chromatic number is closely related to the independence number, since in any colouring all colour classes are independent sets. In particular, for any graph $G$ on $n$ vertices, $\chi(G) \ge n / \alpha(G)$.  This simple lower bound is asymptotically tight for random graphs: the result~\eqref{eq:bollobas87} of Bollob\'as implies that whp $\chi(G_{n, p}) \sim n / \alpha(G_{n,p})$ for constant $p$. Even more can be said: by looking closely at~\eqref{eq:previousbounds} and~\eqref{eq:alphaGnp}, we see that actually the {average} size of a colour class in an optimal colouring differs from $\alpha(G_{n,p})$ by at most a mere constant.

With this preparation at hand we can delve into the Zigzag Conjecture. Our primary goal is to understand the distribution of $\chi(G_{n,p})$. One immediate and promising approach to do so is to enumerate colourings of $G_{n,p}$ with a specific number $k = k(n) \sim n/2\log_b n$ of colours; let's call this (random) number $X_k$. If the expectation $\mathbb{E}_p[X_k]$ vanishes, then whp $X_k = 0$ and so $\chi(G_{n,p}) > k$. On the other hand, if $\mathbb{E}_p[X_k]$ is large and $X_k$ is concentrated around its expectation, then whp $X_k > 0$ and so $\chi(G_{n,p}) \le k$. 

Note that in the latter case, where $\mathbb{E}_p[X_k]$ is large, we actually have some wiggle room. Indeed, we may replace $X_k$ by some other quantity $X'_k$ such that $\mathbb{E}[X'_k]$ is also large and $X'_k > 0$ implies that $X_k > 0$, that is, there is a colouring with $k$ colours. This may sound innocent, but it allows us to switch from a random variable (in this case $X_k$) with  concentration properties that are potentially difficult to handle or even non-existent to another random variable ($X'_k$), where the distribution is easier to study or that has better properties. In particular, instead of enumerating all colourings, we may restrict to a subset of them that possess a specific \emph{profile} $\mathbf{k} = (k_u)_{1 \le u \le \alpha}$: a profile fixes the number $k_u$ of colour classes with~$u$ vertices, for all $1 \le u \le \alpha$. Obviously $\mathbf{k}$ should be chosen in a way so that the expected number of such colourings is as large as possible. Of course at this point, it is not obvious at all why, in expectation, there is a plethora of such colourings when $\mathbb{E}[X_k]$ is large, but we may expect that typical colourings of $G_{n,p}$ have a typical `shape', in the sense that their profile is close to the aforementioned maximizing profile. So let us write $X'_k$ for the number of colourings with profile $\mathbf{k}'$, where $\mathbf{k}'$ maximizes, among all profiles, the expected value.

The crucial question is about concentration properties of $X'_k$, and there is something that we can immediately observe. As it turns out and as we will make more precise later, the profile $\mathbf{k}'$ is such that $k_u$ is proportional to $k$ for all $u$ close to $\alpha$, that is, the number of colour classes of each of the sizes $\alpha-i$, where $i = \bigO(1)$, is linear in the total number $k$ of colours.  
However, it is well-known that for most $n$, the number $N_\alpha$ of largest independent sets is $o(k)$, see also Section~\ref{ssec:isets} below.
In other words, $\mathbb{E}[X'_k]$ is dominated by an atypical event regarding~$N_\alpha$ and thus $X'_k$ \emph{is not} concentrated (actually, $X'_k = 0$ whp).
There is also another issue.
The number $N_{\alpha-1}$ is (much) larger than the number $k$ of colours, but its standard deviation is~$o(k)$. So requiring that $G_{n,1/2}$ has a specific colouring with profile $\mathbf{k}'$ alters the distribution of the graph significantly, since it then has an atypically large number of $(\alpha-1)$-sets.
This provides a second reason why $X'_k$ does not concentrate. On the positive side, there are no such issues with the number of $t$-sets where $t \le \alpha-2$, since they are so numerous and fluctuate so much that `planting' $\Theta(k)$ of them \emph{should not really} alter the distribution of~$G_{n,1/2}$.

The aforementioned observation is at the core of the Zigzag Conjecture, which postulates that the variations in $N_t$, $t \le \alpha-2$, play no role in a very strong sense: as soon as we have decided on the part of a colouring that fixes the colour classes of size $\alpha$ and $\alpha-1$,
$\chi(G_{n,1/2})$ should concentrate extremely sharply on \emph{one} point only, at least for most $n$, resembling the behaviour of the independence number.  Based on this assumption, the conjecture formulates the claimed precise dependency of $\chi(G_{n,1/2})$ on the variations in $N_\alpha$ and $N_{\alpha-1}$, see~\cite{HRHowdoes}.

\paragraph{Our Results.}
In this paper we take the first step towards confirming the Zigzag Conjecture, and towards ultimately determining not just the concentration interval length but also the limiting distribution of the chromatic number. One consequence of our main result is Theorem~\ref{twopointrestricted} below, which suggests rather strongly that colour classes of size $\alpha$ and $\alpha-1$ are indeed the only potential sources of non-concentration of $\chi(G_{n,m})$. Roughly speaking, we show that if we colour the random graph and only allow colour classes of size $ \le \alpha-2$, then the required number of colours is maximally concentrated --- it takes one of at most two consecutive values whp. 

Let us introduce some terminology. We say that a colouring is \emph{$t$-bounded} if no colour class is larger than $t$. So a colouring of a graph $G$ is always $\alpha(G)$-bounded. We call the minimum number of colours for which a $t$-bounded colouring exists the \emph{$t$-bounded chromatic number} of~$G$, and denote it by $\chi_t (G)$.
\begin{theorem}
\label{twopointrestricted}
Let $p=1/2$, $m= \left \lfloor pN \right \rfloor$ and $a=a(n)=\alpha(n)-2$. 
Then there is a function $k=k(n)$ such that, whp,
\[
    \chi_{a}(G_{n,m})\in \{k, k+1\}.
\]
\end{theorem}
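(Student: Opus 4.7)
The plan is a first/second-moment strategy adapted to the $a$-bounded setting. Let $Y_k$ count the $a$-bounded $k$-colourings of $G_{n,m}$, decomposed by profile $\bfl = (\ell_u)_{u \le a}$, where $\ell_u$ is the number of colour classes of size $u$, so that $Y_k = \sum_{\bfl} Y_k^{\bfl}$. Since admissible profiles number polynomially many in $n$, it suffices to study $\E[Y_k^{\bfl}]$ for each profile separately. Using the standard (hypergeometric) formula for the probability that a prescribed partition is a valid colouring of $G_{n,m}$ and maximising the resulting expression over $\bfl$ via Lagrange multipliers / Laplace's method, I would identify a threshold $k^* = k^*(n)$ at which $\E[Y_{k^*-1}] \to 0$; Markov's inequality then yields $\chi_a(G_{n,m}) \ge k^*$ whp.

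For the matching upper bound at $k^* + 1$, I would invoke the paper's general second-moment result for \emph{tame} colourings. The natural definition restricts to colourings whose profile lies in a small window around the first-moment optimiser $\bfl^*$, subject to whatever additional regularity conditions appear in the general theorem. The crucial observation is that since $a = \alpha - 2$, no tame profile puts mass on class sizes $\alpha$ or $\alpha - 1$, so the poorly-concentrated counts $N_\alpha, N_{\alpha-1}$ highlighted in the introduction play no role here and the hypotheses of the tame-colourings theorem should be naturally met. Applying that theorem then gives $\Pb[\tilde Y_{k^*+1} > 0] = 1 - o(1)$, where $\tilde Y_k$ is the number of tame $k$-colourings, so $\chi_a(G_{n,m}) \le k^* + 1$ whp, yielding the two-point concentration with $k = k^*$.

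The main obstacle will be the tightness of the second-moment computation, namely showing $\E[\tilde Y_{k^*+1}^2] \le (1+o(1))\, \E[\tilde Y_{k^*+1}]^2$. This requires a careful analysis of the overlap between two random tame colourings: one enumerates joint overlap profiles $(r_{uv})$ (recording how many vertices lie in a size-$u$ class of the first colouring and a size-$v$ class of the second), estimates the joint probability that both partitions are valid in $G_{n,m}$, and controls the resulting sum. The contribution should be dominated by "independent" overlaps, where the two colourings look asymptotically uncorrelated, and suppressing all off-diagonal terms requires intricate entropy/correlation estimates --- precisely what the paper's general technical result is designed to deliver. Verifying its hypotheses for the specific profile $\bfl^*$ arising at $t = \alpha - 2$, and carrying out the bookkeeping relating tame colourings back to arbitrary $a$-bounded colourings, is the main technical task.
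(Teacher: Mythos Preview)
Your proposal is correct and follows the paper's approach: a first-moment lower bound at the threshold $\boldk_a$, then the general tame-colourings result (Theorem~\ref{maintheorem}) applied to the near-optimal profile $\bfk^*$ with $k+1$ colours, where the key point is that $a=\alpha-2$ forces $\mu_a \gg n^2/\ln^2 n$ so the second-moment ratio is $1+o(1)$ and existence follows whp. One minor slip: admissible profiles are not polynomially many in $n$ (there are $\exp(O(\ln^{3/2} n))$ after restricting to the relevant range of class sizes), but this is harmless since the lower bound works with the summed expectation $E_{n,k-1,a}\to 0$ directly and the upper bound uses a single profile.
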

Actually, we can be a bit more precise than that. Let $E_{n,k,t}$ denote the expected number of unordered $t$-bounded $k$-colourings of $G_{n, 1/2}$, where `unordered' refers to the fact that we count colourings up to permutations of the colours; see also the discussion after Lemma~\ref{lem:expxk}.
Then the \emph{$t$-bounded first moment threshold} is defined as
\begin{equation}\label{ktdef}
    \boldk_t(n) := \min\{k: E_{n,k,t} \ge 1 \}.
\end{equation}
Note that $E_{n,k,t}$ is increasing in $k$ for $k\le n$, so that $\boldk_t(n)$ is well-defined. With this notation at hand, we could obtain an even more precise statement: we show in the proof of Theorem~\ref{twopointrestricted} in Section~\ref{sec:proofsmainthms} that we can actually pick 
\[
    k(n) \in \{\boldk_{\alpha-2}(n)-1, \boldk_{\alpha-2}(n) \}.
\]
Let us remark briefly on the use of $G_{n,m}$ in Theorem~\ref{twopointrestricted}, rather than $G_{n, 1/2}$ which we discussed earlier. The reason for switching to $G_{n,m}$ is that the delicate second moment argument in the forthcoming proof only applies to $G_{n,m}$. As pointed out by Alex Scott (see the remarks at the end of \cite{heckel2019nonconcentration}), there is a simple coupling of the random graphs $G_{n, 1/2}$ and $G_{n,m}$ with $m=\left \lfloor N/2 \right \rfloor$ so that whp their chromatic numbers (or $(\alpha-2)$-bounded chromatic numbers) differ by at most $\omega(n) \ln n$, where $\omega(n) \rightarrow \infty$ is arbitrary. Thus it is almost equivalent whether to study $G_{n, 1/2}$ or $G_{n,m}$ in the context of the concentration or limiting distribution of the chromatic number. Theorem~\ref{twopointrestricted} implies that $\chi_{\alpha-2} (G_{n,1/2})$ is concentrated on $\omega(n) \ln n$ values, although we believe that a two-point concentration result could likely be achieved by allowing $m$ in Theorem~\ref{twopointrestricted} to vary slightly around $N/2$.

Theorem~\ref{twopointrestricted} is a consequence of our main result Theorem~\ref{theorem:general}, which is more general. There we establish a lower bound for the probability of existence for a large class of possible colourings that we call \emph{tame colourings}. We postpone the precise formulation of the setup to Section~\ref{sec:tamegeneral}. Before we come to that, we give two further applications. The second application is Theorem~\ref{theorem:announcedbounds} below, which roughly states that the first moment threshold is a good approximation for the typical values of $\chi_a(G_{n,1/2})$, provided that the expected number $\mu_a$ of $a$-sets, see~\eqref{eq:expNt}, lies within certain bounds. Setting $a=\alpha-1$ and using the fact that (for most $n$) whp $\chi(G_{n, 1/2})$ and $\chi_{\alpha-1}(G_{n, 1/2})$ differ by less than about $\mu_\alpha$ (see Lemma 44 in \cite{HRHowdoes}), this result can be used to improve the previously best explicit bounds \eqref{eq:previousbounds} for $\chi(G_{n, 1/2})$, which had pinned down its value up to an error term of size $o(n/\ln^2 n)$.

Assuming (a special case of) Theorem~\ref{theorem:announcedbounds}, it was shown  in~\cite{HRHowdoes}  that~$\chi(G_{n, 1/2})$ is not concentrated on any sequence of intervals of length ${\sqrt{n} \ln \ln n}/{\ln^3 n}$. This lower bound comes within a power of $\ln n$ of Alon's aforementioned upper concentration bound of ${\sqrt{n}}/{\ln n}$. 
Even more significantly, in~\cite{HRHowdoes} it is argued heuristically why this lower concentration bound should be best possible; see the finer conjectures in \S1.3.2 of~\cite{HRHowdoes}. With Theorem~\ref{theorem:announcedbounds} below, the proof of this lower concentration bound is now complete.
\begin{theorem} \label{theorem:announcedbounds}
Let $G \sim G_{n, 1/2}$, and let $a=a(n)$ be a sequence of integers such that
\[
    n^{1.1} < \mu_a < n^{2.9}.
\]
Then, whp,
\[
    \chi_{a} (G) = \boldk_{ a} + \bigO( n^{0.99}).
\] 
\end{theorem}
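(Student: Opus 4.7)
The proof splits into matching upper and lower bounds for $\chi_a(G)$ around $\boldk_a$.

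\emph{Lower bound (easy).} The bound $\chi_a(G) \ge \boldk_a - O(n^{0.99})$, in fact the stronger $\chi_a(G) \ge \boldk_a - 1$ whp, follows from a standard first moment argument. Estimating $E_{n,k,a}$ via Stirling on an approximately optimal profile gives, up to lower-order terms, $\log E_{n,k,a} \approx (n-k)\ln k + k - k\binom{n/k}{2}\ln b$, whose derivative with respect to $k$ is dominated by the term $(n/k)^2 \ln b / 2 = \Theta(\log^2 n)$ in the relevant range. Hence each integer decrease of $k$ divides $E_{n,k,a}$ by a factor $e^{\Theta(\log^2 n)}$. Since $E_{n,\boldk_a - 1, a} < 1$ by definition of $\boldk_a$, this forces $E_{n,\boldk_a - 2, a} \le e^{-\Theta(\log^2 n)} = o(1)$, and Markov yields the claim.

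\emph{Upper bound (main direction).} For $\chi_a(G) \le \boldk_a + O(n^{0.99})$, the plan is to apply the main result, Theorem~\ref{theorem:general}, on the existence of tame colourings. Take $k := \boldk_a + \lceil C n^{0.99} \rceil$ for a sufficiently large constant $C > 0$ and choose an $a$-bounded profile $\boldk^{*} = (k_u^{*})_{1 \le u \le a}$ on $k$ colour classes which maximises, up to subexponential factors, the expected number of $a$-bounded $k$-colourings of $G_{n, 1/2}$ with that profile. By the log-derivative estimate above, shifting from $\boldk_a$ to $k$ multiplies the expected count by at least $\exp(\Omega(n^{0.99} \log^2 n))$, comfortably exceeding any polynomial threshold on $E(\boldk^{*})$ that the tame framework may require. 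Applying Theorem~\ref{theorem:general} then yields existence of a colouring with profile $\boldk^{*}$ whp, so $\chi_a(G_{n,m}) \le k$ whp. Since Theorem~\ref{theorem:general} is stated for $G_{n,m}$ (see the remark after Theorem~\ref{twopointrestricted}), Scott's coupling between $G_{n, 1/2}$ and $G_{n, m}$ transfers the bound to $G_{n, 1/2}$ at cost $\omega(n) \ln n = o(n^{0.99})$, absorbed into the error.

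\emph{Main obstacle.} The most delicate step will be verifying that $\boldk^{*}$ falls into the class of tame profiles for which Theorem~\ref{theorem:general} applies. This requires analysing the near-optimal $a$-bounded profile explicitly under the hypothesis $n^{1.1} < \mu_a < n^{2.9}$ and checking that it allocates a suitable (linear-in-$k$) number of classes to each relevant size close to $a$, that it avoids sizes outside an admissible window, and that it satisfies the further technical constraints to be imposed in Section~\ref{sec:tamegeneral}. The specific exponent $0.99$ in the error should emerge from balancing the polynomial slack needed by the second moment computation inside Theorem~\ref{theorem:general} against the $\Theta(\log^2 n)$-per-unit growth rate of $\log E_{n, k, a}$ in $k$.
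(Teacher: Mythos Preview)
Your overall architecture is right---first moment for the lower bound, Theorem~\ref{theorem:general} plus Scott's coupling for the upper bound, with tameness of the near-optimal profile as the technical work---but there is a genuine gap in the upper bound argument.

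You propose to set $k=\boldk_a+\lceil Cn^{0.99}\rceil$ so that the expectation is boosted by a factor $\exp(\Omega(n^{0.99}\ln^2 n))$, and then conclude that Theorem~\ref{theorem:general} gives existence \emph{whp}. But the conclusion of Theorem~\ref{theorem:general} is
\[
\Pb_m(X_{\bfk}>0)\;\gtrsim\;\exp\Big(-\frac{k_a^2}{\mu_a}-\bigO(\MB)\Big),
\]
and this lower bound does not depend on $\E_m[\bar X_{\bfk}]$ beyond the qualitative hypothesis~\eqref{eq:lowerboundexpectation}. With $\mu_a$ as small as $n^{1.1}$ and $k_a^*=\Theta(n/\ln n)$ for the optimal profile, the term $k_a^2/\mu_a$ is of order $n^{0.9}/\ln^2 n$, so the bound is only $\exp(-n^{0.9+o(1)})$, not $1-o(1)$. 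Adding $Cn^{0.99}$ extra colours does not shrink $k_a^*$ (the optimal profile still puts a constant fraction of classes at size $a$), so the probability bound does not improve. Your proposed balance between ``polynomial slack'' in the second moment and the growth rate of $\ln E_{n,k,a}$ is therefore not what produces the exponent $0.99$.

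The missing idea is an Azuma--Hoeffding bootstrap. The paper applies Theorem~\ref{theorem:general} at $k=\boldk_a+1$, obtaining $\Pb_m(X_{\bfk^*}>0)\ge\exp(-n^{0.9})$; after coupling to $G_{n,1/2}$ this becomes $\Pb_{1/2}(\chi_a\le\boldk_a+1+\ln^2 n)\ge\exp(-n^{0.89})$. Since $\chi_a$ changes by at most $1$ when all edges at a single vertex are altered, Azuma on the vertex-exposure martingale gives $\Pb(|\chi_a-\E[\chi_a]|\ge t)\le 2\exp(-t^2/2n)$. Taking $t=n^{0.96}$ makes the Azuma tail smaller than $\exp(-n^{0.89})$, forcing $\E[\chi_a]\le\boldk_a+n^{0.96}$; a second application of Azuma then yields $\chi_a\le\boldk_a+2n^{0.96}$ whp. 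The $n^{0.99}$ in the statement is simply a clean upper bound for this.
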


Let us remark briefly on the bounds on $\mu_a$ above. It is well-known that they imply in particular that $a \in \{\alpha(n)-1, \alpha(n)-2\}$, see also Lemma~\ref{prop:basicboundsisets} below. The upper bound~$n^{2.9}$ has no significance beyond ensuring $a \ge \alpha-2$ and could be relaxed to $n^{3-\epsilon}$ for any $\varepsilon > 0$. However, the lower bound $n^{1.1}$ \emph{cannot} be pushed towards $n^{1+\epsilon}$: curiously, the behaviour of the optimal $a$-bounded colouring profile changes significantly when $\mu_a \le n^{1+x_0}$ for some small constant $x_0 \approx 0.02905$, and our proof does not apply in this case. We elaborate more on this in Section~\ref{section:partialcolourings} and after Lemma~\ref{lemma:kstartame}.

The last application of our main result concerns \emph{equitable} colourings. We say that a colouring is equitable if the colour class sizes differ by at most $1$. Let $\chi_=(G)$ denote the \emph{equitable chromatic number} of a graph $G$, that is, the smallest $k$ such that there exists an equitable $k$-colouring. The famous  Hajnal-Szemer{\'e}di Theorem \cite{hajnal1970proof} states that if $G$ has maximum degree $\Delta(G)$, then $\chi_=^*(G) \le \Delta(G)+1$. For random graphs, Krivelevich and Patk\'os~\cite{krivelevich2009equitable} showed that if $n^{-1/5+\epsilon} \le p \le 0.99$, whp $\chi_= (G_{n,p}) \sim \chi(G_{n,p})$. In \cite{heckel:equitable}, the first author proved that for constant $p <1-1/e^2$, there is a subsequence of the integers where $\chi_=(G_{n,m})$ with $m=\left \lfloor pN \right \rfloor$ is maximally concentrated on only {one} value. We extend this concentration result to all integers $n$ (and two consecutive values) when $p<1-1/e \approx 0.63$.
\begin{theorem}
\label{twopointequitable}
Let $0 < p <1-1/e$ be fixed and $m= \left \lfloor pN \right \rfloor$. 
Then there is a sequence $k=k(n)$ such that, whp,
\[
    \chi_=(\Gnm)\in \{k, k+1\}.
\]
\end{theorem}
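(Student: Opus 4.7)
\textbf{Plan}: We apply the main general result, Theorem~\ref{theorem:general}, to the highly restricted class of equitable colourings of $G_{n,m}$. The profile of an equitable $k$-colouring is simple: writing $n = sk + r$ with $0 \le r < k$, there are exactly $r$ classes of size $s+1$ and $k - r$ classes of size $s$, so the profile $\mathbf{k}$ has at most two nonzero entries, namely $k_s=k-r$ and $k_{s+1}=r$. Equitable colourings should therefore comfortably sit inside the framework of tame profiles, provided we can verify the technical conditions.

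Define the equitable first moment threshold $k^* = k^*(n)$ as the smallest $k$ for which the expected number of equitable (unordered) $k$-colourings of $G_{n,m}$ is at least $1$. For the lower bound, a Stirling-based computation shows that as $k$ decreases by $1$ from $k^*$, this expectation drops by a super-polynomial factor; hence the expected number of equitable $(k^*-1)$-colourings is $o(1)$, and Markov's inequality yields $\chi_=(G_{n,m}) \ge k^*$ whp.

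For the upper bound, we apply Theorem~\ref{theorem:general} to the equitable profile at $k^*$ or $k^* + 1$ colours to obtain whp existence of such a colouring, giving $\chi_=(G_{n,m}) \le k^* + 1$ whp. Setting $k = k^*$ then yields the desired two-point concentration $\chi_=(G_{n,m}) \in \{k, k+1\}$.

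The main obstacle is to verify that the equitable profile satisfies the tame conditions of Theorem~\ref{theorem:general}, and in particular that the largest colour class size $s + 1$ is bounded by $\alpha(n) - 2$. Solving the implicit equation for $s$ at the equitable threshold via Stirling gives $s \sim 2 \log_b n - 2 \log_b \log_b n - 2\log_b 2$, while $\alpha_0 \sim 2\log_b n - 2\log_b\log_b n + 2\log_b(e/2) + 1$. Thus $\alpha_0 - s \sim \frac{2}{\ln b} + 1$, and requiring $s + 1 \le \alpha_0 - 2$ becomes $\frac{2}{\ln b} \ge 2$, equivalently $b \le e$, i.e., $p \le 1 - 1/e$---precisely the hypothesis of Theorem~\ref{twopointequitable}. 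The remaining technical verifications (profile near-optimality, entry size bounds, and the other ingredients of tameness) should follow analogously to the proof of Theorem~\ref{twopointrestricted}, with the simpler two-entry equitable profile easing some of the combinatorial bookkeeping.
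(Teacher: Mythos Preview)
Your plan is correct and matches the paper's approach closely: define the equitable first-moment threshold $k^*$, use a first-moment argument for the lower bound, and apply Theorem~\ref{theorem:general} (via Theorem~\ref{maintheorem}) to the equitable $(k^*+1)$-profile for the upper bound, with the constraint $p<1-1/e$ arising exactly as you describe from forcing $\lceil n/k\rceil \le \alpha-2$.

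One point deserves more care than ``analogous to Theorem~\ref{twopointrestricted}'': the verification of condition~\eqref{eq:lowerboundbeta} on partial sub-profiles. In the proof of Theorem~\ref{twopointrestricted} this goes through the optimal-profile machinery of \S\ref{section:optimalprofiles} (Lemma~\ref{lemma:kstartame}), which does not apply here since the equitable profile is far from optimal. Instead the paper checks~\eqref{eq:lowerboundbeta} directly: a sub-profile $\boldlambda$ of the equitable profile has only the two entries $\lambda_{\lfloor n/k\rfloor}$ and $\lambda_{\lceil n/k\rceil}$, and plugging into Lemma~\ref{expectationlemma} one reduces to showing $-(1-\lambda)\ln(1-\lambda) - \tfrac{\ln b}{2}\lambda$ is bounded away from zero on $[\delta,1-\delta]$, which holds since $\tfrac{\ln b}{2}<1$ when $p<1-1/e$. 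So the assumption $p<1-1/e$ is used a second time here, not just for the class-size bound. This is genuinely easier than the optimal-profile case, as you anticipate, but it is its own short computation rather than a reuse of the earlier argument.
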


\paragraph{Outline.} The paper is roughly structured as follows. In Section~\ref{sec:tamegeneral} we present our main result, Theorem~\ref{theorem:general} and set up the appropriate framework. Section~\ref{section:preliminaries} is a collection of auxiliary technical statements and known results that we shall use repeatedly. In the subsequent Section~\ref{section:proofoverview} we guide through the proof of our main result and, in particular, define the random variable $Z_\bfk$ that we will study. Section~\ref{section:firstmoment} is devoted to the analysis of the first moment of $Z_\bfk$, while in Section~\ref{section:secondmoment}, the technically most elaborate part of the paper, we determine asymptotics for the second moment of $Z_\bfk$. Optimal profiles in the case of $t$-bounded colourings for $t\in\{\alpha-1, \alpha-2\}$ and $p=1/2$ are characterised in Section~\ref{section:optimalprofiles}, where we also establish their tameness. Finally, in the last regular section we prove Theorems~\ref{twopointrestricted}, \ref{theorem:announcedbounds} and~\ref{twopointequitable}. All proofs omitted in the main part can be found in the appendix.

\section{Tame colourings and the general result}
 \label{sec:tamegeneral}
		
\subsection{Colouring profiles}	
We will view a colouring as an ordered partition of $V=[n]$ into independent sets: let
\[
    \Pi=(V_1, \dots, V_k)
\]
be an ordered partition of the vertex set $V$ into non-empty parts. If all $V_i$ are independent, then we call $\Pi$ a \emph{colouring}. By a \emph{partial ordered partition}, we mean an ordered collection
\[
    \Pi=(V_1, \dots, V_\ell)
\]
of disjoint sets. If all $V_i$ are independent, then we say that $\Pi$ is a \emph{partial colouring} of $G$. The sets $V_i $ are called the \emph{colour classes} of the (complete or partial) colouring~$\Pi$. In the following definition we put some structure on the set of partitions, describing them in terms of how many parts of any given size they contain.
\begin{definition}
Let $n$, $k$ and $t$ be integers.
\begin{enumerate}[a)]
\itemsep2pt 
    \item A \emph{$k$-colouring profile} is a sequence $\bfk=(k_u)_{1 \le  u \le n}$ such that 
    \[
        \sum_{1\le u \le n} k_u = k \quad \text{and} \quad \sum_{1\le u \le n} uk_u \le n.
    \]
    If $\sum_{1\le u \le n} uk_u = n$, we call $\bfk$ a \emph{complete (colouring) profile}, otherwise a \emph{partial profile}.
    \item A colouring profile $\bfk$ is called \emph{$t$-bounded} if $k_u = 0$ for all $u >t$.
    \item A (complete or partial) ordered partition $\Pi=(V_1, \dots, V_k)$ of $V$ \emph{has profile $\bfk$} if it contains exactly $k_u$ parts of size $u$ for all $1\le u \le n$, and the parts are decreasing in size, that is, $|V_1| \ge |V_2| \ge \dots \ge |V_k|$. Moreover, a colouring with colours in $\{1, \dots, k\}$ has profile $\bfk$ if the induced ordered partition given by its colour classes has profile $\bfk$.
    \item Given a colouring profile $\bfk$, let $ \kappa_u$ be the \emph{fraction of vertices} in sets of size $u$, that is,
    \begin{equation}
        \label{eq:kappak}
        \kappa_u = \frac{ uk_u}{n}, \,\,
	\quad
	\text{and set}
	\quad
	\boldkappa=(\kappa_u)_{1 \le u \le n}. 
    \end{equation}
    Let $\kappa = \sum_{1\le u \le n} {\kappa_u} \le 1$. Note that $\kappa=1$ if and only if $\bfk$ is a complete colouring profile. 
    \item Given a colouring profile $\bfk$ or equivalently $\boldkappa$, let 
    \[
        X_\mathbf{k} \text{ or } X_\boldkappa
    \]
    count the number of valid colourings in the random graph $G$ with profile $\textbf{k}$, and let
    \[
        \bar{X}_\bfk =\bar{X}_\boldkappa= \frac{X_\mathbf{k}}{\prod_{1\le u \le n} k_u!}
    \]
    be the number of \emph{unordered} colourings with profile $\textbf{k}$.
    \item Given $\boldkappa = (\kappa_u)_{1\le u\le n}$ and $\boldsymbol{\lambda}=(\lambda_u)_{1\le u\le n}$, we write $\boldsymbol{\lambda} \le \boldsymbol{\kappa}$ if $\lambda_u \le \kappa_u$ for all $u$.
    \end{enumerate}
\end{definition}
	
We will use both $\bfk$ and $\boldkappa$ to refer to colouring profiles, and specifying one of these sequences defines the other via~\eqref{eq:kappak}. We will often use the letters $i,j$ to index colour classes $V_i$, $V_j$, and the letters $u, v$ often refer to colour class sizes, which may in principle range from $1$ to $n$, but are usually $t$-bounded for some $t = \alpha + \bigO(1)$. To keep notation compact, we will often just write $\sum_u$, $\prod_u$, $\sum_i$ and so on when the range of $u$, $i$ is clear.
	
We will \emph{tend} to use $k$ for the number of colours in a complete colouring, and $\ell$ for the number of colours in a partial colouring. The number of colour classes of size $u$ will then be referred to as $k_u$ or $\ell_u$, and it will often be the case that $\ell_u \le k_u$ and that $k_u$ is the number of colour classes of size $u$ in a complete $k$-colouring, and $\ell_u$ is the number of colour classes of size $u$ in a partial $\ell$-colouring. Usually $\kappa_u = k_u u / n$, $\kappa = \sum_u \kappa_u$, $\lambda_u = \ell_u u / n$, $\lambda = \sum_u \lambda_u$.

From the definitions we readily obtain the following simple lemma about the expected number of colourings with a given profile.
\begin{lemma}
\label{lem:expxk}
Let $n,k\in\mathbb{N}$ and let $\bfk = (k_u)_{1\le u \le n}$ be a $k$-colouring profile. Then
\[
    \mathbb{E}_p[X_\bfk] = P_\bfk \, q^{f_\bfk}, ~ p\in[0,1]
    \quad
    \text{and}
    \quad
    \mathbb{E}_m[X_\bfk] = P_\bfk \, \frac{{N-f_\bfk \choose m}}{{N \choose m}}, ~m \in\mathbb{N},
\]
where
\begin{equation*}
\label{defP} 
    P_\kp = \frac{n!}{\prod_{1 \le u \le n} u!^{k_u}}
    \quad
    \text{and}
    \quad
    f_\mathbf{k}= \sum_{1 \le u \le n} {u \choose 2} k_u.    
\end{equation*}
\end{lemma}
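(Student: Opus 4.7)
The plan is a direct application of linearity of expectation, in which the only real content is to separately count the number of ordered partitions of $[n]$ with profile $\bfk$ and compute the colouring probability of any fixed such partition. I would begin by writing
\[
    X_\bfk \;=\; \sum_{\Pi} \mathbbm{1}\bigl[\Pi \text{ is a valid colouring of } G\bigr],
\]
where the sum ranges over all ordered partitions $\Pi=(V_1,\dots,V_k)$ of $[n]$ satisfying the profile condition; by linearity, the two stated expectations equal $P_\bfk$ multiplied by the probability that a single such $\Pi$ is a valid colouring.

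\textbf{Counting ordered partitions with profile $\bfk$.} Since the profile fixes $k_u$ parts of size $u$ for each $u$, and the tuple $(V_1,\dots,V_k)$ is required to be weakly decreasing in part size, the sequence of sizes $(|V_1|,\dots,|V_k|)$ is determined by $\bfk$. Assigning the $n$ labelled vertices to these prescribed slots is then a multinomial count
\[
    \binom{n}{|V_1|,\dots,|V_k|} \;=\; \frac{n!}{\prod_{i} |V_i|!} \;=\; \frac{n!}{\prod_{u} (u!)^{k_u}} \;=\; P_\bfk.
\]
(As a sanity check, dividing by $\prod_u k_u!$ to merge permutations within each size class recovers the unordered count, consistent with $\bar X_\bfk = X_\bfk/\prod_u k_u!$.)

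\textbf{Probability that a fixed $\Pi$ is a colouring.} The event that $\Pi$ is a valid colouring is precisely the event that none of the ``internal'' pairs -- pairs of distinct vertices lying in a common part $V_i$ -- is an edge of $G$. The number of such pairs is
\[
    \sum_{i=1}^{k} \binom{|V_i|}{2} \;=\; \sum_{u} \binom{u}{2}\, k_u \;=\; f_\bfk,
\]
depending only on the profile and not on which particular $\Pi$ is chosen. In $G_{n,p}$, edges appear independently with probability $p$, so the probability that all $f_\bfk$ specified pairs are non-edges is $q^{f_\bfk}$. In $G_{n,m}$, the probability is the chance that the uniformly chosen $m$-edge set avoids all $f_\bfk$ forbidden pairs, namely $\binom{N-f_\bfk}{m}/\binom{N}{m}$.

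\textbf{Combining.} Since the per-partition probability is the same for every ordered partition with profile $\bfk$, linearity yields $\E_p[X_\bfk] = P_\bfk\, q^{f_\bfk}$ and $\E_m[X_\bfk] = P_\bfk\, \binom{N-f_\bfk}{m}/\binom{N}{m}$, as claimed. There is no genuine obstacle here; the only point requiring a moment of care is the counting of ordered partitions, where one must verify that the ``decreasing in size'' convention does not introduce any extra symmetry factor beyond what is already encoded in the slot sizes being prescribed by~$\bfk$.
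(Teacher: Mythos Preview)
Your proof is correct and follows exactly the same approach as the paper's: count ordered partitions with profile $\bfk$ via the multinomial coefficient, then compute the probability that a fixed such partition is a valid colouring by identifying the $f_\bfk$ forbidden internal pairs. The paper's argument is the same, only more terse.
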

\begin{proof}
The number of ways to choose an ordered vertex partition of profile $\bfk$ equals $P_\bfk$. Moreover, once we have chosen such a partition, it is a colouring if and only if none of the edges within the parts appears in the graph; the number of such forbidden edges is 
$f_\bfk$.
\end{proof}
Let $p \in [0,1]$ and $k,t\in \mathbb{N}$. 
With Lemma~\ref{lem:expxk} at hand we can also determine the expected number of unordered $t$-bounded $k$-colourings $E_{n,k,t,p}$ of $\Gnp$, and we abbreviate, as before, $E_{n,k,t} = E_{n,k,t,1/2}$. Indeed, if we denote by $P_{n,k,t}$ the set of all $t$-bounded $k$-colouring profiles on $n$ vertices, then
\begin{equation*}
    E_{n,k,t,p} = \sum_{\bfk \in P_{n,k,t}} \mathbb{E}_p[ \bar X_\bfk]
    = \sum_{\bfk \in P_{n,k,t}} \frac{\mathbb{E}_p[X_\bfk]}{\prod_{1 \le u \le t} k_u!}
    = \sum_{\bfk \in P_{n,k,t}} \frac{P_{\bfk} \, q^{f_\bfk}}{\prod_{1 \le u \le t} k_u!}.
\end{equation*}
In \S\ref{section:optimalprofiles} we will study $E_{n,k,t}$ and in particular the `optimal' $t$-bounded colouring profile that maximizes the (unordered) expectation in very great detail.

\subsection{Tame colourings}
\label{section:tamecolourings}
	
We specify some technical smoothness properties of colouring profiles. These properties (among others) will occur naturally in the `optimal' colouring profiles under consideration later, and so they impose no real restriction. 
\begin{definition}\label{deftame}
Let $0<p<1$ be fixed. Set $m = \left \lfloor pN \right \rfloor$, and let $a = a(n) =\alpha_0(n) - \bigO(1)$ with $a \le \alpha$ be a sequence of integers, and $\mathbf{k}=\mathbf{k}(n)$ a sequence of complete $a$-bounded $k$-colouring profiles. Then $\bfk$ is called \emph{tame}  if there is a constant $ c\in(0,1)$ and an increasing function $ \gamma: \mathbb{N}_0\rightarrow \mathbb{R}$ with  $\gamma(x)\rightarrow \infty$ as $x \rightarrow \infty$ 
  so that both of the following hold.
    \begin{enumerate}[a)]
    \itemsep2pt 
    \label{tame:a0}
    \item $\kappa_u = {u k_u}/{n} < b^{-(\alpha-u)\gamma(\alpha-u)}$ for all $1 \le u \le a$ and sufficiently large $n$, and
    \label{tame:tail}
    \item \label{largeexp} $ \ln \E_m \big[\bar{X}_\kp\big] \gg -n^{1-c}$.
    \end{enumerate}
\end{definition}
Some comments on the conditions are in place. Condition \ref{tame:tail}) on the tail of the $\kappa_u$'s is a convenient assumption that facilitates our computations; we believe that our results are true under weaker conditions (perhaps only requiring a `second moment'-type condition such as $\sum_u(a-u)^2 \kappa_u$ being bounded), but the case treated here is sufficient for all intended applications. 
In particular, we will see that the relevant optimal $(\alpha-1)$- or $(\alpha-2)$-bounded colourings of $G_{n,m}$ have the property that $\kappa_u \approx b^{-(a-u)^2/2}$, so \ref{tame:tail}) will be satisfied with plenty of room to spare.

Condition \ref{largeexp}) requires that the expected number of unordered colourings is \emph{not too small}. Obviously, under such a condition we cannot in general establish that $X_\kp>0$ whp, and our main result Theorem~\ref{theorem:general} actually makes the stronger assumption that $\ln \E_m \big[\bar{X}_\kp\big] \gg \ln n$, i.e., the expected number of colourings is at least \emph{superpolynomial}. However, assuming just \ref{largeexp}) enables us to formulate intermediate proof steps and results in a more general form that may facilitate future use.

We will require a similar condition for the expected number of certain partial colourings in Theorem~\ref{theorem:general} -- see \eqref{eq:lowerboundbeta}. It is easy to see that some condition of this kind is also necessary so that $X_\bfk > 0$ whp: it is possible to construct profiles $\bfk$ so that the expectation of $\bar{X}_{\bfk}$ is large, but the expectation of $\bar{X}_\bfl$ tends to $0$ for some $\bfl \le \bfk$. (For example, we may boost $\E[\bar X_\bfk]$ by allowing for an atypically large number of $\alpha$-sets, as we described in the introduction.) 
But then whp $\bar{X}_{\bfl} = 0$, which implies that also whp $X_{\bfk}=0$, since any colouring with profile $\bfk$ contains a partial colouring with profile $\bfl$. Thus, a lower bound on $\E_p[\bar X_{\bfl}]$ for all ${\bfl} \le \bfk$ is also necessary, which we will stipulate in \eqref{eq:lowerboundbeta}, but to keep our second moment calculations in as general a setting as possible, in the definition of a tame colouring we do not require it yet.

Let us collect some quick properties of tame profiles $\bfk$ that we will use several times.
\begin{proposition}
\label{prop:proptame}
Let $\bfk$ be tame. Then there is a function $u^* \sim \alpha \sim 2\log_bn$ so that $k_u = 0$ for all $u < u^*$. Moreover, for any $s \in \mathbb{N}$,
\[
    \sum_{u^* \le u \le a}(\alpha-u)^s\kappa_u = \bigO(1).
\]
\end{proposition}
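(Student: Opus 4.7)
The plan is to deduce both statements from condition (a) of tameness, which supplies the tail bound $\kappa_u < b^{-(\alpha-u)\gamma(\alpha-u)}$ for $1 \le u \le a$. This single estimate controls both the location of the threshold $u^*$ and the convergence of the moment-type sum.

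For the first part, I would define $r = r(n)$ to be the smallest positive integer with $r\gamma(r) \ge 2\log_b n$; such an $r$ exists and tends to infinity as $n\to\infty$ because $\gamma$ is increasing with $\gamma(j) \to \infty$. Setting $u^* := \alpha - r + 1$ and writing $k_u = n\kappa_u/u$, the tail bound gives $k_u < (n/u)\, b^{-(\alpha-u)\gamma(\alpha-u)}$. For any $u < u^*$ we have $\alpha - u \ge r$, hence $(\alpha-u)\gamma(\alpha-u) \ge r\gamma(r) \ge 2\log_b n > \log_b(n/u)$, forcing $k_u < 1$ and thus $k_u = 0$. To verify $u^* \sim \alpha$, equivalently $r = o(\log n)$: if $r \ge c\log n$ held along a subsequence for some $c>0$, then minimality of $r$ would give $(r-1)\gamma(r-1) < 2\log_b n$, whence $\gamma(r-1) = \bigO(1/c)$, contradicting $\gamma(r-1) \to \infty$ (since $r-1 \to \infty$).

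For the sum bound, I would substitute $j = \alpha - u$, so that $j$ ranges over a subset of $\{\alpha-a, \ldots, r-1\}$, where $\alpha - a = \bigO(1)$ because $a = \alpha_0 - \bigO(1)$ and $\alpha = \lfloor \alpha_0 \rfloor$. The tail bound rewrites as $(\alpha-u)^s \kappa_u \le j^s b^{-j\gamma(j)}$. Given $s \in \mathbb{N}$, I would pick a constant $J_s$ large enough that $\gamma(j) \ge s+2$ for all $j \ge J_s$ (possible since $\gamma \to \infty$), and split the sum at $J_s$: the head contributes $\bigO_s(1)$ since it contains only finitely many terms of trivially bounded size, and the tail is dominated by the convergent geometric-type series $\sum_{j \ge J_s} j^s b^{-j(s+2)} = \bigO_s(1)$.

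I do not anticipate a serious obstacle, as the whole argument uses only condition (a) of Definition~\ref{deftame} together with elementary monotonicity and summability. The mildly subtle point is that $u^*$ must be a single sequence working for all $s$ while also satisfying $u^* \sim \alpha$; this is handled by using the $s$-independent threshold $2\log_b n$ in the definition of $r$, with the $s$-dependence in the sum estimate absorbed entirely into the constant $J_s$.
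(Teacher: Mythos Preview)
Your proposal is correct and follows essentially the same approach as the paper's proof: both derive everything from condition~(a) of Definition~\ref{deftame}, define $u^*$ by making $(\alpha-u)\gamma(\alpha-u)$ beat $\log_b(n/u)$, and bound the moment sum by splitting at a point where $\gamma$ is large enough for geometric decay. The only cosmetic difference is that the paper uses a single $s$-independent cutoff (where $b^{-\gamma(x)}\le 1/10$, giving the tail bound $\sum_v v^s 10^{-v}$) rather than your $s$-dependent $J_s$, but this is immaterial.
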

\begin{proof}
Let $1 \le u \le a$. From condition \ref{tame:tail}) we obtain that $k_u = \kappa_u n / u < b^{-(\alpha-u)\gamma(\alpha-u)}n/u$, and since $\gamma \to \infty$, we may choose $\alpha - u^* = o(\ln n)$ so that $k_u < 1$ for all $u < u^*$, as claimed. Moreover, for $x$ large enough we have $b^{-\gamma(x)} \le 1/10$ and so
\[
    \sum_{u^* \le u \le a}(\alpha-u)^s\kappa_u 
    \le 
    \bigO(1) + \sum_{u^* \le u \le a}(\alpha-u)^s 10^{-\alpha + u}
    \le 
\bigO(1)+    \sum_{v \ge 0}v^s 10^{-v} = \bigO(1).
\]
\end{proof}
\begin{remark}
In the following, whenever we consider a tame profile $\bfk$ we will tacitly assume functions $a= \alpha_0 - \bigO(1)$ with $a \le \alpha$ and $u^* \sim \alpha$ such that $k_u =0$ unless $u^* \le u \le a$, and we will use these functions $a, u^*$ without further reference or explanation. Moreover, without loss of generality, we will assume that $u^* \le \alpha-2$ and that $\gamma (\alpha - u^*) \ge 10$ (since $\gamma(x) \rightarrow \infty$ as $x \rightarrow \infty$). In particular, whenever we write $\sum_u, \prod_u$ and the profile is tame, then the range of $u$ will be $u^* \le u \le a$.
\end{remark}

\begin{remark}
If $\bfk$ is tame, then we obtain that
\[
    n = \sum_{u} u k_u \sim 2 \log_b n \sum_{u} k_u,
\]
and so the total number of colours $k = \sum_u k_u \sim {n}/{2 \log_b n}$. We will also use this fact several times without explicit reference.
\end{remark}

\subsection{The general result}

Our main (technical) result provides  a lower bound for the probability that an $a$-bounded colouring with a tame profile exists, provided that there are many $a$-sets  and that the expected number of colourings and partial colourings is large enough.
\begin{theorem}\label{theorem:general}
Let $p \in (0,1)$ and $\epsilon>0$ be fixed and $m=\left \lfloor Np \right \rfloor$. Let $a=a(n)$ be an integer sequence such that $a=\alpha_0 - \bigO(1)$ and 
\begin{equation}
\label{eq:lowerboundmu}
    \mu_a \ge n^{1+\epsilon}.
\end{equation}	
Suppose that $\kp=\kp(n)$ is a tame $a$-bounded sequence of complete colouring profiles so that
\begin{equation} \label{eq:lowerboundexpectation}
    \ln \E_m[\bar X_\bfk] \gg \ln n.
\end{equation}
Furthermore, suppose that for any fixed ${\delta}>0$, if $n$ is large enough, then for all colouring profiles $\boldlambda \le \boldkappa$ such that ${\delta} \le \sum_{1 \le u \le a} \lambda_u \le 1-{\delta}$ (and letting $\ell_u = n \lambda_u /u$),
    \begin{equation}\label{eq:lowerboundbeta}
       \E_p\left[\bar{X}_{\boldlambda}\right] \ge \exp \Big( \ln^6 n \Big) \prod_{1 \le u \le a}{\binom{k_u}{\ell_u}}^2 .    \end{equation}
Then
\begin{align}
    \Pb_m \left( X_\bfk >0 \right) &\gtrsim \exp \Big(   - \frac{k_a^2}{\mu_{a}} -\bigO(\MB)\Big),
    \quad\text{where}\quad
    \MB = \frac{k_a^4 \ln^2 n }{n \mu_{a}^2}. \label{eq:defMB}
\end{align}
\end{theorem}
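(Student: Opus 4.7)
The plan is to establish the lower bound on $\Pb_m(X_\bfk > 0)$ via a truncated second moment argument applied to a carefully chosen random variable $Z_\bfk$ that counts only colourings with profile $\bfk$ satisfying additional regularity. By Paley--Zygmund, $\Pb_m(Z_\bfk > 0) \ge \E_m[Z_\bfk]^2 / \E_m[Z_\bfk^2]$, and any $Z_\bfk > 0$ automatically witnesses $X_\bfk > 0$. The necessity of truncation is the standard one: the plain second moment of $X_\bfk$ is dominated by atypical graphs (those with abnormally many $a$-sets and $(a-1)$-sets), so ignoring these would inflate $\E[X_\bfk^2]$ beyond $\E[X_\bfk]^2$ by factors we cannot afford. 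The factor $\exp(-k_a^2/\mu_a)$ in the conclusion is a birthday-type correction reflecting the cost of requiring $k_a$ \emph{pairwise disjoint} $a$-sets to exist in a graph whose total expected $a$-set count is only $\mu_a$: when $k_a^2 \gg \mu_a$ this probability is indeed minuscule.

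For the first moment of $Z_\bfk$, I would design the truncation so that $Z_\bfk$ counts only those colourings whose $a$-classes behave like a typical packing of $k_a$ disjoint $a$-subsets, and whose partial subcolourings do not fall into the atypical regimes excluded by hypothesis~(\ref{eq:lowerboundbeta}). Starting from $\E_p[X_\bfk] = P_\bfk q^{f_\bfk}$ (Lemma~\ref{lem:expxk}), one passes between $G_{n,p}$ and $G_{n,m}$ via the standard $\Theta(\sqrt n)$-factor reduction. The truncation multiplies the first moment by a factor of order $\exp\bigl(-\frac{k_a^2}{2\mu_a}(1+o(1))\bigr)$, which is exactly the probability that $k_a$ uniformly chosen $a$-subsets of $[n]$ are pairwise disjoint (weighted by the probability of independence). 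Tameness, via Proposition~\ref{prop:proptame}, guarantees that smaller colour classes $u < a$ contribute only bounded corrections here because $\sum_u (a-u)^s \kappa_u = O(1)$.

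The substantive work is the second moment, and I expect this to be the main obstacle. One writes $\E_m[Z_\bfk^2]$ as a sum over ordered pairs of profile-$\bfk$ colourings $(\Pi,\Pi')$, indexed by the overlap pattern and equivalently encoded by a partial profile $\boldlambda \le \boldkappa$ describing the shared colour structure. The strategy is to split according to the overlap mass $\lambda = \sum_u \lambda_u$: for $\delta \le \lambda \le 1-\delta$, hypothesis~(\ref{eq:lowerboundbeta}) directly bounds the contribution by $(\E_m[Z_\bfk])^2$ times a negligible factor; for $\lambda$ close to $0$ (nearly disjoint pairs) one compares the joint edge-forbidden count against $2 f_\bfk$ and exploits the tame tail of $\kappa_u$ to restrict the summation to a bounded window near $u = \alpha$; for $\lambda$ close to $1$ (nearly identical pairs) a sharp local analysis is required, enumerating small perturbations of a fixed colouring. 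The hypothesis (\ref{eq:lowerboundmu}), $\mu_a \ge n^{1+\epsilon}$, enters exactly here, since the near-identical regime is dominated by swaps involving $a$-classes.

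The hardest step will be the endpoint $\lambda \to 1$ of the second moment, where one needs matching asymptotics (rather than mere inequalities) for configurations that differ from a fixed optimal colouring by a bounded number of colour-class swaps, and where the precise form of the error term $\MB = k_a^4 \ln^2 n / (n \mu_a^2)$ in the exponent is ultimately produced. Once this local analysis is complete, one combines the three regimes to obtain $\E_m[Z_\bfk^2] \le \E_m[Z_\bfk]^2 \cdot \exp\bigl(O(\MB)\bigr)$; together with the first-moment estimate this yields $\Pb_m(Z_\bfk > 0) \gtrsim \exp(-k_a^2/\mu_a - O(\MB))$, proving the theorem. Throughout, the hypothesis (\ref{eq:lowerboundexpectation}) that $\ln \E_m[\bar X_\bfk] \gg \ln n$ is used to absorb polynomial prefactors, such as the $\sqrt n$ coming from the $G_{n,m} \leftrightarrow G_{n,p}$ transfer, into the exponentially large main term.
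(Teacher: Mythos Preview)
Your high-level architecture (Paley--Zygmund on a truncated count $Z_\bfk$, splitting the second moment by overlap mass $\lambda$) matches the paper, but several key attributions are inverted.

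First, the truncation is not the one you describe. The paper's $Z_\bfk$ restricts to colourings $\pi$ satisfying structural events $B_\pi, C_\pi, D_\pi$ about how \emph{arbitrary} independent sets intersect the parts of $\pi$ (roughly: any independent set of relevant size either meets at most two parts, or is spread over nearly $|S|$ parts). These events hold whp conditional on $\pi$ being a colouring, so $\E_m[Z_\bfk] \sim \E_m[X_\bfk]$ with \emph{no} loss --- there is no $\exp(-k_a^2/(2\mu_a))$ penalty on the first moment. The purpose of the truncation is not to exclude graphs with atypically many large independent sets; it is to force any relevant pair $(\pi,\pi')$ into a structural dichotomy: each part of $\pi'$ is either (essentially) identical to a part of $\pi$, or ``scrambled'' across many parts of $\pi$.

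Second, the factor $\exp(k_a^2/\mu_a)$ and the error $\MB$ both live in the second-moment ratio, and both arise from the \emph{small-overlap} (``scrambled'') regime, not from $\lambda \to 1$. When $\lambda < \ln^{-3} n$, the dominant contribution to $\E[Z_\bfk^2]/\E[Z_\bfk]^2$ is governed by the number of identically-shared $a$-classes between $\pi$ and $\pi'$; summing over this count produces $\exp(k_a^2/\mu_a + \bigO(\MB))$ (Lemma~\ref{lemmascrambled}). By contrast the regime $\lambda \to 1$ is the \emph{easiest} case (Lemma~\ref{lemmasimilar}): it contributes only $n^{\bigO(1)}/\E[\bar X_\bfk]$, which is $o(1)$ exactly by hypothesis~(\ref{eq:lowerboundexpectation}). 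The middle regime is handled using hypothesis~(\ref{eq:lowerboundbeta}) together with the bound $\mu_a \ge n^{1+\epsilon}$ (via Lemma~\ref{lemma:smallsubprofiles}).

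Third, your final arithmetic is internally inconsistent: if one really had $\E[Z_\bfk^2] \le \E[Z_\bfk]^2 \exp(\bigO(\MB))$, Paley--Zygmund would give $\Pb(Z_\bfk>0) \ge \exp(-\bigO(\MB))$ regardless of any first-moment penalty, and the term $k_a^2/\mu_a$ could never appear. The correct bookkeeping is $\E_m[Z_\bfk] \sim \E_m[X_\bfk]$ and $\E_m[Z_\bfk^2]/\E_m[Z_\bfk]^2 \lesssim \exp\bigl(k_a^2/\mu_a + \bigO(\MB)\bigr)$, from which Paley--Zygmund gives the conclusion directly.
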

Let us comment briefly on the assumptions~\eqref{eq:lowerboundexpectation} and \eqref{eq:lowerboundbeta}.
We will see later in Lemma~\ref{lemma:kstartame} that assuming $\E_m[\bar X_\bfk] \gg \ln n$ poses no real restriction, as in the range of $k$ that we consider, increasing the number of colours by just one boosts the expectation by a factor of $e^{\Omega(\ln^2n)}$ (or for equitable colourings, by $e^{\Omega(\ln n\ln\ln n)}$ in the worst case).
Moreover, as we already mentioned after Definition~\ref{deftame}, for $X_\bfk >0$ to hold it is necessary that $\E[\bar X_\bfl]$ 
is large enough; the binomial coefficients account for the choice that we have when choosing the partial profiles.
As we will see, the requirement \eqref{eq:lowerboundbeta} will hold with room to spare, as for the `optimal' colouring profile under consideration later, the expected numbers of sub-colourings in question will actually be exponential in $n$.

Using that $k_u \le k = \bigO(n/\ln n)$ for all $u$, we immediately obtain the  following statement that provides conditions for the whp-existence of a colouring.
\begin{theorem}
\label{maintheorem}
Under the conditions of Theorem \ref{theorem:general}, if additionally $\mu_a \gg n^2 / \ln^2 n$, then with high probability, $G_{n,m}$ has a colouring with profile $\bfk$.
\end{theorem}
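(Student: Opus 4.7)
The plan is straightforward: Theorem \ref{maintheorem} is essentially a clean packaging of the lower bound already provided by Theorem \ref{theorem:general}, and the whole argument amounts to checking that both terms in the exponent tend to zero under the extra hypothesis $\mu_a \gg n^2/\ln^2 n$. The key elementary ingredient is the bound $k_a \le k$ combined with the fact, noted in the remark following Proposition \ref{prop:proptame}, that every tame profile satisfies $k = \sum_u k_u \sim n/(2\log_b n)$, and therefore $k_a = \bigO(n/\ln n)$.

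First I would apply Theorem \ref{theorem:general} verbatim to obtain
\[
    \Pb_m(X_\bfk > 0) \gtrsim \exp\Big(-\tfrac{k_a^2}{\mu_a} - \bigO(\MB)\Big),
    \qquad \MB = \frac{k_a^4 \ln^2 n}{n \mu_a^2}.
\]
Using $k_a \le k = \bigO(n/\ln n)$, the first term in the exponent satisfies
\[
    \frac{k_a^2}{\mu_a} = \bigO\!\left(\frac{n^2}{\mu_a \ln^2 n}\right) = o(1),
\]
where the final equality is precisely the additional hypothesis $\mu_a \gg n^2/\ln^2 n$.

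For the second term, substituting the same bound on $k_a$ gives
\[
    \MB \le \frac{(n/\ln n)^4 \cdot \ln^2 n}{n \mu_a^2} = \bigO\!\left(\frac{n^3}{\mu_a^2 \ln^2 n}\right).
\]
Since $\mu_a \gg n^2/\ln^2 n$ dominates $n^{3/2}/\ln n$ for large $n$ (the ratio is $n^{1/2}/\ln n \to \infty$), we have $\mu_a^2 \gg n^3/\ln^2 n$, so $\MB = o(1)$ as well. Combining both estimates yields $\Pb_m(X_\bfk > 0) \ge 1 - o(1)$, which is exactly the desired whp statement that $G_{n,m}$ admits a colouring of profile $\bfk$.

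There is no genuine obstacle: all the work is concentrated in Theorem \ref{theorem:general}. The only thing to observe is that the threshold $n^2/\ln^2 n$ is sharp for this packaging, in the sense that it is dictated precisely by the dominant term $k_a^2/\mu_a$ when $k_a$ is as large as possible, i.e.\ of order $n/\ln n$; the $\MB$-term is automatically negligible under the same assumption.
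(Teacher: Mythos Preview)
Your proposal is correct and is exactly the approach the paper takes: the paper simply remarks that the statement follows immediately from Theorem~\ref{theorem:general} using $k_u \le k = \bigO(n/\ln n)$, and your write-up just spells out the two resulting $o(1)$ bounds for $k_a^2/\mu_a$ and $\MB$.
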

As we will see below in \eqref{expectationu}, 
$\mu_a \gg n^2/\ln^2 n$ is equivalent to $\mu_{a+2} \rightarrow \infty$. In particular, for $p=1/2$, this is the case if $a \le \alpha(n)-2$, which will be crucial in the verification of Theorem~\ref{twopointrestricted}.

\section{Preliminaries}
\label{section:preliminaries}

In this section we collect several facts and basic bounds concerning independent sets in random graphs and  the expected number of colourings with certain properties. The proofs are mostly routine (but sometimes involved) and are all contained in the appendix. 

\subsection{Independent sets in random graphs}
\label{ssec:isets}

Fix $p \in (0,1)$, and let $N_t$ be the number of $t$-sets --- that is, independent vertex sets of size $t$ -- in $G_{n,p}$. Recall the definitions of $\alpha_0$ and $\alpha$ from~\eqref{eq:alpha0}, which we repeat here for convenience:
\[
    \alpha_0 = \alpha_0(n) = 2 \log_b n -2 \log_b\log_b n +2 \log_b(e/2)+1,
    \qquad \alpha = \alpha(n) = \left\lfloor \alpha_0(n)\right \rfloor.
\]
As we already saw, for almost all $n$ the independence number of $G_{n,p}$ satisfies $\alpha(G_{n,p}) = \alpha$ whp. Moreover, $\alpha_0$ is the approximate value of~$u$ for which the expected value $\mu_u = \E_p[N_u] = \binom{n}{u}q^{\binom{u}{2}}$, interpreted suitably for $u \in \mathbb{R}$, is equal to 1. 
The following statement, proved in the appendix, gives us bounds on the expected number of $u$-sets when $u$ is close to $\alpha_0$.
\begin{lemma}
\label{prop:basicboundsisets}
Let $0 < p < 1$ be fixed. Let $u(n) \le \alpha_0(n)$ be an integer sequence such that $x = x(n) := \alpha_0 - u = \bigO(1)$. Then 
\begin{equation}
\label{eq:mut}
    \mu_u
    = n^{x+\bigO(\ln \ln n / \ln n)}.
\end{equation}
Moreover, if $0 < p < 1-e^{-4}$,
\begin{equation}\label{eq:muinfinity}
    \mu_{\alpha} = \E_p[N_\alpha] \rightarrow \infty.
\end{equation}
\end{lemma}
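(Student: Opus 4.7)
The plan is to unpack $\mu_u=\binom{n}{u}q^{\binom{u}{2}}$ via Stirling and substitute the explicit definition of $\alpha_0$, so that the careful bookkeeping reveals which terms cancel and which survive. Setting $L=\log_b n$ and $M=\log_b\log_b n$, we start from
\[
  \log_b\mu_u \;=\; u L \;-\; \log_b(u!) \;-\; \binom{u}{2} \;+\; O(u^2/n),
\]
which for $u=\Theta(\log n)$ follows from $\binom{n}{u}=\frac{n^u}{u!}(1+O(u^2/n))$. Applying Stirling to $\log_b u!$ and then writing $u=\alpha_0-x=2L-2M+C-x$ with $C=2\log_b(e/2)+1$, the key expansion is $\log_b u=\log_b 2+M-(M\log_b e)/L+O(M^2/L^2)$. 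The plan for part 1 is simply to expand everything in terms of $L$, $M$ and $x$ and collect contributions by order of magnitude.

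I expect the following cancellations, all forced by the specific shape of $\alpha_0$: the $L^2$, $LM$, and $M^2$ coefficients vanish identically; the $L$-coefficient collapses to $-C-2\log_b 2+2\log_b e+1+x$, which by the definition of $C$ is exactly $x$; what remains is a coefficient of $M$ plus an $O(1)$ error. Putting these together one obtains
\[
  \log_b\mu_u \;=\; xL \;+\; (2\log_b e - \tfrac12 - x)\,M \;+\; O(1),
\]
which immediately gives (\ref{eq:mut}) after dividing by $L$ and observing $M/L = \Theta(\ln\ln n/\ln n)$.

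For the second assertion, specialise this expansion to $u=\alpha=\lfloor\alpha_0\rfloor$, so that $x=\{\alpha_0\}\in[0,1)$. The $M$-coefficient $2\log_b e - 1/2 - x$ is strictly positive as long as $x<2\log_b e-1/2=2/\ln b-1/2$, and the threshold $2/\ln b>1/2$ is exactly the hypothesis $b<e^4$, i.e.\ $p<1-e^{-4}$. Splitting into two cases closes the argument: if $x$ is small (in particular $x<2\log_b e-1/2$), the $M$-term is positive and of order $M\to\infty$, so $\log_b\mu_\alpha\to\infty$; if $x$ is bounded away from $0$, then the $xL$ term dominates and $xL\gg|M|$ since $L/M\to\infty$. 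In either regime $\mu_\alpha\to\infty$, proving (\ref{eq:muinfinity}).

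The main obstacle is the bookkeeping in the Stirling expansion: many of the apparent top-order terms cancel only thanks to the precise constants in $\alpha_0$, and in particular one must keep the sub-leading correction $-M\log_b e/L$ in $\log_b u$ (coming from Taylor-expanding $\log_b(2L-2M+C)$ about $2L$). Without it, the critical $M$-coefficient is off by $2\log_b e$, which in fact flips its sign and would spuriously suggest $\mu_{\alpha_0}\to 0$. Once this term is retained the threshold $b<e^4$ emerges naturally from the single inequality $2\log_b e>1/2$.
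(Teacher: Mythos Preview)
Your proposal is correct and takes essentially the same route as the paper: both apply Stirling to $\mu_u=\binom{n}{u}q^{\binom{u}{2}}$ and then substitute $u=\alpha_0-x$, arriving at the formula $\log_b\mu_u=xL+(2\log_b e-\tfrac12-x)M+O(1)$ (which the paper writes as $\mu_a\sim(n/\log_b n)^y\exp((2/\ln b-\tfrac12)\ln\log_b n+O(1))$). The only organisational difference is that the paper first packages the expression as $(enq^{(a-1)/2}/a)^a$ and uses the clean identity $q^{a-1}=(2\log_b n/(en))^2 b^y$, then combines with the ratio estimate~(\ref{expectationu}) for the general statement, whereas you expand $\log_b\mu_u$ term-by-term directly; both lead to the same threshold $2/\ln b>1/2$, i.e.\ $p<1-e^{-4}$, for $\mu_\alpha\to\infty$.
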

Now define $\theta= \theta(n)$ by letting
\begin{equation}
    \label{eq:defx2}
    \mu_\alpha = n^\theta.
\end{equation}
Then, by \eqref{eq:mut},
\begin{equation}\label{eq:theta}
    \theta=\theta(n)= \alpha_0 - \alpha + \bigO\left( \frac{\ln \ln n}{\ln n} \right) \in [o(1),1+o(1)],
\end{equation}
so that $\theta$ is essentially the fractional part of $\alpha_0$. In the lemma below, which is also proved in the appendix, we give bounds for $\mu_u$ for a large range of $u$.
\begin{lemma}
\label{prop:muurelativetomua}
Let $0 < p < 1$ be fixed. Let $a=a(n)=\alpha_0-\bigO(1)$ and $u = u(n)$ be integer sequences such that $0.1 \alpha \le u \le 10 \alpha$. Then, uniformly in $u$, 
\begin{equation}
\label{expectationu}
    \mu_{u}
    = \mu_a \left(\Theta\Big( \frac{n}{\ln n}b^{-(a-u)/2}\Big) \right)^{a-u}.
\end{equation}
\end{lemma}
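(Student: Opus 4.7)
The plan is to compute the ratio $\mu_u/\mu_a$ directly from $\mu_u = \binom{n}{u}q^{\binom{u}{2}}$. Since $u = \bigO(\ln n) = o(\sqrt{n})$ throughout the range $0.1\alpha \le u \le 10\alpha$, a standard Taylor expansion of the falling factorial gives
\[
  \binom{n}{u} \;=\; \frac{n^u}{u!}\bigl(1 + \bigO(u^2/n)\bigr) \;=\; \frac{n^u}{u!}(1+o(1)),
\]
uniformly in $u$. Hence $\mu_u = (n^u/u!)\,b^{-\binom{u}{2}}(1+o(1))$.

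Set $d := a-u$, so that $|d| = \bigO(\ln n)$. Forming the ratio and using the identity $\binom{a}{2}-\binom{u}{2} = d(a+u-1)/2 = da - d^2/2 - d/2$ yields
\[
  \frac{\mu_u}{\mu_a} \;=\; n^{-d}\cdot\frac{a!}{u!}\cdot b^{da}\cdot b^{-d^2/2 - d/2}\,(1+o(1)).
\]
From $a = \alpha_0 - \bigO(1)$ and the explicit formula for $\alpha_0$ one obtains $b^a = \Theta(n^2/\ln^2 n)$, so $b^{da} = \Theta(n/\ln n)^{2d}$. For the factorial ratio, $a!/u!$ is a product (when $d>0$) or reciprocal product (when $d<0$) of $|d|$ consecutive integers, all lying in a window $[c_1\ln n,\,c_2\ln n]$ with absolute constants $c_1,c_2>0$ (here the assumption $u \ge 0.1\alpha$ keeps the smaller endpoint bounded away from $0$), so $a!/u! = \Theta(\ln n)^{d}$.

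Plugging everything in and absorbing the leftover $b^{-d/2} = \Theta(1)^d$ into the $\Theta$ inside the base gives
\[
  \frac{\mu_u}{\mu_a} \;=\; \Theta\bigl(n/\ln n\bigr)^{d}\,b^{-d^2/2} \;=\; \Bigl(\Theta\bigl(\tfrac{n}{\ln n}\,b^{-d/2}\bigr)\Bigr)^{d},
\]
which is exactly~\eqref{expectationu}. The one delicate point, and the main obstacle to be handled carefully, is uniformity: because $|d|$ can be as large as $\Theta(\ln n)$, every hidden constant that is raised to the $d$-th power must come from a bound that is honestly uniform in $u$ on the whole range. The Stirling-type error $\bigO(u^2/n)$ is uniform since $u=\bigO(\ln n)$, and the window $[c_1\ln n,c_2\ln n]$ containing the factorial-ratio factors depends only on the fixed constants $0.1,10$ in the range of $u$ and on the explicit asymptotics of $\alpha_0$; both are stable enough for the displayed equalities to survive exponentiation by $d$, so no further machinery should be required.
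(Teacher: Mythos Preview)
Your proof is correct and follows essentially the same route as the paper: compute the ratio $\mu_u/\mu_a$, use $b^a=\Theta(n^2/\ln^2 n)$, and observe that the factorial ratio contributes $\Theta(\ln n)^{a-u}$ because all factors lie in a window $\Theta(\ln n)$. The only cosmetic difference is that the paper evaluates $\binom{n}{u}/\binom{n}{a}$ directly as a product of $|a-u|$ terms of size $\Theta(\ln n/n)$, whereas you first pass through $\binom{n}{u}=\frac{n^u}{u!}(1+o(1))$ and then take the ratio; this introduces a harmless global $(1+o(1))$ factor that the paper's direct computation avoids, but it is easily absorbed into the $\Theta(\cdot)^d$ form as you note.
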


\subsection{Bounds on the expected number of colourings}
\label{section:firstmomentlemmas}

This section contains a few basic but important lemmas that determine or provide bounds for the expected number of colourings. 
 We start with a bound that determines the leading exponential order of the expected number of colourings with a given profile. 
\begin{lemma} \label{expectationlemma}
Let $0 < p < 1$ be fixed. Let $\boldkappa = \boldkappa(n)$ be a sequence of (complete or partial) colouring profiles such that $\kappa_u =0$ for all $u< 0.1 \alpha$ and for all $u>10\alpha$, and such that $\sum_u \kappa_u (\alpha-u)^2 = \bigO(1)$. Then 
\[
    \E_m\big[\bar{X}_\boldkappa\big], \E_p\big[\bar{X}_\boldkappa\big] 
    = \exp\left(\varphi(\boldkappa)n + \bigO\Big(\frac{n \ln \ln n}{\ln n} \Big)\right),
\]
where
\begin{align*}
    \varphi(\boldkappa) &= -\parenth{1-\kappa} \ln\parenth{1-\kappa} +\frac{\ln b}{2} \sum_{u} \kappa_u \Big(\alpha_0-1-\frac{2}{\ln b}-u\Big),
    \quad
    \kappa = \sum_u \kappa_u.
\end{align*}
If $\boldkappa$ is complete and so $\kappa =1$, then we let $\parenth{1-\kappa} \ln\parenth{1-\kappa} = 0$ in the definition of $\varphi$. 
\end{lemma}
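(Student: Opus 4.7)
The plan is a careful Stirling expansion combined with the explicit form of $\alpha_0$. I begin by reducing the $G_{n,m}$ computation to the $G_{n,p}$ one: by Lemma~\ref{lem:expxk}, the ratio $\E_m[\bar X_\boldkappa]/\E_p[\bar X_\boldkappa]$ equals $\binom{N-f_\bfk}{m}\binom{N}{m}^{-1}/q^{f_\bfk}$, and expanding $\ln\binom{N-f}{m}\binom{N}{m}^{-1} = \sum_{i=0}^{f-1}\ln(1-m/(N-i)) = f\ln q + O(f^2/N)$ shows the two expectations agree up to $O(\ln^2 n) = o(n)$, since $f_\bfk = \frac{1}{2}\sum_u u(u-1)k_u = O(n\ln n)$ (using $k = O(n/\ln n)$, which follows from the hypotheses and $u \asymp \alpha$). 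Thus it suffices to analyse $\E_p[\bar X_\bfk]$.

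Next I would expand
\[
\ln \E_p[\bar X_\bfk] = \ln n! - \ln(n-m)! - \sum_u k_u \ln u! - \sum_u \ln k_u! - f_\bfk \ln b,
\]
where $m = \kappa n$ and, for partial profiles, the $(n-m)!^{-1}$ factor arising from the count of ordered partial partitions is included. Stirling gives $\ln n! - \ln(n-m)! = m \ln n - n(1-\kappa)\ln(1-\kappa) - m + O(\ln n)$, with the convention that $(1-\kappa)\ln(1-\kappa) = 0$ at $\kappa=1$. Using the hypothesis $\sum_u \kappa_u(\alpha-u)^2 = O(1)$ together with $u \in [0.1\alpha, 10\alpha]$ to approximate $\ln u$ by $\ln\alpha_0$, one finds $\sum_u k_u \ln u! = \kappa n \ln\alpha_0 - m + O(n/\ln n)$ and $\sum_u \ln k_u! = k(\ln n - \ln\alpha_0) + \sum_u k_u \ln\kappa_u - k + O(n/\ln n)$. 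The term $\sum_u k_u \ln\kappa_u$ is controlled via the trivial bound $|\ln\kappa_u| \le O((\alpha-u)^2+1)$, together with $k_u \le \kappa_u n/u$ and Cauchy-Schwarz, to yield $O(n/\ln n)$.

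Collecting terms and cancelling the two $-m$'s, the expression becomes
\[
(m-k)(\ln n - \ln\alpha_0) + k - n(1-\kappa)\ln(1-\kappa) - \frac{n\ln b}{2}\sum\nolimits_u \kappa_u(u-1) + O(n\ln\ln n/\ln n),
\]
using $f_\bfk = \frac{n}{2}\sum_u\kappa_u(u-1)$. The explicit form of $\alpha_0$ yields the key algebraic identity $\ln n - \ln\alpha_0 = \frac{\ln b}{2}(\alpha_0 - 1 - 2/\ln b) + O(\ln\ln n/\ln n)$; combined with $k = n\kappa/\alpha_0 + O(n/\ln^2 n)$, the $(m-k)(\ln n - \ln\alpha_0)$ and $-\frac{n\ln b}{2}\sum_u\kappa_u(u-1)$ terms recombine, after cancellation of a leading $\pm n\kappa\ln b/2$, into $\frac{n\ln b}{2}\sum_u \kappa_u(\alpha_0 - 1 - 2/\ln b - u)$, with the stray $+k$ absorbed into the error. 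This matches $n\varphi(\boldkappa) + O(n\ln\ln n/\ln n)$, as required.

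The main obstacle will be the bookkeeping of error terms at the target precision $O(n\ln\ln n/\ln n)$. There are $O(\ln n)$ relevant values of $u$ in the support of $\boldkappa$, so every Stirling remainder $O(\ln k_u)$ and every $O((\alpha-u)/\alpha)$-type correction must be summed without multiplicative blow-up; the replacement $\ln u \to \ln\alpha_0$ is in fact what produces the dominant $O(n\ln\ln n/\ln n)$ contribution. The most delicate piece is the bound $|\sum_u k_u \ln\kappa_u| = O(n/\ln n)$, since $\kappa_u$ may be exponentially small in $(\alpha-u)^2$; for partial profiles near $\kappa=1$, care is also needed to ensure that the $-n(1-\kappa)\ln(1-\kappa)$ contribution emerges cleanly from $\ln(n-m)!$ without incurring errors larger than $o(n)$.
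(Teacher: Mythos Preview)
Your overall strategy is essentially the one the paper uses: reduce $\E_m$ to $\E_p$, expand everything by Stirling, and then exploit the explicit form of $\alpha_0$ via the identity $\ln n-\ln\alpha_0=\tfrac{\ln b}{2}(\alpha_0-1-2/\ln b)+O(\ln\ln n/\ln n)$. The paper organises the computation slightly differently, routing through $\mu_u$ and the relation $\mu_u=\mu_\alpha(\Theta(nb^{-(\alpha-u)/2}/\ln n))^{\alpha-u}$ rather than expanding $\ln u!$ directly, but the underlying mechanism is the same.

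There is, however, one genuine gap. Your claimed ``trivial bound'' $|\ln\kappa_u|\le O((\alpha-u)^2+1)$ is false under the stated hypotheses: nothing prevents $k_u=1$ (so $\kappa_u=u/n$ and $|\ln\kappa_u|\sim\ln n$) for some $u$ with $\alpha-u=O(1)$. The Cauchy--Schwarz step built on this bound therefore collapses. The correct argument --- and this is exactly what the paper does --- is an entropy bound: the support $U$ has size $O(\alpha)=O(\ln n)$, so Jensen's inequality gives
\[
0\ \le\ -\sum_{u\in U}\kappa_u\ln\kappa_u\ \le\ \kappa\ln|U|-\kappa\ln\kappa\ =\ O(\ln\ln n),
\]
and hence $|\sum_u k_u\ln\kappa_u|\le(10n/\alpha)\cdot O(\ln\ln n)=O(n\ln\ln n/\ln n)$. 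This is weaker than your claimed $O(n/\ln n)$, but still inside the target error, so the rest of your outline goes through once this fix is made. (A couple of your intermediate error claims should also read $O(n\ln\ln n/\ln n)$ rather than $O(n/\ln n)$ --- e.g.\ the Stirling remainder $\sum_u O(\ln u)$ in $\sum_u k_u\ln u!$ is $O(k\ln\alpha)$ --- but these are harmless.)
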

For tame profiles we have $k_u \neq 0$ only for $u \sim 2\log_bn$ and so $\kappa_u = k_u u/ n \sim k_u/k$, uniformly in $u$. In this case, if in addition $\E_p\big[\bar{X}_\boldkappa\big] \rightarrow \infty$ but not too quickly, the lemma readily implies that $\varphi(\kappa)=o(1)$ and so $\sum_{u}\kappa_u u = \alpha_0-1-{2}/{\ln b}+o(1)$. In that case the average colour class size $n/k$ satisfies
\[
    \frac{n}k
    = \frac{\sum_u k_uu }{k}
    \sim \sum_u \kappa_u u
    = \alpha_0-1-\frac 2 {\ln b} +o(1).
\]
The next lemma provides an upper bound for the expectation when the average colour class size is larger than this,  which also holds if $\sum_u \kappa_u u^2$ is unbounded.
\begin{lemma} \label{lemmaupperbound}
Let $0 < p < 1$ and $C >0$ be fixed. Let $\bfk$ be a sequence of complete colouring profiles so that $k_u=0$ for all $u<0.1 \alpha$ and for all $u>10 \alpha$. If
\[
    \frac{n}{k} > \alpha_0-1-\frac{2}{\ln b}+C,
\]
then
\[
    \E_m\big[\bar{X}_\bfk\big], \E_p\big[\bar{X}_\bfk\big] 
    < \exp \Big (-\frac{C\ln b}{2}n + o(n) \Big).
\]
\end{lemma}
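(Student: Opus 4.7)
The approach is to follow the first-moment calculation of Lemma~\ref{expectationlemma} but dispense with its bounded-second-moment hypothesis using two applications of Jensen's inequality. By Lemma~\ref{lem:expxk} and Stirling---using $u = \Theta(\ln n)$ throughout the support of $\bfk$ and $|\mathrm{supp}(\bfk)| = \bigO(\ln n)$---we get
\[
    \ln \E_p[\bar X_\bfk] = n\ln n - \sum_u k_u u \ln u - \sum_u k_u \ln k_u + k - \frac{\ln b}{2}(S_2 - n) + o(n),
\]
where $S_2 = \sum_u u^2 k_u$. Set $\bar u := n/k$ and view $p_u := k_u/k$ as a probability distribution with mean $\bar u$. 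Convexity of $x\ln x$ gives $\sum_u k_u u\ln u \ge n\ln \bar u$; Cauchy--Schwarz gives $S_2 = k\,\E[U^2] \ge k\bar u^2 = n\bar u$; and $\sum_u k_u\ln k_u = k\ln k - kH(p) \ge k\ln k - k\ln R$ with $R = |\mathrm{supp}(\bfk)| = \bigO(\ln n)$, so $-\sum_u k_u\ln k_u \le -k\ln k + o(n)$. Substituting, using $k\bar u = n$ to write $n\ln n - n\ln\bar u = n\ln k$, and collecting terms gives
\[
    \ln \E_p[\bar X_\bfk] \le T(\bar u) + o(n), \qquad T(\bar u) := (n-k)\ln k + k - \frac{n\ln b}{2}(\bar u - 1).
\]

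Two properties of $T$ finish the proof. First, treating $k = n/\bar u$ as a function of $\bar u \in [0.1\alpha, 10\alpha]$ and differentiating,
\[
    \frac{dT}{d\bar u} = -\frac{n}{\bar u} + \frac{n\ln k}{\bar u^2} - \frac{n\ln b}{2} = -\frac{n\ln b}{2} + o(n),
\]
which is negative for large $n$, so $T$ is strictly decreasing on the allowed range. Second, a careful expansion at the smallest allowed value $\bar u^* := \alpha_0 - 1 - 2/\ln b + C$ evaluates
\[
    T(\bar u^*) = -\frac{Cn\ln b}{2} + o(n).
\]
Monotonicity then gives $T(\bar u) \le T(\bar u^*) = -\tfrac{Cn\ln b}{2} + o(n)$ for every $\bar u \ge \bar u^*$, which proves the $\E_p$ statement. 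The $\E_m$ bound is then immediate from the standard comparison $\E_m[\bar X_\bfk]/\E_p[\bar X_\bfk] = \bigO(\sqrt N) = e^{o(n)}$.

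The main obstacle is the exact evaluation $T(\bar u^*) = -\tfrac{Cn\ln b}{2} + o(n)$: the four dominant contributions to $T(\bar u^*)$---$n\ln k$ and $-k\ln k$ from $(n-k)\ln k$, the leading $-\tfrac{n\bar u^*\ln b}{2}$ term, and the constant $\tfrac{n\ln b}{2}$ term---are each of order $n\ln n$ or $n\ln\ln n$, and the precise cancellation into a clean $-\tfrac{Cn\ln b}{2}$ uses every correction term in $\alpha_0 = 2\log_b n - 2\log_b\log_b n + 2\log_b(e/2) + 1$, via the Taylor expansions $\ln\bar u^* = \ln\log_b n + \ln 2 + o(1)$ and $n\ln n/\bar u^* = \tfrac{n\ln b}{2} + o(n)$. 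Verifying this amounts to a careful but routine algebraic computation.
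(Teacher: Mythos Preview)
Your proof is correct. It differs from the paper's in that the paper reuses the intermediate formula (derived jointly for Lemmas~\ref{expectationlemma}--\ref{expectationlowerbound}) that rewrites $\E_p[\bar X_\bfk]$ through the quantities $\mu_u$ and the asymptotics of Lemma~\ref{prop:muurelativetomua}; it then dispenses with the bounded-second-moment hypothesis by simply dropping the factor $b^{-(\alpha-u)^2/2}\le 1$. Your route is more self-contained: working straight from the Stirling expansion of $\E_p[\bar X_\bfk]$, the same role is played by the Cauchy--Schwarz bound $S_2\ge n\bar u$ (together with the two other convexity estimates), after which everything collapses to the one-variable function $T(\bar u)$ and a monotonicity argument. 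What your approach buys is independence from Lemma~\ref{prop:muurelativetomua} and a bound that is manifestly uniform over the whole range $\bar u\in[\bar u^*,10\alpha]$; the paper's write-up is really tuned to $\bar u$ within $\bigO(1)$ of $\alpha_0$, and when $\bar u$ is of order a constant times $\alpha$ one has to observe separately that the dominant term $n^{(2/\ln b - C')k}$ swamps the now-large error factors. What the paper's approach buys is economy: having already derived the general formula for Lemma~\ref{expectationlemma}, the extra work for the present lemma is only a few lines.
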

In the next lemma we give a lower bound for the expected number of $a$-bounded colourings if $\mu_a$ is large enough, with a very small error term.
\begin{lemma}
\label{expectationlowerbound}
Let $0<p<1, \epsilon> \delta >0$ be fixed, let $a=a(n)=\alpha_0-\bigO(1)$ be a sequence of integers such that $\mu_{a} \ge n^{1+\epsilon}$, and suppose that $u^*=u^*(n) \sim 2 \log_b n$. Then for all sequences~$\boldkappa$ of $a$-bounded colouring profiles such that $\kappa_u=0$ for all $u < u^*$, uniformly
\[
    \E_m\big[\bar{X}_\bfk\big], \E_p\big[\bar{X}_\bfk\big]
    \ge \exp\left(\tilde \varphi_\delta(\kappa) n+\bigO( \ln^2 n)\right),
    \quad
    \text{where}
    \quad
    \kappa=\sum_{u} \kappa_u,
\]
and $\tilde \varphi_\delta:[0,1] \rightarrow \mathbbm{R}$ satisfies
\[
\tilde \varphi_\delta(x)
=-(1-x)\ln(1-x) -\left(1-\frac{\ln b}{2}\delta \right) \,x,
\]	
where, as before, we set $0 \ln 0 := 0$ in the case $\kappa=1$.
\end{lemma}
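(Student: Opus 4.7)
The starting point is Lemma~\ref{lem:expxk}, which (with $r=n-\sum_u uk_u$) gives $\E_p[\bar X_\bfk]=\frac{n!\,q^{f_\bfk}}{r!\prod_u(u!)^{k_u}\prod_u k_u!}$. Packaging each $u!$ together with the matching $q^{\binom u 2}$ inside $\mu_u=\binom n u q^{\binom u 2}$ rewrites this as
\[
\E_p[\bar X_\bfk] = R \cdot \prod_u \frac{\mu_u^{k_u}}{k_u!},\qquad R = \frac{n!/r!}{\prod_u (n)_u^{k_u}},\qquad (n)_u = n(n-1)\cdots(n-u+1).
\]
The goal is to prove, separately, $\ln R\ge -(1-\kappa)n\ln(1-\kappa)-\kappa n - O(\ln n)$ and $\sum_u k_u\ln\mu_u-\sum_u\ln(k_u!)\ge \tfrac{\delta\ln b}{2}\kappa n - O(\ln^2 n)$; adding them gives the lemma for $\E_p$. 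The bound for $\E_m$ reduces to the $\E_p$ bound by Taylor-expanding $\sum_{j=0}^{m-1}\ln(1-f_\bfk/(N-j))$, which shows $\binom{N-f_\bfk}{m}/\binom{N}{m}=q^{f_\bfk}\cdot e^{O(f_\bfk^2/N)}$. Since $f_\bfk=O(n\ln n)$ forces $f_\bfk^2/N=O(\ln^2 n)$, this multiplicative correction is absorbed in the allowed error.

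For $\ln R$, Stirling gives $\ln(n!/r!)=\kappa n\ln n-(1-\kappa)n\ln(1-\kappa)-\kappa n+O(\ln n)$ (with the convention $0\ln 0=0$ handling $\kappa=1$), and the expansion $\ln(n-j)=\ln n-j/n+O((j/n)^2)$ valid for $j<u=O(\ln n)$ yields $\sum_u k_u\ln(n)_u=\kappa n\ln n-f_\bfk/n+O(\ln^2 n/n)$. Subtracting the two identities gives $\ln R=-(1-\kappa)n\ln(1-\kappa)-\kappa n+f_\bfk/n+O(\ln n)$, which is sufficient since $f_\bfk/n\ge 0$.

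For the second factor, note that $\mu_u/\mu_{u+1}=\tfrac{u+1}{n-u}b^u\gg 1$ whenever $u\ge u^*\sim 2\log_b n$, so $\mu_u$ is decreasing on $[u^*,a]$, giving $\sum_u k_u\ln\mu_u\ge k\ln\mu_a\ge(1+\epsilon)k\ln n$ by the hypothesis $\mu_a\ge n^{1+\epsilon}$. Stirling applied individually to each $\ln(k_u!)$ gives $\ln(k_u!)=k_u\ln k_u-k_u+\tfrac12\ln(2\pi k_u)+O(1)$ for $k_u\ge 1$; crucially, the set $\{u:k_u>0\}$ has at most $a-u^*+1=o(\ln n)$ elements (since both $u^*$ and $a$ are $\sim 2\log_b n$), so the cumulative $\tfrac12\ln(2\pi k_u)+O(1)$ remainders total only $o(\ln^2 n)$. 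Combined with $\sum_u k_u\ln k_u\le k\ln k$ (valid because $k_u\le k$), this yields $\sum_u\ln(k_u!)\le k\ln k-k+O(\ln^2 n)$, and therefore
\[
\sum_u k_u\ln\mu_u-\sum_u\ln(k_u!)\ge k\ln(e\mu_a/k)-O(\ln^2 n)\ge \epsilon k\ln n-O(\ln^2 n),
\]
where the last inequality uses $k\le n/u^*$ (from $u^*k_u\le uk_u$), so that $e\mu_a/k\ge eu^*n^\epsilon\ge n^\epsilon$. Finally, the identity $\sum_u uk_u=\kappa n$ together with $u\le a$ gives $k\ge\kappa n/a$; and since $a\le\alpha_0<2\log_b n$ for $n$ large (by the explicit formula for $\alpha_0$), one concludes $\epsilon k\ln n\ge\tfrac{\epsilon\ln b}{2}\kappa n>\tfrac{\delta\ln b}{2}\kappa n$ because $\epsilon>\delta$. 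The constants appearing in all $O(\cdot)$ terms depend only on $p,\epsilon,\delta$, giving the uniformity in $\boldkappa$ required by the statement.

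The main obstacle is maintaining the error at the $O(\ln^2 n)$ scale. A naive Stirling expansion of $\ln(u!)$ inside $\sum_u k_u\ln(u!)$ would incur an error of $\Theta(k\ln\ln n)=\Theta(n\ln\ln n/\ln n)$, which dwarfs the target. The structural fix is twofold: (i) glue $\ln(u!)$ to $q^{\binom u 2}$ inside $\ln\mu_u$ so that $\sum_u k_u\ln(u!)$ never appears explicitly, and then collapse $\sum_u k_u\ln\mu_u$ to the single bound $k\ln\mu_a$ via the monotonicity of $\mu_u$; (ii) restrict the Stirling analysis to $\sum_u\ln(k_u!)$, where the narrow support $|\{u:k_u>0\}|=o(\ln n)$ keeps the remainders below $o(\ln^2 n)$.
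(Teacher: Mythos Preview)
Your proof is correct and follows the same overall decomposition as the paper: both arguments write $\E_p[\bar X_\bfk]$ as (essentially) $R\cdot\prod_u \mu_u^{k_u}/k_u!$ with $R=\frac{n!/r!}{\prod_u(n)_u^{k_u}}$, then handle the two factors separately via Stirling, arriving at the same $-(1-\kappa)n\ln(1-\kappa)-\kappa n$ contribution from $R$ and the $\tfrac{\delta\ln b}{2}\kappa n$ contribution from the product.

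The one substantive difference is in how you bound $\prod_u \mu_u^{k_u}$. The paper invokes the asymptotic $\mu_u=\mu_d\bigl(\Theta(\tfrac{n}{\ln n}b^{-(d-u)/2})\bigr)^{d-u}$ from Lemma~\ref{prop:muurelativetomua} with $d=a+1$, and then carefully tracks the $\Theta$-constants through the product. You instead observe directly that $\mu_u/\mu_{u+1}=\tfrac{(u+1)b^u}{n-u}\gg 1$ on $[u^*,a]$, so $\mu_u\ge\mu_a$ for every relevant $u$, giving $\sum_u k_u\ln\mu_u\ge k\ln\mu_a$ in one line. This is a cleaner and more self-contained route for this particular lemma (it avoids any appeal to Lemma~\ref{prop:muurelativetomua}), though the paper's version has the advantage of deriving a single intermediate formula that simultaneously serves Lemmas~\ref{expectationlemma}, \ref{lemmaupperbound} and~\ref{expectationlowerbound}. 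One small point worth making explicit in your write-up: the step ``$a\le\alpha_0$'' is not literally contained in ``$a=\alpha_0-\bigO(1)$'', but it does follow from the hypothesis $\mu_a\ge n^{1+\epsilon}$ together with Lemma~\ref{prop:basicboundsisets}, so you might cite that.
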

The following statement 
will be useful in the verification of Theorem~\ref{theorem:general}, giving a lower bound on the expected number of partial profiles for a range of $\lambda= \sum_u \lambda_u$ where the lower bound \eqref{eq:lowerboundbeta} does not apply.
\begin{lemma} \label{lemma:smallsubprofiles}
Assume the conditions of Lemma \ref{expectationlowerbound}. Then 
there is a $C= C(\varepsilon) > 0$ so that the following holds. If $\boldkappa$ is a sequence of $a$-bounded colouring profiles so that $\kappa_u=0$ for all $u <u^*$, then uniformly for all sequences $\boldlambda=(\lambda_u)_{u^* \le u \le a}$ with $0 \le \lambda_u \le \kappa_u$ and $\ln^{-3} n \le \sum_u \lambda_u \le C$, 
\[
\E_p\left[\bar{X_{\boldlambda}}\right]
\ge \exp \Big(\Theta\Big(\frac{n}{\ln^3 n}\Big)\Big) \prod_{u}{\binom{k_u}{\ell_u}}^2.
\]
\end{lemma}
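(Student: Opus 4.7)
The plan is to apply Lemma~\ref{expectationlowerbound} directly to the partial profile $\boldlambda$ itself: since $\boldlambda$ is $a$-bounded and $\lambda_u \le \kappa_u = 0$ for $u < u^*$, the hypotheses of that lemma are met. Writing $\lambda = \sum_u \lambda_u \in [\ln^{-3} n, C]$, this yields
\[
    \ln \E_p[\bar X_{\boldlambda}] \ge \tilde \varphi_\delta(\lambda)\,n + O(\ln^2 n).
\]
The problem then reduces to proving that for a suitable $C = C(\varepsilon) > 0$,
\[
    \tilde \varphi_\delta(\lambda)\,n \;-\; 2\sum_u \ln\binom{k_u}{\ell_u} \;\ge\; \Theta(n/\ln^3 n).
\]

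For the first term, a Taylor expansion at $\lambda = 0$ gives $\tilde\varphi_\delta(\lambda) = (\ln b/2)\delta\,\lambda - \lambda^2/2 + O(\lambda^3)$, so by choosing $C$ small enough (depending only on $\delta$) one secures $\tilde\varphi_\delta(\lambda) \ge (\ln b/4)\delta\,\lambda$ throughout $\lambda \in [0, C]$. For the second term, I would use $\binom{k_u}{\ell_u} \le (e k_u / \ell_u)^{\ell_u}$ to bound
\[
    \sum_u \ln \binom{k_u}{\ell_u} \le \sum_u \ell_u + \sum_u \ell_u \ln(k_u / \ell_u).
\]
The first sum on the right is $O(\lambda n / \ln n)$ because $u \ge u^* \sim 2 \log_b n$ forces $\sum_u \ell_u \le \lambda n / u^*$. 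For the second, exploit that $k_u \le k = O(n / \ln n)$ and that $\boldlambda$ is supported on at most $a - u^* + 1 = O(\ln n)$ values of $u$; concavity of $x \mapsto x\ln(k/x)$ (Jensen) then gives
\[
  \sum_u \ell_u \ln(k_u/\ell_u) \le \Big(\sum_u \ell_u\Big) \ln\!\Big( \frac{k(a - u^* + 1)}{\sum_u \ell_u} \Big) = O(\lambda n \ln\ln n / \ln n),
\]
using $\lambda \ge \ln^{-3} n$ to control the logarithm by $O(\ln\ln n)$. Combining, $2 \sum_u \ln \binom{k_u}{\ell_u} = O(\lambda n \ln\ln n / \ln n) = o(\lambda n)$, whence
\[
    \tilde\varphi_\delta(\lambda)n - 2\sum_u \ln \binom{k_u}{\ell_u} \ge \frac{\ln b}{4}\delta\,\lambda n - o(\lambda n) \ge \frac{\ln b}{8}\delta \cdot \frac{n}{\ln^3 n},
\]
as required (the $O(\ln^2 n)$ error from Lemma~\ref{expectationlowerbound} is absorbed since $\lambda n \ge n/\ln^3 n \gg \ln^2 n$).

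The main obstacle is obtaining a sharp enough upper bound on $\sum_u \ln \binom{k_u}{\ell_u}$: in the worst case this sum can reach $\Theta(k) = \Theta(n/\ln n)$ (for instance when $\ell_u \approx k_u/2$), which would swamp the linear-in-$\lambda$ gain in $\tilde\varphi_\delta(\lambda) n$. Achieving the bound $o(\lambda n)$ requires a simultaneous interplay of all three hypotheses: the lower bound $u \ge u^* = \Omega(\ln n)$ together with $\sum_u u \ell_u = \lambda n$ to force $\sum_u \ell_u$ down to order $\lambda n / \ln n$; the restriction $\lambda \ge \ln^{-3} n$ to keep the surviving log-entropy factor at only $O(\ln\ln n)$; and the restriction $\lambda \le C$ to stay inside the linear Taylor regime of $\tilde\varphi_\delta$ where its leading coefficient is the full $(\ln b / 2)\delta$ unopposed by the quadratic correction.
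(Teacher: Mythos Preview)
Your proposal is correct and follows essentially the same route as the paper: apply Lemma~\ref{expectationlowerbound} to $\boldlambda$, show $\tilde\varphi_\delta(\lambda) = \Theta(\lambda)$ on $[0,C]$ for suitable $C=C(\varepsilon)$, and then verify that $\prod_u \binom{k_u}{\ell_u}^2 \le \exp\big(O(\lambda n \ln\ln n/\ln n)\big) = \exp(o(\lambda n))$. The only cosmetic difference is that the paper bounds the product of binomials via Vandermonde, $\prod_u \binom{k_u}{\ell_u} \le \binom{k}{\ell} \le (ek/\ell)^\ell$, whereas you use Jensen on $x\mapsto x\ln(k/x)$; both yield the same estimate.
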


\subsection{Basic bounds}

In this section we collect some basic bounds that will be useful later. We begin with the following lemma that gives us the asymptotic probability that a given set of $x$ edges is not present in $\Gnm$. 
\begin{lemma} \label{gnmlemma}
Let $p \in (0,1)$ be constant and $m = Np + \bigO(1)$. If $x=x(n)=o(n^{4/3})$, then 
\[
    \frac{{N-x \choose m}}{{N \choose m}} \sim q^{x} \exp \left(- \frac{(b-1)x^2}{n^2}\right)
\]
where $q=1-p$ and $b=1/q$.
\end{lemma}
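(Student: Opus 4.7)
The plan is to write the ratio as a telescoping product over $x$ factors, take logarithms, Taylor-expand each factor to second order, and keep careful track of error terms to see why precisely the assumption $x=o(n^{4/3})$ is needed. Concretely, I first write
\[
\frac{\binom{N-x}{m}}{\binom{N}{m}} = \prod_{i=0}^{x-1}\frac{N-m-i}{N-i}.
\]
Since $m=Np+\bigO(1)$, one has $N-m=Nq+\epsilon$ with $\epsilon=\bigO(1)$, so
\[
\frac{N-m-i}{N-i} = q\cdot\frac{1-\tfrac{i-\epsilon}{qN}}{1-\tfrac{i}{N}}.
\]
Taking logs and expanding $\ln(1-y)=-y-y^2/2+\bigO(y^3)$ for $y\in\{(i-\epsilon)/(qN),\,i/N\}$, the linear and quadratic pieces combine using $1/q-1=b-1$ and $1/q^2-1=b^2-1$ to give
\[
\ln\frac{N-m-i}{N-i} = \ln q - \frac{(b-1)\,i}{N} - \frac{(b^2-1)\,i^2}{2N^2} + \bigO\!\left(\frac{1}{N}+\frac{i}{N^2}+\frac{i^3}{N^3}\right).
\]

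Next I sum over $0\le i<x$. The constant term contributes $x\ln q$. The linear-in-$i$ term contributes
\[
-\frac{(b-1)\,x(x-1)}{2N} = -\frac{(b-1)\,x^2}{n^2}\bigl(1+o(1)\bigr),
\]
using $N=n(n-1)/2$; this is the exponent on the right-hand side of the statement. The cubic-sum term contributes $-\tfrac{(b^2-1)x^3}{6N^2}(1+o(1))$, and the accumulated error is $\bigO(x/N + x^2/N^2 + x^4/N^3)$. Substituting $N\sim n^2/2$, all of these auxiliary contributions are $o(1)$ precisely in the regime $x=o(n^{4/3})$: indeed $x^3/N^2\asymp x^3/n^4$ and $x^4/N^3\asymp x^4/n^6$ are both $o(1)$ under this hypothesis, and the remaining corrections $x/n^2$, $x^2/n^3$ are smaller still. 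Exponentiating then yields the asymptotic equivalence $q^{x}\exp(-(b-1)x^2/n^2)$ claimed.

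The only real subtlety is bookkeeping: one must be sure that the cubic term $x^3/N^2$ is actually controlled and not, by accident, of the same order as $(b-1)x^2/n^2$. Since $(b-1)x^2/n^2$ need not be $o(1)$ — it may be of constant order when $x$ is of order $n$ — the calculation has to keep the quadratic correction as an exact expression, while absorbing everything cubic or higher into $o(1)$. The threshold $x=o(n^{4/3})$ is exactly what makes this separation work, and recognising this balance is the main (though modest) obstacle; the rest is routine Taylor expansion.
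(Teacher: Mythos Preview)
Your proof is correct. The paper's own argument takes a different (also standard) route: it applies Stirling's formula to the four factorials to obtain
\[
\frac{\binom{N-x}{m}}{\binom{N}{m}} \sim \left(\frac{N-m}{N}\right)^{x}\frac{(1-x/N)^{N-x}}{(1-x/(N-m))^{N-m-x}},
\]
notes that the first factor is $\sim q^{x}$, and then uses $1-y=e^{-y-y^2/2+\bigO(y^3)}$ on the two remaining factors to get $e^{-x+x^2/(2N)}$ and $e^{-x+x^2/(2(N-m))}$; their ratio collapses to $\exp(-(b-1)x^2/n^2)$. Your approach avoids Stirling entirely by writing the ratio as the telescoping product $\prod_{i<x}(N-m-i)/(N-i)$ and summing the second-order Taylor expansions of the $x$ individual logarithms. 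The tradeoff is that the paper's decomposition packages everything into two closed-form $(1-y)^M$ expressions and a single application of the $e^{-y-y^2/2}$ expansion, whereas you have to control a sum of $x$ error terms; on the other hand your argument is slightly more elementary and makes transparent exactly which accumulated term ($x^3/N^2$ from the quadratic piece, $x^4/N^3$ from the cubic remainder) forces the threshold $x=o(n^{4/3})$. One cosmetic point: in the step $-\tfrac{(b-1)x(x-1)}{2N}=-\tfrac{(b-1)x^2}{n^2}(1+o(1))$ it is safer to write ``$=-\tfrac{(b-1)x^2}{n^2}+o(1)$'', since the main term may itself be $o(1)$; either way the conclusion is the same.
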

The following bound is a handy technical estimate for factorials.
\begin{lemma} \label{ablemma}
Let $a=a(n)$, $b=b(n)$ be two nonnegative integer sequences such that $0 \le b-a \rightarrow \infty$. Then
\[
    \frac{(b-a)!}{b!} \lesssim b^{-a} e^{a^2/b}. 
\]
\end{lemma}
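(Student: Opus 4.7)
The plan is to apply Stirling's formula to both factorials and reduce the claim to a one-variable pointwise inequality. The hypothesis $0 \le b - a \to \infty$ together with $a \ge 0$ forces $b \to \infty$ as well, so Stirling's approximation $n! = (1 + o(1))\sqrt{2\pi n}(n/e)^n$ applies to both $n = b$ and $n = b - a$. After cancellation, using $(b-a)^{b-a} = b^{b-a}(1-a/b)^{b-a}$, this gives
\[
    \frac{(b-a)!}{b!} = (1 + o(1))\, \sqrt{\frac{b-a}{b}} \cdot b^{-a} \cdot (1 - a/b)^{b-a}\, e^{a}.
\]
Since $\sqrt{(b-a)/b} \le 1$, the lemma will follow once we show that
\[
    (1-\alpha)^{b(1-\alpha)}\, e^{b\alpha} \le (1 + o(1))\, e^{b\alpha^2}, \qquad \alpha := a/b,
\]
equivalently $b \cdot g(\alpha) \le o(1)$ with $g(\alpha) := (1 - \alpha)\ln(1 - \alpha) + \alpha - \alpha^2$. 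As $b$ may be arbitrarily large, it suffices to prove the uniform pointwise bound $g(\alpha) \le 0$ on $[0,1]$.

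The second step is routine calculus. One verifies $g(0) = 0$ and $g(1) = 0$ (using $\lim_{x \to 0^+} x \ln x = 0$), and computes $g'(\alpha) = -\ln(1 - \alpha) - 2\alpha$ with $g'(0) = 0$, together with $g''(\alpha) = 1/(1 - \alpha) - 2$. Thus $g'$ is decreasing on $[0, 1/2]$ and increasing on $[1/2, 1)$, with $g'(\alpha) \to +\infty$ as $\alpha \to 1^-$. Combined with $g'(0) = 0$ and $g''(0) < 0$, this forces $g'$ to have a single zero $\alpha^* \in (1/2, 1)$ in the interior, so $g$ has a unique interior critical point (a minimum) and therefore attains its maximum on $[0,1]$ at the endpoints, giving $g \le 0$.

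No serious obstacle is expected. The only mild subtlety is that when $a$ is comparable to $b$ one cannot simply Taylor-expand $\ln(1 - i/b)$ termwise inside $\sum_{i=0}^{a-1} \ln(b - i)$, because the approximation breaks down for the largest $i$; routing through Stirling and the global inequality $g \le 0$ sidesteps this issue and handles the entire regime $0 \le a < b$ uniformly in a single line.
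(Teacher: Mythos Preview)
Your proof is correct and follows the same route as the paper: apply Stirling's formula and then bound $(1-a/b)^{b-a}$. The paper is shorter only in the final step, where instead of your calculus argument for $g(\alpha)\le 0$ it invokes $1+x\le e^x$ directly to get $(1-a/b)^{b-a}\le e^{-(a/b)(b-a)}=e^{-a+a^2/b}$; in fact your inequality $g(\alpha)\le 0$ is, after dividing by $1-\alpha$, exactly $\ln(1-\alpha)\le -\alpha$, so the two arguments are equivalent.
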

The following statement of analytic flavour will be needed in the proof of Lemma~\ref{lemma:kstartame}.
\begin{lemma} \label{lemma:technicalsequence}
For every $x \in [0,1]$, let $(s_i(x))_{i \ge 1}$ be a sequence of real numbers so that $s_i(x)$ is a continuous function of $x \in [0,1]$ for all $i \ge 1$. Suppose further that for all $x \in [0,1]$, $s_i(x) \rightarrow1$ as $i \rightarrow \infty$. Then $\min_{x \in [0,1]} s_i(x) \rightarrow 1$ as $i \rightarrow \infty$.
\end{lemma}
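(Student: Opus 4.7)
The strategy is to combine pointwise convergence with compactness of $[0,1]$ and continuity of the individual $s_i$ to obtain uniform control, and hence convergence of the minimum. One direction is immediate: for any fixed $x_0 \in [0,1]$, we have $\min_{x \in [0,1]} s_i(x) \le s_i(x_0) \to 1$, so $\limsup_i \min_x s_i(x) \le 1$.

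For the matching $\liminf \ge 1$, the plan is to argue by contradiction via a compactness extraction. Suppose there exists $\epsilon > 0$ and a subsequence $(i_k)$ such that $\min_{x} s_{i_k}(x) \le 1 - \epsilon$. Since each $s_{i_k}$ is continuous on the compact interval $[0,1]$, the minimum is attained at some $x_k \in [0,1]$. By Bolzano--Weierstrass, passing to a further subsequence we may assume $x_k \to x^* \in [0,1]$. The aim is then to use the continuity of the $s_{i_k}$ near $x^*$, combined with the pointwise convergence $s_{i_k}(x^*) \to 1$, to derive a contradiction with $s_{i_k}(x_k) \le 1-\epsilon$.

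The main obstacle — and indeed the only real content of the lemma — is the transfer from the value at the moving point $x_k$ to the value at the fixed limit $x^*$. Pointwise convergence alone does not suffice for this, since a "moving bump" would allow $s_i(x)\to 1$ at every fixed $x$ while $\min s_i$ stays bounded away from $1$. What rescues the argument is additional structure on the family $\{s_i\}$ that is natural in the envisaged application in Lemma~\ref{lemma:kstartame}: most plausibly the convergence is monotone (as is typical when the $s_i$ arise as ratios of expectations tending to $1$), in which case Dini's theorem — monotone pointwise convergence of continuous functions to a continuous limit on a compact set is uniform — yields uniform convergence on $[0,1]$, and uniform convergence immediately gives $\min_x s_i(x) \to 1$. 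Alternatively, equicontinuity of $\{s_i\}$ would close the contradiction argument above directly: $|s_{i_k}(x_k) - s_{i_k}(x^*)|\to 0$ forces $s_{i_k}(x^*)\le 1-\epsilon/2$ eventually, contradicting pointwise convergence at $x^*$. I would therefore first verify the relevant additional property (monotonicity being the cleanest), and then invoke Dini's theorem to conclude.
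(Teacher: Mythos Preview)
Your diagnosis is correct: the lemma as stated is false. A tent of fixed depth centred at $x=1/i$ gives continuous $s_i$ with $s_i(x)\to 1$ pointwise on $[0,1]$ but $\min_x s_i(x)=0$ for all large $i$. So no proof of the bare statement can succeed.

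The paper attempts a different route: it linearly interpolates the $s_i$ in a second variable, setting $f(x,1-1/i)=s_i(x)$ and $f(x,1)=1$, to obtain $f:[0,1]^2\to\R$; it then asserts that $f$ is continuous, deduces uniform continuity from compactness of $[0,1]^2$, and reads off the conclusion from uniform continuity at $y=1$. But joint continuity of $f$ at a point $(x_0,1)$ is precisely the assertion that $s_{i_n}(x_n)\to 1$ whenever $i_n\to\infty$ and $x_n\to x_0$, i.e.\ local uniform convergence --- which is the conclusion one is trying to establish. (If ``continuous in both $x$ and $y$'' is read as separate continuity, that does hold, but separate continuity on a compact set does not imply uniform continuity.) So the paper's argument has the same gap you identified, just repackaged.

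Your proposed fix via Dini is exactly right for the only place the lemma is used. In the proof of Lemma~\ref{lemma:kstartame} it is applied to the partial sums $s_r(x)=\sum_{i_0\le i\le r}\zeta_i(x)$, which are monotone increasing in $r$ to the continuous limit $1$; Dini's theorem then yields uniform convergence on $[0,1]$, and $\min_x s_r(x)\to 1$ follows immediately. Adding monotonicity to the hypotheses and invoking Dini, as you suggest, is the clean repair.
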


\section{Proof overview for Theorem~\ref{theorem:general}}
\label{section:proofoverview}
We will prove Theorem \ref{theorem:general} in \S\ref{section:proofoverview}, \S\ref{section:firstmoment} and \S\ref{section:secondmoment}. Throughout these sections, let $G \sim G_{n,m}$ with $m=\left \lfloor pN \right \rfloor$ where $p \in (0,1)$ is constant, fix a sequence $a=a(n)= \alpha_0(n)-\bigO(1)$ and let $\bfk$ be a tame sequence of $a$-bounded colouring profiles. Let $c>0$ be the constant and $\gamma(n)$ be the function from Definition~\ref{deftame} for the tame sequence $\bfk$, together with a function~$u^*=u^*(n) \sim a$ as mentioned in the remarks (since $\gamma(x) \rightarrow \infty$ as $x \rightarrow \infty$) so that $k_u = 0$ for all $u < u^*$.

Recall the Paley-Zygmund inequality, which states that for any non-negative random variable $Z$ with finite variance,
\begin{equation}\label{eq:payleyzygmund}
    \prob(Z>0) \ge \E[Z]^2/\E[Z^2].
\end{equation}
To prove Theorem \ref{theorem:general}, it suffices find a non-negative random variable $Z_\kp$ so that $Z_\kp>0$ implies $X_\bfk >0$, and so that under the conditions of Theorem \ref{theorem:general},
\begin{equation}
    \E[Z_\kp^2]/\E[Z_\kp]^2 \lesssim \exp \left( \frac{k_a^2}{\mu_{a}} +\bigO(\MB)\right). \label{smratio}
\end{equation}
The most natural candidate for $Z_\kp$ is the number of unordered colourings with profile $\kp$.
However, determining the second moment of this random variable is a complex and elaborate task, and there is no a priori reason to expect a statement like~\eqref{smratio}. 
	
Fortunately, we are able to take a different route. We greatly simplify the underlying structure of the possible pairs of colourings by only considering colourings that are \emph{well-separated} in a certain sense. More specifically, we show that it is sufficient to consider only colourings~$\pi$ with the property that if a colour class from another colouring has significant overlap with a colour class in~$\pi$, the two colour classes must be essentially identical. This  property guarantees that if we look at  any pair of colourings and remove the (essentially) identical colour classes, then what remains looks completely random; in essence all correlations disappear and this is the most important ingredient in verifying~\eqref{smratio}. The 
identical colour classes of size $a$ 
 shared between pairs of colourings will account for the terms $k_a^2 / \mu_a+\bigO(\MB)$.

\subsection{Definition of $Z_\bfk$} \label{sectiondefinitionrandomvariable}

Before defining the random variable $Z_\bfk$ that we will consider, let us begin with a simple notion that quantifies how a vertex set is put together from the sets of a vertex partition.
\begin{definition}\label{def:tcomposed}
    For an ordered partition $\pi=(V_1, \dots, V_k)$ of $V$ and a set $S \subset V$, we write
    \[
    z(S, \pi):= \big| \{i: V_i \cap S \neq \emptyset \big\}|.
    \]
    If $z(S, \pi)=z$, we say that $S$ is \emph{$z$-composed} with respect to $\pi$.  
\end{definition}
With this definition at hand, we will define some events for vertex partitions $\pi$ in $\Gnm$ below. This will ensure that any two partitions $\pi, \pi'$ which define colourings in $G_{n,m}$ and for which these events hold are rather \emph{dissimilar} in the following sense. As we will see, for any two such partitions $\pi = (V_1, \dots , V_k)$  and $\pi' = (V'_1, \dots, V'_k)$ which are colourings in $\Gnm$,  the colour classes $V'_i, 1 \le i \le k$, up to a small number of exceptions, have no non-trivial intersection with the $V_i$'s. More specifically, essentially every~$V'_i$ is either identical to a colour class of $\pi$, or it is composed from $(1-o(1))|V'_i|$ colour classes of $\pi$. This is a quite useful property: in the study of the second moment $\E[Z_\bfk]$, after taking the identical parts into consideration, it will allow us to restrict our considerations to pairs $(\pi, \pi')$ that do not interfere much. Indeed, if $\pi,\pi'$ are dissimilar as just described, then knowing that $\pi$ induces a colouring of $\Gnm$ doesn't really tell us much about the chances that $\pi'$ is a colouring, as only a small number of edges within the $V'_i$'s are known not to be contained in the graph.  So the events that $\pi$ and $\pi'$ both induce colourings are almost independent, and then the second moment is under control. Let us proceed with the precise list of properties that we will assume.
\begin{definition} \label{defrandomvariable} 
Let $\pi = (V_1, \dots, V_k)$ be an ordered complete partition of $V$ with tame profile~$\bfk$. Define the following events.
\begin{itemize}
\itemsep1pt
	\item[$A_\pi:$] $\pi$ is a colouring, that is, the sets $V_1, \dots, V_k$ are independent. 
	\item[$B_\pi:$] If $S \subset V$ is an independent set 
 with $u^* \le |S| \le a$, then
    \[
        z(S,\pi)\le 2  ~~\text{ or }~~ z(S,\pi) \ge |S| - 2 (\alpha - |S|)-1.
    \]
    That is, $S$ is composed of vertices from at most two parts of $\pi$, or from at least $|S| - 2 (\alpha - |S|)-1$ parts.
    			
	\item[$C_\pi:$] If $S\subset V$ is an independent set 
 such that $u^*\le |S|\le a$ and such that $z(S,\pi)= 2$, then there are parts $V_i$ and $V_j$ such that 
	\[
        |S \cap V_{i}|= 1 ~~\text{ and }~~ |S \cap V_j|=|S|-1.
    \]
	That is, any 2-composed independent set $S$ has a simple structure: it is obtained by taking one vertex from a part $V_{i}$ of $\pi$, and all other vertices from another part $V_{j}$.
			
		\item[$D_\pi:$] There are at most $\ln^3 n$ independent sets $S$ of size between $u^*$ and $a$ such that 
		\[
			z(S,\pi)= 2
            \quad
			\text{and}
			\quad
			\text{there is a part $V_i$ such that  $|S \cap V_i| \ge |V_i|-1$}.
		\]
		\end{itemize}
		\end{definition}
We are now ready to define the random variable $Z_\kp$ which we will apply the second moment method to: let
\[
    Z_\kp = \sum_{\pi \text{ of profile }\kp} \xi_\pi,
    \quad
    \text{where} 
    \quad
    \xi_{\pi}= \mathbbm{1}_{A_\pi \cap B_\pi \cap C_\pi \cap D_\pi}.
\]	
Obviously $Z_\kp>0$ implies $X_\kp>0$, because $\xi_\pi>0$ implies $A_\pi$. The first, crucial,  point is that switching from $X_\kp$ to the more restrictive $Z_\kp$ has little impact on the first moment, and therefore on the expectation threshold.
\begin{proposition}
    \label{lem:XkZk} 
    Let $\bfk$ be tame. Then $\ex{Z_\kp} \sim \ex{X_\kp}$.
\end{proposition}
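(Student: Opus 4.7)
Since $Z_\bfk \le X_\bfk$ pointwise, it suffices to show $\E[X_\bfk] - \E[Z_\bfk] = o(\E[X_\bfk])$. By Lemma~\ref{lem:expxk} every ordered partition $\pi$ with profile $\bfk$ is a colouring with the same probability $\prob(A_\pi) = \E[X_\bfk]/P_\bfk$, so
\[
    \E[X_\bfk] - \E[Z_\bfk] = \E[X_\bfk]\cdot \E_\pi\bigl[\,\prob\bigl((B_\pi \cap C_\pi \cap D_\pi)^c \mid A_\pi\bigr)\,\bigr],
\]
where $\pi$ is uniform over ordered partitions with profile $\bfk$. By the union bound, the task reduces to showing that each of $\E_\pi[\prob(B_\pi^c \mid A_\pi)]$, $\E_\pi[\prob(C_\pi^c \mid A_\pi)]$ and $\E_\pi[\prob(D_\pi^c \mid A_\pi)]$ is $o(1)$.

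The core estimate is as follows. Conditional on $A_\pi$, the edges of $G_{n,m}$ are a uniform sample of $m$ edges from the $N - f_\bfk$ cross edges of $\pi$. For a vertex set $S$ of size $s$ that is $z$-composed in $\pi$ with part sizes $(s_j)_{j=1}^z$, the event \emph{``$S$ is independent''} requires the absence of $\Delta(S,\pi) := \binom{s}{2} - \sum_j \binom{s_j}{2}$ specific additional cross edges. For tame profiles $f_\bfk = \bigO(n \ln n)$ and $\Delta = \bigO(\ln^2 n)$, so an adaptation of Lemma~\ref{gnmlemma} yields $\prob(S\text{ indep}\mid A_\pi) \sim q^{\Delta(S,\pi)}$. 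Applying the union bound over the offending sets $S$ described in Definition~\ref{defrandomvariable} and switching order of summation, each of the three averaged conditional probabilities reduces to an explicit sum over sizes $s$, compositions $(s_j)$ and choices of which classes of $\pi$ the set $S$ hits, weighted by $q^\Delta$; the expectation over $\pi$ is evaluated via exchangeability and reduces to a sum involving the multinomial factor $\prod_j \binom{u_{i_j}}{s_j}/\binom{n}{s}$.

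The events $C_\pi$ and $D_\pi$ are then relatively straightforward. For $C_\pi$, bad sets are $2$-composed with both parts of size $\ge 2$, forcing $\Delta \ge 2s-4 \sim 4\log_b n$ and hence $q^\Delta \le n^{-4+o(1)}$, which easily dominates the polynomial count of configurations allowed by the tameness tail bound. For $D_\pi$, bad sets are essentially composed of a full (or near-full) class $V_i$ together with at most $\bigO(1)$ extra vertices from one other class; a routine expected-count computation using Lemma~\ref{prop:basicboundsisets} shows that the expected number of such independent sets per $\pi$ is $o(\ln^3 n)$, and Markov's inequality against the threshold $\ln^3 n$ in the definition of $D_\pi$ yields $\prob(D_\pi^c \mid A_\pi) = o(1)$.

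The main obstacle is the bound $\E_\pi[\prob(B_\pi^c \mid A_\pi)] = o(1)$, which requires uniform control over the entire intermediate range $3 \le z \le s - 2(\alpha-s)-2$. For each such $z$, $\Delta$ is minimized by the skewed composition $(s-z+1, 1, \ldots, 1)$, yielding $\Delta \ge (z-1)(s-z/2)$, so $q^\Delta$ decays polynomially in $n$ already at $z=3$. However, for $z$ close to $s$ the combinatorial factor $\binom{k}{z}\prod_j \binom{u_{i_j}}{s_j}$ can nearly compensate; the delicate point is to quantify the trade-off precisely and show the sum is $o(1)$ uniformly. The polynomial tail condition (a) of Definition~\ref{deftame} on $\kappa_u$ is what makes this work: it forces the classes of $\pi$ that can host a singleton of a spread-out $S$ to be concentrated near size $\alpha$, suppressing the sum over class-size tuples in the multinomial. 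Summing cleanly across $z \in \{3,\ldots,s-2(\alpha-s)-2\}$ and $s \in \{u^*, \ldots, a\}$ is the technical heart of the argument.
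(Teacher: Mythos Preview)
Your overall strategy is correct and coincides with the paper's: show $\Pb(B_\pi^c\mid A_\pi)$, $\Pb(C_\pi^c\mid A_\pi)$, $\Pb(D_\pi^c\mid A_\pi)=o(1)$. One simplification you are missing: by vertex-symmetry these conditional probabilities depend only on the profile $\bfk$, not on the particular $\pi$, so the averaging over $\pi$ (and with it the ``multinomial factor $\prod_j \binom{u_{i_j}}{s_j}/\binom{n}{s}$'') is unnecessary. The paper simply fixes an arbitrary $\pi$ and works there; this is both cleaner and avoids the exchangeability bookkeeping you describe. Your treatments of $C_\pi$ and $D_\pi$ match the paper.

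For $B_\pi$ you have the right lower bound $\Delta\ge (z-1)(s-z/2)$, but your diagnosis of what closes the argument is off. The tail condition~(a) of Definition~\ref{deftame} is \emph{not} used directly; it enters only through Proposition~\ref{prop:proptame} to guarantee $u^*\sim\alpha$, i.e.\ that all parts have size $(2-o(1))\log_b n$. The paper does \emph{not} sum over class-size tuples $(u_{i_j})$ at all; it bounds the number of first-vertex choices crudely by $\binom{n}{z}$. What actually handles the delicate small-$z$ regime is a preliminary trick you do not mention: one first conditions on the (whp) event $\overline{E}$ that no independent set $S$ satisfies $|S\cap V_i|\cdot|S\cap V_j|\ge c\ln n$ for any $i\neq j$. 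On $\overline{E}$, any composition of a $z$-composed $S$ with $z\le(\ln\ln n)^2$ has at most one large piece and all others of size $O(z)$, which forces both $|\mathcal X_{u,z}|$ and $\max\prod_j\binom{a}{x_j}$ to be $n^{o(1)}$; combined with $\Delta\ge(z-1)(s-z/2)$ this gives the $n^{-1+o(1)}$ bound directly. The intermediate and large-$z$ ranges then go through with the crude estimates $|\mathcal X_{u,z}|\le a^{z-1}$ and $\prod_j\binom{a}{x_j}\le a^{u-z}$, again without any appeal to the $\kappa_u$-tail. Your plan could likely be made to work by summing carefully over compositions with the \emph{exact} $\Delta$ rather than the minimum, but as written the explanation of the mechanism is misleading.
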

The second important point is that the heavy structural restrictions (`dissimilarity') imposed by $B_\pi \cap C_\pi \cap D_\pi$ on possible pairs of colourings allow us to obtain a sufficient upper bound on the second moment of $Z_\kp$.
\begin{proposition}  
\label{secondmomentgoal}
Let $\bfk$ be tame. Then, under the conditions of Theorem~\ref{theorem:general} and recalling the definition~\eqref{eq:defMB} of $\MB$,
\[
    \frac{\ex{Z^2_\kp}}{\ex{Z_\kp}^2} \le  \exp \left(\frac{k_a^2}{\mu_{a}}+\bigO(\MB)+o(1)    \right).
    \]
\end{proposition}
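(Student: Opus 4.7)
The plan is a stratified second-moment calculation exploiting the structural restrictions $B_\pi, C_\pi, D_\pi$. Expanding $\E[Z_\bfk^2]$ and dropping the events $B_{\pi'}, C_{\pi'}, D_{\pi'}$ as an upper bound, we write
\[
    \E[Z_\bfk^2] \le \sum_{\mathbf{s}} \sum_{\substack{(\pi, \pi')\text{ with overlap }\mathbf{s}\\ B_\pi \cap C_\pi \cap D_\pi \text{ holds}}} \Pb(A_\pi \cap A_{\pi'}),
\]
stratifying pairs of ordered partitions of profile $\bfk$ by the profile $\mathbf{s} = (s_u)_{u^* \le u \le a}$ of colour classes that $\pi$ and $\pi'$ share as sets.

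The crucial structural step is to control $|F_\pi \cap F_{\pi'}|$, where $F_\sigma = \bigcup_i \binom{V^\sigma_i}{2}$ denotes the edge set forbidden by the partition $\sigma \in \{\pi, \pi'\}$. For any non-shared class $V'_j$ of $\pi'$ of size $u$, which is independent in $G$ by $A_{\pi'}$, event $B_\pi$ leaves two possibilities: either (i) $V'_j$ is $2$-composed w.r.t.\ $\pi$, in which case $C_\pi$ enforces a single-vertex-plus-rest structure and $D_\pi$ caps the number of such classes by $\ln^3 n$, contributing at most $O(\ln^5 n)$ edges to the overlap; or (ii) $|V_i \cap V'_j| = O(1)$ for every $i$, contributing no overlap. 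Hence $|F_\pi \cap F_{\pi'}| = \sum_u s_u \binom{u}{2} + O(\ln^5 n)$. Applying Lemma~\ref{gnmlemma} to $|F_\pi \cup F_{\pi'}| = 2 f_\bfk - |F_\pi \cap F_{\pi'}|$ and expanding the quadratic correction $-(b-1) x^2/n^2$ around $2 f_\bfk$ gives
\[
    \Pb(A_\pi \cap A_{\pi'}) \lesssim \Pb(A_\pi)^2 \cdot b^{\sum_u s_u \binom{u}{2}} \cdot \exp\bigl(O(\MB) + O(\ln^5 n)\bigr),
\]
where the $O(\MB)$ term arises from the cross product $(2 f_\bfk)\sum_u s_u \binom{u}{2}/n^2$ under the bounds $s_u \le k_u$ and $f_\bfk = O(n \ln n)$.

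The combinatorial count of pairs of ordered partitions with overlap at least $\mathbf{s}$, divided by $P_\bfk^2$ and multiplied by the probability factor $b^{\sum_u s_u \binom{u}{2}}$, asymptotically yields a stratum contribution of
\[
    \prod_u \frac{(k_u^2/\mu_u)^{s_u}}{s_u!} \cdot \exp\bigl(O(\MB) + o(1)\bigr),
\]
using $\mu_u \sim n^u q^{\binom{u}{2}}/u!$ and $(n - \sum_u u s_u)!/n! \sim n^{-\sum_u u s_u}$, after accounting for the choice of shared-class positions in both $\pi$ and $\pi'$ together with the choice of shared vertex subsets. Summing over $\mathbf{s}$ produces $\prod_u \exp(k_u^2/\mu_u)\,\exp(O(\MB)+o(1))$. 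By Lemma~\ref{prop:muurelativetomua}, $\mu_u/\mu_a$ grows like $(\Theta(n/\ln n))^{a-u}$, so using $k_u \le k = O(n/\ln n)$ and $\mu_a \ge n^{1+\epsilon}$ we obtain $k_u^2/\mu_u = o(1)$ for every $u < a$, leaving only the dominant $u=a$ contribution $\exp(k_a^2/\mu_a)$. Dividing by $\E[Z_\bfk]^2 \sim \E[X_\bfk]^2$ via Proposition~\ref{lem:XkZk} finishes the bound.

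The main obstacle is tail control in the stratification: strata where $\sum_u u s_u$ is a positive fraction of $n$ are not covered by the asymptotic counting above and must be ruled out separately. Here hypothesis~\eqref{eq:lowerboundbeta}, asserting that partial colourings with profile $\bfl \le \bfk$ exist in superpolynomial abundance relative to $\prod_u \binom{k_u}{\ell_u}^2$, is used to absorb the combinatorial cost of sharing a substantial vertex footprint between $\pi$ and $\pi'$. One must also verify that the $O(\ln^5 n)$ slack contributed by the $\ln^3 n$ exceptional $2$-composed classes permitted by $D_\pi$ is absorbed by $O(\MB)$, which under $\mu_a \ge n^{1+\epsilon}$ is polynomially large in $n$.
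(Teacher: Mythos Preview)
There is a genuine gap in your treatment of the ``scrambled'' overlap. Your case (ii) asserts that when a non-shared class $V'_j$ is highly composed with respect to $\pi$, then $|V_i \cap V'_j| = O(1)$ for every $i$ and this contributes ``no overlap'' to $F_\pi \cap F_{\pi'}$. Both claims are false. First, $B_\pi$ only guarantees that overlap blocks in such a class have size at most $2(\alpha-u)+2 \le 4(\alpha-u^*) = o(\ln n)$, not $O(1)$. Second, and more seriously, overlap blocks of size $\ge 2$ \emph{do} contribute $\binom{\text{block size}}{2}$ shared forbidden edges; your formula $|F_\pi \cap F_{\pi'}| = \sum_u s_u \binom{u}{2} + O(\ln^5 n)$ therefore does not hold pairwise, and the discrepancy does not wash out after summing.

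This is not minor bookkeeping. The paper's analysis (Lemma~\ref{easylemma2}) shows that, summed over overlap configurations, size-$2$ blocks alone yield a factor $\exp(T(2))$ with $T(2) = 2bf^2/n^2 + o(1) = \Theta(\ln^2 n)$ in the exponent. This is neither $o(1)$ nor $O(\MB)$: in the intended applications $\MB = o(1)$, so your proposed absorption of polylog slack into $O(\MB)$ also fails. The whole point of working in $G_{n,m}$ rather than $G_{n,p}$ is that the quadratic correction from Lemma~\ref{gnmlemma} produces an equal and opposite term $-2bf^2/n^2 + o(1)$, and these cancel. Capturing this cancellation requires tracking the full overlap sequence $\bfr = (r_x^{u,v})$ and counting the resulting matrices via McKay's enumeration of $0$--$1$ matrices with prescribed margins (Lemma~\ref{matrixlemma}); stratifying only by shared classes is too coarse. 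Your final paragraph correctly identifies that strata with a macroscopic shared footprint need separate treatment via~\eqref{eq:lowerboundbeta} and~\eqref{eq:lowerboundexpectation}, and the paper does exactly this in Lemmas~\ref{lemmamiddle} and~\ref{lemmasimilar}; but the dominant scrambled case (Lemma~\ref{lemmascrambled}) needs the finer machinery you have skipped.
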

Theorem \ref{theorem:general} then follows immediately from Proposition~\ref{secondmomentgoal} and Payley-Zygmund~\eqref{eq:payleyzygmund}. We will prove Proposition~\ref{lem:XkZk} in \S\ref{section:firstmoment} and Proposition~\ref{secondmomentgoal} in \S\ref{section:secondmoment}.
		
\section{The first moment}
\label{section:firstmoment}

In this section we prove Proposition~\ref{lem:XkZk}, showing that the expectations of $X_\bfk$ and $Z_\bfk$ are asymptotically the same. 
To this end, let $\pi(n)$ be a partition with profile $\bfk(n)$ for each $n\in\mathbb{N}$, where $\bfk$ is tame. We will show in Lemmas~\ref{lem:Bpi},~\ref{lem:Cpi} and~\ref{lem:Dpi} that
\[
    \Pb(\overline{B_\pi} ~|~ A_\pi), \quad 
    \Pb(\overline{C_\pi} ~|~ A_\pi), \quad
    \Pb(\overline{D_\pi} ~|~ A_\pi)
    = o(1),
\]
from which Proposition~\ref{lem:XkZk} immediately follows. Since it is no additional effort, we will also prove the same statements for the probabilities in $G_{n,p}$. In the rest of this section we will write without further reference
\[
    N_\pi := \binom{n}{2} - \sum_{u^* \le u \le a} k_u\binom{u}{2} = N - \Theta(n\log n)
\]
for the number of admissible edges in $G$ given $A_\pi$, that is, given that $\pi$ is a colouring. Recall that $G \sim G_{n,m}$ has \[m = \lfloor pN \rfloor\] edges in total. We continue right away with the proof for $B_\pi$. 
\begin{lemma}
    \label{lem:Bpi}
    With the notation in this section, $\Pb(\overline{B_\pi} ~|~ A_\pi) = o(1)$ and $\Pb_{p}(\overline{B_\pi} ~|~ A_\pi)$.
\end{lemma}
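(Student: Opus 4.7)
The plan is a Markov (first-moment) argument: letting $W$ denote the number of independent sets $S$ with $u^* \le |S| \le a$ and $z(S,\pi) \in [3,\,|S| - 2(\alpha - |S|) - 2]$, we have $\Pb(\overline{B_\pi} \mid A_\pi) \le \mathbb{E}[W \mid A_\pi]$, and it suffices to show this expectation is $o(1)$. I would parameterise each bad $S$ by its size $s$, the ordered tuple of colour classes $V_{j_1},\dots,V_{j_z}$ it intersects, and the ordered composition $(s_1,\dots,s_z)$ with $s_i = |S \cap V_{j_i}| \ge 1$ and $\sum_i s_i = s$. Given $A_\pi$, all within-class pairs of $S$ are automatically non-edges; in $G_{n,p}$ the remaining between-class pairs are independent and contribute $q^{\sum_{i<j} s_i s_j}$ to the conditional probability of independence, while in $G_{n,m}$ the same bound holds up to $1+o(1)$ by Lemma~\ref{gnmlemma} since $\sum_{i<j} s_i s_j = \bigO(s^2) = o(n^{4/3})$. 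This reduces both cases of the lemma to bounding
\[
\mathbb{E}[W \mid A_\pi] \;\le\; (1+o(1)) \sum_{s,\,z,\,(s_i)} k^z \prod_{i} \binom{a}{s_i}\, q^{\sum_{i<j} s_i s_j}.
\]

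The heart of the argument is to show that each pair $(s,z)$ in the bad range contributes at most $n^{-1+o(1)}$. The exponent $\sum_{i<j} s_i s_j = \tfrac12(s^2 - \sum_i s_i^2)$ is minimised over positive compositions by the extremal shape $(s-z+1,\,1,\dots,1)$, at which it equals $(z-1)(2s-z)/2$. Plugging this in with $k = \Theta(n/\log n)$ and $a = \Theta(\log n)$, and invoking Lemma~\ref{prop:muurelativetomua} in the form $\mu_s/\mu_{s-z+1} = n^{-(z-1)+o(1)}$, the extremal contribution works out to $n^{2-z+o(1)}$, which is $\le n^{-1+o(1)}$ as soon as $z \ge 3$. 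For any other composition, $\sum_i \binom{s_i}{2}$ is strictly smaller than the extremal value $\binom{s-z+1}{2}$, so the probability factor drops by at least a constant factor $q$, while $\prod_i \binom{a}{s_i}$ changes by at most $n^{o(1)}$; since the number of compositions is $\binom{s-1}{z-1} \le 2^s = n^{o(1)}$, the total per $(s,z)$ remains $n^{-1+o(1)}$. Summing over the $\bigO(\log^2 n)$ pairs $(s,z)$ in the bad range yields $\mathbb{E}[W \mid A_\pi] = n^{-1+o(1)} = o(1)$, as required.

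The main technical obstacle is performing this composition bookkeeping uniformly as both $y := \alpha - s$ and $z$ range over their allowed values. Since $u^* \sim \alpha$ one has $y = o(\log n)$ (and not merely $\bigO(1)$), so both the combinatorial factor $a^{z+y-1}$ arising from the extremal composition and the ratio $\mu_s/\mu_{s-z+1}$ must be tracked using Lemma~\ref{prop:muurelativetomua} rather than the simpler Lemma~\ref{prop:basicboundsisets} when $y$ is large. The tameness of $\bfk$ enters only lightly -- through $u^* \sim \alpha$, $k_u = 0$ outside $[u^*,a]$, and $k = \Theta(n/\log n)$ -- ensuring that the window in which all class sizes and all $s_i$ live is narrow enough for the asymptotic estimates above to apply uniformly.
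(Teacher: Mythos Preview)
Your overall strategy---first moment on the number of bad independent sets, parameterised by the composition $(s_1,\dots,s_z)$---is the same as the paper's, and your identification of the extremal composition $(s-z+1,1,\dots,1)$ as minimising $\sum_{i<j}s_is_j$ is correct. However, the step where you pass from the extremal composition to the full sum over compositions has a genuine gap. Both auxiliary claims fail:
\begin{itemize}
\item ``$\prod_i\binom{a}{s_i}$ changes by at most $n^{o(1)}$'' is false. For instance, when $z=s/2$ the extremal gives $\binom{a}{s/2+1}=n^{\bigO(1)}$, whereas the balanced composition $(2,\dots,2)$ gives $\binom{a}{2}^{s/2}\asymp a^{s}=n^{\Theta(\log\log n)}$.
\item ``the number of compositions is $\binom{s-1}{z-1}\le 2^{s}=n^{o(1)}$'' is false: since $s\ge u^{*}\sim 2\log_b n$, one has $2^{s}=n^{\Theta(1)}$ (e.g.\ $n^{2}$ when $p=1/2$).
\end{itemize}
So saying ``the probability drops by at least a constant factor $q$'' cannot by itself absorb a combinatorial blow-up of order $n^{\Theta(\log\log n)}$ over $n^{\Theta(1)}$ many compositions. (It is true that the probability drop is in fact much larger than $q$ for compositions far from the extremal one, and a careful tradeoff analysis \emph{could} be made to work, but you have not carried it out.)

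The paper circumvents this entirely by a preliminary step: it first shows that, conditional on $A_\pi$, whp no independent set $S$ satisfies $|S\cap V_i|\cdot|S\cap V_j|\ge c\ln n$ for any two classes $V_i,V_j$. Restricting to this high-probability event forces every admissible composition to have at most one large part $s_i$, with all other parts bounded by $\bigO(z)$ in the small-$z$ regime; this is what keeps $\prod_i\binom{a}{s_i}$ and $|\mathcal{X}_{u,z}|$ under control. The paper then splits the range of $z$ into three regimes (few, intermediate, many parts) and handles each separately. Your sketch is missing this structural restriction, which is precisely the idea that makes the bookkeeping go through.
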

\begin{proof}
    Let $\pi = (V_1, \dots, V_k)$ (where we suppress the dependence on $n$). We begin with a simple observation. For $c>0$, let $E(c)$ be the event that there is an independent set $S$ such that $|S \cap V_i| \cdot |S \cap V_j| \ge c \ln n$ for some $1 \le i < j \le k$. There are at most $n^2$ ways to choose $i,j$ and at most $2^{|V_i| + |V_j|} \le 2^{4\log_b n}$ ways to choose subsets from $V_i$ and $V_j$. If $E(c)$ occurs, then the edges with one endpoint in $S \cap V_i$ and the other in $S \cap V_j$ are not contained in $G_{n,m}$. Thus
			\[
			\Pb(E(c)~|~A_\pi)
			\le n^2 \cdot 2^{4\log_b n} \cdot {\binom{N_\pi - c \ln n}{m}} / {\binom{N_\pi}{m}}.
			\]
By applying Lemma~\ref{gnmlemma} twice we obtain that the ratio of the binomial coefficients is $\sim q^{c\ln n}$. Thus
			\[
			\Pb(E(c)~|~A_\pi)
			\lesssim   n^2 \cdot 2^{4\log_b n} \cdot q^{c\ln n}.
			\]
			It is clear that we can choose $c$ independent of $n$ such that this probability is $o(1)$; let $c_0$ be such a choice and let us write (in a slight abuse of notation) $E = E(c_0)$. Then the complementary event $\overline{E}$  asserts that all independent sets $S$ in $G_{n,m}$ have the property that $|S\cap V_i|\cdot|S \cap V_j| \le c_0\ln n$ for all $i<j$, that is, there is no significant overlap of any independent set with two (or more) colour classes in $\pi$.  With this at hand, denote by
            $$
			I_{u,z}, \quad u^*\le u \le a, \quad z \in \{3, \dots, u-2(\alpha-u)- 2\}
		$$
		the event that there is an independent set $S$ with $u$ vertices that is $z$-composed. We will show that
			\[
			\sum_{u=u^*}^{a} \sum_{z=3}^{u-2(\alpha-u)-2} P(I_{u,z} \cap \overline{E}~|~A_\pi) = o(1),
			\]
			from which the claim in Lemma~\ref{lem:Bpi} follows immediately.  
			
			Let $u,z$ be fixed in the respective intervals. We estimate the probability of 
			$I_{u,z} \cap \overline{E}$ as follows. There are at most $\binom{n}{z}$ ways to choose $z$ vertices in $z$ different parts of $\pi$. Let us consider a specific choice such that these vertices are contained in $V_{i_1}, \dots, V_{i_z}$, where $1 \le i_1 < i_2 < \dots < i_z \le k$. We pick non-negative integers $x_1, \dots, x_z$ that sum up to $u-z$ and such that $(1+x_i)(1+ x_j) \le c_0 \ln n$, and then we select for each $1\le j \le z$ another $x_j$ vertices from $V_j$. The number of such choices is at most $\prod_{1\le j \le z} \binom{a}{x_j}$. Finally, the selected subsets from $V_{i_1}, \dots, V_{i_z}$ form an independent set of size $u$ if and only if the $\sum_{1 \le i < j \le z} (1+x_i)(1+x_j)$ edges between them are not included in $G_{n,m}$, conditional on $A_\pi$. So abbreviating 
			\[
			{\cal X}_{u,z} = \big\{(x_1, \dots, x_z) \in \mathbb{N}_0^{z}: x_1 + \dots + x_z = u-z, (1+x_i)(1+x_j) \le c_0 \ln n \big\},
			\]
			we obtain that
			\[
			\Pb(I_{u,z} \cap \overline{E} ~|~ A_\pi)
			\le
			\binom{n}{z} \sum_{(x_1, \dots, x_z)\in{\cal X}_{u,z}} \prod_{1\le j \le z} \binom{a}{x_j} \cdot \frac{\binom{N_\pi - \sum_{1\le i < j \le z}(1+x_i)(1+x_j)}{m}}{\binom{N_\pi}{m}}.
			\]
Using Lemma~\ref{gnmlemma} twice we obtain that the fraction of binomial coefficients in the previous expression is $\sim q^{\sum_{1 \le i < j \le z}(1+x_i)(1+x_j)}$. Moreover, for each $(x_1, \dots, x_z)\in{\cal X}_{u,z}$,
\[
    \sum_{1 \le i < j \le z}(1+x_i)(1+x_j)
= \frac{u^2}{2} - \frac12 \sum_{1\le j \le z} (1+x_j)^2.
\]
			This is minimized when all $x_j = 0$, except for one that equals $u-z$; in that case the value is 
			\[
			\frac12\big(u^2 - (u-z+1)^2 - z + 1\big) = (z-1)(u-z/2).
			\]
			By putting everything together we obtain that
			\[
			\Pb(I_{u,z} \cap \overline{E} ~|~ A_\pi)
			\lesssim 
			\binom{n}{z} \cdot q^{(z-1)(u-z/2)} \cdot |{\cal X}_{u,z}| \cdot \max_{(x_1, \dots, x_z)\in{\cal X}_{u,z}} \prod_{1\le j \le z} \binom{a}{x_j}.
			\]
			In the next steps we will estimate the binomial coefficient and the exponent of $q$. Since it will be quite convenient, let us write
			\[
			z = u - 2(\alpha-u) - d, \quad \text{where} \quad 2 \le d \le u-2(\alpha-u) + 3.
			\]
			Regarding the binomial coefficient, note that 
			\[
			\binom{n}{z}
			\le \Big(\frac{en}z\Big)^z
			= q^{-z \log_b n + z \log_b z - z \log_b e}.
			\]
			Moreover, recalling from~\eqref{eq:alpha0} that $\alpha \ge 2\log_bn - 2\log_b \log_b n + 2\log_b(e/2)$, we obtain that
			\[
			(z-1)(u-z/2)
			= (z-1)(2\alpha -u-d)/2
			\ge (z-1)\Big(\log_b n - \log_b\log_b n + \log_b(e/2) + \frac{\alpha-u+d}2 
			\Big).
			\]
			Combining these bounds, and using $z \ge 3 > e$, yields \begin{equation}
				\label{eq:globalBound}
				\Pb(I_{u,z} \cap \overline{E} ~|~ A_\pi)
				\lesssim 
				n q^{(z-1)\big(\log_b(z/2\log_b n) + (\alpha-u+d)/2 \big)}  \cdot |{\cal X}_{u,z}| \cdot \max_{(x_1, \dots, x_z)\in{\cal X}_{u,z}} \prod_{1\le j \le z} \binom{a}{x_j}.
			\end{equation}
			From here on we will distinguish three cases. Consider first the `few parts' case, where $z \le (\ln\ln n)^2$; or equivalently $d\ge u - 2(\alpha-u) -  (\ln\ln n)^2$. In particular $d$ is of logarithmic order. Let us look more closely at a sequence ${\bf x} =  (x_1, \dots, x_z) \in {\cal X}_{u,z}$. Since the $x_j$'s sum up to $u-z$, there is at least one one index $j^*$ with $x_{j^*} \ge (u-z)/z = \Omega((\ln n) / z)$. As ${\bf x} \in {\cal X}_{u,z}$, we obtain that there is a constant $c_1 > 0$ such that $x_j \le \lfloor c_1 z \rfloor$ for all $j \neq j^*$. With this observation at hand, we can bound $|{\cal X}_{u,z}|$ as follows. There are at most $z$ ways to choose $j^*$, $a$ ways to pick $x_{j^*}$ and $c_1 z$ ways to pick each of the $x_j$ with $j \neq j^*$. So as $z \le (\ln \ln n)^2$,
\[
    |{\cal X}_{u,z}|
    \le z a (c_1 z)^{z-1}
    = n^{o(1)}.
\] 
The monotonicity and log-concavity of binomial coefficients, the bound $\binom{a}{\lfloor c_1 z \rfloor} \le a^{c_1z}$, and the fact that $u=a-o(\log n)$ guarantee for such $\bf x$ that
\[
    \prod_{1\le j \le z} \binom{a}{x_j} \le \binom{a}{ \lfloor c_1 z \rfloor}^{z-1}\binom{a}{u-z-\lfloor c_1z \rfloor(z-1)}
    \le a^{c_1 z^2} \cdot \binom{a}{a - o(\log n)}
    = n^{o(1)}.
\]
Note also that  $\log_b(z/2\log_b n) \ge -\log_b\log_b n = -o(d)$ as $d$ is of logarithmic order, and that $\alpha-u+d\sim \alpha \sim 2 \log_b n$. 
Since $n^{o(1)} = q^{-o(\ln n)}$ and $z \ge 3$, we obtain in the `few parts' case from~\eqref{eq:globalBound}
			\begin{equation}
				\label{eq:few}
				\begin{split}
					\Pb(I_{u,z} \cap \overline{E}~|~ A_\pi)
					& \le 
					n^{1+o(1)} \cdot q^{(1-o(1))(z-1)(\alpha-u+d)/2} \\
					& \le n^{1+o(1)}\cdot q^{(2-o(1))\log_b n}
					= n^{-1+o(1)},
					\quad 3 \le z \le (\ln\ln n)^2.
				\end{split}
			\end{equation}
Consider the `intermediate parts' case, where $(\ln\ln n)^2 \le d < u-2(\alpha-u)-(\ln\ln n)^2$; in particular $z > (\ln\ln n)^2$. We again use that $\log_b(z/2\log_b n) \ge -\log_b\log_b n$. Moreover, since  ${\cal X}_{u,z}$ is a subset of the solutions in the non-negative integers to $x_1 + \dots + x_z = u-z$,
			\begin{equation}
				\label{eq:allCount}
				|{\cal X}_{u,z}| \le \binom{u-1}{z-1} \le u^{z-1} \le a^{z-1}.
			\end{equation}
			This bound is true for all $z$ and we shall use it again later.
			Consider also the crude bound
			\begin{equation}
				\label{eq:crudeBoundProd}
				\prod_{1\le j \le z} \binom{a}{x_j}
				\le a^{\sum_{1 \le j \le z} x_j} =a^{u-z}= a^{2(\alpha-u) + d},
				\quad (x_1, \dots, x_z) \in {\cal X}_{u,z},
			\end{equation}
			valid also for all $z$. By plugging all these estimates into~\eqref{eq:globalBound} we obtain in the `intermediate parts' case that
			\[
			\Pb(I_{u,z} \cap \overline{E} ~|~ A_\pi)
			\lesssim 
			n \cdot q^{(z-1)\big(-\log_b\log_b n + (\alpha-u+d)/2+o(1) - \left(1 + (2(\alpha-u) + d) / (z-1)\right) \log_b a\big)}
			\]
			and since $z = \omega(\ln\ln n)$, $d=\omega(\ln \ln n)$, $\log_b a = \log_b\log_b n + \bigO(1)$, with room to spare
			\begin{equation}
				\label{eq:intermediate}
				\Pb(I_{u,z} \cap \overline{E} ~|~ A_\pi)
				\lesssim 
				n \cdot q^{(z-1)(d/2 - o(d))}
				\le  n^{-1}, \quad (\ln\ln n)^2 \le d \le u-2(\alpha-u)-(\ln\ln n)^2.
			\end{equation}
			Finally, let us consider the `many parts' case where $2 \le d \le (\ln\ln n)^2$, that is, $z \ge u-2(\alpha-u)-(\ln\ln n)^2 \sim 2\log_b n$. Then ${z}/{2 \log_b n}	= 1- o(1)$ and so $\log_b({z}/{2 \log_b n}) = o(1)$. From~\eqref{eq:globalBound} we obtain that
			\[
			\Pb(I_{u,z} \cap \overline{E} ~|~ A_\pi)
			\le n^{1-(1+o(1))(\alpha-u+d)} 
			\cdot |{\cal X}_{u,z}| \cdot \max_{(x_1, \dots, x_z)\in{\cal X}_{u,z}} \prod_{1\le j \le z} \binom{a}{x_j}.
			\]
			Moreover, from the bound in~\eqref{eq:crudeBoundProd} we obtain that 
			\[
			\prod_{1\le j \le z} \binom{a}{x_j}
			\le a^{2(\alpha-u) +d} = n^{o(\alpha-u+d)}.
			\]
			Regarding $|{\cal X}_{u,z}|$, we obtain from~\eqref{eq:allCount} that 
			\[
			{\cal X}_{u,z}
			\le \binom{u-1}{z-1} = \binom{u-1}{2(\alpha-u)+d}
			\le u^{2(\alpha-u)+d}\le a^{2(\alpha-u)+d}
			= n^{o(\alpha-u+d)}.
			\]
			Thus, again with room to spare and since $\alpha-u \ge 0$ and $d \ge 2$,
			\[
			\Pb(I_{u,z} \cap \overline{E}~|~A_\pi)
			\le n^{1-(1+o(1))(\alpha-u+d)} \cdot n^{o(\alpha-u+d)}
			\le n^{-1+o(1)}, \quad 2 \le d \le (\ln\ln n)^2.
			\]
			Combining this bound with~\eqref{eq:few} and~\eqref{eq:intermediate} we obtain
			\[
			\sum_{u^*\le u \le a} \sum_{z=3}^{u-2(\alpha-u)-2} \Pb(I_{u,z} \cap \overline{E}~|~A_\pi) \le a\cdot a \cdot n^{-1+o(1)} = o(1),
			\]
			as desired.

   For the corresponding statement for $\Pb_p$, observe that throughout the proof, we estimated all probabilities in $G_{n,m}$ to be $(1+o(1))$ times the corresponding probability in $G_{n,p}$ via Lemma~\ref{gnmlemma}. So the conclusion in $G_{n,p}$ follows in exactly the same way.
		\end{proof} 
		
		\begin{lemma}
			\label{lem:Cpi}
			With the notation in this section $\Pb(\overline{C_\pi} ~|~ A_\pi) = o(1)$ and $\Pb_{p}(\overline{C_\pi} ~|~ A_\pi)$.
		\end{lemma}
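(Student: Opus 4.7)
My plan is to follow the strategy of Lemma~\ref{lem:Bpi}, union-bounding over configurations that violate $C_\pi$ and using Lemma~\ref{gnmlemma} to evaluate conditional edge-absence probabilities. A violation of $C_\pi$ given $A_\pi$ is an independent set $S$ with $|S| = u \in [u^*, a]$ and $z(S,\pi) = 2$ such that, writing $V_i, V_j$ for the two parts $S$ meets, both $|S\cap V_i|$ and $|S\cap V_j|$ are at least $2$. Setting $s := |S\cap V_i| \in \{2,\dots,u-2\}$, summing over ordered quadruples $(u, s, i, j)$ (so each bad $S$ is counted twice) and proceeding exactly as in the proof of Lemma~\ref{lem:Bpi} yields
\[
\Pb(\overline{C_\pi}\mid A_\pi) \lesssim \sum_{u=u^*}^{a}\sum_{s=2}^{u-2}\sum_{i\ne j}\binom{|V_i|}{s}\binom{|V_j|}{u-s}\,q^{s(u-s)}.
\]

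I expect $s=2$ (and symmetrically $s = u-2$) to dominate: $s(u-s)$ is minimised there, and one checks that the consecutive ratio in $s$ is of order $(a/s)^2\,q^{u-2s+1} = \text{polylog}(n)\cdot n^{-2}$, killing the tail $s\ge 3$. For $s=2$, bounding $\sum_i\binom{|V_i|}{2}\le an/2$ (via $\sum_v v^2 k_v \le an$, which holds by tameness) and $\sum_j\binom{|V_j|}{u-2} \le k\binom{a}{u-2}$ with $k = O(n/a)$ reduces the $(u,s{=}2)$ contribution to $n^2\binom{a}{u-2}\,q^{2(u-2)}$. So the crux becomes showing $\binom{a}{u-2}\,q^{2(u-2)} = n^{-4+o(1)}$ uniformly in $u$. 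Writing $y = \alpha - u$ and using $\alpha - a = O(1)$, binomial symmetry gives $\binom{a}{u-2} = \binom{a}{y+O(1)} \le (ea/y)^{y+O(1)}$, and from the definition of $\alpha_0$ I have $q^{2(u-2)} = O((\log n)^4/n^4)\cdot b^{2y}$; the task reduces to controlling $(eab^2/y)^y$.

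This is the main obstacle. As a function of $y$, $y\log(eab^2/y)$ is maximised at $y = ab^2 = \Theta(\log n)$, attaining value $\Theta(\log n)$, so the bound would collapse if $y$ could reach $\log n$. Tameness rescues it: combining $\kappa_u < b^{-(\alpha-u)\gamma(\alpha-u)}$ with $k_{u^*} \ge 1$ (equivalently $\kappa_{u^*} \ge u^*/n$) forces $y_{\max}\gamma(y_{\max}) < \log_b n + O(1)$ for $y_{\max} := \alpha - u^*$, and since $\gamma\to\infty$ we conclude $y_{\max} = o(\log n)$. On $[1,y_{\max}]$, which lies strictly below $ab^2$, the function $y\log(eab^2/y)$ is increasing, and writing $\gamma := \gamma(y_{\max})$ a short estimate gives $y_{\max}\log(eab^2/y_{\max}) = (\log n/\gamma)\log\gamma \cdot (1+o(1)) = o(\log n)$, hence $(eab^2/y)^y = n^{o(1)}$ uniformly.

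Assembling the pieces, $n^2\binom{a}{u-2}\,q^{2(u-2)} = n^{-2+o(1)}$ uniformly in $u$. Summing over $u$ (at most $y_{\max}+1 = o(\log n)$ values) and over $s$ (with the geometric tail noted above) yields $\Pb(\overline{C_\pi}\mid A_\pi) = n^{-2+o(1)} = o(1)$. The statement for $\Pb_p$ follows identically: all invocations of Lemma~\ref{gnmlemma} can be replaced with the exact edge-absence probability $q^{s(u-s)}$ in the independent-edge model, and the rest of the argument carries over verbatim.
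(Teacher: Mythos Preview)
Your approach is the same union bound the paper uses, and your analysis of the dominant $s=2$ term is correct. There is one gap: your consecutive-ratio claim $(a/s)^2\,q^{u-2s\pm 1} = \mathrm{polylog}(n)\cdot n^{-2}$ holds only for $s = O(1)$; as $s$ approaches $u/2$ the exponent $u-2s-1$ drops to $-1$ and the binomial ratio tends to $1$, so the ratio is $\Theta(1)$ there and this step alone does not dispose of the middle of the range $3 \le s \le u-3$. The paper handles this with a case split at $\ln\ln n$: when $\min(s,u-s) \le \ln\ln n$, log-concavity gives $\binom{a}{s}\binom{a}{u-s} \le \binom{a}{\ln\ln n}\binom{a}{u-\ln\ln n} = n^{o(1)}$ (both binomial arguments are $o(\log n)$), and $s(u-s) \ge 2(u-2) \ge (4-o(1))\log_b n$ already yields $n^{-4+o(1)}$; when $\min(s,u-s) > \ln\ln n$, the crude bound $\binom{a}{s}\binom{a}{u-s} \le 4^a = n^{O(1)}$ is swamped by $s(u-s) \ge (\ln\ln n)(u-\ln\ln n) = \omega(\ln n)$.

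As a side remark, your explicit $(eab^2/y)^y$ analysis via $y_{\max}\gamma(y_{\max}) \lesssim \log_b n$ is correct but more than necessary: once you know $y \le y_{\max} = o(\log n)$, both $\binom{a}{y+O(1)} = n^{o(1)}$ and $b^{2y} = n^{o(1)}$ follow directly, which already gives $\binom{a}{u-2}q^{2(u-2)} = n^{-4+o(1)}$.
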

		\begin{proof}
			Let $\pi = (V_1, \dots, V_k)$ (where we suppress the dependence on $n$). Suppose that $\overline{C_\pi}$ occurs. Then there is a $2$-composed independent set $S$ with $|S| \ge u^*$ and there are indices $1 \le i < j \le k$ such that $2 \le |S \cap V_i|,  |S \cap V_j| \le |S|-2$.
			As $S$ is independent, this means that $|S \cap V_i| \cdot |S \cap V_j|$
			edges are excluded from $G_{n,m}$ (conditional on $A_\pi$). We obtain that
			\[
			\Pb(\overline{C_\pi} ~|~ A_\pi)
			\le
			\sum_{1 \le i <j \le k} ~ \sum_{\substack{\ell, \ell' \ge 2 \\ u^* \le \ell + \ell' \le a}} \binom{|V_i|}{\ell} \binom{|V_j|}{\ell'} \frac{\binom{N_\pi - \ell \ell'}{m}} {\binom{N_\pi}{m}}.
			\]
Applying Lemma~\ref{gnmlemma} (twice) to the ratio of the binomial coefficients and using that $k \le n$ and $|V_i| \le a$ for all $i$ we obtain that  
			\[
			\Pb(\overline{C_\pi} ~|~ A_\pi)
			\lesssim
			n^2 ~ \sum_{\substack{\ell, \ell' \ge 2 \\ a-u^* \le \ell + \ell' \le a}} \binom{a}{\ell} \binom{a}{\ell'} q^{\ell \ell'}.
			\]
			Let us first consider the terms with $2 \le \ell \le \ln\ln n$ or $2 \le \ell' \le \ln\ln n$. Then $\ell \ell' \ge 2(u^*-2) = (4-o(1))\log_b n$ and moreover, the log-concavity of the binomial coefficients and the fact that $\ell+\ell'\sim 2\log_b n \sim a$ guarantees that
			\[
			\binom{a}{\ell} \binom{a}{\ell'}
			\le \binom{a}{\ln\ln n} \binom{a}{\ell+\ell' - \ln\ln n}
			= n^{o(1)}.
			\]
			Thus,
			\[
			\binom{a}{\ell} \binom{a}{\ell'} q^{\ell \ell'}
			\le n^{-4 + o(1)}, \quad 2 \le \ell \le \ln\ln n \text{ or } 2 \le \ell' \le \ln\ln n.
			\]
			On the other hand, if both $\ell, \ell' \ge \ln\ln n$, then as $\ell+\ell' \sim a = \Theta( \log n)$, we have $\ell\ell' = \omega(\ln n)$. As all binomial coefficients are $\le 2^a = n^{\bigO(1)}$, we obtain
			\[
			\Pb(\overline{C_\pi} ~|~ A_\pi)
			\le n^2 \cdot a^2 \cdot (n^{o(1) - 4} + n^{\bigO(1)}q^{-\omega(\ln n)})
			= o(1).
			\]
      For the corresponding statement for $\Pb_p$, observe that, as in the last lemma, we only ever calculated probabilities in $G_{n,m}$ to be $(1+o(1))$ times the corresponding probability in $G_{n,p}$ via Lemma~\ref{gnmlemma}. So again the conclusion in $G_{n,p}$ follows in the same way.
		\end{proof}

\begin{lemma}
\label{lem:Dpi}
    With the notation in this section $\Pb(\overline{D_\pi} ~|~ A_\pi) = o(1)$ and $\Pb_{p}(\overline{D_\pi} ~|~ A_\pi)$.
\end{lemma}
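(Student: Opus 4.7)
The plan is to bound $\E[N_{\mathrm{bad}} \mid A_\pi]$, where $N_{\mathrm{bad}}$ denotes the number of independent sets $S$ with $u^* \le |S| \le a$, $z(S,\pi)=2$, and some part $V_i$ satisfying $|S\cap V_i|\ge |V_i|-1$, and then apply Markov's inequality: if $\E[N_{\mathrm{bad}}\mid A_\pi] = o(\ln^3 n)$, then $\Pb(\overline{D_\pi}\mid A_\pi) \le \E[N_{\mathrm{bad}}\mid A_\pi]/\ln^3 n = o(1)$, as desired. Mirroring the strategy in the proofs of Lemmas~\ref{lem:Bpi} and \ref{lem:Cpi}, the argument for $G_{n,p}$ is identical since all conditional probabilities will be estimated via Lemma~\ref{gnmlemma}.

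To bound the expectation, I enumerate over ordered pairs $(V_i,V_j)$ of parts of $\pi$, with $|V_i|=u$ and $|V_j|=v$, and split into two cases according to the structure of $S$: Case~1, where $S\supseteq V_i$ and $|S\cap V_j|=s\ge 1$, contributing $\binom{v}{s}$ choices of $S$ and excluding $us$ edges; and Case~2, where $|S\cap V_i|=u-1$ and $|S\cap V_j|=s\ge 1$, contributing $u\binom{v}{s}$ choices and excluding $(u-1)s$ edges. Since $us \le a^2 = \bigO(\log^2 n) = o(n^{4/3})$, Lemma~\ref{gnmlemma} yields that the probability (conditional on $A_\pi$) that all these edges are absent is $\sim q^{us}$ and $\sim q^{(u-1)s}$ respectively. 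Summing over $s\ge 1$ and using $vq^u = o(1)$ gives $\sum_{s\ge 1}\binom{v}{s}q^{us}\le 2vq^u$, so Case~1 contributes at most $2\sum_{i,j} v_j q^{u_i}\le 2n\sum_u k_u q^u$ while Case~2 contributes at most $2q^{-1}n\sum_u u k_u q^u$.

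The key analytic input is tameness, which guarantees $\kappa_u\le b^{-(\alpha-u)\gamma(\alpha-u)}$ with $\gamma(x)\to\infty$. Substituting $y=\alpha-u$, this yields
\[
    \sum_u \kappa_u b^{-u} \le b^{-\alpha}\sum_{y\ge 0} b^{y(1-\gamma(y))} = \bigO(b^{-\alpha}) = \bigO\!\left(\tfrac{\log^2 n}{n^2}\right),
\]
where we used $b^{-\alpha} = \Theta(\log^2 n/n^2)$ from \eqref{eq:alpha0}. Since $k_u = (n/u)\kappa_u$ and $u\ge u^* \sim 2\log_b n$, it follows that $n\sum_u k_u q^u = (n^2/u^*)\cdot \bigO(b^{-\alpha}) = \bigO(\log n)$ and $q^{-1}n\sum_u uk_u q^u = q^{-1}n^2\sum_u \kappa_u b^{-u} = \bigO(\log^2 n)$. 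Combining the two cases gives $\E[N_{\mathrm{bad}}\mid A_\pi] = \bigO(\log^2 n) = o(\ln^3 n)$, and Markov's inequality completes the proof; the $G_{n,p}$ statement follows identically. The main subtlety is ensuring that the tameness decay of $\kappa_u$ dominates the $b^{-u^*} = n^{-2+o(1)}$ savings per pair, which alone would only give a bound of $n^{o(1)}$ insufficient for the $\ln^3 n$ threshold.
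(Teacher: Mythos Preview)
Your proof is correct and follows essentially the same approach as the paper: bound the conditional expectation of the relevant count by $\bigO(\ln^2 n)$ using tameness, then apply Markov's inequality. The one noteworthy difference is that the paper first invokes Lemma~\ref{lem:Cpi} to reduce to the case $|S\cap V_j|=1$ (so only $s=1$ needs to be considered), whereas you handle all $s\ge 1$ directly via the elementary bound $\sum_{s\ge 1}\binom{v}{s}q^{us}=(1+q^u)^v-1\le 2vq^u$; this makes your argument slightly more self-contained at no extra cost.
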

\begin{proof}
Let $\pi = (V_1, \dots, V_k)$ (where we suppress the dependence on $n$).  Let us write $Y$ for the (random) number of independent sets $S$ that are 2-composed and have the property that there is a $1 \le i \le k$ with $|S \cap V_i| \ge |V_i|-1$ \text{and} $|S| = |S \cap V_i|+1$ (that is, $S$ contains exactly one more vertex outside of $V_i$). We will show that
\begin{equation}
    \label{eq:boundSimplerCount}
    \Pb(Y \ge \ln^3 n ~|~ A_\pi) = o(1),
\end{equation}
from which the statement in the lemma follows immediately from Lemma~\ref{lem:Cpi}:
\[
    \Pb(\overline{D_\pi} ~|~ A_\pi) 
    \le \Pb(\overline{D_\pi} \cap C_\pi~|~ A_\pi) + \Pb(\overline{C_\pi} ~|~ A_\pi)
    \le \Pb(Y \ge \ln^3 n ~|~ A_\pi) + o(1).
\] 
We shall first compute the expectation of $Y$. 
For a vertex $v\in[n]$ let us write $c(v)$ for the unique index $j$ with $v \in V_j$. Define the families of indicator random variables
\[
    (Y_{v, i})_{v \in [n], i \in [k]\setminus \{c(v)\}}, 
    \qquad
    (Y_{v,w})_{v\in [n], w \in [n] \setminus V_{c(v)}},
\]
where $Y_{v, i}$ equals one if $\{v\} \cup V_i$ is an independent set, and $Y_{v,w}$ equals one if $\{v\} \cup V_{c(w)} \setminus \{w\}$ is independent.  Then, by definition, $Y$ is the sum of all these variables. From now on we will write $v$ for a vertex, $i$ for an index in $[k]\setminus \{c(v)\}$ and $w$ for a vertex in $[n] \setminus V_{c(v)}$. Note that
\[
    \E_m[Y_{v, i} ~|~ A_\pi] = \binom{N_\pi - |V_j|}{m} / \binom{N_\pi}{m}.
\]
By applying Lemma~\ref{gnmlemma} twice (by now we should have enough routine) we obtain
\[
    \E_m[Y_{v,i} ~|~ A_\pi] = (1+o(1))q^{|V_i|}
    ~\text{ and similarly }~
    \E_m[Y_{v, w} ~|~ A_\pi] = (1+o(1))q^{|V_{c(w)}|-1}	
\]
There are $n$ choices for $v$. Moreover, we can choose $w$ by selecting an $i \neq c(v)$ and then selecting a vertex from $V_i$. We obtain that
\[
\begin{split}
    \E_m[Y ~|~ A_\pi]
    & = \sum_{v \in [n], i \in [k]\setminus \{c(v)\}} \E_m[Y_{v, i} ~|~ A_\pi] + \sum_{v\in [n], w \in [n] \setminus V_{c(v)}} \E_m[Y_{v, w} ~|~ A_\pi] \\
    & = (1+o(1)) n \left(\sum_{1 \le i \le k} q^{|V_i|} + q^{-1}\sum_{1 \le i \le k} |V_i|q^{|V_i|}\right),
\end{split}
\]
and since $k \sim n/2\log_b n$ and $|V_i| \sim 2\log_b n$, recalling $\kappa_u= k_u u/n$ and that $\bfk$ is tame (so we may apply Proposition~\ref{prop:proptame}),
\[
    \E_m[Y ~|~ A_\pi]
    \le (q^{-1}+o(1)) n^2  q^a \sum_{u^* \le u \le a} \kappa_u q^{-a+u}
    \le \bigO(\ln^2 n) \sum_{u^* \le u \le a} \kappa_u q^{-a+u} = \bigO(\ln^2 n).
\]
The claim follows by applying Markov's inequality.

      For the corresponding statement for $\Pb_p$, as in the previous two lemmas, we only ever calculated probabilities and expectations in $G_{n,m}$ to be $(1+o(1))$ times the corresponding expressions in $G_{n,p}$ via Lemma~\ref{gnmlemma}. So again the conclusion in $G_{n,p}$ follows in the same way.
\end{proof}

\section{The second moment}
\label{section:secondmoment}
In this section we prove Proposition \ref{secondmomentgoal}. Throughout, we fix a sequence $\kp=\kp(n)$ of tame $a$-bounded $k$-colouring profiles with $a= \alpha- \bigO(1)$ and  $c$, $\gamma(n)$, $u^*(n)$ as in Definition \ref{deftame} and the remarks thereafter. Whenever we index a sum, product or sequence with $u$ or $v$, this means $u^*\le u \le a$ or $u^* \le v \le a$ unless otherwise specified. 
		
Recall Definition \ref{defrandomvariable} of the events $B_\pi$, $C_\pi$, and $D_\pi$. We say that a pair $(\pi, \pi')$ of ordered partitions is \emph{relevant} if, roughly speaking, $\pi'$ being a colouring would not violate the events $B_\pi$, $C_\pi$, and $D_\pi$, and $\pi$ being a colouring would not violate the events $B_{\pi'}$, $C_{\pi'}$, and $D_{\pi'}$. We spell out what this means in the following definition.
		
\begin{definition}\label{def:tamepairs}
Let $\pi=(V_i)_{i=1}^k$ and $\pi'=(V'_j)_{j=1}^k$ be ordered partitions with profile $\kp$. Then we say that $(\pi, \pi')$ is \emph{relevant} if all of the following hold.
\begin{enumerate}[a)]
    \item 
    \begin{enumerate}[1.]
        \item If $V_j'$ is a part of $\pi'$ of size $u$, then 
        \[z(V_j', \pi) \le 2\quad \text{or} \quad z(V_j', \pi) \ge u-2(\alpha-u)-1.\]
        \item If $z(V_j', \pi) = 2$ for some part $V_j'$ of $\pi'$, then there are parts $V_{i_1}$ and $V_{i_2}$ of $\pi$ such that 
        \[|V_j' \cap V_{i_1}|=1\quad \text{and} \quad |V_j'\cap V_{i_2}|=|V_j'|-1 .\]
        \item There are at most $\ln^3 n$ parts $V_j'$ of $\pi'$ such that 
        \[z(V_j', \pi)=2, \quad \text{and} \quad \text{there is a part $V_i$ of $\pi$ such that } |V_j' \cap V_i| \ge |V_i|-1.\] 
    \end{enumerate}
    \item All of the above also hold with $\pi$ and $\pi'$ swapped.
\end{enumerate}
    Let $\Pi_\mathrm{rel}= \Pi_\mathrm{rel}(\kp)$ denote the set of all relevant pairs  $(\pi, \pi')$ of vertex partitions.
\end{definition} 
		
		The crucial point of this definition is that $A_\pi \cap B_{\pi}\cap C_\pi \cap D_\pi$ and $A_{\pi'} \cap B_{\pi'}\cap C_{\pi'} \cap D_{\pi'}$ can only hold at the same time if $(\pi, \pi') \in \Pi_{\mathrm{rel}}$. Therefore, 
		\[
		\ex{Z_\kp^2}= \sum_{\pi,\pi'} \ex {\xi_{\pi} \xi_{\pi'}}  = \sum_{(\pi,\pi') \in \Pi_\mathrm{rel}} \ex {\xi_{\pi} \xi_{\pi'}} \le \sum_{(\pi,\pi') \in \Pi_{\mathrm{rel}}} \Pb\left(A_{\pi} \cap A_{\pi'}\right).
		\]
		So to verify Proposition~\ref{secondmomentgoal}, it suffices to prove that under the conditions of Theorem~\ref{theorem:general},
		\begin{equation}
			\frac{1}{\ex{Z_\textbf{k}}^2} \sum_{(\pi, \pi') \in \Pi_{\mathrm{rel}}} \Pb\left(A_{\pi} \cap A_{\pi'}\right) \le \exp\left(\frac{k_a^2}{\mu_{a}}+\bigO(\MB)+o(1)\right). \label{eq:assum}
		\end{equation}
The following observations are immediate from Definition \ref{def:tamepairs} and the fact that all parts of the partitions we consider are of size between $u^*$ and $a$, where $u^* \sim a \sim 2 \log_b n$. Recall that we say that a set $S$ is $z$-composed (with respect to $\pi$) if $z(S,\pi)=z$ as in Definition~\ref{def:tcomposed}.
\begin{fact}
\label{tamelemma}
Suppose that $(\pi, \pi') \in \Pi_\mathrm{rel}$, and that $V_i$ is a part of $\pi$. Then exactly one of the following five cases applies.
    \begin{enumerate}[a)]
    \itemsep0pt
        \item $V_i=V'_j$ for some part $V'_j$ of $\pi'$. \label{case1}
        \item \label{case5} $V_i$ is at least $(u-2(\alpha-u)-1)$-composed (with respect to $\pi'$), where $u=|V_i|$. 
        \item \label{case2} $V_i$ is $1$-composed  and $V_i \subset V'_j$ for some part $V'_j$ of $\pi'$, and $|V_j'\setminus V_i|=1$.
        \item \label{case3} $V_i$ is $2$-composed and $V_i \supset V'_j$ for some part $V'_j$ of $\pi'$, and $|V_i\setminus V_j'|=1$.
        \item \label{case4} There is a part $V_j'$ of $\pi'$ such that $|V_j'\setminus V_i|=|V_i\setminus V_j'|=1$ (so both $V_i$, $V_j'$ are $2$-composed).
    \end{enumerate}
\end{fact}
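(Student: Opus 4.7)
My plan is to run a case analysis on $z := z(V_i, \pi')$, the number of parts of $\pi'$ meeting $V_i$. Part b) of Definition~\ref{def:tamepairs} asserts that all conditions in part a) also hold with $\pi, \pi'$ swapped; applied to its condition 1, this gives $z \le 2$ or $z \ge u - 2(\alpha - u) - 1$, where $u = |V_i|$. By tameness and Proposition~\ref{prop:proptame}, $u \ge u^* \sim 2 \log_b n$ and $\alpha - u \le \alpha - u^* = o(\log n)$, so this threshold tends to infinity; for large $n$ it exceeds $3$, which automatically separates case b) from the four cases where $z \in \{1, 2\}$.

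If $z \ge u - 2(\alpha - u) - 1$, case b) is immediate. If $z = 1$, then $V_i \subseteq V_j'$ for a unique $j$: equality yields case a), while $V_i \subsetneq V_j'$ means $|V_j' \cap V_i| = u \ge u^* \ge 2$. I would then apply condition 1 of part a) to $V_j'$: either $z(V_j', \pi) \le 2$ or $z(V_j', \pi) \ge |V_j'| - 2(\alpha - |V_j'|) - 1$. I would rule out the latter by a counting argument: the $u$ vertices of $V_j'$ that lie inside $V_i$ contribute only one class of $\pi$, so $z(V_j', \pi) \le 1 + (|V_j'| - u)$; combining this with the lower bound on $z(V_j', \pi)$ forces $u \le 2(\alpha - |V_j'|) + 2$, which contradicts $u \ge u^* \sim \alpha$ since $\alpha - |V_j'| \le \alpha - u^* = o(\alpha)$. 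Hence $z(V_j', \pi) = 2$, and condition 2 of part a) splits $V_j'$ as $1 + (|V_j'| - 1)$ across two parts of $\pi$; the large piece must lie in $V_i$ because $|V_j' \cap V_i| \ge 2$, so $|V_j'| = u + 1$ and $|V_j' \setminus V_i| = 1$: case c).

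For $z = 2$, the swap of condition 2 provides parts $V_{j_1}', V_{j_2}'$ of $\pi'$ with $|V_i \cap V_{j_1}'| = 1$ and $|V_i \cap V_{j_2}'| = u - 1$; write $\{v\} = V_i \cap V_{j_1}'$. Applying condition 1 of part a) to $V_{j_2}'$, the "large $z$'' option is again eliminated by the same counting argument, now using $|V_{j_2}' \cap V_i| = u - 1 \ge u^* - 1$. So $z(V_{j_2}', \pi) \in \{1, 2\}$: the value $1$ forces $V_{j_2}' \subseteq V_i$ which, combined with $V_i \setminus \{v\} \subseteq V_{j_2}'$, yields $V_{j_2}' = V_i \setminus \{v\}$ and case d); the value $2$ lets condition 2 of part a) split $V_{j_2}'$ as $1 + (|V_{j_2}'| - 1)$ with the large piece inside $V_i$, so $|V_{j_2}'| = u$ and $|V_{j_2}' \setminus V_i| = |V_i \setminus V_{j_2}'| = 1$: case e).

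The only non-routine ingredient is the "large-$z$ elimination" invoked twice above; this is where I expect any difficulty to lie. Fortunately it reduces to the quantitative comparison $u \ge u^* \sim \alpha$ against $\alpha - |V_j'| \le \alpha - u^* = o(\alpha)$, which is immediate from tameness and comfortably supplies the required contradiction for $n$ large. Mutual exclusivity of the five cases is automatic from the partition of $z$-values $\{1\}, \{2\}, \{{\ge u - 2(\alpha-u) - 1}\}$ once $n$ is large enough, and within each of $z = 1$ and $z = 2$ the remaining dichotomy (equality vs.\ proper containment, and $V'_{j_2} \subsetneq V_i$ vs.\ $V'_{j_2} \not\subset V_i$) distinguishes the sub-cases.
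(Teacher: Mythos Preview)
Your proposal is correct and is exactly the natural unpacking of the argument: the paper states Fact~\ref{tamelemma} without proof, noting only that it is ``immediate from Definition~\ref{def:tamepairs} and the fact that all parts of the partitions we consider are of size between $u^*$ and $a$, where $u^* \sim a \sim 2\log_b n$.'' Your case analysis on $z(V_i,\pi')$, together with the large-$z$ elimination via the counting bound $z(V_j',\pi)\le 1+(|V_j'|-|V_j'\cap V_i|)$, is precisely the content behind that remark; the only step you leave implicit is that $z(V_j',\pi)\ge 2$ when $V_i\subsetneq V_j'$, which is trivial.
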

If case \eqref{case1} of Fact~\ref{tamelemma} applies, we say that $V_i$ is \emph{identical} (with respect to $(\pi, \pi')$). In case \eqref{case5}, we say that $V_i$ is \emph{scrambled} (with respect to $(\pi, \pi')$). If one of the cases \eqref{case2}, \eqref{case3} or \eqref{case4} applies, we call $V_i$ \emph{exceptional} (with respect to $(\pi, \pi')$). It follows from Definition~\ref{def:tamepairs} that $\pi$ has at most $2 \ln^3n$ exceptional parts. Finally, if $V_i$ is either scrambled or exceptional (i.e., if it is not identical), we say that $V_i$ is \emph{transmuted}.

As $\pi$ and $\pi'$ have the same profile $\bfk$, the number of identical and transmuted parts of each size (with respect to the other partition) is the same in both $\pi$ and $\pi'$. So, given $(\pi, \pi') \in \Pi_\mathrm{rel}$, let us define the following quantities that all, without always mentioning it explicitly, depend directly on $(\pi, \pi')$. Set
\[
    \bfl = (\ell_u)_{u^* \le u \le a},
    ~~
    \ell_u =  \text{number of identical parts of size $u$},
    ~~
    \ell = \sum_{u^* \le u \le a} \ell_u.
\]
Moreover, we define the `normalised' versions of these quantities,
\[
    \lambda_u = \ell_u u/n,
    \quad
    \lambda = \sum_{u^* \le u \le a} \lambda_u.
\]
In addition to that, we set
\[
    t_u =  \text{ number of transmuted parts of size $u$},
    \quad
    t = \sum_{u^*\le u \le a}  t_u,
\]
and write
\[
    s = \text{total number of scrambled parts}.
\]
Note that
\[
    \ell_u+t_u
    = k_u\quad \text{and} \quad \ell+t=k.
\]
Furthermore, as there are at most $2 \ln^3 n$ exceptional parts,
\[
    s \le t \le s+ 2 \ln^3 n.
\]
Note that since
\[
    \sum_{u^* \le u \le a} \ell_u u 
    = \sum_{u^* \le u \le a} \lambda_u n
    =  \lambda n,
\]
the quantity $\lambda$ is the fraction of vertices contained in identical parts. The calculations for the proof of Proposition \ref{secondmomentgoal} will be split up into three cases depending on the value of~$\lambda$. Recall the constant $c \in (0,1)$ from the Definition~\ref{deftame} of the tame sequence $\bfk$, set $c_0=c/3 \in (0, \frac 13)$, and let
\begin{equation}
\label{eq:defofscrambledetc}
\begin{split}
    \Pi_{\text{scrambled}}
    &= \left\{ (\pi, \pi') \in \Pi_{\mathrm{rel}} \mid \lambda(\pi, \pi') < {\ln^{-3} n} \right\},  \\
    \Pi_{\text{middle}}
    &= \left\{ (\pi, \pi') \in \Pi_{\mathrm{rel}} \mid  {\ln^{-3} n} \le \lambda(\pi, \pi') \le 1-n^{-{c_0}}\right\},  \\
    \Pi_{\text{similar}}
    &= \left\{ (\pi, \pi') \in \Pi_{\mathrm{rel}} \mid \lambda(\pi, \pi') > 1- n^{-{c_0}}\right\}.
\end{split}    
\end{equation}
Then the three lemmas below readily  imply \eqref{eq:assum} and thus Proposition \ref{secondmomentgoal}. Some of the lemmas here, as well as later on, are formulated in a slightly more general way than what is needed in the proof  Theorem~\ref{theorem:general}. In particular, those lemmas do not assume the additional conditions \eqref{eq:lowerboundmu},  \eqref{eq:lowerboundexpectation} and \eqref{eq:lowerboundbeta} from Theorem~\ref{theorem:general} unless stated explicitly. This does not complicate their proofs, but makes them more readily available for reuse in future work.
\begin{lemma}
\label{lemmascrambled} Let $a=\alpha_0-\bigO(1)$ and let $\kp$ be a tame $a$-bounded profile, then
		\begin{equation*}\frac{1}{\ex{Z_\kp}^2}\sum_{(\pi,\pi') \in \Pi_{\textup{scrambled}}} \Pb\left(A_{\pi} \cap A_{\pi'}\right) \lesssim \exp \left(  \frac{k_a^2}{\mu_{a}}+\bigO(M)\right)
		\end{equation*}
with $M=\MB+\MA$, where 
\begin{align}
    \MB = \MB(n) &=  \frac{k_a^4 \ln^2 n }{n \mu_{a}^2}, \nonumber\\
    \MA =  \MA(n) &=\frac{n}{\mu_{a} \ln n } + \frac{k_a^2 \ln^3 n+k_a k_{a-1} \ln^2 n}{ \mu_{a}n}+\frac{ ( k_{a-2} \ln n + k_{a-1} \ln^2 n+k_a \ln^3 n)^2 }{\mu_{a}n^2}. \label{eq:defM}
\end{align}
 \end{lemma}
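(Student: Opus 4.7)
The plan is to organize the sum $\sum_{(\pi,\pi') \in \Pi_{\text{scrambled}}} \Pb(A_\pi \cap A_{\pi'})$ by the profile $\bfl = (\ell_u)$ of identical parts (with $\lambda = \sum_u \ell_u u / n < \ln^{-3} n$) together with a configuration $\mathcal{E}$ encoding the at most $2 \ln^3 n$ exceptional parts allowed by Definition \ref{def:tamepairs}; the remaining parts of $\pi'$ are then scrambled with respect to $\pi$, i.e., each is $z$-composed with $z \ge u - 2(\alpha - u) - 1$. Using Lemma \ref{gnmlemma}, one factors
\[
\Pb(A_\pi \cap A_{\pi'}) \sim q^{2 f_\bfk - O(\pi,\pi')} \exp\!\Bigl(-\tfrac{(b-1)(2 f_\bfk - O(\pi,\pi'))^2}{n^2}\Bigr),
\]
where $O(\pi,\pi')$ is the number of edges forbidden by both $\pi$ and $\pi'$, and one decomposes $O = O_{\mathrm{id}} + O_{\mathrm{exc}} + O_{\mathrm{scr}}$. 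Identical parts give $O_{\mathrm{id}} = \sum_u \binom{u}{2} \ell_u$; exceptional parts give at most $\bigO(\binom{a}{2} \ln^3 n) = \bigO(\ln^5 n)$; and scrambled parts give at most $\bigO\bigl(\sum_u k_u (\alpha - u)^2\bigr) = \bigO(k)$ by Proposition \ref{prop:proptame}, since each scrambled part has at most $2(\alpha - u) + 1$ ``repeat'' vertices lying in a common part of the other colouring.

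Next I count configurations for fixed $(\bfl, \mathcal{E})$: $\pi$ in $P_\bfk$ ordered ways, the identical parts of $\pi'$ in $\prod_u \binom{k_u}{\ell_u} \ell_u!$ matchings, the exceptional parts in $\exp(\bigO(\ln^4 n))$ choices, and the scrambled completion of $\pi'$ bounded above by the residual ordered-partition count on the leftover profile. Ignoring the scrambling constraint is admissible as an upper bound because, by the first-moment argument of Lemmas \ref{lem:Bpi}--\ref{lem:Dpi} applied to the residual instance, the fraction of non-scrambled residual partitions is $o(1)$. Factoring out $(\E[X_\bfk])^2 \sim (\E[Z_\bfk])^2$ via Proposition \ref{lem:XkZk}, the target ratio is then at most
\[
\sum_{\bfl, \mathcal{E}} \prod_u \binom{k_u}{\ell_u}^2 \ell_u! \, q^{-\binom{u}{2} \ell_u} \exp(\bigO(M)),
\]
with $\exp(\bigO(M))$ absorbing the exceptional, scrambled, and quadratic-error corrections. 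Using $q^{-\binom{u}{2}} / \binom{n}{u} \sim 1/\mu_u$ and $\binom{k_u}{\ell_u}^2 \ell_u! \lesssim k_u^{2 \ell_u}/\ell_u!$, the generic summand is $(k_u^2/\mu_u)^{\ell_u}/\ell_u!$, so summing over $\ell_u \ge 0$ gives $\exp(k_u^2/\mu_u)$. The $u = a$ term yields precisely $\exp(k_a^2/\mu_a)$; for $u < a$ the tameness tail $\kappa_u = \bigO(b^{-(\alpha-u) \gamma(\alpha-u)})$ from Definition \ref{deftame} forces $k_u^2/\mu_u$ to be negligible, and these terms are absorbed into $\bigO(\MA)$.

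The main obstacle is twofold. First, bounding the scrambled-part count requires a uniform version of the first-moment arguments of Section \ref{section:firstmoment}, applied to each residual instance parametrized by $(\bfl, \mathcal{E})$; ensuring the overcount from unconstrained residual partitions is only $1 + o(1)$ is a delicate bookkeeping task, since the residual instance no longer has exactly a tame profile and the constants in Lemmas \ref{lem:Bpi}--\ref{lem:Dpi} must be shown to survive this perturbation. Second, the quadratic error $\exp(-(b-1)(2 f_\bfk - O)^2/n^2)$ from Lemma \ref{gnmlemma} must be tracked carefully to produce exactly the $\MB = k_a^4 \ln^2 n / (n \mu_a^2)$ correction; this comes from the interplay of the leading $4 f_\bfk O / n^2$ term in the exponent with the effective fluctuations of $\ell_a$ around its mean $k_a^2/\mu_a$, and is the technically most demanding piece of the calculation.
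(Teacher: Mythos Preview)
Your proposal has a genuine gap in the handling of the scrambled overlap $O_{\mathrm{scr}}$. You bound $O_{\mathrm{scr}} = \bigO\bigl(\sum_u k_u(\alpha-u)^2\bigr) = \bigO(k)$ uniformly over all scrambled pairs and then assert that the resulting factor $q^{-O_{\mathrm{scr}}}$, together with the quadratic correction, can be absorbed into $\exp(\bigO(M))$. But $q^{-O_{\mathrm{scr}}} = \exp(\bigO(k)) = \exp(\bigO(n/\ln n))$, whereas in the intended regime (indeed whenever $\mu_a \ge n^{1+\epsilon}$, as in Theorem~\ref{theorem:general}) one has $M = o(1)$. A worst-case bound on the scrambled overlap is therefore useless here. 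Relatedly, your appeal to Lemmas~\ref{lem:Bpi}--\ref{lem:Dpi} to justify ``ignoring the scrambling constraint'' is a type mismatch: those lemmas give conditional probabilities in the random graph, not a combinatorial statement about the fraction of residual partitions that are scrambled; and even if that fraction were $o(1)$, this would say nothing about the \emph{weighted} sum $\sum_{\pi'_{\mathrm{res}}} q^{-g_{\mathrm{scr}}}$, which is dominated precisely by the non-scrambled completions with large overlap.

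The paper's route is entirely different from a uniform overlap bound. It records the full overlap sequence $\bfr = (r_x^{u,v})$ and, for each fixed $(\bfl,\bfr)$, counts pairs $(\pi,\pi')$ via the overlap matrix using McKay's enumeration of $0$--$1$ matrices with prescribed line sums (Theorem~\ref{mckaymatrices}); this yields Lemma~\ref{contributionlemma} with its factors $F_1, F_2, F_3$. Summing $F_2$ over $\bfr$ produces $\exp(T(2) + o(1))$ with $T(2) = 2bf^2/n^2 + o(1)$, and the entire point is that this \emph{exactly cancels} the $G_{n,m}$ quadratic correction $F_3 = -2bf^2/n^2 + o(1)$ (Lemma~\ref{easylemma2}\eqref{teil1}--\eqref{teil2}). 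This cancellation---the reason the second moment is run in $G_{n,m}$ rather than $G_{n,p}$---is the mechanism that tames the scrambled contribution, and it is absent from your outline. Finally, the $\MB$ term does not come from the $G_{n,m}$ quadratic interacting with $\ell_a$ as you suggest; it arises from the Stirling-type correction $e^{n_{\mathrm{id}}^2/n}$ (Lemma~\ref{ablemma}) when summing over $\ell_a$, giving a contribution of order $(k_a^2/\mu_a)^2 a^2/n$.
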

	Note that in the context of Theorem~\ref{theorem:general}, we have $\mu_a \ge n^{1+\epsilon}$, and so $\MA=o(1)$ since $k_u \le k = \bigO(n/\ln n)$ for all $u$.
\begin{lemma} \label{lemmamiddle} 
Let $a=\alpha_0-\bigO(1)$ and let $\kp$ be a tame $a$-bounded profile. Furthermore, suppose that conditions \eqref{eq:lowerboundmu} and \eqref{eq:lowerboundbeta} from Theorem~\ref{theorem:general} hold. Then
\[
    \frac{1}{\ex{Z_\kp}^2}\sum_{(\pi,\pi') \in \Pi_{\textup{middle}}} \Pb\left(A_{\pi} \cap A_{\pi'}\right) =o(1).
\]
\end{lemma}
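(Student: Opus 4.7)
The plan is a second-moment decomposition indexed by the common identical parts. For each $(\pi,\pi') \in \Pi_{\mathrm{middle}}$, let $\sigma = \sigma(\pi,\pi')$ denote the ordered partial partition consisting of its identical parts, of profile $\bfl \le \bfk$ with $\lambda := \sum_u u \ell_u / n \in [\ln^{-3}n,\, 1 - n^{-c_0}]$. Decompose
\[
    \sum_{(\pi,\pi') \in \Pi_{\mathrm{middle}}} \Pb\left[A_\pi \cap A_{\pi'}\right]
    = \sum_{\bfl}\, \sum_{\sigma \text{ of profile } \bfl}\, \sum_{\substack{(\pi,\pi') \in \Pi_{\mathrm{rel}} \\ \sigma(\pi,\pi') = \sigma}} \Pb\left[A_\pi \cap A_{\pi'}\right],
\]
where the outer sum ranges over profiles $\bfl \le \bfk$ with $\lambda$ in the specified range. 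Since there are at most $(n+1)^a = e^{\bigO(\ln^2 n)}$ such profiles, it suffices to show that the inner double sum is $e^{-\Omega(\ln^6 n)} \E[Z_\bfk]^2$ for every fixed $\bfl$.

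For the joint probability, the edges forbidden by $A_\pi \cap A_{\pi'}$ are exactly those lying inside some $V_i \cap V_j'$, where $V_i$ is a part of $\pi$ and $V_j'$ of $\pi'$. By Fact~\ref{tamelemma}, for non-identical $(V_i,V_j')$ the intersection has size at most $1$ unless one of $V_i, V_j'$ is exceptional, and each partition has at most $2\ln^3 n$ exceptional parts. A routine count then bounds the number of edges forbidden by both $\pi$ and $\pi'$ beyond those accounted for by $\sigma$ by $\bigO(\ln^5 n)$, so Lemma~\ref{gnmlemma} yields
\[
    \Pb\left[A_\pi \cap A_{\pi'}\right] \le (1+o(1))\, q^{f_\pi + f_{\pi'} - f_\sigma - \bigO(\ln^5 n)} = e^{\bigO(\ln^5 n)}\, q^{2 f_\bfk - f_\bfl}.
\]
For fixed $\sigma$ of profile $\bfl$, each extension $\pi$ of profile $\bfk$ is obtained by placing the $\ell_u$ identical parts among the $k_u$ size-$u$ slots of $\pi$ and choosing an ordered partition of $V \setminus V_\sigma$ of profile $\bfk - \bfl$. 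Doing this for both $\pi$ and $\pi'$ and summing over $\sigma$ of profile $\bfl$, then using $P_\bfk = n!/\prod_u (u!)^{k_u}$, $\E[X_\bfl] = P_\bfl q^{f_\bfl} = \prod_u \ell_u!\cdot \E[\bar X_\bfl]$, the identity $P_\bfl P_{\bfk-\bfl}^2/P_\bfk^2 = (n-\lambda n)!\prod_u (u!)^{\ell_u}/n!$, and $\E[Z_\bfk] \sim \E[X_\bfk]$ from Proposition~\ref{lem:XkZk}, the algebra collapses to
\[
    \sum_{\substack{(\pi,\pi')\in\Pi_{\mathrm{rel}}\\ \text{identical profile } =\,\bfl}} \Pb\left[A_\pi \cap A_{\pi'}\right] \le e^{\bigO(\ln^5 n)}\, \E[Z_\bfk]^2\, \frac{\prod_u \binom{k_u}{\ell_u}^2}{\E[\bar X_\bfl]}.
\]

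For $\lambda \in [\delta, 1-\delta]$ with a small fixed $\delta = \delta(\varepsilon) > 0$, hypothesis~\eqref{eq:lowerboundbeta} gives $\E[\bar X_\bfl] \ge e^{\ln^6 n} \prod_u \binom{k_u}{\ell_u}^2$; for $\lambda \in [\ln^{-3}n,\, \delta)$, Lemma~\ref{lemma:smallsubprofiles} supplies the even stronger bound $\E[\bar X_\bfl] \ge \exp(\Theta(n/\ln^3 n))\prod_u\binom{k_u}{\ell_u}^2$. For the upper tail $\lambda \in (1 - \delta,\, 1 - n^{-c_0}]$ there are only $k - \ell = \bigO(n^{1-c_0}/\ln n)$ transmuted parts, so a direct count of the pairs (or a symmetric argument swapping the roles of identical and transmuted parts) gives a comparable exponentially small bound. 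In every regime the contribution of a single $\bfl$ is $e^{-\Omega(\ln^6 n)} \E[Z_\bfk]^2$, and summing over the $e^{\bigO(\ln^2 n)}$ admissible profiles finishes the proof.

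The main obstacle is the uniform $\bigO(\ln^5 n)$ estimate on the overlap of forbidden edges outside $\sigma$. This is exactly what motivates the passage from $X_\bfk$ to the restricted random variable $Z_\bfk$: only under $\Pi_{\mathrm{rel}}$ does Fact~\ref{tamelemma} force non-identical parts to intersect in sets of size $\bigO(1)$ apart from a polylogarithmic number of exceptional pairs. Without this restriction a single pair of overlapping parts could contribute $\Omega(\ln^2 n)$ common forbidden edges, and the second moment bound would fail catastrophically. A secondary (but more routine) subtlety is the treatment of the boundary regimes $\lambda \to \ln^{-3}n$ and $\lambda \to 1$, which require invoking Lemma~\ref{lemma:smallsubprofiles} and a direct calculation respectively in place of~\eqref{eq:lowerboundbeta}.
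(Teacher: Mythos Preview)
The central claim---that ``for non-identical $(V_i,V_j')$ the intersection has size at most $1$ unless one of $V_i,V_j'$ is exceptional''---is false, and the rest of the argument collapses with it. Fact~\ref{tamelemma} only guarantees that a scrambled part $V_i$ of size $u$ is at least $(u-2(\alpha-u)-1)$-composed with respect to~$\pi'$; it does \emph{not} force each individual intersection $V_i\cap V_j'$ to have size~$\le 1$. In fact a scrambled $V_i$ can contain up to $2(\alpha-u)+1$ overlap blocks of size $\ge 2$, each of size up to $2(\alpha-u)+2$. Since there can be $\Theta(k)=\Theta(n/\ln n)$ scrambled parts, the total number of common forbidden edges outside~$\sigma$ satisfies only $\gt=\bigO\bigl(\sum_u t_u(\alpha-u)^3\bigr)=\bigO(n/\ln n)$, and this bound is attained (e.g.\ take $\lambda=1/2$ and arrange every transmuted part to contain exactly one overlap block of size~$2$). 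Your uniform estimate $\gt=\bigO(\ln^5 n)$ is therefore off by a factor $n/\ln^6 n$, and the probability bound acquires an uncontrolled factor $q^{-\Theta(n/\ln n)}$.

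This is precisely why the paper's proof of Lemma~\ref{lemmamiddle} cannot take your shortcut. Instead of bounding $\gt$ pointwise, the paper tracks the full overlap sequence $\bfr=(r_x^{u,v})$, counts pairs with prescribed $(\bfl,\bfr)$ via McKay's asymptotic for $0$--$1$ matrices (Lemma~\ref{matrixlemma}), and obtains the product form $F_1(\bfl)F_2(\bfl,\bfr)\exp(F_3(\bfl,\bfr))$ of Lemma~\ref{contributionlemma}. The point is that pairs with many large overlap blocks are rare enough that the small count compensates for the large $q^{-\gt}$; only \emph{after summing over all $\bfr$} (via the identity $\sum_r T^r/r!=e^T$ and the estimates of Lemma~\ref{easylemma1}) does the $\exp(\bigO(\ln^5 n))$ factor emerge, in~\eqref{eq:umform}. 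Your decomposition by~$\sigma$ alone discards exactly the information needed to carry out this trade-off. The three sub-ranges of $\lambda$ are then handled by Lemmas~\ref{lemma:newlemma1}--\ref{lemma:newlemma3}, which plug the bounds from Lemma~\ref{lemma:smallsubprofiles}, hypothesis~\eqref{eq:lowerboundbeta}, and a direct complement argument, respectively, into the $F_1(\bfl)$ factor.
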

	
\begin{lemma}
\label{lemmasimilar}
Let $a=\alpha_0-\bigO(1)$ and let $\kp$ be a tame $a$-bounded profile, 
then
\[
    \frac{1}{\ex{Z_\kp}^2}\sum_{(\pi,\pi') \in \Pi_{\textup{similar}}} \Pb\left(A_{\pi} \cap A_{\pi'}\right) \le  \frac{n^{\bigO(1)}}{\E[\bar X_\bfk]} \enspace .
\]
\end{lemma}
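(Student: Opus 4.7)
The plan is to reduce $\Pb(A_\pi \cap A_{\pi'})$ to $\Pb(A_\pi)\cdot q^{\Delta}(1+o(1))$, where $\Delta:=|F_{\pi'}\setminus F_\pi|$ counts the extra forbidden edges contributed by $\pi'$ (with $F_\pi:=\bigcup_i\binom{V_i}{2}$, so $|F_\pi|=f_\bfk$). For similar pairs, $|T|\le n^{1-c_0}$ yields $\Delta\le\sum_u t_u\binom{u}{2}=O(n^{1-c_0}\log n)=o(n^{4/3})$, so Lemma~\ref{gnmlemma} applies to the ratio $\binom{N-f_\bfk-\Delta}{m}/\binom{N-f_\bfk}{m}$. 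Summing and using $\sum_\pi\Pb(A_\pi)=\E[X_\bfk]\sim\E[Z_\bfk]$ (Proposition~\ref{lem:XkZk}) together with the identity $\E[Z_\bfk]^2/\E[\bar X_\bfk]\sim\E[X_\bfk]\prod_u k_u!$, the lemma reduces to showing
\[
    \max_\pi \sum_{\pi'\text{ similar to }\pi} q^{\Delta(\pi,\pi')}\le n^{O(1)}\prod_u k_u!,
\]
or equivalently, that the weighted count over \emph{unordered} similar $\pi'$ is at most $n^{O(1)}$.

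I would parametrize similar $\pi'$ by (i) a transmuted profile $\bft\le\bfk$ with $\sum_u t_u u\le n^{1-c_0}$, (ii) a subset $I\subseteq[k]$ selecting the identical parts of $\pi$ (so that $\{V_i:i\in I\}$ has profile $\bfk-\bft$), and (iii) an unordered partition $\rho$ of $T:=[n]\setminus\bigcup_{i\in I}V_i$ with profile $\bft$. Each configuration yields $\prod_u k_u!$ ordered $\pi'$, and the diagonal $\bft=0$ (where $\pi'=\pi$) contributes exactly $1$. Writing $\sigma:=\pi|_T$ for the partition of $T$ into the transmuted parts of $\pi$, it thus suffices to show
\[
    \sum_{\bft\ne 0}\sum_I\sum_\rho q^{\Delta(\sigma,\rho)}\le n^{O(1)}.
\]

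The central tool is the explicit identity $\Delta(\sigma,\rho)=\sum_{j'}\sum_{i_1<i_2}n_{i_1,j'}n_{i_2,j'}$, where $n_{i,j'}=|V'_{j'}\cap V_i|$, together with Fact~\ref{tamelemma}: each part $V'_{j'}$ of $\rho$ is either \emph{scrambled} with respect to $\sigma$ (giving $\Delta_{j'}\ge(1-o(1))\binom{|V'_{j'}|}{2}$ by direct computation at the scrambling boundary, since the largest intersection $|V'_{j'}\cap V_i|$ is bounded by $2(\alpha-|V'_{j'}|)+2=o(\log n)$) or \emph{exceptional} in one of the cases c',~d',~e' (giving $0$, or $|V'_{j'}|-1$). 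Conditions $D_\pi$ and $D_{\pi'}$ from Definition~\ref{def:tamepairs} cap the total number of exceptional parts at $O(\ln^3 n)$. A scrambled part of size $u\ge u^*$ admits at most $(ta)^u$ configurations while $q^{\Delta_{j'}}\le n^{-u(1-o(1))}$; since $ta\le n^{1-c_0}$, the per-part contribution is at most $(ta/n)^{u(1-o(1))}\le n^{-c_0 u+o(u)}$, which is superpolynomially small in $n$. Each exceptional part individually has only polynomially many configurations.

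The main obstacle will be organising the bookkeeping so that the factor $\sum_I=\prod_u\binom{k_u}{t_u}\le e^{O(|T|)}$, together with the sum over all admissible profiles $\bft$, is absorbed by the $n^{-\Omega(c_0|T|)}$ savings from the scrambled parts. In the large-$|T|$ regime (say $|T|\gg\ln^4 n$) the scrambled savings dominate directly. In the small-$|T|$ regime, where $t=O(\ln^3 n)$ so that all parts could in principle be exceptional, one bounds configurations using the $O(\ln^3 n)$ cap; here the delicate point is that case~d' parts contribute $0$ to $\Delta$ yet are not freely chosen, since the vertex removed from each case~d' part must be absorbed by a partner part (scrambled or case c'/e') on the other side. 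Tracking this vertex-flow correspondence prevents the aggregate count of case~d' configurations from exploding, and once these estimates are summed over all admissible $\bft$, the total collapses to $n^{O(1)}$ and the lemma follows.
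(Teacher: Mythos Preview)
Your approach is correct and follows the same overall strategy as the paper: both reduce to bounding $\sum_{(\pi,\pi')} q^{f-g}$ by $P\prod_u k_u!\cdot n^{O(1)}$, and both exploit Fact~\ref{tamelemma} to show that scrambled parts yield super-polynomial savings while exceptional parts are at most $O(\ln^3 n)$ in number. The main difference is organisational. You work from the $\pi'$-side, classifying each part of $\rho=\pi'|_T$ as scrambled or exceptional (c'/d'/e') and summing the per-part contributions, which forces you to deal with the small-$|T|$ regime via a vertex-flow correspondence. The paper instead parametrizes from the $\pi$-side by $(s,(z_u)_u)$, where $s$ counts scrambled parts of $\pi$ and $z_u$ counts $2$-composed parts of $\pi$ of size $u$, and then \emph{generates} $\pi'$ constructively (pick scrambled/2-composed parts, pick exceptional vertices, assign them, fill in the scrambled parts of $\pi'$, order). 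This gives the clean lower bound $f-g \ge \sum_u z_u(u-1)+(2-c_0)s\log_b^2 n$, after which the sums over $s$ and over $(z_u)_u$ factor and close via $e^z=\sum_r z^r/r!$ and the tame tail condition, uniformly for all $s\le n^{1-c_0}$, without a separate small/large-$|T|$ split.

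One concrete error in your write-up: you say ``case d' parts contribute $0$ to $\Delta$ yet are not freely chosen, since the vertex removed from each case d' part\dots''. This should be case c' throughout. With roles swapped, case c' is $V'_{j'}\subset V_i$ with $|V_i\setminus V'_{j'}|=1$, so indeed $\Delta_{j'}=0$ and a vertex is \emph{removed} from $V_i$; in case d', $V'_{j'}\supset V_i$ \emph{adds} a vertex and gives $\Delta_{j'}=|V'_{j'}|-1$. The substance of your argument---that the removed vertex from each $\Delta_{j'}=0$ part must land in a $\Delta_{j'}>0$ part (since two case-c' parts cannot share the same $V_i$), bounding the number of the former by the number of the latter---is correct once the labels are fixed.
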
 
Note that in Theorem~\ref{theorem:general}, \eqref{eq:lowerboundexpectation}, we assume that $\E[\bar X_\bfk] \gg \ln n$, so in that case the bound from Lemma~\ref{lemmasimilar} is $o(1)$. Lemmas \ref{lemmascrambled} and \ref{lemmamiddle} are the most difficult to prove; this is accomplished in \S\ref{sectionscrambled} after some quick preliminaries in \S \ref{section:smprelims}. We prove Lemma~\ref{lemmasimilar} in \S\ref{sectionsimilar} with a short standalone argument.

\subsection{General preliminaries}
\label{section:smprelims}

Let $\bfk$ be a tame colouring profile.  Recall from Lemma~\ref{lem:expxk} that 
$\ex{X_\kp} = P {{N-f \choose m}}/{{N \choose m}}$, where
\begin{equation}
\label{defPf} 
    P = P_\kp = \frac{n!}{\prod_{u^* \le u \le a} u!^{k_u}}
    \quad\text{and}\quad
    f=f_\mathbf{k}= \sum_{u^* \le u \le a} {u \choose 2} k_u.
\end{equation}
Here $P$ denotes the number of ordered vertex partitions of profile $\kp$, and $f$ is the number of \emph{forbidden edges}, that is, edges with both endpoints  in the same part of any partition with profile $\bfk$. From Proposition~\ref{lem:XkZk} and Lemma~\ref{gnmlemma}, since $f = \bigO(n \ln n)$ for any tame profile, we obtain that
\begin{align}\label{expectation}
    \ex{Z_\kp} \sim \ex{X_\kp} = P \frac{{N-f \choose m}}{{N \choose m}} \sim P \, q^{f} \exp \left( -\frac{(b-1)f^2}{n^2} \right).
\end{align}
For two partitions $\pi$ and $\pi'$ of profile $\kp$, let
\[g=g_{\pi, \pi'}\]
denote the number of forbidden edges that $\pi$ and $\pi'$ have in common, that is, the number of pairs of vertices that are in the same part in both $\pi$ and $\pi'$. Using Lemma~\ref{gnmlemma} as before, the probability that both $\pi$ and $\pi'$ induce a valid colouring
is  
\begin{align}\label{jointprobability} 
    \Pb\left(A_{\pi} \cap A_{\pi'}\right) = {{{N-2f+g} \choose {m}}}/{{N \choose m}} \sim q^{2f-g} \exp\left(-\frac{(b-1)(2f-g)^2}{n^2}\right).
\end{align}

\subsection{Proof of Lemmas \ref{lemmascrambled} and \ref{lemmamiddle}} \label{sectionscrambled}
	
	Throughout \S\ref{sectionscrambled}, we assume that $\kp$ is tame and $a=\alpha-\bigO(1)$, but we do not assume the additional conditions \eqref{eq:lowerboundmu}, \eqref{eq:lowerboundexpectation} and  \eqref{eq:lowerboundbeta} from Theorem~\ref{theorem:general} unless explicitly stated.

\subsubsection{Notation and main lemma}
\label{section:notationsecondmoment}	

Recall from the start of \S \ref{section:secondmoment} that given $(\pi,\pi') \in \Pi_{\mathrm{rel}}$, we denote by $\bfl=(\ell_u)_u$ and $\bft=(t_u)_u$ the numbers of \emph{identical} and \emph{transmuted} parts of size $u$, respectively, and let $\ell = \sum_u \ell_u$ and $t=\sum_u t_u$. Then $k_u=\ell_u+t_u$ for all $u$, and $k=\ell+t$. 
In order to prove Lemma \ref{lemmascrambled} we need a fine-grained description of how two partitions~$\pi$,~$\pi'$ overlap. So, in addition to $\bfl=(\ell_u)_u$, we define the \emph{overlap sequence} $\bfr = \bfr(\pi, \pi')$ encoding the overlap between $\pi$ and $\pi'$ within the transmuted parts.
	
\begin{definition} Let $(\pi, \pi') \in \Pi_{\mathrm{rel}}$, where $\pi=(V_1, \dots, V_k)$ and $\pi'=(V_1', \dots, V_k')$.
\begin{enumerate}[a)]
\itemsep0mm
    \item An \emph{overlap block} is a non-empty maximal set of vertices that are in the same transmuted part of $\pi$ and in the same transmuted part of $\pi'$. In other words, an overlap block is the non-empty intersection of two transmuted parts of $\pi$ and $\pi'$, respectively.
    \item Let $r_x^{u,v}$ denote the \emph{number} of overlap blocks of size exactly $x$ given by the intersection of a part of size $u$ in $\pi$ and and a part of size $v$ in $\pi'$. Formally, 
    \begin{align*}
        r_x^{u,v} = \Big| \Big\{(V_i, V'_j) \,\,\big|\,\,& \text{$V_i$ transmuted part of $\pi$, $V_j'$ transmuted part of $\pi'$, }  \\
        &   |V_i|=u, \,\, |V_j'|=v, \,\, |V_i \cap V_j'|=x \Big\} \Big|.
    \end{align*}
    \item For $x \ge 1$, let
    \begin{align*}
        r_x = \sum_{u,v}r_x^{u,v}.
    \end{align*}
    \item We call $\bfr=\bfr(\pi, \pi')=(r_x^{u,v})_{x,u,v}$ the \emph{overlap sequence} of the pair $(\pi,\pi')$.
\end{enumerate}
\end{definition}
We make several simple observations, some of which are summarised in the lemma below. Suppose that $(\pi, \pi') \in \Pi_{\mathrm{rel}}$.
First, note that a $z$-composed transmuted part $V_i$ of $\pi$ contains exactly $z$ pairwise disjoint overlap blocks.
In particular, if $V_i$ is scrambled and $|V_i|=u$, then $V_i$ contains at least $u-2(\alpha-u)-1$ disjoint overlap blocks.
So any overlap block in a scrambled part contains at most $2(\alpha-u) +2 \le 4(\alpha-u^*)$ vertices.
	
Any overlap block consisting of more than $4(\alpha-u^*)$ vertices is in an exceptional (transmuted but not scrambled) part $V_i$ of $\pi$, and contains $|V_i|\ge u^*$ or $|V_i|-1 \ge u^*-1$ vertices. Each exceptional part contains only one such block, so as there are at most $2 \ln^3 n$ exceptional parts, there are at most $2\ln^2 n$ overlap blocks with more than $4(\alpha-u^*)$ vertices. 
	
There are no overlap blocks of size $a$, because such an overlap block would  have to form a complete part in both $\pi$ and $\pi'$, but then this part would be identical, not transmuted. For the same reason, an overlap block of size $a-1$ cannot be contained in parts of size $a-1$ in both $\pi$ and $\pi'$.

\begin{lemma}\label{lem:ra-1}
Suppose that $(\pi, \pi')\in \Pi_{\mathrm{rel}}$. Then for all $4(\alpha-u^*)<x<u^*-1$, we have $r_x=0$. Furthermore,
        \[ \pushQED{\qed} 
        \sum_{x\ge u^*-1} r_x \le 2 \ln^3 n \quad \text{and} \quad   
    r_a = r_{a-1}^{a-1,a-1}=0. \qedhere
    \popQED
\]
\end{lemma}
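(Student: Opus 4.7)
The plan is to deduce all three claims as direct structural consequences of Fact~\ref{tamelemma} combined with the event $D$ enforced by the definition of $\Pi_{\mathrm{rel}}$. The key dichotomy is that every transmuted part is either scrambled or exceptional, and the possible sizes of overlap blocks in each case are tightly restricted.

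First I would establish the size restriction for the two types. If $V_i$ is a scrambled part of size $u$, then by definition $z(V_i, \pi') \ge u - 2(\alpha-u) - 1$, so $V_i$ decomposes into at least that many overlap blocks whose sizes sum to $u$; hence the largest block contained in $V_i$ has size at most $2(\alpha-u) + 2 \le 4(\alpha-u^*)$, using $u \ge u^*$ and $\alpha - u^* \ge 2$ (cf.\ the remark after Definition~\ref{deftame}). On the other hand, each of the three exceptional cases of Fact~\ref{tamelemma} produces overlap blocks only of sizes $|V_i|$, $|V_i|-1$, or $1$, hence either exactly $1$ or at least $u^*-1$. Combining these observations gives the first claim: no block size can lie strictly between $4(\alpha-u^*)$ and $u^*-1$.

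For the second claim, any overlap block $B = V_i \cap V'_j$ with $|B| \ge u^*-1$ cannot lie in a scrambled part by the above, so $V_i$ must be exceptional, and each exceptional part contributes at most one such large block. It therefore suffices to bound the number of exceptional parts of $\pi$ by $2\ln^3 n$. In the two exceptional cases where $V_i$ is itself $2$-composed with respect to $\pi'$, there is a part $V'_j$ with $|V_i \cap V'_j| = |V_i|-1 \ge u^*-1$, and since $|V'_j| \le |V_i|$ this also gives $|V_i \cap V'_j| \ge |V'_j|-1$; by the $\pi$--$\pi'$-swapped version of Definition~\ref{def:tamepairs}(a.3) there are at most $\ln^3 n$ such $V_i$. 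In the remaining exceptional case, $V_i$ is $1$-composed and contained in a $2$-composed part $V'_j$ with $|V'_j \setminus V_i| = 1$, so $V'_j$ satisfies $|V'_j \cap V_i| = |V_i| \ge |V_i|-1$ and is one of the $\le \ln^3 n$ parts of $\pi'$ counted by Definition~\ref{def:tamepairs}(a.3); this gives at most $\ln^3 n$ choices for $V'_j$ and hence for $V_i$. Summing yields at most $2\ln^3 n$ exceptional parts of $\pi$, as required.

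The third claim is immediate. A block of size $a$ would satisfy $|B| = |V_i| = |V'_j|$ (since all parts have size at most $a$), forcing $V_i = V'_j = B$; this contradicts both $V_i$ and $V'_j$ being transmuted rather than identical, so $r_a = 0$. The same argument applied to $|B| = a-1$ with $|V_i| = |V'_j| = a-1$ forces $V_i = V'_j$ and gives $r^{a-1,a-1}_{a-1} = 0$. No substantive obstacle is anticipated; the only step requiring care is the precise matching of each exceptional case of Fact~\ref{tamelemma} with the appropriate clause of Definition~\ref{def:tamepairs}, which is the mild bookkeeping behind the factor of $2$ in $2\ln^3 n$.
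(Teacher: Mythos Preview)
Your proposal is correct and follows exactly the approach the paper takes: the paper states these facts as ``simple observations'' in the paragraph immediately preceding the lemma and places a QED box on the lemma statement itself, without further proof. Your write-up is in fact more detailed than the paper's, particularly in carefully matching each exceptional case of Fact~\ref{tamelemma} with the relevant clause of Definition~\ref{def:tamepairs} to obtain the $2\ln^3 n$ bound, which the paper simply asserts right after Fact~\ref{tamelemma}.
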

We now organise pairs $(\pi, \pi')$ of partitions according to the sequences $\bfl$, $\bfr$.
\begin{definition}
Suppose that $\bfl=(\ell_u)_u$ is a possible sequence encoding the numbers of identical parts of size $u$, and that $\bfr=(r_x^{u,v})_{x,u,v}$ is a possible overlap sequence. Then we let
\[
    \Pi_{\bfl, \bfr}
    = \left\{ (\pi, \pi')\in \Pi_\mathrm{rel} \mid \text{ $\pi$ and $\pi'$ overlap according to $\bfl$ and $\bfr$} \right\}
\]
and
\[
    P_{\bfl, \bfr} = |\Pi_{\bfl, \bfr}|  .   
\]
\end{definition}
	
If $(\pi,\pi') \in \Pi_{\bfl, \bfr}$, they share exactly
\begin{align}
\label{defg}
    g= g_{\bfl, \bfr}  = \sum_{u=u^*}^a {u \choose 2}\ell_u + \sum_{z=2}^{a-1} {z \choose 2} r_z
\end{align}
forbidden edges. Combined with \eqref{expectation} and \eqref{jointprobability}, this  implies that
\begin{equation}
\label{umformung}
\begin{split}
\frac{1}{\ex{Z_\kp}^2}\sum_{(\pi,\pi') \in \Pi_{\bfl, \bfr}} \Pb\left(A_{\pi} \cap A_{\pi'}\right) &\sim  \frac{P_{\bfl, \bfr}}{P^2}\, q^{-g_{\bfl, \bfr}}\, \exp \Big(-\frac{(b-1)}{n^2} \left((2f-g_{\bfl, \bfr})^2-2f^2\right) \Big) \\
& \le \frac{P_{\bfl, \bfr}}{P^2}\, q^{-g_{\bfl, \bfr}}\, \exp \Big(-\frac{2(b-1)f}{n^2} \left(f-2g_{\bfl, \bfr}\right) \Big) \enspace . 
\end{split}    
\end{equation}
Given $\bfl$ and $\bfr$, we will also use the following notation:
	\begin{align*}
		\nid &= \sum_u u\ell_u = \lambda n, \,\,\text{ the number of vertices in identical parts}\\
		\ntr&=\sum_u ut_u = \sum_{x\ge 1} x r_x =  n-n_\mathrm{id} = (1-\lambda)n, \,\, \text{ the number of vertices in transmuted parts}\\
		\eta &= 1- \frac{r_1}{\ntr}, \,\, \text{ the fraction of such vertices in overlap blocks of size at least $2$}\\
		\gid &= \sum_u \ell_u{ u \choose 2}, \,\, \text{ the number of shared forbidden edges in identical parts}\\
		\gt & = \sum_z r_z {z \choose 2}, \,\, \text{ the number of shared forbidden edges in transmuted parts.}
	\end{align*}
	Note that $n=\nid+\ntr$ and $g=\gid+\gt$. By \eqref{eq:defofscrambledetc}, for $(\pi, \pi') \in \Pi_\mathrm{middle} \cup \Pi_\mathrm{scrambled}$ with overlap $\bfl$ and $\bfr$, 
\begin{equation}\label{eq:ntrbound1}
    \lambda = \frac{\nid}{n} \le 1-n^{-c_0},
    \quad \text{ or equivalently }
    \quad \ntr>n^{1-c_0}.
\end{equation}
We are now ready to state the main lemma which bounds the contribution to (\ref{secondmomentgoal}) from all pairs $(\pi, \pi') \in \Pi_{\bfl, \bfr}$ with $\bfr$, $\bfl$ so that  $\ntr>n^{1-c_0}$.
\begin{lemma}
\label{contributionlemma}
With the notation in this section, let $\bfl=\bfl(n)$ and $\bfr=\bfr(n)$ be possible overlap sequences of a relevant pair of partitions such that $\ntr \ge n^{1-c_0}$.  Then
\begin{align}
        \frac{1}{\ex{Z_\kp}^2}\sum_{(\pi,\pi') \in \Pi_{\bfl, \bfr}} \Pb\left(A_{\pi} \cap A_{\pi'}\right) \,\, \lesssim \,\,\,& F_1(\bfl) F_2(\bfl,\bfr)  \exp \parenth{F_3\parenth{\bfl, \bfr}}, \label{asldkjfalk}
    \end{align}
    where
    \begin{align*}
        F_1(\bfl) \,\,&= \,\,   \frac{\prod_u{k_u \choose \ell_u}^2}
        { {
                \E_p \left[{\bar{X}_{\boldsymbol{\bfl}}}\right]
        } },
        \\
        F_2(\bfl, \bfr) \,\,&=\,\, \prod_{x \ge 2, u,v} \frac{T(x,u,v)^{r^{u,v}_x}}{r^{u,v}_x!}\, \text{ with }\, T(x,u,v)= \frac{{t_ut_v{{u \choose x}}{v \choose x}}e^{x \eta}}{{\ntr \choose x}q^{x \choose 2}} \,\text{ and }\, \eta=\frac{\ntr-r_1}{\ntr},\\
        F_3\parenth{\bfl, \bfr} \,\,&=\,\, -\frac{2(b-1)f \left(f-2g\right)+2 (f -  \gid- a(\ntr-r_1))^2}{n^2} .
    \end{align*}
\end{lemma}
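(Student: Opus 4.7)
The starting point is the inequality~\eqref{umformung}, which gives
\[
\frac{1}{\E[Z_\bfk]^2}\sum_{(\pi,\pi')\in\Pi_{\bfl,\bfr}}\Pb(A_\pi\cap A_{\pi'})\le \frac{P_{\bfl,\bfr}}{P^2}\,q^{-g}\exp\!\Big(-\frac{2(b-1)f(f-2g)}{n^2}\Big),
\]
so the task reduces to showing $P_{\bfl,\bfr}/P^2\cdot q^{-g}\lesssim F_1(\bfl)F_2(\bfl,\bfr)\exp(-2(f-\gid-a(\ntr-r_1))^2/n^2)$. The plan is to enumerate $\Pi_{\bfl,\bfr}$ in two stages --- identical parts, then transmuted parts --- and to use Stirling-type expansions from Lemma~\ref{ablemma} to isolate the factors $F_1$, $F_2$, and the Gaussian-type exponential correction appearing in $F_3$.

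For the identical parts, I would choose the $\ell_u$ positions of size $u$ in each of $\pi$ and $\pi'$ (contributing $\prod_u\binom{k_u}{\ell_u}^2$), match these positions across the two partitions ($\prod_u\ell_u!$), and select vertex sets for them ($P_\bfl$ choices as an ordered sequence of disjoint subsets). After dividing by $P^2$ and combining with the $q^{-\gid}$ portion of $q^{-g}$, the identity $\E_p[\bar X_\bfl]=P_\bfl q^{\gid}/\prod_u\ell_u!$ collapses this contribution to exactly $F_1(\bfl)$. For the transmuted parts, I would parameterise pairs by the integer matrix $(\xi_{ij})=(|V_i\cap V_j'|)$ of intersection sizes between the transmuted positions of $\pi$ and $\pi'$; each consistent matrix contributes $\ntr!/\prod_{i,j}\xi_{ij}!$ vertex assignments, and the number of such matrices is bounded above by $\prod_{x,u,v}(t_u t_v)^{r_x^{u,v}}/r_x^{u,v}!$ after relaxing the row- and column-sum constraints (which only restrict the count further). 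Reorganising the resulting product via the identity
\[
\binom{u}{x}\binom{v}{x}\big/\binom{\ntr}{x}=\frac{u!\,v!\,(\ntr-x)!}{x!(u-x)!(v-x)!\,\ntr!},
\]
pairing with the $q^{-\gt}$ half of $q^{-g}$, and folding the $x=1$ blocks (whose count is $\approx\ntr$, constituting the baseline ``typical'' behaviour) into the global factorials, the resulting product matches $F_2(\bfl,\bfr)$ modulo two Stirling-type refinements.

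The most delicate step is extracting these two refinements: the per-block factor $e^{x\eta}$ inside $T(x,u,v)$ (with $\eta=(\ntr-r_1)/\ntr$), and the aggregate exponential $\exp(-2(f-\gid-a(\ntr-r_1))^2/n^2)$ in $F_3$. Applying Lemma~\ref{ablemma} in the form $\ntr!/(\ntr-X)!\lesssim\ntr^X\exp(-X^2/(2\ntr))$ to each factorial ratio yields the leading hypergeometric terms together with Gaussian-type corrections of order $\exp(X^2/\ntr)$; for each $\ge 2$-block these corrections (interacting with the count of ``other'' transmuted vertices $r_1=\ntr(1-\eta)$) consolidate into exactly $e^{x\eta}$. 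The global correction comes from Stirling around $\ntr!$ for the aggregate displacement: the total transmuted forbidden-edge count is $f-\gid$, its expected contribution from $\ge 2$-blocks scales like $a(\ntr-r_1)$, and the squared deviation divided by $\ntr\sim n$ (with a factor of $2$ from the doubled structure across $\pi$ and $\pi'$) produces the claimed $-2(f-\gid-a(\ntr-r_1))^2/n^2$ exponent. The main obstacle will be this bookkeeping across all $(u,v,x)$ triples; here the tameness bounds in Proposition~\ref{prop:proptame} and the assumption $\ntr\ge n^{1-c_0}$ ensure that all Stirling expansions are valid and that the accumulated error terms remain negligible compared to the terms we are retaining.
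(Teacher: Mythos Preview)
Your overall decomposition into identical parts (giving $F_1$) and transmuted parts (giving $F_2$) is correct and matches the paper. You are also right that the $e^{x\eta}$ factors arise from Lemma~\ref{ablemma} applied to $r_1!/\ntr!$, exactly as in the paper's equation~\eqref{eq:sdl}.

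However, there is a genuine gap in your treatment of the Gaussian correction $-2(f-\gid-a(\ntr-r_1))^2/n^2$ in $F_3$. This term does \emph{not} come from Stirling or Lemma~\ref{ablemma}; your heuristic about ``aggregate displacement'' and ``squared deviation divided by $\ntr$'' does not correspond to an actual computation. In the paper this term has a completely different origin: after placing the identical entries and the entries $\ge 2$ in the overlap matrix, the $r_1$ entries equal to $1$ must form a $0$--$1$ matrix with prescribed row and column sums $(\sigma_i),(\tau_j)$. The paper invokes McKay's theorem (Theorem~\ref{mckaymatrices}), which gives
\[
N(\boldsymbol\sigma,\boldsymbol\tau)\sim \frac{r_1!}{\prod_i\sigma_i!\prod_j\tau_j!}\exp\Big(-\frac{S_2T_2}{2r_1^2}\Big),
\]
and it is the correction factor $\exp(-S_2T_2/(2r_1^2))$, combined with the lower bound $S_2,T_2\ge 2(f-\gid-a(\ntr-r_1))$, that produces the term you are after. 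If you simply ``relax the row- and column-sum constraints'' as you propose, you lose this factor entirely: you would obtain only the leading $r_1!/(\prod_i\sigma_i!\prod_j\tau_j!)$ part, and the resulting bound would be too weak by a factor $\exp(\Theta(\ln^2 n))$. This matters downstream, since in Lemma~\ref{easylemma2} this very term cancels against $T(2)=2bf^2/n^2+o(1)$ inside $F_2$; without it the scrambled case does not close. You need McKay's theorem or an equivalent enumeration result for $0$--$1$ matrices with given margins.
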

We prove this lemma in \S\ref{sectionlemmaproof} below. Very roughly speaking, the term $F_1$ quantifies the contribution from identical parts and $F_2$ the contribution from smaller overlaps in scrambled parts. The term $\exp(F_3)$ contains some corrective terms, for example from calculating in $G_{n,m}$ rather than $G_{n,p}$, which will eventually cancel out with the summed contributions in $F_2$ from overlap blocks of size $2$, that is, joint forbidden edges of $\pi$ and $\pi'$ (which is the reason why we work in $G_{n,m}$ rather than in $G_{n,p}$).   
Subsequently, in \S\ref{summinssection}, we sum \eqref{asldkjfalk} over all possible sequences~$\bfl$, $\bfr$ corresponding to $\Pi_\mathrm{middle}$ and $\Pi_{\mathrm{scrambled}}$ in order to obtain Lemmas~\ref{lemmascrambled} and~\ref{lemmamiddle}.
	
\subsubsection{Proof of Lemma \ref{contributionlemma}} \label{sectionlemmaproof}
We start by counting $P_{\bfl, \bfr}$. Given some pair $(\pi, \pi')$ of partitions with $\pi=(V_1, \dots, V_k)$ and $\pi'=(V_1', \dots, V_k')$, we define the $k \times k$ \textit{overlap matrix} $\mc{M}= \mc{M}(\pi, \pi')=\left( M_{ij} \right)_{1 \le i, j, \le k}$ by letting
	\[
	M_{ij}=|V_i \cap V'_j|.
	\]
Note that the entries in the $i^\mathrm{th}$ row of $\mc{M}$ sum to $|V_i|$, and the entries in the $j^\mathrm{th}$ column sum to~$|V'_j|$. So for every $u^* \le u \le a$, exactly $k_u$ rows and columns of $\mc{M}$ sum to $u$.  Furthermore, the row and column sums decrease in size (as we defined ordered partitions as having decreasing part sizes $|V_1| \ge |V_2| \ge ... \ge |V_k|$), so the first $k_a$ rows and columns sum to $a$, the next $k_{a-1}$ rows and columns sum to $a-1$ and so on. The sum of all entries in the matrix is $n$.
	
	For every $u^* \le u \le a$, $(\pi, \pi')$ has exactly $\ell_u$ identical parts of size $u$. This corresponds to~$\ell_u$ entries of the number $u$ in $\mc{M}$ so that all other entries in the same row or column are $0$. In addition, for any $1 \le x \le a-1$ there are $r_x$ entries of the number $x$. All other entries of $\mc{M}$ are $0$.
	
	Now conversely, given a matrix with these properties, the number of corresponding pairs $(\pi$, $\pi')$ of partitions is given by the multinomial coefficient 
	\begin{equation*}
		\frac{n!}{ \prod_u u!^{\ell_u}\prod_x x!^{r_x}}.
	\end{equation*}
	This is because for each positive entry $M_{ij}$ of $\mc{M}$, we may choose a vertex set of size $M_{ij}$ for $V_i \cap V'_j$, and this uniquely defines $\pi$ and $\pi'$. So let $M_{\bfl,\bfr}$ denote the number of matrices with row and column sums according to $\bfk$ and entries corresponding to $\bfl$ and $\bfr$ and with the properties above, then
\begin{equation}
\label{relationpm}
    P_{\bfl,\bfr} \le \frac{n!}{\prod_u u!^{\ell_u}\prod_x x!^{r_x} } \, M_{\bfl,\bfr}.
\end{equation}
Note that this is an upper bound because not every matrix $\mc{M}$ with the properties above corresponds to relevant pairs $(\pi, \pi')$, which fulfil some additional conditions.

\begin{lemma}
\label{matrixlemma}
With the notation in this section,
\begin{align*}
    M_{\bfl,\bfr} \lesssim r_1! \, &\prod_u \left({k_u \choose \ell_u}^2\frac{\ell_u!}{{u!^{2t_u}}} \right)  \prod_{x \ge 2, u,v} \frac{\left(t_u t_v {{u \choose x}{v \choose x}x!^2}\right)^{r^{u,v}_x}}{r^{u,v}_x!}  \exp\left( -\frac{2 \big(f -  {\gid}- a{(\ntr-r_1)}\big)^2}{n^2}\right).
\end{align*}
\end{lemma}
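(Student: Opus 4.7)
Proof proposal for Lemma~\ref{matrixlemma}:

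The plan is to enumerate the matrices $\mc{M}$ directly, separating off the identical parts and then handling the transmuted submatrix via a configuration-model style argument.

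First I would count the contribution from identical parts: for each $u^* \le u \le a$, I would select $\ell_u$ out of the $k_u$ rows of sum $u$ and, independently, $\ell_u$ out of the $k_u$ columns of sum $u$ to be identical, and then match them in $\ell_u!$ ways. This gives the factor $\prod_u \binom{k_u}{\ell_u}^2 \ell_u!$ and completely determines the corresponding rows and columns of $\mc{M}$.

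For the transmuted submatrix, which has $t_u=k_u-\ell_u$ rows and columns of each sum $u$ and entries distributed according to $\bfr$, I would use the configuration model: each transmuted row of sum $u$ carries $u$ labelled half-edges, and likewise each transmuted column of sum $v$ carries $v$ labelled half-edges. I would then place the overlap blocks sequentially. For each $(u,v,x)$ with $x\ge 2$, each of the $r_x^{u,v}$ blocks is specified by a choice of transmuted row of sum $u$ ($t_u$ options), transmuted column of sum $v$ ($t_v$ options), $x$-subsets of row and column half-edges ($\binom{u}{x}\binom{v}{x}$), and a bijection between them ($x!$), for a total of $t_u t_v \binom{u}{x}\binom{v}{x} x!$ configurations per block, divided by $r_x^{u,v}!$ for block interchangeability. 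The remaining $r_1$ row half-edges on each side are then matched into singleton blocks in $r_1!$ ways. Since each matrix in our class corresponds to $\prod_u u!^{2 t_u}/\prod_{x\ge 2} x!^{r_x}$ such half-edge matchings, dividing and absorbing one $x!$ per block into $x!^2$ reproduces the main body of the claimed bound, namely $r_1!\prod_u\binom{k_u}{\ell_u}^2\ell_u!/u!^{2t_u}$ times $\prod_{x\ge 2,u,v}(t_u t_v\binom{u}{x}\binom{v}{x}x!^2)^{r_x^{u,v}}/r_x^{u,v}!$.

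The exponential correction $\exp(-2(f-\gid-a(\ntr-r_1))^2/n^2)$ must then be extracted from a finer analysis showing that not all configurations produced by the above procedure yield matrices with exactly the structure $\bfr$: during the singleton-matching step, some pairs of singletons may collide in a cell that already contains a size-$\ge 2$ block or in the same cell as each other, inflating the corresponding entry and destroying the prescribed structure. Quantifying this overcount is the delicate part: one would estimate the expected number of such collisions (equivalently, the log-probability of a collision-free singleton matching) and show that it matches the Gaussian-type quadratic form appearing in the correction. The argument $f-\gid-a(\ntr-r_1)$ is natural here: $f-\gid=\sum_u t_u\binom{u}{2}$ counts the available ``cell opportunities'' in transmuted rows, while $a(\ntr-r_1)$ subtracts the portion already consumed by non-singleton blocks (treating each transmuted row as having essentially $a$ slots), so the difference encodes how far the transmuted configuration deviates from the idealised regime in which collisions are minimised. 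The main obstacle I anticipate is executing this survival-probability estimate rigorously, combining Stirling expansions of the $(u!)^{2t_u}$ and $r_1!$ factors with a careful bookkeeping of the second-order deviations $(a-u)$, and checking that all sub-leading errors are absorbed either into the implicit $\lesssim$ or into factors that are later shown to be $1+o(1)$ in the second-moment estimate.
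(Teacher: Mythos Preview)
Your configuration-model derivation of the main body of the bound is correct and, in fact, a clean alternative to the paper's packaging: you obtain the factors $\binom{u}{x}\binom{v}{x}x!^2$ directly from half-edge choices, whereas the paper places the size-$\ge 2$ entries as matrix entries (contributing only $(t_ut_v)^{r_x^{u,v}}/r_x^{u,v}!$) and recovers the remaining factors later from a \emph{lower} bound on $\prod_i\sigma_i!\prod_j\tau_j!$, where $\sigma_i,\tau_j$ are the residual row and column sums of the 0--1 submatrix. Either route yields the same product.

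The genuine gap is the exponential correction. Your instinct that it encodes the survival probability of a collision-free singleton matching is right, but this is precisely McKay's asymptotic enumeration of 0--1 matrices with prescribed margins (Theorem~\ref{mckaymatrices} in the paper), not something that falls out of Stirling expansions of $(u!)^{2t_u}$ and $r_1!$. The paper simply invokes McKay's formula
\[
N(\boldsymbol\sigma,\boldsymbol\tau)\sim \frac{r_1!}{\prod_i\sigma_i!\prod_j\tau_j!}\exp\Bigl(-\frac{S_2T_2}{2r_1^2}\Bigr),
\]
and then shows the uniform lower bound $\tfrac12\sum_i\sigma_i(\sigma_i-1)\ge f-\gid-a(\ntr-r_1)$ over all placements of the size-$\ge 2$ entries (the worst case being when each such entry sits in a row of maximal residual sum). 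Squaring and replacing $r_1$ by $n$ gives exactly the claimed exponent. You should either cite McKay directly or be explicit that your ``collision analysis'' amounts to reproving it; the latter is nontrivial and your Stirling-based sketch would not produce the $S_2T_2/(2r_1^2)$ term.

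This correction is not cosmetic: it is what cancels $T(2)=2bf^2/n^2+o(1)$ against the $G_{n,m}$ correction in $F_3$ (Lemma~\ref{easylemma2}). Without it you retain an uncancelled $\exp\bigl(\Theta(\ln^2 n)\bigr)$ factor in the scrambled case, and the second-moment bound in Proposition~\ref{secondmomentgoal} fails.
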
	
\begin{proof}
Start with an empty $k \times k $ matrix. We first choose places for the $\ell_u$ entries of the number $u$, $u^*\le u \le a$, and write $0$ for all other entries in the same row or column. Each entry $u$ has to go into one of the $k_u$ rows and columns which sum to $u$. As there cannot be two such entries $u$, $v$ in the same row or column, there are exactly
\begin{align}\label{placelargest}
    \prod_u {k_u \choose \ell_u}^2 \ell_u!
\end{align}
choices. All other non-zero entries must be placed in the other $t=k-\ell$ 
rows and columns. Recall that $t_u=k_u-\ell_u$. We next place $r_x^{u,v}$ entries of the number $x$ for all $2 \le x \le a-1$, for which there are at most
\begin{align}\label{placelarge}
    \prod_{x\ge 2, u, v} {t_u t_v \choose r^{u,v}_x} &\le \prod_{x \ge 2, u,v} \frac{(t_u t_v)^{r^{u,v}_x}}{r^{u,v}_x!}
\end{align}
possibilities. All remaining entries are $0$ or $1$, and their placement needs to be in accordance with the given row and column sums of the matrix. To bound the number of choices, we use the following theorem of McKay~\cite{mckay1984asymptotics} 
that pins down the  asymptotic number of $0$-$1$ matrices with prescribed row and column sums.
\begin{theorem}[\cite{mckay1984asymptotics}]
    \label{mckaymatrices}
    Let $N(\boldsymbol{\sigma}, \boldsymbol{\tau})$ be the number of $m \times n$ 0-1 matrices with row sums $\boldsymbol{\sigma}=(\sigma_1, \dots, \sigma_m)$ and column sums $\boldsymbol{\tau}=(\tau_1, \dots, \tau_n)$. Let $S= \sum_{i=1}^m \sigma_x$, $\sigma = \max_i \sigma_i$, $\tau = \max_j \tau_j$, $S_2 = \sum_{i=1}^m \sigma_i (\sigma_i -1)$ and $T_2 = \sum_{j=1}^n \tau_j (\tau_j -1)$. If $S \rightarrow \infty$ and $1 \le \max\{\sigma,\tau\}^2 < dS$ for some $d < 1/6$, then
    \[
    N(\boldsymbol{\sigma}, \boldsymbol{\tau}) = \frac{S!}{\prod_{i=1}^m \sigma_i! \, \prod_{j=1}^n \tau_j!} \exp \left( - \frac{S_2 T_2}{2S^2} + \bigO \left( \frac{\max\{\sigma,\tau\}^4}{S}\right) \right).
    \]
\end{theorem}

In our setting, we want to count the number of $k\times k$ $0$-$1$-matrices where the row and column sums are given by the profile $\bfk$, minus the entries we have already placed in the matrix. We use the notation $\boldsymbol{\tau}=(\tau_1, \dots, \tau_k)$ and $\boldsymbol{\sigma}=(\sigma_1, \dots, \sigma_{k})$ for the prescribed row and column sums as in McKay's theorem, where the exact values for the $\sigma_i$ and $\tau_j$ depend on where we placed the larger entries in the matrix. Note that $ N(\boldsymbol{\sigma}, \boldsymbol{\tau}) $ overcounts the total number of ways to complete the matrix as we cannot place any entries $1$ where we have already written another number, but this will be insignificant.

The total number of entries of the number $1$ is exactly $r_1$ -- this is $S$ in the notation of Theorem \ref{mckaymatrices}. If $n$ is large enough then 
\begin{equation}
r_1 \ge n^{0.5} \label{eq:r1}
\end{equation}
 (because by assumption $\ntr > n^{1-c_0}=n^{1-c/3}$ where $c \in (0,1)$ is the constant from Definition~\ref{deftame}, and there are at most $2 \ln^3 n$ exceptional parts, so $\pi$ has at least $n^{0.5}$ scrambled parts, each of which contains overlap blocks of size $1$). 
Furthermore, all $\sigma_i$'s and $\tau_j$'s are at most $a=\bigO(\ln n)$. So  certainly $1 \le \max\{\sigma,\tau\}^2 < S/10$, say, and Theorem \ref{mckaymatrices} gives
\begin{align}\label{01matrix}
    N(\boldsymbol{\sigma}, \boldsymbol{\tau}) \sim \frac{r_1!}{\prod_{i=1}^k \sigma_i! \, \prod_{j=1}^k \tau_j!} \exp \left( - \frac{\sum_{i=1}^k \sigma_i (\sigma_i -1) \sum_{j=1}^k \tau_j (\tau_j -1)}{2r_1^2} \right).
\end{align}
The sequence $\sigma_1, \dots, \sigma_k$ can be obtained from the sequence $a, a, \dots, u^*, u^*$ ($k_u$ times $u$ for each $u^* \le u \le a$) in the following way. First, for each $u^* \le u \le a$ replace exactly $\ell_u$ members $u$ of the sequence by $0$ (this corresponds to the $\ell_u$ entries of the number $u$ in the matrix). Then for all $u^* \le u \le a$, successively subtract the number $2$ from $\sum_{u^*\le v \le a} r_2^{u,v}$ members of the sequence which were originally $u$, the number $3$ from $\sum_{u^* \le v \le a} r_3^{u,v}$ members of the sequence which were originally $u$, and so on (with repetition).

Accordingly, the product $\prod_{i=1}^{ k} \sigma_i!$ can be obtained from the product $\prod_{u^* \le u \le a} u!^{k_u}$ by removing the corresponding factors of the factorials. Then $\prod_{i=1}^{ k} \sigma_i!$ is minimised if these factors are as large as possible, which happens if each number is subtracted from a member of the sequence that is as large as possible. Recalling that $t_u=k_u-\ell_u$, this gives
\begin{align*}
    \prod_{i=1}^{ k} \sigma_i! &\ge \prod_{u} u!^{k_u-\ell_u} \prod_{x \ge 2, u, v}  \frac{\left( u-x\right)!^{r_x^{u,v}}}{u!^{r_x^{u,v}}}= \frac{\prod_{u} u!^{t_u}}{ \prod_{x \ge 2, u,v} \left(x!{{u\choose x}}\right)^{r^{u,v}_x}}.
\end{align*}
Of course there is a corresponding bound for $\boldsymbol{\tau}=(\tau_1, \dots, \tau_{k})$, so
\begin{align} \label{bound2i}
    \prod_{i=1}^{ k} \sigma_i! \prod_{j=1}^{ k} \tau_j! \ge \frac{\prod_{u} u!^{2t_u}}{ \prod_{x \ge 2, u,v} \left(x!^2{{u \choose x}{v \choose x}}\right)^{r^{u,v}_x}}.
\end{align}
A similar argument gives a lower bound  for $\sum_{i=1}^k \sigma_i (\sigma_i -1)$. Recalling that $f=\sum_u k_u{u \choose 2}$,
\begin{align*}
    \sum_{i=1}^k \sigma_i (\sigma_i -1)
    = 2 \sum_{i=1}^k  {\sigma_i \choose 2} 
    &\ge 2\sum_u {u \choose 2 }(k_u-\ell_u) - 2\sum_{x\ge2} r_x \parenth{{a \choose 2}-{a-x \choose 2}}\\
    &= 2f - 2 \sum_u {u \choose 2} \ell_u-  \sum_{x\ge2}r_x (2ax-x-x^2) \\
    &\ge 2f -  2 {\gid} - 2a \sum_{x\ge2} x r_x  = 2f -  2{\gid}- 2a{(\ntr-r_1)}.
\end{align*}
As the same lower bound also holds for $\sum_{j=1}^k \tau_j (\tau_j -1)$, we obtain that
\begin{align*}
    \sum_{i=1}^k \sigma_i (\sigma_i -1)\sum_{j=1}^k \tau_j (\tau_j -1)&\ge  4(f -  {\gid}- a{(\ntr-r_1)})^2 .
\end{align*}
Plugging this and (\ref{bound2i}) into (\ref{01matrix}) gives
\begin{align*}
    N(\boldsymbol{\sigma}, \boldsymbol{\tau}) \lesssim \frac{r_1!}{\prod_{u} u!^{2t_u}} \prod_{x \ge 2, u,v} \left(x!^2{{u \choose x}{v \choose x}}\right)^{r^{u,v}_x} \exp\left( -\frac{2 (f -  {\gid}- a{(\ntr-r_1)})^2}{r_1^2}\right).
\end{align*}
Together with  (\ref{placelargest}) and (\ref{placelarge}),  bounding $r_1 \le n$, we have \begin{align*}
    M_{\bfl,\bfr} \lesssim r_1! \, &\prod_u \left({k_u \choose \ell_u}^2\frac{\ell_u!}{{u!^{2t_u}}} \right)  \prod_{x \ge 2, u,v} \frac{\left(t_u t_v x!^2{{u \choose x}{v \choose x}}\right)^{r^{u,v}_x}}{r^{u,v}_x!}    \exp\left( -\frac{2 (f -  {\gid}- a{(\ntr-r_1)})^2}{n^2}\right),
\end{align*}
completing the proof of Lemma \ref{matrixlemma}.
\end{proof}	
We need one final estimate before combining everything into one big formula. Recall from \S\ref{section:notationsecondmoment} that
\[
    n=\nid+\ntr \, \text{  and }\, \eta
    = \frac{\ntr-r_1}{\ntr},
\]
and so by Lemma \ref{ablemma} (noting that $r_1 \rightarrow \infty$ by \eqref{eq:r1}),
\begin{equation}
\label{eq:sdl}
\begin{split}
    \frac{r_1!}{n!}
    &=\frac{\ntr!}{n!} \cdot \frac{(\ntr-{(\ntr-r_1)})!}{\ntr!} 
    \lesssim \frac{(n-{\nid})!}{n!}  \ntr^{-\ntr+r_1}e^{\eta {(\ntr-r_1)}} \\
    &\le \frac{(n-{\nid})!}{n!} \cdot \prod_{x \ge 2} \left(\frac{e^{\eta x} }{x!{\ntr \choose x}} \right)^{r_x}, 
\end{split}
\end{equation}
using in the last step that ${\ntr-r_1}=\sum_{x\ge 2} x r_x$, and that ${\ntr \choose x} \le n_{\mathrm{tr}}^x/x!$. Now we are ready to put everything together. We continue from (\ref{umformung}), using (\ref{relationpm}), Lemma \ref{matrixlemma}, (\ref{defPf}),  \eqref{eq:sdl} and the fact that $k_u=\ell_u+t_u$:
	
	\[\frac{1}{\ex{Z_\kp}^2}  \sum_{(\pi, \pi') \in \Pi_{\bfl, \bfr}} \Pb\left[A_{\pi} \cap A_{\pi'}\right] \sim q^{-g} \, \frac{P_{\bfl, \bfr}}{P^2}\, \exp \Big(-\frac{2(b-1)f}{n^2} \left(f-2g\Big) \right)\]
	\begin{align}
		\le & \, \,\, \, M_{\bfl,\bfr} \, \frac{ \prod_{u} u!^{2k_u-\ell_u}}{ n! \, q^{g}\,\prod_{x\ge 2} x!^{r_x}}\, \exp \left(-\frac{2(b-1)f}{n^2} \left(f-2g\right) \right)\nonumber\\
		\lesssim & \, \,\, \, \frac{r_1!}{n! \, q^g} \, \prod_u \left({k_u \choose \ell_u}^2 \ell_u!u!^{\ell_u}\right)  \prod_{x \ge 2, u,v} \frac{\left(x! t_u t_v {{u \choose x}{v \choose x}}\right)^{r^{u,v}_x}}{r^{u,v}_x!} \nonumber \\
		& \cdot  \exp\left( -\frac{2(b-1)f}{n^2} \left(f-2g\right)-\frac{2 (f -  {\gid}- a{(\ntr-r_1)})^2}{n^2}\right)  \nonumber\\
		\lesssim & \, \,\, \, F_1(\bfl) F_2(\bfl,\bfr)  \exp \parenth{F_3\parenth{\bfl, \bfr}} ,
	\end{align}
	as required, bounding $r_1 \le n-\nid$ and recalling that $\E_p[\bar{X}_\bfl]= \frac{n!}{(n-\nid)!} \prod_u \left( \frac{q^{{u \choose 2} \ell_u}}{\ell_u! u!^{\ell_u}}\right)$.
	\qed
	
\subsubsection{Summing the terms --- proof of Lemmas \ref{lemmascrambled} and \ref{lemmamiddle}}\label{summinssection}
		
\subsubsection*{Overview}
With Lemma~\ref{contributionlemma} at hand, we know almost exactly how much pairs $(\pi, \pi') \in \Pi_{\bfl, \bfr}$ contribute to the sums that we want to bound in Lemmas~\ref{lemmascrambled} and \ref{lemmamiddle}. We will now sum these contributions over all possible overlap sequences $\bfl$ and $\bfr$ encoding pairs in $\Pi_\mathrm{scrambled}\cup \Pi_\mathrm{middle}$.
	
Consider the expression $F_2(\bfl, \bfr)$ from Lemma~\ref{contributionlemma}. The form of the terms will make it easy to sum $F_2(\bfl, \bfr)$ over all integers $r_x$ using the well-known identity $\sum_{n\ge 0 }{z^n}/{n!} = e^z$. Roughly speaking, this will result in terms of the form $\exp(\sum_{u,v,x}T(x,u,v) )$. In view of this, we make the following definition.
	
Recall that by Lemma  \ref{lem:ra-1}, $r_{a} =r_{a-1}^{a-1,a-1}=0$, which is why we will not need to take sums over these parameters. Furthermore, note that the terms $T(x,u,v)$ only depend on $\bfl$ (via $t_u$ and $t_v$) and $r_1$ (via $\eta = (\ntr - r_1)/ \ntr$, where $\ntr = \sum_u ut_u= (1-\lambda)n$).
	\begin{definition}\label{def:Tsums}
		For $2 \le x \le a-2$, let
		\[
		T(x) = T_{\bfl,r_1} (x) = \sum _{u^* \le u,v \le a} T(x,u,v) = \frac{e^{x\eta}\left(\sum_{u} t_u {u \choose x}  \right)^2}{{\ntr \choose x}q^{x \choose 2}}.
		\]
		Furthermore, let
		\begin{align*}
			T(a-1) &= T(a-1,a,a)+T(a-1,a,a-1)+T(a-1,a-1,a) 
			= \frac{e^{(a-1)\eta} \big(a^2 t_{a}^2+2at_{a}t_{a-1} \big)}{{\ntr\choose a-1} q^{a-1 \choose 2}}.
		\end{align*} 
	\end{definition}
	We continue with some technical lemmas bounding various terms.  
All proofs are straightforward calculations that can be found in the appendix. 
\begin{lemma}
\label{easylemma1}
Let $\bfl$ and $\bfr$ encode the overlap of a pair $(\pi, \pi')\in \Pi_{\textrm{scrambled}} \cup \Pi_{\textrm{middle}}$, in particular such that $\lambda= {\sum_u \ell_u u}/{n} \le 1-n^{-c_0}$. Then the following holds uniformly for all such sequences $\bfl, \bfr$.
\begin{enumerate}[a)]
    \item $ F_3(\bfl, \bfr)= \bigO (\ln^2 n)$. \label{lemparta}
    \item $T(2)= \bigO(\ln^2 n)$. \label{lempartb1}
    \item Let $3 \le x \le 4(\alpha-u^*)$ and recall that $t=\sum_u t_u = k-\ell$. Then \label{lempartb}
    \[
        T(x)  = \bigO \left( \frac{\ln^3 n}{t}\right)=o(1)
        ~\text{ and }~
        \sum_{x=3}^{4(\alpha-u^*)} T(x) =o(1).
    \] 
    \item \[\prod_{x >4(\alpha-u^*), u,v} \frac{T(x,u,v)^{r^{u,v}_x}}{r^{u,v}_x!}  \le \exp\left( \bigO(\ln ^5 n)\right).\label{lempartc}\]
\end{enumerate}
\end{lemma}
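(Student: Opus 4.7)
}

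The plan is to handle the four parts separately, deriving part~(a) from crude size estimates on $f$, $g$, $g_{\mathrm{id}}$ and $\ntr-r_1$, and parts~(b)--(d) from a single clean combinatorial bound on $\sum_u t_u\binom{u}{x}$, combined with the structural information on the overlap blocks coming from Lemma~\ref{lem:ra-1}.

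For part~(a), I would first observe that, since $\bfk$ is tame and all parts have size at most $a = O(\log n)$,
\[
    f = \sum_u \binom{u}{2}k_u \le \frac{a-1}{2}\sum_u u\,k_u = O(n\log n),
\]
and clearly $g \le f$, $g_{\mathrm{id}} \le f$, and $a(\ntr - r_1) \le an = O(n\log n)$. Plugging these directly into the definition
\[
    F_3(\bfl,\bfr) = -\frac{2(b-1)f(f-2g) + 2(f - g_{\mathrm{id}} - a(\ntr-r_1))^2}{n^2}
\]
shows $F_3(\bfl,\bfr) = O(\log^2 n)$.

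The workhorse for parts~(b) and (c) will be the inequality
\[
    \sum_{u^*\le u\le a} t_u \binom{u}{x} \le \frac{a^{x-1}\ntr}{x!},
\]
which follows by writing $\binom{u}{x} = \tfrac{u}{x}\binom{u-1}{x-1} \le \tfrac{u\,a^{x-1}}{x!}$ and using $\sum_u u\,t_u = \ntr$. Combined with $\binom{\ntr}{x}\ge \ntr^x/(2\,x!)$ (valid because $\ntr\ge n^{1-c_0}$ greatly exceeds $x=O(\log n)$), this yields
\[
    T(x) \le \frac{2\,e^{x\eta}\,a^{2(x-1)}\,b^{\binom{x}{2}}}{x!\,\ntr^{x-2}}.
\]
For $x=2$ the $\ntr$ cancels and $T(2) = O(a^2) = O(\log^2 n)$, proving (b). For part~(c), at $x=3$ this gives $T(3) = O(\log^4 n/\ntr) = O(\log^3 n/t)$, since $t\sim \ntr/a$ by Proposition~\ref{prop:proptame}. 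For $3\le x\le 4(\alpha-u^*)$, and hence $x = o(\log n)$, the ratio $T(x+1)/T(x) \le e\,a^2 b^x/((x+1)\ntr) \le \log^2 n \cdot n^{o(1) - (1-c_0)} = o(1)$, so $T(x)$ is strictly decreasing and $\sum_{x=3}^{4(\alpha-u^*)} T(x) \le 4(\alpha-u^*)\,T(3) = o(1)$.

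For part~(d), I would invoke Lemma~\ref{lem:ra-1}: overlap blocks in the range $4(\alpha-u^*) < x < u^*-1$ do not exist, and there are at most $2\log^3 n$ blocks with $x\ge u^*-1$ in total. So the product runs over at most $2\log^3 n$ nontrivial factors. Dropping the $r_x^{u,v}!$ in the denominator and bounding each $T(x,u,v)$ very crudely ($t_u,t_v\le n$, $\binom{u}{x}\binom{v}{x}\le 2^{2a}$, $e^{x\eta}\le e^a$, $q^{\binom{x}{2}}\ge b^{-\binom{a}{2}} = e^{-O(\log^2 n)}$, $\binom{\ntr}{x}\ge 1$) gives $T(x,u,v) \le e^{O(\log^2 n)}$, and therefore the whole product is at most $e^{O(\log^2 n)\cdot 2\log^3 n} = e^{O(\log^5 n)}$. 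The main (mild) obstacle is part~(c): one must verify that the crude bound on $\sum_u t_u\binom{u}{x}$ is sharp enough that the factor $a^{2(x-1)}b^{\binom{x}{2}}$ in the numerator is beaten by $\ntr^{x-2}$ over the entire range $3\le x\le 4(\alpha-u^*)$, which is precisely why we used $\ntr\ge n^{1-c_0}$ with $c_0 < 1/3$.
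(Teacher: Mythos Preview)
Your proposal is correct and follows essentially the same approach as the paper's proof: crude size bounds for part~(a), a direct estimate on $\sum_u t_u\binom{u}{x}$ plus a ratio argument for parts~(b)--(c), and Lemma~\ref{lem:ra-1} together with a crude bound $T(x,u,v)\le e^{O(\log^2 n)}$ for part~(d). The only cosmetic difference is that the paper bounds $\sum_u t_u\binom{u}{x}\le t\,a^x/x!$ (using $\sum_u t_u=t$) rather than your $\sum_u t_u\binom{u}{x}\le a^{x-1}\ntr/x!$ (using $\sum_u u\,t_u=\ntr$); since $\ntr\sim at$, the two bounds are equivalent.
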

The next lemma will be used in the case $\lambda < \ln^{-3}n$, where we have to be more precise. 
\begin{lemma} \label{easylemma2}
If $\bfl$, $\bfr$ are such that $\lambda,\eta < \ln^{-3}n$, then, uniformly,
		\begin{enumerate}[a)]
			\item $F_3(\bfl, \bfr) = -2b f^2/{n^2}+o(1)$, \label{teil1}
			\item $T(2)= {2b f^2}/{n^2}+o(1)$, \label{teil2}
			\item  $
			\sum_{x= u^*-1}^{a-3} T(x) \le \frac{n^{-1+o(1)}}{\mu_a} = o(1),
			$\label{teil3} 
			\item $T(a-2)= \bigO \left(  \frac{ ( k_{a-2} \ln n + k_{a-1} \ln^2 n+k_a \ln^3 n)^2 }{\mu_{a}n^2} \right)$ \label{teil4a},
			\item $T(a-1) = \bigO \left( \frac{k_{a}^2 \ln^3 n+k_{a} k_{a-1} \ln^2 n}{n\mu_{a}}\right)$. \label{teil4}
		\end{enumerate}
In particular, together with Lemma~\ref{easylemma1} and recalling the definition \eqref{eq:defM} of $\MA$, 
\[
    \sum_{x=3}^{4(\alpha-u^*)}T(x)+\sum_{x=u^*-1}^{a-1}T(x)=\bigO(\MA)+o(1).
\]
\end{lemma}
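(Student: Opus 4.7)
Throughout, I exploit that $\lambda, \eta < \ln^{-3}n$ implies $\nid = \lambda n = o(n/\ln^2 n)$, $\ntr \sim r_1 \sim n$, and $\ntr - r_1 = \eta\ntr = o(n/\ln^2 n)$. Bounding each $\binom{z}{2} \le a/2 = \bigO(\ln n)$ and using $\sum_u u\ell_u = \nid$ and $\sum_{x \ge 2} x r_x = \ntr - r_1$, I obtain $\gid, \gt = \bigO(n/\ln^2 n)$, so that $g = \gid + \gt = \bigO(n/\ln^2 n)$, and similarly $a(\ntr - r_1) = \bigO(n/\ln^2 n)$.

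For parts (a) and (b), the plan is to expand and track error terms. Expanding $F_3 = -2(b-1)f(f-2g)/n^2 - 2(f - \gid - a(\ntr - r_1))^2/n^2$ yields the leading contribution $(-2(b-1) - 2)f^2/n^2 = -2bf^2/n^2$; using $f = \bigO(n\ln n)$, all other terms are bounded by $\bigO(fg/n^2) + \bigO(f(\gid + a(\ntr - r_1))/n^2) = \bigO(1/\ln n) = o(1)$, which gives (a). For (b), since $\sum_u t_u\binom{u}{2} = f - \gid$, writing $T(2) = e^{2\eta}(f - \gid)^2/(\binom{\ntr}{2} q)$ and expanding $e^{2\eta} = 1 + \bigO(\eta)$, $\binom{\ntr}{2} = (n^2/2)(1 + \bigO(\lambda))$, and $(f - \gid)^2 = f^2(1 + \bigO(\gid/f))$ with $\gid/f = \bigO(1/\ln^3 n)$, yields $T(2) = (2bf^2/n^2)(1 + \bigO(\eta + \lambda + 1/\ln^3 n))$; combined with $f^2/n^2 = \bigO(\ln^2 n)$ this delivers $2bf^2/n^2 + o(1)$.

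For part (c), the plan is to use the crude bound $\sum_u t_u \binom{u}{x} \le k \binom{a}{x}$ together with the approximation $\binom{\ntr}{x} q^{\binom{x}{2}} \sim \mu_x$ (valid because $\lambda$ is small and $x = \bigO(\ln n)$, so $\binom{\ntr}{x}/\binom{n}{x} = 1 - o(1)$), giving $T(x) \lesssim k^2 \binom{a}{x}^2/\mu_x$. Setting $y = a - x \in [3, a - u^* + 1]$ and applying Lemma \ref{prop:muurelativetomua} with $k = \bigO(n/\ln n)$ and $\binom{a}{y}^2 \le a^{2y}/(y!)^2$, this reduces to
\[
    T(a - y) \lesssim \frac{\ln^{y-2} n \cdot a^{2y} \cdot b^{y^2/2}}{(y!)^2 \mu_a n^{y-2}}.
\]
For $y = 3$ this evaluates to $\bigO(\ln^7 n / (n \mu_a)) = n^{-1+o(1)}/\mu_a$. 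A direct computation gives the ratio $T(a - y)/T(a - (y - 1)) = \bigO(b^y \ln^3 n/(y^2 n))$, which is uniformly $n^{-1+o(1)}$ since $a - u^* = o(\log_b n)$ by tameness (so $b^y = n^{o(1)}$). Hence $\sum_{x = u^*-1}^{a-3} T(x)$ is geometric-like and dominated by its $x = a-3$ term.

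Parts (d) and (e) are direct computations. For (d), $\binom{u}{a-2}$ is nonzero only for $u \in \{a-2, a-1, a\}$, giving $\sum_u t_u \binom{u}{a-2} = \bigO(k_{a-2} + k_{a-1}\ln n + k_a \ln^2 n)$; combining with $\binom{\ntr}{a-2} q^{\binom{a-2}{2}} \sim \mu_{a-2} = \Theta(n^2/\ln^2 n) \mu_a$ (by Lemma \ref{prop:muurelativetomua}) yields the stated bound. Part (e) is analogous, using that the definition of $T(a-1)$ has only two summands and that $\mu_{a-1} = \Theta(n/\ln n) \mu_a$. For the ``in particular'' claim, the part (c) bound $n^{-1+o(1)}/\mu_a$ is $o(\MA)$ since the ratio against the first term $n/(\mu_a \ln n)$ of $\MA$ is $n^{-2+o(1)} \ln n = o(1)$; parts (d) and (e) directly produce the remaining two terms of $\MA$; and $\sum_{x=3}^{4(\alpha-u^*)} T(x) = o(1)$ comes from Lemma \ref{easylemma1}(c). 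I expect the trickiest step to be part (c), where the tameness condition is essential to ensure the geometric factor $b^y$ remains $n^{o(1)}$ across the full range of $y$, so that $T(a-3)$ indeed dominates the sum.
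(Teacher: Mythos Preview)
Your approach is correct and essentially identical to the paper's: the same preliminary bounds on $\gid,\gt,g$, the same expansion for $F_3$ and $T(2)$, the same reduction $T(x)\lesssim k^2\binom{a}{x}^2/\mu_x$ combined with Lemma~\ref{prop:muurelativetomua} for part~(c), and the same direct computations for (d), (e). One small slip: the inequality you invoke should be $\binom{z}{2}\le z\cdot a/2$ (not $\binom{z}{2}\le a/2$), which is what actually yields $\gid,\gt=\bigO(n/\ln^2 n)$ after summing against $\sum_u u\ell_u$ and $\sum_x x r_x$.
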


\subsubsection*{The middle case --- proof of Lemma \ref{lemmamiddle} }
	
Suppose that $\bfl\le \bfk$ is a sequence such that $\lambda \le 1-n^{-c_0}$, where as usual $\lambda = \sum_u \lambda_u = \sum_u {\ell_u u}/{n}$. From Lemma~\ref{contributionlemma}, if we fix $\bfl$ and sum (\ref{asldkjfalk}) over all possible overlap sequences $\bfr$ encoding the overlap of a relevant pair, using Lemma \ref{easylemma1} and the identity $\sum_{n\ge 0} {z^n}/{n!}= e^z$, 
	\begin{align}
		\sum_{\bfr \text{ relevant}}  & \Big(F_1(\bfl) F_2((\bfl,\bfr)) \exp \parenth{F_3\parenth{\bfl, \bfr}} \Big)  \le F_1(\bfl)  \sum_{\bfr \text{ relevant}}  \left(\prod_{x\ge 2,u,v} \frac{T(x,u,v)^{r^{u,v}_x}}{r^{u,v}_x!} \right) \exp (\bigO(\ln^2 n)) \nonumber\\
		& \le F_1(\bfl)  \sum_{\bfr \text{ relevant}}  \left(\prod_{2\le x \le 4(\alpha-u^*),u,v} \frac{T(x,u,v)^{r^{u,v}_x}}{r^{u,v}_x!} \right) \exp (\bigO(\ln^5 n)) \nonumber\\
		&\le F_1(\bfl)\left(\prod_{2\le x \le 4(\alpha-u^*),u,v} \left(\sum_{r_x^{u,v}=0}^\infty  \frac{T(x,u,v)^{r^{u,v}_x}}{r^{u,v}_x!} \right)\right)\exp (\bigO(\ln^5 n))\nonumber \\
		&= F_1(\bfl) \exp \left(  \sum_{x=2}^{4(\alpha-u^*)} T(x)  + \bigO(\ln^5 n) \right) \le \frac{\prod_u{k_u \choose \ell_u}^2}
		{ {
				\E_p \left[\bar{X}_{\boldsymbol{\bfl}}\right]
		} } \exp \left( \bigO(\ln^ 5 n) \right). \label{eq:umform}
\end{align} 
We will use this relationship, together with Lemma~\ref{contributionlemma}, to prove the following three lemmas in the appendix. Together with the additional conditions \eqref{eq:lowerboundmu} and \eqref{eq:lowerboundbeta} from Theorem~\ref{theorem:general}, they readily imply Lemma \ref{lemmamiddle} (by setting $\beta(n)=\exp(\ln^6 n)$ in Lemma~\ref{lemma:newlemma2}, so that \eqref{eq:betadelta} is given by \eqref{eq:lowerboundbeta}).
\begin{lemma}
\label{lemma:newlemma1}
Let $\epsilon>0$ be constant and let $a=a(n)=\alpha_0-\bigO(1)$ be an integer so that $\mu_a \ge n^{1+\epsilon}$. Let~$\kp$ be a tame $a$-bounded sequence of profiles. Then there is a constant $C(\epsilon)>0$ so that 
\[
    \frac{1}{\ex{Z_\kp}^2}\sum_{(\pi,\pi') \in \Pi_{\textup{middle,1}}} \Pb\left(A_{\pi} \cap A_{\pi'}\right) 
    \le \exp \Big( - \Theta\Big( \frac{n} {\ln^3 n}\Big) \Big),
\]
where
\[
    \Pi_{\textup{middle,1}} = \left\{ (\pi, \pi') \in \Pi_{\textup{middle}} \mid {\ln^{-3} n} \le \lambda \le C(\epsilon)\right\}.
\]
\end{lemma}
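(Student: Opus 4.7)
The plan is to combine the single-profile bound \eqref{eq:umform} with the lower bound on the expected number of partial colourings provided by Lemma~\ref{lemma:smallsubprofiles}, and then sum over the possible identity profiles $\bfl$. The constant $C(\epsilon)$ will be chosen equal to the constant $C(\varepsilon)$ appearing in Lemma~\ref{lemma:smallsubprofiles}. Note that, by the tameness assumption, the support of $\bfk$ satisfies $u^*\le u \le a$ with $u^*\sim a\sim 2\log_b n$, so the hypotheses of Lemmas~\ref{expectationlowerbound} and~\ref{lemma:smallsubprofiles} apply.

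First I would group the sum over $(\pi,\pi')\in\Pi_{\text{middle},1}$ by the identity profile $\bfl$. For each such $\bfl$ (which automatically satisfies $\ln^{-3}n \le \lambda \le C(\epsilon)$ and $\bfl\le\bfk$), Lemma~\ref{contributionlemma} together with \eqref{eq:umform} yields
\[
\sum_{\bfr\text{ relevant}} F_1(\bfl)\,F_2(\bfl,\bfr)\exp(F_3(\bfl,\bfr))
\;\le\; \frac{\prod_u\binom{k_u}{\ell_u}^2}{\E_p[\bar X_{\boldsymbol\lambda}]}\,\exp\!\bigl(\bigO(\ln^5 n)\bigr).
\]
Second, Lemma~\ref{lemma:smallsubprofiles} (with $C=C(\epsilon)$) provides, in precisely the range $\ln^{-3}n\le\lambda\le C(\epsilon)$, the matching lower bound
\[
\E_p[\bar X_{\boldsymbol\lambda}] \;\ge\; \exp\!\Bigl(\Theta\Bigl(\tfrac{n}{\ln^3 n}\Bigr)\Bigr)\prod_u\binom{k_u}{\ell_u}^2.
\]
Dividing, each $\bfl$ contributes at most $\exp\!\bigl(-\Theta(n/\ln^3 n)+\bigO(\ln^5 n)\bigr)$.

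Third, I would count the number of admissible $\bfl$. Since $\ell_u \in\{0,1,\dots,k_u\}$ and only the indices $u^*\le u\le a$ are relevant, the number of candidates is at most
\[
\prod_{u^*\le u\le a}(k_u+1) \;\le\; (k+1)^{a-u^*+1}.
\]
Using $k=\bigO(n/\ln n)$ together with the fact (from Proposition~\ref{prop:proptame} and the tameness of $\bfk$) that $a-u^*=o(\ln n)$, this count is at most $n^{o(\ln n)}=\exp(o(\ln^2 n))$.

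Putting the three steps together,
\[
\frac{1}{\E[Z_\bfk]^2}\sum_{(\pi,\pi')\in\Pi_{\text{middle},1}}\Pb(A_\pi\cap A_{\pi'})
\;\le\; \exp\!\Bigl(o(\ln^2 n)+\bigO(\ln^5 n)-\Theta\Bigl(\tfrac{n}{\ln^3 n}\Bigr)\Bigr)
\;=\; \exp\!\Bigl(-\Theta\Bigl(\tfrac{n}{\ln^3 n}\Bigr)\Bigr),
\]
since $n/\ln^3 n$ dominates all polylogarithmic error terms. There is no single hard step here: the real work has already been done in establishing \eqref{eq:umform} and Lemma~\ref{lemma:smallsubprofiles}; the only subtlety is matching the constant $C(\epsilon)$ in the definition of $\Pi_{\text{middle},1}$ with that in Lemma~\ref{lemma:smallsubprofiles} and verifying that the rough combinatorial count over $\bfl$ does not overwhelm the exponential gain.
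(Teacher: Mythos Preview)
Your proposal is correct and follows essentially the same approach as the paper: set $C(\epsilon)$ to be the constant from Lemma~\ref{lemma:smallsubprofiles}, plug that lemma into \eqref{eq:umform}, and sum over the at most polynomially-in-$n$ many choices of $\bfl$. The only cosmetic difference is that the paper bounds the number of $\bfl$ by $k^a=\exp(\bigO(\ln^2 n))$ rather than your slightly sharper $(k+1)^{a-u^*+1}=\exp(o(\ln^2 n))$, but either bound is swallowed by the $\exp(-\Theta(n/\ln^3 n))$ term.
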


\begin{lemma}\label{lemma:newlemma2}	Let $a=a(n)=\alpha_0-\bigO(1)$ be an integer, $\kp$ a tame $a$-bounded sequence of profiles, and $\delta>0$ a constant. Furthermore, suppose that $\beta(n)$ is a function so that for all colouring profiles $\boldlambda \le \boldkappa$ such that ${\delta} \le \sum_{1 \le u \le a} \lambda_u \le 1-{\delta}$, 
	\begin{equation}\label{eq:betadelta}
		\E_p\left[\bar{X}_{\boldlambda}\right] \ge \beta(n) \prod_{1 \le u \le a}{k_u \choose \ell_u}^2 .    \end{equation}
	Then 
\[
    \frac{1}{\ex{Z_\kp}^2}\sum_{(\pi,\pi') \in \Pi_{\textup{middle,2}}} \Pb\left(A_{\pi} \cap A_{\pi'}\right) 
    \le \frac{1}{\beta(n)} \exp \big(\bigO(\ln^5 n) \big),
\]
where 
\[
    \Pi_{\textup{middle,2}} = \left\{ (\pi, \pi') \in \Pi_{\textup{middle}} \mid \delta \le \lambda \le 1-\delta\right\}.
\]
\end{lemma}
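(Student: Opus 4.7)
The plan is to exploit the key inequality~\eqref{eq:umform} that was derived just before the statement. Recall that this inequality, which follows by fixing $\bfl$, summing Lemma~\ref{contributionlemma} over all relevant overlap sequences $\bfr$, and applying parts~\ref{lemparta})--\ref{lempartc}) of Lemma~\ref{easylemma1}, gives that for every $\bfl \le \bfk$ with $\lambda \le 1 - n^{-c_0}$,
\[
\sum_{\bfr \text{ relevant}} F_1(\bfl) F_2(\bfl,\bfr)\exp(F_3(\bfl,\bfr))
\;\le\; \frac{\prod_u \binom{k_u}{\ell_u}^2}{\E_p[\bar X_\bfl]}\exp\bigl(\bigO(\ln^5 n)\bigr).
\]
Since every pair $(\pi,\pi')\in\Pi_{\mathrm{middle},2}$ lies in $\Pi_{\bfl,\bfr}$ for exactly one pair $(\bfl,\bfr)$ with $\delta\le\lambda\le 1-\delta$, Lemma~\ref{contributionlemma} reduces the desired bound to
\[
\sum_{\bfl:\,\delta\le\lambda\le 1-\delta}\;\sum_{\bfr\text{ relevant}} F_1(\bfl)F_2(\bfl,\bfr)\exp(F_3(\bfl,\bfr))
\;\le\; \frac{1}{\beta(n)}\exp\bigl(\bigO(\ln^5 n)\bigr).
\]

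The first step is to invoke the hypothesis~\eqref{eq:betadelta}: for any $\bfl$ with $\delta\le\lambda\le 1-\delta$, the assumption gives $\E_p[\bar X_\bfl]\ge \beta(n)\prod_u\binom{k_u}{\ell_u}^2$, and hence the fraction above is bounded by $1/\beta(n)$. Plugging this into~\eqref{eq:umform}, each fixed $\bfl$ in the relevant range contributes at most $\beta(n)^{-1}\exp(\bigO(\ln^5 n))$ to the sum.

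The second step is to control the number of admissible profiles $\bfl$. Since $\bfk$ is tame, $k_u=0$ unless $u^*\le u\le a$, and $a-u^*=o(\ln n)$ by Proposition~\ref{prop:proptame}. Moreover each coordinate satisfies $0\le \ell_u\le k_u\le k=\bigO(n/\ln n)$. Therefore the total number of profiles $\bfl\le\bfk$ is at most
\[
(k+1)^{a-u^*+1} \;\le\; \exp\bigl(\bigO(\ln^2 n)\bigr).
\]
Multiplying the per-profile bound by this count gives a total bound of
\[
\exp\bigl(\bigO(\ln^2 n)\bigr)\cdot \beta(n)^{-1}\exp\bigl(\bigO(\ln^5 n)\bigr)
\;=\; \beta(n)^{-1}\exp\bigl(\bigO(\ln^5 n)\bigr),
\]
as required.

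I don't expect a genuine obstacle here: the entire content of the lemma is packaged in the estimate~\eqref{eq:umform} from the preceding overview, the hypothesis~\eqref{eq:betadelta}, and a crude counting argument. The only point that deserves care is verifying that the hypothesis of Lemma~\ref{contributionlemma} (namely $\ntr\ge n^{1-c_0}$, equivalently $\lambda\le 1-n^{-c_0}$) is satisfied throughout $\Pi_{\mathrm{middle},2}$; this is immediate since $\delta\le\lambda\le 1-\delta<1-n^{-c_0}$ for all large~$n$. Everything else follows by direct substitution.
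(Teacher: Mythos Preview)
The proposal is correct and follows essentially the same approach as the paper's proof: apply \eqref{eq:umform} (which packages Lemma~\ref{contributionlemma} and Lemma~\ref{easylemma1}) to each $\bfl$ with $\delta\le\lambda\le 1-\delta$, invoke hypothesis~\eqref{eq:betadelta} to bound each term by $\beta(n)^{-1}\exp(\bigO(\ln^5 n))$, and then absorb the crude count of profiles $\bfl$ (the paper uses $k^a=\exp(\bigO(\ln^2 n))$, you use the slightly sharper $(k+1)^{a-u^*+1}$) into the $\exp(\bigO(\ln^5 n))$ factor. Your explicit check that $\lambda\le 1-\delta<1-n^{-c_0}$ so that Lemma~\ref{contributionlemma} applies is a nice touch that the paper leaves implicit.
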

\begin{lemma}\label{lemma:newlemma3}
Let $a=a(n)=\alpha_0-\bigO(1)$ be an integer, and $\bfk$ a tame sequence of $a$-bounded colouring profiles. There is a constant $\delta_0 > 0$ so that 
\[
    \frac{1}{\ex{Z_\kp}^2}\sum_{(\pi,\pi') \in \Pi_{\textup{middle,3}}} \Pb\left(A_{\pi} \cap A_{\pi'}\right)
    \le \exp \Big(-\Theta(n^{1-c_0}) \Big),
\]
where
\[
\Pi_{\textup{middle,3}} = \left\{ (\pi, \pi') \in \Pi_{\textup{middle}} \mid 1-\delta_0 \le \lambda \le 1-n^{-c_0}\right\},
\]
and $c_0=c/3$, where $c$ is the constant from the Definition \ref{deftame} of the tame sequence $\bfk$.
\end{lemma}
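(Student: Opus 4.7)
The strategy is to apply the per-$\bfl$ bound \eqref{eq:umform} and then exploit the subgraph interpretation of $F_1(\bfl)$ when $\lambda$ is close to $1$. Specifically, for every fixed $\bfl$ with $\lambda(\bfl)\in [1-\delta_0,1-n^{-c_0}]$, setting $\bft := \bfk-\bfl$, $t_u := k_u-\ell_u$, and $n_\mathrm{tr} := \sum_u u t_u = (1-\lambda)n$, a direct manipulation of factorials using Lemma~\ref{lem:expxk} yields the identity
\begin{equation}\label{eq:Fidentity}
F_1(\bfl) \;=\; \frac{\prod_u \binom{k_u}{t_u}\cdot W(\bft)}{\E_p[\bar X_\bfk]}, \qquad W(\bft) := \frac{n_\mathrm{tr}!}{\prod_u u!^{t_u}\, t_u!}\, q^{\sum_u \binom{u}{2} t_u} \enspace .
\end{equation}
Here $W(\bft)$ is the expected number of unordered colourings of the induced subgraph $G_{n_\mathrm{tr},p}$ with profile $\bft$. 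Every non-zero $t_u$ in $\bft$ has $u\ge u^*\sim 2\log_b n$, whereas the natural (optimal) part size in $G_{n_\mathrm{tr},p}$ is of order $2\log_b n_\mathrm{tr}$; for $n_\mathrm{tr}\le \delta_0 n$ the latter is strictly smaller, so colouring the small subgraph with these too-large parts is extremely unlikely.

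The plan is then: pick $\delta_0>0$ a small constant depending on $p$; show via a Stirling-based computation (or via Lemma~\ref{lemmaupperbound} applied inside $G_{n_\mathrm{tr},p}$, with $n_\mathrm{tr}/t\ge u^*$ exceeding $\alpha_0^{(n_\mathrm{tr})}-1-2/\ln b$ by a large constant once $\delta_0$ is small enough) that $W(\bft)\le \exp(-\Theta(n_\mathrm{tr}))$ uniformly in the relevant range; combine with the elementary bound $\prod_u \binom{k_u}{t_u}\le k^t \le \exp(\tfrac{\ln b}{2}n_\mathrm{tr}+o(n_\mathrm{tr}))$ (using $t\le n_\mathrm{tr}/u^*$ and $k=\bigO(n/\log n)$) and with the estimate $|\{\bft:\sum_u u t_u = n_\mathrm{tr}\}|\le \exp(o(n_\mathrm{tr}))$, to get
\[
\sum_{\bft:\,\sum_u u t_u=n_\mathrm{tr}} \prod_u\binom{k_u}{t_u}\,W(\bft) \;\le\; \exp\bigl(-\Theta(n_\mathrm{tr})\bigr) \enspace .
\]
Substituting into \eqref{eq:Fidentity} and summing over $n_\mathrm{tr}\in[n^{1-c_0},\delta_0 n]$ (the sum is dominated by the lower endpoint) gives $\sum_{\bfl\in\Pi_{\textrm{middle,3}}} F_1(\bfl)\le \exp(-\Theta(n^{1-c_0}))/\E_p[\bar X_\bfk]$. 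The tameness condition~(b) together with Lemma~\ref{expectationlemma} gives $1/\E_p[\bar X_\bfk]\le \exp(o(n^{1-c}))$, and since $c_0=c/3<c$ this factor, together with the trailing $\exp(\bigO(\ln^5 n))$ from \eqref{eq:umform}, is comfortably absorbed into $\exp(-\Theta(n^{1-c_0}))$.

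The main obstacle is establishing the Stirling estimate $W(\bft)\le \exp(-\Theta(n_\mathrm{tr}))$ uniformly over $n_\mathrm{tr}\in[n^{1-c_0},\delta_0 n]$: for $n_\mathrm{tr}$ approaching $\delta_0 n$ the saving from $\ln(n/n_\mathrm{tr})$ is only $\bigO(\log(1/\delta_0))$, while the gap $\alpha-u^*$ in a general tame profile can be as large as $o(\ln n)$, threatening to swamp it. Overcoming this uses crucially the tail hypothesis $\kappa_u<b^{-(\alpha-u)\gamma(\alpha-u)}$ with $\gamma\to\infty$: it forces the total mass of $\bfk$ (and hence of any $\bft\le\bfk$) sitting on parts of size $u\le \alpha-C$ to be $\bigO(n b^{-C\gamma(C)})$, which is negligible relative to $n_\mathrm{tr}\ge n^{1-c_0}$ for any fixed $C$. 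Consequently the bulk of $\bft$ concentrates on $u$ within $O(1)$ of $\alpha$, making $W(\bft)$ behave as in the optimal case and allowing a small, $p$-dependent choice of $\delta_0$ to close the argument.
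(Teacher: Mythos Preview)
Your outline follows exactly the paper's route: the factorization \eqref{eq:Fidentity} (which the paper writes as $\E_p[\bar X_\bfl]=\E_p[\bar X_\bfk]\prod_u\binom{k_u}{\ell_u}/\E_{n_\mathrm{tr},p}[\bar Y_{\bfk-\bfl}]$), the bound $\prod_u\binom{k_u}{t_u}\le b^{(1/2+o(1))n_\mathrm{tr}}$, the use of Lemma~\ref{lemmaupperbound} on the subgraph $G_{n_\mathrm{tr},p}$, and absorbing $1/\E_p[\bar X_\bfk]\le\exp(o(n^{1-c}))$ via tameness~(b). The reduction to the per-$\bfl$ estimate~\eqref{eq:umform} and the $\exp(\bigO(\ln^5 n))$ overhead are also identical.

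There is, however, a genuine gap in your final paragraph. The assertion that the mass of $\bfk$ on $u\le\alpha-C$, namely $\bigO(n\,b^{-C\gamma(C)})$, is ``negligible relative to $n_\mathrm{tr}\ge n^{1-c_0}$ for any fixed $C$'' is false: for fixed $C$ the quantity $b^{-C\gamma(C)}$ is a positive constant, so $n\,b^{-C\gamma(C)}$ is \emph{not} $o(n^{1-c_0})$. Consequently your conclusion that ``the bulk of $\bft$ concentrates on $u$ within $O(1)$ of $\alpha$'' only holds when $n_\mathrm{tr}=\Theta(n)$, not uniformly over the whole range $[n^{1-c_0},\delta_0 n]$; and the trivial bound $n_\mathrm{tr}/t\ge u^*$ is also insufficient in intermediate regimes such as $n_\mathrm{tr}=n/\log n$, since then $\bar\alpha_0=\alpha_0-\bigO(\ln\ln n)$ can exceed $u^*=\alpha-o(\ln n)$.

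The paper resolves this with a \emph{$\tau$-dependent} cutoff rather than a fixed one. Setting $\tilde k_u=b^{-(\alpha-u)\gamma(\alpha-u)}n/u^*\ge k_u$, it defines $u_0$ by $\sum_{u^*\le v\le u_0}v\tilde k_v\le n_\mathrm{tr}<\sum_{u^*\le v\le u_0+1}v\tilde k_v$, shows via the monotonicity of $\gamma$ that the average part size in $\bft$ is at least $u_0-1$, and then proves $u_0\ge\alpha+(\log_b\tau)/10+\bigO(1)$ from the tail hypothesis (using $\gamma(\alpha-u_0-1)\ge 10$ once $\delta_0$ is small). Since $\bar\alpha_0=\alpha_0+2\log_b\tau+\bigO(1)$, the resulting gap is $(-2+\tfrac{1}{10})\log_b\tau+\bigO(1)=1.9\,|\log_b\tau|+\bigO(1)$, which is at least $10$ once $\delta_0$ is chosen small enough. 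This scaling with $\tau$ is precisely what makes the bound $W(\bft)\le\exp(-\Theta(n_\mathrm{tr}))$ hold uniformly across the full range.
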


\subsubsection*{The scrambled case --- proof of Lemma \ref{lemmascrambled}}
		
We now come to the most delicate case $\lambda<{\ln^{-3} n}$. We will sum \eqref{asldkjfalk} over all sequences~$\bfl, \bfr$ encoding the overlap between pairs of partitions in $\Pi_{\text{scrambled}}$. We will start by fixing a sequence $\bfl$ such that $\lambda<{\ln^{-3} n}$, and sum over all $\bfr$ such that $(\bfl, \bfr)$ encodes the overlap between pairs of partitions in $\Pi_{\text{scrambled}}$. Thereafter we will sum the result over all possible sequences $\bfl$. So recall that $\eta = 1-r_1 / \ntr $ and let 
\begin{align*}
    R_1(\bfl)&= \big\{ \bfr \mid (\bfl, \bfr) \text{ encodes the overlap of a pair in $\Pi_{\text{scrambled}}$ and $\eta \ge {\ln^{-3} n}$}\big\} \\
    R_2(\bfl)&= \big\{ \bfr \mid (\bfl, \bfr) \text{ encodes the overlap of a pair in $\Pi_{\text{scrambled}}$ and $\eta < {\ln^{-3} n}$}\big\},
\end{align*}
then the following lemmas bound the sum over $\bfr$ of \eqref{asldkjfalk} for a fixed sequence $\bfl$.
\begin{lemma}\label{lemma:sumR1}
Let $\bfl$ be a possible sequence encoding the identical parts in a pair of partitions of profile $\kp$, and suppose that $\lambda = \sum_u \ell_u u/n < {\ln^{-3} n}$. Then
\[
    \frac{1}{\ex{Z_\kp}^2}\sum_{\bfr \in R_1(\bfl)}\sum_{(\pi,\pi') \in \Pi_{\bfl, \bfr}} \Pb\left(A_{\pi} \cap A_{\pi'}\right)
    = o(F_1(\bfl)).
\]
\end{lemma}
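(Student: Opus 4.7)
The plan is to apply Lemma~\ref{contributionlemma}, so that each summand is bounded by $F_1(\bfl)\,F_2(\bfl, \bfr)\,\exp(F_3(\bfl, \bfr))$, and then use Lemma~\ref{easylemma1}(a) to bound $\exp(F_3) = \exp(\bigO(\ln^2 n))$ uniformly over $R_1(\bfl)$. This reduces the claim to
\[
\sum_{\bfr \in R_1(\bfl)} F_2(\bfl, \bfr) = \exp(-\omega(\ln^2 n)).
\]
The defining constraint of $R_1(\bfl)$ is $\eta := (\ntr - r_1)/\ntr \ge \ln^{-3} n$, equivalently $y := \sum_{x \ge 2, u, v} x\, r_x^{u, v} \ge \ntr \ln^{-3} n \sim n \ln^{-3} n$. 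By Lemma~\ref{lem:ra-1}, at most $\bigO(\ln^4 n)$ of this mass can come from blocks with $x \ge u^* - 1$, so essentially all of $y$ must come from small-to-medium blocks with $2 \le x \le 4(\alpha - u^*)$. However, the typical value of the latter portion, in the Poisson-like distribution induced by the $T(x, u, v)$'s, is only $\sum_{2 \le x \le 4(\alpha - u^*)} x\, T(x) = \bigO(\ln^2 n)$ by Lemma~\ref{easylemma1}(b)(c); hence $R_1(\bfl)$ is a severe large-deviation event, and the proof will proceed via a Chernoff-type tail estimate.

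To implement this, I would first factor $T(x, u, v) = \tilde T(x, u, v)\, e^{x\eta}$ with $\tilde T(x, u, v) = t_u t_v \binom{u}{x}\binom{v}{x}/(\binom{\ntr}{x} q^{\binom{x}{2}})$ being $\bfr$-independent, so that $F_2 = e^{\eta y} \prod (\tilde T)^r / r!$, and then use the trivial bound $\eta \le 1$ to pass to $F_2 \le \prod_{x, u, v} (\tilde T\, e^x)^{r_x^{u,v}}/r_x^{u, v}!$. Splitting this product into a ``large $x$'' ($x \ge u^* - 1$) and a ``small $x$'' ($2 \le x \le 4(\alpha - u^*)$) part, the large part is uniformly bounded by $\exp(\bigO(\ln^5 n))$ using Lemma~\ref{easylemma1}(d) (the extra $e^{x r}$ factors contribute at most $\exp(\bigO(\ln^4 n))$ since $\sum_{x \ge u^* - 1} x r_x = \bigO(\ln^4 n)$ by Lemma~\ref{lem:ra-1}), and summing over the combinatorially subexponential set of large-$x$ configurations preserves this bound. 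For the small part, a Chernoff tilt with parameter $\mu > 0$ via $\mathbbm{1}_{y^{\mathrm{sm}} \ge y_0} \le e^{\mu(y^{\mathrm{sm}} - y_0)}$ and the identity $\sum_{r \ge 0} z^r/r! = e^z$ yields
\[
\sum_{\bfr^{\mathrm{sm}} : y^{\mathrm{sm}} \ge n\ln^{-3}n/2}\ \prod_{2 \le x \le 4(\alpha - u^*), u, v} \frac{(\tilde T\, e^x)^{r_x^{u, v}}}{r_x^{u, v}!} \ \le\ \exp\!\left(-\frac{\mu n}{2\ln^3 n} + \sum_{2 \le x \le 4(\alpha - u^*), u, v} \tilde T(x, u, v)\, e^{(1+\mu) x}\right).
\]

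The crucial point is that by tameness $\alpha - u^* = o(\log n)$ --- this follows from Definition~\ref{deftame}(a), since $\kappa_{u^*} \ge u^*/n$ forces $(\alpha - u^*) \gamma(\alpha - u^*) \le \log_b(n/u^*)$ and $\gamma \to \infty$ --- so for any fixed $\mu > 0$ we have $e^{(1+\mu) x} = n^{o(1)}$ throughout the summation range, and the inner exponent above is at most $n^{o(1)} \cdot \bigO(\ln^2 n) = n^{o(1)}$. Choosing $\mu$ to be any fixed positive constant, the tilt factor $\exp(-\Omega(n/\ln^3 n))$ comfortably dominates both the $\exp(\bigO(\ln^5 n))$ from the large-$x$ product and the $\exp(n^{o(1)})$ from the tilted sum. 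Assembling the pieces gives $\sum_{\bfr \in R_1(\bfl)} F_2 \le \exp(-\Omega(n/\ln^3 n))$, which combined with $\exp(F_3) \le \exp(\bigO(\ln^2 n))$ yields the desired $o(F_1(\bfl))$. The main obstacle will be verifying carefully that Lemma~\ref{easylemma1}(d) indeed tolerates the additional $e^{x r}$ perturbation in the large-$x$ product, which requires a brief revisiting of its proof with the slightly inflated $T$'s; given the polynomial structure of $T(x, u, v)$ and the tight constraint $\sum_{x \ge u^* - 1} r_x \le 2 \ln^3 n$ from Lemma~\ref{lem:ra-1}, this should pose no fundamental difficulty.
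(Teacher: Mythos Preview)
Your proposal is correct, but it takes a genuinely different route from the paper. The paper argues by pigeonhole: if $\eta \ge \ln^{-3} n$ then, since $y = \sum_{x\ge 2,u,v} x r_x^{u,v} \ge n^{1-o(1)}$ and there are only $\bigO(\ln^3 n)$ admissible triples $(x,u,v)$ in the small-$x$ range, some single $r_{x_0}^{u_0,v_0}$ must exceed $n/\ln^8 n$. Since $T(x_0,u_0,v_0) \le T(x_0) = \bigO(\ln^2 n)$ by Lemma~\ref{easylemma1}, Stirling gives $T^{r}/r! \le (eT/r)^r \le \exp(-\ln^6 n)$ for that one factor; the remaining factors are bounded by $\exp(\bigO(\ln^5 n))$ via $T^r/r! \le e^T$ and Lemma~\ref{easylemma1}(b)--(d). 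A crude count of $n^{a^3} = \exp(\bigO(\ln^4 n))$ sequences $\bfr$ then suffices.

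Your Chernoff-tilt argument is more systematic and yields a quantitatively much stronger bound, $\exp(-\Omega(n/\ln^3 n))$ rather than $\exp(-\Theta(\ln^6 n))$, at the cost of some extra bookkeeping (factoring out $e^{x\eta}$, splitting into large and small $x$, and summing separately over the large-$x$ configurations). Your concern about Lemma~\ref{easylemma1}(d) is unfounded: its proof bounds each $T(x,u,v) \le \exp(\bigO(\ln^2 n))$, and replacing $T$ by $\tilde T e^x \le T e^x$ only adds $\exp(\bigO(\ln^4 n))$ in total since $\sum_{x \ge u^*-1} x r_x = \bigO(\ln^4 n)$. Both approaches are valid; the paper's is shorter and avoids the tilt machinery, while yours would be the natural choice if a sharper decay rate were ever needed.
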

	
\begin{lemma}\label{lemma:sumR2}
Under the conditions of Lemma~\ref{lemma:sumR1}, and recalling the definition~\eqref{eq:defM} of $\MA$,
\[
    \frac{1}{\ex{Z_\kp}^2}\sum_{\bfr \in R_2(\bfl)}\sum_{(\pi,\pi') \in \Pi_{\bfl, \bfr}} \Pb\left(A_{\pi} \cap A_{\pi'}\right) \lesssim F_1(\bfl)\, \exp ( \bigO(\MA)).
\]
\end{lemma}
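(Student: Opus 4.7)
The plan is to apply Lemma \ref{contributionlemma} to bound the contribution from each fixed $(\bfl, \bfr)$, and then to sum the resulting expression $F_1(\bfl) F_2(\bfl,\bfr)\exp(F_3(\bfl,\bfr))$ over $\bfr \in R_2(\bfl)$, factoring $F_1(\bfl)$ out since it only depends on $\bfl$. The critical feature of the regime $R_2(\bfl)$ --- where both $\lambda, \eta < \ln^{-3} n$ --- is that Lemma \ref{easylemma2}(a) gives $F_3(\bfl,\bfr) = -2bf^2/n^2 + o(1)$ \emph{uniformly} in $\bfr$. This uniformity means we may pull $\exp(F_3(\bfl,\bfr)) \le \exp(-2bf^2/n^2 + o(1))$ out of the sum over $\bfr$ at no cost beyond an $e^{o(1)}$ factor.

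It remains to bound $\sum_{\bfr \in R_2(\bfl)} F_2(\bfl,\bfr)$. Enlarging the range of summation from $R_2(\bfl)$ to all nonnegative integer sequences $(r_x^{u,v})_{x,u,v}$ (with $r_x = 0$ outside the allowed ranges, as prescribed by Lemma \ref{lem:ra-1}) can only increase the sum, and in this enlarged sum the terms factor as a product over triples $(x,u,v)$. Applying the identity $\sum_{r \ge 0} z^r/r! = e^z$ to each factor yields
\[
\sum_{\bfr \in R_2(\bfl)} F_2(\bfl,\bfr) \le \exp\Big( \sum_{x \ge 2, u, v} T(x,u,v) \Big) = \exp\Big( \sum_{x \ge 2} T(x) \Big),
\]
with the range of $x$ restricted by Lemma \ref{lem:ra-1}. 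Strictly speaking $T(x,u,v)$ depends on $\bfr$ through $\eta$, but since Lemma \ref{easylemma2} provides uniform bounds in the regime $\eta < \ln^{-3} n$, we may freeze $\eta$ at any value in this range without loss.

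Finally, we decompose $\sum_{x \ge 2} T(x)$ using the technical bounds already at our disposal: by Lemma \ref{easylemma2}(b), $T(2) = 2bf^2/n^2 + o(1)$; by Lemma \ref{easylemma1}(c), $\sum_{x=3}^{4(\alpha-u^*)} T(x) = o(1)$; by Lemma \ref{easylemma2}(c,d,e), $\sum_{x=u^*-1}^{a-1} T(x) = \bigO(\MA) + o(1)$ (as summarised in the concluding remark of Lemma \ref{easylemma2}); and $r_x = 0$ for the gap $4(\alpha-u^*) < x < u^* - 1$ by Lemma \ref{lem:ra-1}. Altogether,
\[
\sum_{x \ge 2} T(x) = \frac{2bf^2}{n^2} + \bigO(\MA) + o(1),
\]
and the $+2bf^2/n^2$ term cancels exactly with the $-2bf^2/n^2$ contribution from $F_3$. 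This cancellation --- which is the very reason the argument is carried out in $G_{n,m}$ rather than $G_{n,p}$ --- leaves only $\exp(\bigO(\MA) + o(1))$, which combined with $F_1(\bfl)$ gives the stated bound.

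The only real obstacle in executing this plan is verifying the exact cancellation between $T(2)$ and $F_3$, together with the somewhat finicky control of the large-$x$ tail via $T(a-2)$ and $T(a-1)$ that produces the $\bigO(\MA)$ correction. Both are, however, encapsulated in Lemma \ref{easylemma2}, so the present lemma reduces to a clean bookkeeping exercise once those estimates are in place.
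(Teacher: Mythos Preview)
Your proposal is correct and follows essentially the same route as the paper: apply Lemma~\ref{contributionlemma}, pull out $F_1(\bfl)$ and $\exp(F_3)\sim\exp(-2bf^2/n^2)$ uniformly via Lemma~\ref{easylemma2}\eqref{teil1}, enlarge the sum over $\bfr$ and factor via $\sum_{r\ge 0} z^r/r!=e^z$, then decompose $\sum_x T(x)$ using Lemmas~\ref{lem:ra-1}, \ref{easylemma1}\eqref{lempartb} and \ref{easylemma2}\eqref{teil2}--\eqref{teil4} so that $T(2)$ cancels the $F_3$ contribution and only $\bigO(\MA)+o(1)$ remains. Your explicit remark about freezing $\eta$ (since $T(x,u,v)$ is increasing in $\eta$ and the bounds in Lemma~\ref{easylemma2} are uniform for $\eta<\ln^{-3}n$) is a point the paper leaves implicit when switching sums and products, so you are in fact slightly more careful there.
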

	
	\begin{proof}[Proof of Lemma~\ref{lemma:sumR1}] 
		Let $\bfr \in R_1(\bfl)$. Recall that
		\[
		\eta=1-\frac{r_1}{\ntr}= \frac{\sum_{x \ge 2} x r_x}{\ntr}=\frac{\sum_{x \ge 2,u,v} x r^{u,v}_x}{\ntr}.
		\]
		So if $\eta > {\ln^{-3} n}$, then as $\ntr=(1-\lambda)n \sim n$ and $u,v,x\le a=\bigO(\ln n)$, if $n$ is large enough there exist $x_0 \ge 2, u_0, v_0$ such that 
		\[
		r^{u_0,v_0}_{x_0} \ge {n}/{\ln^8 n}.
		\]
		By Lemma~\ref{lem:ra-1}, we have $\sum_{x>4(\alpha-u^*)} r_x \le 2 \ln^3 n$, so $x_0 \le 4(\alpha-u^*)$. Then, by Lemma \ref{easylemma1},
		\[
		T (x_0, u_0, v_0) \le T(x_0) \le \max_{2 \le x \le 4(\alpha-u^*)}T(x) = \bigO \left( \ln^2 n \right).
		\]
		Therefore, if $n$ is large enough,
\begin{equation}
\label{eq:ksdjhf}
    \frac{T (x_0, u_0, v_0)^{r^{u_0,v_0}_{x_0}}}{r^{u_0,v_0}_{x_0}!}\le \left(\frac{e T (x_0, u_0, v_0)}{r^{u_0,v_0}_{x_0}} \right)^{r^{u_0,v_0}_{x_0}}
    \le \left(\frac{\ln^{11} n}{n} \right)^{{n}/{\ln^8 n}} \le \exp\left( -\ln^6 n\right).
\end{equation}
		Using (\ref{asldkjfalk}), we wish to sum $F_1(\bfl) F_2(\bfl,\bfr)  \exp \parenth{F_3\parenth{\bfl, \bfr}}$ over all $\bfr \in R_1(\bfl)$. Using Lemma \ref{easylemma1}, we can bound $F_3(\bfl, \bfr)=\bigO (\ln^2 n)$, and furthermore, using \eqref{eq:ksdjhf}, Lemma \ref{easylemma1} and the bound $T^r/r! \le e^T$, uniformly in $\bfr$,
		\begin{align*}
			F_2(\bfl, \bfr) &= \frac{T (x_0, u_0, v_0)^{r^{u_0,v_0}_{x_0}}}{r^{u_0,v_0}_{x_0}!} \prod_{\substack{2\le x \le 4(\alpha-u^*), u,v \\ (x,u,v) \neq (x_0, u_0, v_0)}} \frac{T(x,u,v)^{r^{u,v}_x}}{r^{u,v}_x!} \prod_{x >4(\alpha-u^*), u,v} \frac{T(x,u,v)^{r^{u,v}_x}}{r^{u,v}_x!} \\
			&\le \exp\left( -\ln^6 n + \bigO(\ln^5 n)\right).
\end{align*}
Note that as $u, v, x \le a = \bigO(\ln n)$, and all $r_x^{u,v}$ are at most $k <n$, and using Lemma~\ref{easylemma1} again, we can bound crudely for sufficiently large $n$:
\begin{align*}
    \sum_{\bfr \in R_1(\bfl)} F_1(\bfl) F_2(\bfl,\bfr)  & \exp \parenth{F_3\parenth{\bfl, \bfr}} \le 	F_1(\bfl) \sum_{\bfr \in R_1(\bfl)}\exp\left(-\ln^6n+\bigO(\ln^5 n)\right)\\
    &\le F_1(\bfl) n^{a^3}\exp\left(-\ln^6n+\bigO(\ln^5 n)\right) 
    \le F_1(\bfl) \exp \left(-\frac{1}{2}\ln^6 n \right),
\end{align*}
and this, by \eqref{asldkjfalk}, implies the claim.
\end{proof}

\begin{proof}[Proof of Lemma~\ref{lemma:sumR2}]
Let $\bfr \in R_2(\bfl)$, so both $\lambda, \eta<{\ln^{-3} n}$. Similarly to the proof of Lemma~\ref{lemma:sumR1}, using \eqref{asldkjfalk} we need to sum $F_1(\bfl) F_2(\bfl,\bfr)  \exp \parenth{F_3\parenth{\bfl, \bfr}}$ over all $\bfr \in R_2(\bfl)$.
Using Lemmas~\ref{lem:ra-1} and \ref{easylemma2}, 
switching sums and products, and recalling that there are no overlap blocks of size between $4(\alpha-u^*)+1$ and $u^*-2$, we obtain
\begin{align*}
    \sum_{\bfr \in R_2(\bfl)} & F_1(\bfl) F_2(\bfl,\bfr)  \exp \parenth{F_3\parenth{\bfl, \bfr}} \sim F_1(\bfl)\exp\left( -\frac{2b f^2}{n^2}\right)\sum_{\bfr \in R_2(\bfl)}F_2(\bfl, \bfr)\\
    &\sim F_1(\bfl)\exp\left( -\frac{2b f^2}{n^2}\right)\sum_{\bfr \in R_2(\bfl)}\prod_{\substack{2 \le x \le 4(\alpha-u^*)  \\ \text{or } u^*-1 \le x \le a-1, \\  u,v}} \frac{T(x,u,v)^{r^{u,v}_x}}{r^{u,v}_x!} \\
    &\le  F_1(\bfl)\exp\left( -\frac{2b f^2}{n^2}\right) \prod_{\substack{2 \le x \le 4(\alpha-u^*)  \\ \text{or } u^*-1 \le x \le a-1, \\  u,v}}\left(\sum_{r_{x}^{u,v} \ge 0} \frac{T(x,u,v)^{r_x^{u,v}}}{r_x^{u,v}!}\right).
\end{align*}
Using the identity $\sum_{r=0}^\infty T^r/r! = e^T$ and Lemma~\ref{easylemma2} again, we obtain that this is at most
\begin{align*}
    \pushQED{\qed} 
    & F_1(\bfl) \exp \left(-\frac{2b f^2}{n^2} + \sum_{x=2}^{4(\alpha-u^*)}T(x)+\sum_{x=u^*-1}^{a-1} T(x) \right)\\
    \lesssim & ~ F_1(\bfl) \exp \left(-\frac{2b f^2}{n^2}  + \frac{2b f^2}{n^2}+ \bigO(\MA) \right) = F_1(\bfl) \exp \left(\bigO(\MA) \right).
    \qedhere
    \popQED
\end{align*}
\renewcommand{\qedsymbol}{}
\end{proof} 	
\vspace{-7mm}
We proceed with Lemma~\ref{lemmascrambled} by summing over all sequences $\bfl$ so that $\lambda < {\ln^{-3} n}$. Along the way, it will be almost no additional effort to observe the following statement. This will not be used further here, but we plan to use it in future work. 
\begin{lemma}\label{lemma:futurework}
Let $a=\alpha_0-\bigO(1)$ and $\kp$ a tame $a$-bounded profile. Assume that $k_a^2/\mu_a \rightarrow \infty$ and define
\[\Pi_{\mathrm{scrambled}}'  = \left\{ (\pi, \pi') \in \Pi_{\textup{scrambled}}  \mid \ell_a(\pi, \pi') \ge  \frac{10 k_a^2}{\mu_a} \right\}.
\]
Then
\[
    \frac{1}{\ex{Z_\kp}^2}\sum_{(\pi,\pi') \in \Pi'_{\textup{scrambled}}} \Pb\left(A_{\pi} \cap A_{\pi'}\right) \le  \exp \Big(- \frac{10 k_a^2}{\mu_a}+\bigO(\MB+\MA)\Big)\enspace.
\]
\end{lemma}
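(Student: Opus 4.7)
The plan is to mimic the proof of Lemma~\ref{lemmascrambled}, inserting one additional ingredient: a Poisson-type tail bound for the summation over $\ell_a$. Since $\Pi_{\mathrm{scrambled}}' \subseteq \Pi_{\mathrm{scrambled}}$, every pair in question automatically satisfies $\lambda < \ln^{-3}n$, so Lemmas~\ref{lemma:sumR1} and~\ref{lemma:sumR2} combined yield, for every admissible identical-parts sequence $\bfl$,
\[
\frac{1}{\ex{Z_\kp}^2}\sum_{\bfr}\sum_{(\pi,\pi') \in \Pi_{\bfl, \bfr}}\Pb(A_\pi \cap A_{\pi'}) \,\lesssim\, F_1(\bfl)\exp(\bigO(\MA)).
\]
It therefore suffices to control $\sum_\bfl F_1(\bfl)$ over the restricted set $\{\bfl : \lambda < \ln^{-3}n,\ \ell_a \ge L\}$ with $L := \lceil 10 k_a^2/\mu_a \rceil$.

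For such $\bfl$, a direct calculation from the formula $\E_p[\bar X_\bfl] = \frac{n!}{(n-\lambda n)!\prod_u u!^{\ell_u}\ell_u!}q^{f_\bfl}$, combined with $\mu_u \sim n^u q^{\binom{u}{2}}/u!$ and $\binom{k_u}{\ell_u} \le k_u^{\ell_u}/\ell_u!$, yields the factorised bound
\[
F_1(\bfl) \,\le\, \exp(\bigO(M))\prod_u \frac{(k_u^2/\mu_u)^{\ell_u}}{\ell_u!},
\]
where the multiplicative $\exp(\bigO(M))$ absorbs the correction $\exp(\bigO(\lambda^2 n))$ arising from the ratio $n!/(n-\lambda n)!$ versus $n^{\lambda n}$. (This is the same factorisation and the same bookkeeping that is already implicit in the proof of Lemma~\ref{lemmascrambled}.) Summing over the restricted set and isolating the $u=a$ coordinate of the product,
\[
\sum_{\bfl:\ \ell_a \ge L} F_1(\bfl) \,\le\, \exp(\bigO(M))\parenth{\sum_{j \ge L}\frac{(k_a^2/\mu_a)^j}{j!}}\prod_{u \ne a}\exp(k_u^2/\mu_u).
\]

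The product over $u \ne a$ is at most $\exp(\bigO(M))$: by tameness, $\kappa_u$ decays rapidly as $u$ moves away from $a$, while Lemma~\ref{prop:muurelativetomua} shows that $\mu_u$ grows rapidly as $u$ decreases, so each $k_u^2/\mu_u$ for $u \ne a$ is dominated by terms already appearing in $\MA$ and $\MB$. For the remaining tail sum, set $\mu := k_a^2/\mu_a$, which tends to infinity by hypothesis; then the standard Chernoff estimate for the right tail of a Poisson gives
\[
\sum_{j \ge L}\frac{\mu^j}{j!} \,=\, e^\mu\,\Pb(\Poi(\mu) \ge L) \,\le\, \exp\parenth{-\mu(10\ln 10 - 10)} \,\le\, \exp(-13\mu).
\]
Combining everything,
\[
\frac{1}{\ex{Z_\kp}^2}\sum_{(\pi,\pi')\in \Pi_{\mathrm{scrambled}}'}\Pb(A_\pi \cap A_{\pi'}) \,\le\, \exp\parenth{-13 k_a^2/\mu_a + \bigO(M)} \,\le\, \exp\parenth{-10 k_a^2/\mu_a + \bigO(M)},
\]
as desired. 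The only non-routine step is the bookkeeping verifying that both the $\exp(\bigO(M))$ correction in $F_1(\bfl)$ and the product $\prod_{u \ne a}\exp(k_u^2/\mu_u)$ really are absorbed into $\exp(\bigO(M))$; fortunately, these are precisely the estimates already needed for Lemma~\ref{lemmascrambled}, so no genuinely new work beyond that calculation plus the Poisson tail bound is required.
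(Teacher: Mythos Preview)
Your proposal is correct and follows exactly the paper's route: reduce via Lemmas~\ref{lemma:sumR1}--\ref{lemma:sumR2} to bounding $\sum_\bfl F_1(\bfl)$ over the restricted range $\ell_a \ge 10k_a^2/\mu_a$, factor the bound on $F_1(\bfl)$ across the coordinates $\ell_u$, and control the $\ell_a$-tail. The only cosmetic difference is that the paper invokes Stirling directly on $\sum_{\ell_a \ge L} z^{\ell_a}/\ell_a!$ whereas you package it as a Poisson--Chernoff bound; these are the same computation.

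One small imprecision worth flagging: the uniform pointwise bound $F_1(\bfl) \le \exp(\bigO(M))\prod_u (k_u^2/\mu_u)^{\ell_u}/\ell_u!$ is not literally true, because the Stirling correction coming from $(n-\nid)!/n!$ contributes a factor $e^{\ell_a^2 a^2/n}$ which for the largest permitted $\ell_a$ can far exceed $e^{\bigO(M)}$. The paper keeps this factor inside the sum over $\ell_a$ and only then observes that the dominant terms sit near $\ell_a \approx 10k_a^2/\mu_a$, where the factor is indeed $e^{\bigO(\MB)}$ (and for larger $\ell_a$ the decay of $z^{\ell_a}/\ell_a!$ overwhelms it). Since you explicitly defer to the bookkeeping of Lemma~\ref{lemmascrambled} for this step, the correct handling is already on record there; just be aware that the bound is not uniform but emerges after summation.
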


\begin{proof}[Proof of Lemmas~\ref{lemmascrambled} and \ref{lemma:futurework}]
Combining Lemmas~\ref{lemma:sumR1} and \ref{lemma:sumR2}, to prove Lemma~\ref{lemmascrambled} it suffices to show that
\begin{equation} \label{eq:sumoverbfl}
    \sum_{\bfl \le \bfk: \lambda <\ln^{-3} n} F_1(\bfl)\lesssim \exp \left(\frac{k_a^2}{\mu_a}+\bigO(\MB+\MA) \right),
\end{equation}
and for Lemma~\ref{lemma:futurework} we need the  corresponding upper bound when we only sum over $\bfl$ with $\lambda< {\ln^{-3} n}$ and $\ell_a \ge 10 k_a^2 / \mu_a$.  
So suppose that $\bfl \le \bfk$ with $\lambda < {\ln^{-3} n}$. Then
\begin{equation}
\label{boundforh1}
    {\nid}=\sum_u u \ell_u=\lambda n < \frac{n}{\ln^3 n}.
\end{equation} 	
Note that
\[
    \nid^2 \le \ell_a^2 a^2 +  2\nid (\nid - \ell_aa)  = \ell_a^2 a^2 + \bigO\left(\frac{n}{\ln^3n} \right) \sum_{u^* \le u \le a-1} u \ell_u,
\] 
	and so with Lemma \ref{ablemma}, 
	\[
	\frac{(n-{\nid})!}{n!} \lesssim n^{-{\nid}}e^{{\nid}^2/n} \le n^{-{\nid}}e^{{\ell_a^2 a^2}/{n} + \bigO\left(\frac{1}{\ln^ 3n} \right) \sum_{u^* \le u \le a-1}u\ell_u}.
	\]
Therefore, since $\E[X_\ell] = \frac{n!}{(n-\nid)!} \prod_u  \big( q^{{u \choose 2}\ell_u} / (u!^{\ell_u}\ell_u!) \big)$, bounding ${k_u \choose \ell_u} \le k_u^{\ell_u}/\ell_u!$,
\begin{align}
    F_1(\bfl) &= \frac{\prod_u {k_u \choose \ell_u}^2}{\E_p[\bar X_\ell]} \lesssim \frac{e^{\ell_a^2 a^2/n} }{\ell_a!} \left( \frac{k_a^2 a!}{n^{a}q^{{a\choose 2}}}\right)^{\ell_a}\prod_{u^* \le u \le a-1} \frac{1}{\ell_u!} \left( \frac{k_u^2 u !e^{\bigO\left(u / \ln^3 n\right)}}{n^{u}q^{{u\choose 2}}}\right)^{\ell_u}. \label{bound1}
\end{align}
	Recall that $\mu_{u}={n \choose u} q^{u \choose 2}$ is the expected number of independent $u$-sets in $G_{n,p}$. Note that for all $u^* \le u \le a$,
	\begin{equation}\label{bound2a}
		\frac{k_u^2 u!}{n^{u}q^{{u \choose 2}}} =\frac{k_u^2}{{n \choose u}q^{u \choose 2}} \left(1+\bigO\left(\frac{a^2}{n}\right) \right)= \frac{k_u^2}{\mu_{u}}+ \bigO \left( \frac{k_u^2 \ln^2 n}{\mu_{u} n} \right).
	\end{equation}
In particular, we obtain uniformly for $u^* \le u \le a$ that
	\begin{equation}\label{bound2}
		\frac{k_u^2 u!e^{\bigO({u}/{\ln^3 n})}}{n^{u}q^{{u \choose 2}}}  \sim \frac{k_u^2}{\mu_{u}}.
	\end{equation}
It follows from (\ref{expectationu}) that for any $t=\alpha_0-\bigO(1)$ and $u^* \le u \le a$,
	\[
    \mu_{u}
= \mu_t \left(\Theta\Big( \frac{n}{\ln n}b^{-(t-u)/2}\Big) \right)^{t-u}.
	\]
	Therefore, as $k_u \le k = \bigO\left({n}/{\ln n}\right)$ for $u^*\le u \le a-1$, 
	\begin{equation*}
		\sum_{u=u^*}^{a-1}  \frac{k_u^2}{\mu_{u}}= \bigO\left( \frac{n^2}{\ln^2 n } \sum_{u=u^*}^{a-1}  \frac{1}{\mu_{u}}\right)=\bigO \left(\frac{n^2}{\mu_{a-1}\ln^2 n} \right)=\bigO \left(\frac{n}{\mu_{a}\ln n} \right)=\bigO(\MA).  
	\end{equation*}
	So by this and (\ref{bound2}), switching sums and products and using $e^z = \sum_n {z^n}/{n!}$,
	\begin{align}
		\sum_{\ell_{u^*}, \dots, \ell_{a-1}} \prod_{u^*\le u \le a-1} \frac{1}{\ell_u!} \left( \frac{k_u^2 u!e^{O\left(\frac{u}{\ln^3 n}\right)}}{n^{u}q^{{u\choose 2}}}\right)^{\ell_u} = \exp \left((1+o(1))\sum_{u=u^*}^{a-1}  \frac{k_u^2}{\mu_{u}} \right)=\exp \left( \bigO(\MA) \right). \label{eq:summingu2}
	\end{align}
	We have to be more careful when summing \eqref{bound1} over $\ell_a$. Noting that when $z \rightarrow \infty$, the main contribution to $e^z=\sum_n {z^n}/{n!}$ comes from terms where $n \approx z$, and that $\ell_a \le \nid = \lambda n < {n}/{\ln^3 n}$, together with~\eqref{bound2a} we get
	\begin{align*}
		\sum_{\ell_a=0}^{n/\ln^3 n}  \frac{e^{\ell_a^2 a^2/n}}{\ell_a!} \left( \frac{k_a^2 a!}{n^{a}q^{{a\choose 2}}}\right)^{\ell_a} &\le  \exp \left(\frac{k_a^2 a!}{n^{a}q^{{a\choose 2}}}+\bigO\Big( \Big( \frac{k_a^2 a!}{n^{a}q^{{a\choose 2}}}\Big)^2\frac{a^2}{n}\Big) \right) \nonumber \\
		&= \exp \left( \frac{k_a^2}{\mu_{a}}+ \bigO \Big( \frac{k_a^4 \ln^2 n}{\mu_a^2 n}+ \frac{k_a^2 \ln^2 n}{\mu_{a} n}  \Big) \right) \nonumber \\
		&= \exp \left(\frac{k_a^2}{\mu_{a}}+ \bigO \left(\MB+\MA\right)  \right). 
	\end{align*}
Note further that, if we only take the sum over $\ell_a \ge 10 k_a^2 / \mu_a \sim 10k_a^2 a! / (n^a q^{a \choose 2})$ (and $\ell_a < n/\ln^3 n$),  then using Stirling's formula, we can similarly bound by  $\exp\big(-10 k_a^2 / \mu_a+\bigO(\MB+\MA)\big)$, assuming $k_a^2/\mu_a \rightarrow \infty$. 
	Using \eqref{bound1} and \eqref{eq:summingu2}, we can now bound the sum \eqref{eq:sumoverbfl}:  switching sums and products we obtain that
	\begin{align*}
		\sum_{\bfl \le \bfk: \lambda < \ln^{-3} n}  F_1(\bfl)  \lesssim &  \left(\sum_{\ell_a=0}^{n/\ln^3 n}  \frac{e^{\frac{\ell_a^2 a^2}{n}}}{\ell_a!} \left( \frac{k_a^2 a!}{n^{a}q^{{a\choose 2}}}\right)^{\ell_a} \right)  \left(\sum_{\ell_{u^*}, \dots, \ell_{a-1}} \prod_{u^* \le u \le a-1} \frac{1}{\ell_u!} \left( \frac{k_u^2 u!e^{\bigO\left(\frac{u}{\ln^3 n}\right)}}{n^{u}q^{{u\choose 2}}}\right)^{\ell_u}  \right)  \\
		\lesssim &\exp \left(\frac{k_a^2}{\mu_{a}}+ \bigO \left(\MB+\MA\right)  \right).
	\end{align*}
This is \eqref{eq:sumoverbfl} as required, completing the proof of Lemma \ref{lemmascrambled}. For Lemma~\ref{lemma:futurework} we proceed in the same way while restricting the sum to $10 k_a^2/\mu_a \le \ell_a \le n/\ln^3 n$, completing also that proof.
\end{proof}

\subsection{Proof of Lemma \ref{lemmasimilar}} \label{sectionsimilar} 
Let us remark at the beginning that the following proof in fact works for any constant $c_0>0$, not just for $c_0=c/3$.	Recall the definitions of $\bfl, \lambda, s, t, P, f, g$ from the beginning of \S\ref{section:secondmoment} and~\S\ref{section:smprelims}. By \eqref{expectation} and \eqref{jointprobability},
\begin{align}&\frac{1}{\ex{Z_\kp}^2}\sum_{(\pi,\pi') \in \Pi_{\text{similar}}} \Pb\left(A_{\pi} \cap A_{\pi'}\right) \nonumber\\
    & \sim \frac{1}{\ex{Z_\kp}P}\sum_{(\pi,\pi') \in \Pi_{\text{similar}}}\Big( q^{f-g_{\pi, \pi'}} \exp \Big(-\frac{b-1}{n^2} \left((2f-g_{\pi, \pi'})^2-f^2 \right) \Big) \Big)\nonumber \\
    &\le \frac{1}{\ex{Z_\kp} P}\sum_{(\pi,\pi') \in \Pi_{\text{similar}}} q^{f-g_{\pi, \pi'}}\nonumber .
\end{align}
As $\kp$ is tame, by Proposition \ref{lem:XkZk} we have 
\[
    \E[Z_\kp] \sim \E[X_\kp] = \E[\bar X_\kp] \cdot \prod_u k_u!  \enspace .
\]
So to prove Lemma \ref{lemmasimilar}, it suffices to show 
\begin{equation}
\label{sdfsedf}
    \sum_{(\pi,\pi') \in \Pi_{\text{similar}}} q^{f-g_{\pi, \pi'}} \le P \,\prod_u k_u!   \exp \Big(\bigO \big(
		\ln n \big)\Big).
\end{equation}
In order to achieve this, we will generate and count all pairs of partitions $(\pi, \pi')\in \Pi_{\text{similar}}$ while bounding $f-g_{\pi, \pi'}$ from below. 
	
	First note that if $(\pi, \pi') \in \Pi_{\text{similar}}$, then there are exactly $\lambda n$ vertices in identical parts, where $\lambda=\lambda(\pi, \pi')>1-n^{-c_0}$. As each part contains at least one vertex (in fact, at least $u^*$ vertices), the number of scrambled parts is at most
	\begin{equation} \label{eq:boundS}
		s(\pi, \pi') \le t(\pi, \pi') \le  (1-\lambda)n < n^{1-c_0}.
	\end{equation}
	Given $\pi$ and $\pi'$, let 
	\begin{align*}
		z_u(\pi, \pi') &= \text{ number of $2$-composed parts of $\pi$ of size $u$ (with respect to $\pi'$)},\\
		z(\pi, \pi') &= \text{ total number of $2$-composed parts of $\pi$ (with respect to $\pi'$)}. 
	\end{align*}
Fix some integers $s\le n^{1-c_0}$  and $(z_u)_{u^*\le u \le a}$ and let $z = \sum_u z_u$. We will first generate and count all pairs $(\pi, \pi') \in \Pi_\mathrm{rel}(\bfk)$ such that $s(\pi, \pi')=s$ and $z_u(\pi, \pi')=z_u$ for all $u$, and then take the sums over all $s\le n^{1-c_0}$  and $(z_u)_u$ in order to obtain \eqref{sdfsedf}. Without loss of generality we only count those pairs $(\pi, \pi')$ where $z(\pi, \pi') \ge z(\pi', \pi)$, 
	as by symmetry this decreases the left-hand side of (\ref{sdfsedf}) by at most a factor ${1}/{2}$.
	
	Any pair $(\pi, \pi') \in \Pi_\mathrm{rel}(\bfk)$ corresponding to $s$ and $(z_u)_u$ can be obtained in the following way. First choose $\pi$, for which there are $P$ possibilities, and then pick the scrambled and the $2$-composed parts, for which there are at most
	\[
	{k \choose s}\prod_u{k_u \choose z_u} \le \frac{k^s}{s!}\prod_u\frac{k_u^{z_u}}{z_u!} 
	\]
	choices. As $(\pi, \pi')$ is relevant, by Fact~\ref{tamelemma} all remaining parts of $\pi$ must be $1$-composed. From every $2$-composed part $V_i$, pick the exceptional vertex $v_i \in V_i$ which is in a different part of $\pi'$ than the other vertices in $V_i$, and remove it from $V_i$. There are at most
	\[
	   a^{z}
	\]
	choices for the exceptional vertices. Next, choose the vertices which will be the exceptional vertices in $2$-composed parts of $\pi'$. As w.l.o.g.\ $z(\pi', \pi) \le z(\pi', \pi)$, there are at most~$z$ of them. In $\pi$, those vertices must either be in one of the $s$ scrambled parts or they are amongst the $z$ exceptional vertices of $\pi$, so there are at most
\[
       2^{as+z}
\]
choices. Now assign these vertices to $1$- and (formerly) $2$-composed parts of $\pi$ in order to create the $2$-composed parts of $\pi'$. This can be done in at most
\[
   k^{z}
\]
ways.
We have generated all $1$-composed and $2$-composed parts of $\pi'$, and now only need to rearrange the remaining vertices to form the scrambled parts of $\pi'$. As each scrambled part of $\pi'$ contains at most $a$ vertices, there are at most $sa$ vertices left, so there are at most
\[
    s^{sa}
\]
ways to group them into $s$ parts. Finally, we choose one of the
\[
    \prod_u k_u!
\]
possible orderings of the parts, and this defines $\pi'$ completely. 
Overall, given $s$ and $(z_u)_u$, if $n$ is large enough there are at most
\begin{align}
    P \frac{\left(2^a s^a k\right)^s}{s!}(2ak)^{z} \prod_u \left(\frac{k_u^{z_u}}{z_u!} k_u! \right) \le \left( P \prod_u k_u! \right) \frac{\left(2^a s^a k\right)^s}{s!} \prod_u \left(\frac{\left(3ak^2\kappa_u\right)^{z_u}}{z_u!} \right)  \label{numberpiprime}
\end{align}
choices for the pair $(\pi, \pi')$ (note that $\kappa_u={k_u u}/{n} \sim {k_u}/{k}$ for $u^* \le u \le a$).
	
	We also need a lower bound for $f-g_{\pi, \pi'}$, the number of forbidden edges in~$\pi$ (i.e., pairs of vertices that are in the same part of $\pi$) that are not forbidden in $\pi'$. In $\pi$, each of the $z_u$ exceptional vertices in a $2$-composed part of size $u$ is the endpoint of exactly $u-1$ forbidden edges in $\pi$ which are not forbidden edges of $\pi'$. All of these forbidden edges are distinct. Furthermore, each of the $s$ scrambled parts of $\pi$ is at least $(u^*-2(a-u^*)-1)$-composed. As $a \sim u^* \sim 2 \log_b n$, if $n$ is large enough then each scrambled part contains at least 
	\[{u^*-2(a-u^*)-1 \choose 2} = {(2-o(1))\log_b n \choose 2} \ge (2-c_0) \log^2_b n\]
	forbidden edges which are not forbidden in $\pi'$. Hence,
	\begin{align*}
		f-g_{\pi, \pi'} &\ge \sum_u (z_u (u-1)) + (2-c_0) s \log_b^2 n.
	\end{align*}
	As $a=\alpha_0-\bigO(1)$ and $b^{a} = \Theta\left( {n^2}/{\ln^2 n}\right)=\Theta(k^{2})$, 
	\[
	q^{f-g_{\pi, \pi'}} =b^{-(f-g_{\pi, \pi'})}\le  b^{(-2+c_0)s \log^2_b n}\prod_u \bigO \left(k^{-2} b^{a-u}\right)^{z_u}.
	\]
	Together with (\ref{numberpiprime}), the contribution to (\ref{sdfsedf}) from relevant $(\pi$, $\pi')$ corresponding to the fixed values $s$ and $(z_u)_u$ is at most
	\begin{align}
		\left( P \prod_u k_u! \right) \frac{\left(2^a s^a k b^{(-2+c_0) \log^2_b n}\right)^s}{s!} \prod_u \left(\frac{\bigO\left(a\kappa_u b^{a-u}\right)^{z_u}}{z_u!}  \right). \label{zwischen}
	\end{align}
	As we picked $s \le n^{1-c_0}$ and since $a \sim \alpha_0 \sim 2 \log_b n$, if $n$ is large enough we have 
	\[
	2^a s^a k b^{(-2+c_0) \log^2_b n}\le b ^{(-2 + c_0 + o(1) ) \log^2_b n+ a (1-c_0) \log_b n}= b^{(-c_0+o(1))\log_b^2 n} \le 1.
	\]
So, summing \eqref{zwischen} over all possible choices for integers $s \le n^{1-c_0}$ and $(z_u)_u$, using once more that $e^z = \sum_{n \ge 0} z^n/n!$, we obtain
	\begin{align*}
		\sum_{(\pi,\pi') \in \Pi_{\text{similar}}} q^{f-g(\pi_0, \pi')} & \lesssim P \prod_{u^*\le u \le a} k_u! \exp \left(1+ \bigO(a) \sum_{u^*\le u \le a} \kappa_u b^{a-u} \right)  \\
		&\le P\prod_{u^*\le u \le a} k_u!  \exp\parenth{\bigO\big(a\big)} ,
	\end{align*}
using the property $\kappa_u < b^{-(\alpha-u)\gamma(\alpha-u)}$ from Definition \ref{deftame}~\eqref{tame:tail} in the last step. As $a=\bigO(\ln n)$, this is~\eqref{sdfsedf} as required.	
\qed

\section{Optimal profiles}\label{section:optimalprofiles}
		
In this section we study the colouring profiles that maximize the expectation $\E[\bar X_\bfk]$ amongst all $t$-bounded colouring profiles $\bfk$ with $k$ colours. This analysis will be used in the verification of Theorems \ref{twopointrestricted} and \ref{theorem:announcedbounds}. We set $p = 1/2$, so that $b=1/(1-p)=2$ throughout.

The main goal of this section is to establish that we may apply Theorem~\ref{theorem:general} to an optimal $t$-bounded profile, at least in the cases required to verify Theorems~\ref{twopointrestricted} and \ref{theorem:announcedbounds}. That is, we establish that the optimal profile is tame and, furthermore, meets the additional constraints \eqref{eq:lowerboundexpectation} and \eqref{eq:lowerboundbeta} from Theorem~\ref{theorem:general}. This is ultimately achieved in Lemma~\ref{lemma:kstartame}. The main difficulty is to show that the expected number of certain sub-colourings is large in order to verify condition \eqref{eq:lowerboundbeta}. This will be established in \S\ref{section:partialcolourings}, after some preparation in \S\ref{section:previousresults}--\ref{section:convergence}.

We will make use of the analysis of optimal, i.e., expectation maximizing, profiles in \S3.1 of \cite{HRHowdoes}, where the optimal $t$-bounded colouring profile was approximated by a continuous version (i.e., where the number of colour classes of each size is a non-negative real number, not necessarily an integer). 
For the purposes of this paper we need tighter bounds compared to what is established in~\cite{HRHowdoes}, showing that the difference in the expected number of colourings between the optimal integer profile and the optimal continuous profile is extremely small. This is achieved in Lemma~\ref{lemma:improvedapproximation} below. The corresponding result in \cite{HRHowdoes} is Lemma 29 there, which would be sufficient to prove concentration on $\bigO(\ln^2 n)$ values in Theorem~\ref{twopointrestricted}, but not for the two-point concentration result that we obtain.

\subsection{Setup and previous results}\label{section:previousresults}

We first collect some notation and results from \cite{HRHowdoes}. For $t=\bigO(\ln n)$, let $E_{n,k,t}$ denote the expected number of unordered $t$-bounded $k$-colourings in $\Gnh$ (\emph{not} in $G_{n,m}$ with $m=\left \lfloor N/2 \right \rfloor$, although by Lemma~\ref{gnmlemma} these two quantities only differ by a factor $\exp({\bigO(\ln^2 n)})$). We will replace $\ln E_{n,k,t}$  by a quantity which is easier to handle.
	
Let $P_{n,k,t}$ be the set of all $t$-bounded $k$-colouring profiles $\bfk=(k_u)_{1 \le u \le t} \in  \mathbb{N}_0^t$  on $n$  vertices, that is, 
\begin{equation}\label{constraint}
    k_u\ge 0,\qquad  \sum_{1 \le u \le t} k_u=k  \qquad \text{and}\qquad \sum_{1 \le u \le t} u k_u = n.
\end{equation}
Given positive reals $k<n$ and a positive integer $t$, we define a real-valued version, letting
	\[
	P^0_{n,k,t} = \bigl\{\  (k_u)_{1 \le u \le t} \in \R^t : \text{\eqref{constraint} holds}\ \bigr\}.
	\]
For $1 \le u \le t$ and $\bfk \in P_{n,k,t}^0$, let
\[
    d_u:= 2^{\binom{u}{2}}u! \quad \text{and} \quad L_{\bfk} := n \ln n - n + k - \sum_{1\le u \le t} k_u \ln (k_u d_u),
\]
and set 
\begin{equation}\label{eq:L0def}
    L_0(n,k,t):=\sup_{\bfk \in P^0_{n,k,t}}L_\bfk.
\end{equation}
In \cite[Lem.~29]{HRHowdoes} it was established for suitable $n,k,t$ that 
\begin{equation} \label{eq:oldapprox}
    \ln(E_{n,k,t}) = L_0(n,k,t) + \bigO(\ln^4 n),
\end{equation} 
which, in many cases, means that $L_0(n,k,t)$ is a good approximation for $\ln(E_{n,k,t})$, while being much easier to analyse. In Lemma~\ref{lemma:improvedapproximation} below we will sharpen~\eqref{eq:oldapprox} for suitable $n, k, t$, replacing  $\bigO(\ln^4 n)$ by  $\bigO(\ln^{3/2} n)$.
Continuing to follow \cite{HRHowdoes}, we rescale by setting $p_u=k_u/k$, which is the fraction of colour-classes of size $u$. So for $t\in\mathbb{N}$ and $\rho\in(1,t)$ a real number let
\[
    \tP_{\rho,t}
    = \left\{ (p_u)_{1\le u \le t} \in [0,1]^t\ :\ \sum_{1 \le u \le t} p_u = 1
    \text{\quad and\quad}
    \sum_{1 \le u \le t} u p_u = \rho
    \right\}.
\]
This is simply the set of probability distributions with support $\{1, \dots, t\}$ and mean $\rho$. 
Moreover, when setting $\rho=n/k$ this is in 1-1 correspondence with  $P^0_{n,k,t}$,  where we substitute $p_u=k_u/k$. Set
\begin{equation*}
    \tL(\rho,k,\bp)
    := \rho\ln(\rho k)-\ln k -\rho +1 -\sum_{1\le u \le t} p_u\ln(p_ud_u),
\quad\bp=(p_i)_{i \le i\le t} \in \tP_{\rho,t},
\end{equation*}
and define the analogue of $L_0(n,k,t)$ by
\begin{equation}\label{eq:L0tildedef}
    \tL_0(\rho,k,t) := \sup_{\bp \in \tP_{\rho,t}} \tL(\rho,k,\bp).
\end{equation}
	It follows immediately from the definitions that for any positive integer  $t$  and positive reals $n$ and $k$ with $1<n/k<t$,
\[
    L_0(n,k,t) = k \tL_0(\rho,k,t),
    \quad\text{where}\quad
    \rho = n/k.
\]
The following lemma from \cite{HRHowdoes} describes the location of the maximum of $\tL(\rho,k,\bp)$ over $\bp$, which is actually independent of $k$.
\begin{lemma}[\cite{HRHowdoes}, Lem.~32]
\label{lem:tLmax}
Let $1<\rho<t$, where $t \in \mathbb{N}$. Then for any real $k>1$,  
there is a unique $\bp=\bp_{\rho,t}\in \tP_{\rho,t}$ maximizing $\tL(\rho,k,\bp)$ which is given by
\begin{equation}\label{pixy}
    p_u = e^{x_t(\rho)+uy_t(\rho)} d_u^{-1}, \quad 1\le u\le t,
\end{equation}
where $x_t(\rho)$ and $y_t(\rho)$ satisfy the equations
		\begin{equation}\label{xy1}
			\sum_{1\le u \le t} e^{x_t(\rho)+uy_t(\rho)} d_u^{-1} =1
		\end{equation}
		and
		\begin{equation}\label{xy2}
			\sum_{1\le u \le t} u e^{x_t(\rho)+uy_t(\rho)} d_u^{-1} = \rho.
		\end{equation}
	\end{lemma}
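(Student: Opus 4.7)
The plan is to recognize this as a constrained maximization of a strictly concave objective over a compact convex set and to apply Lagrange multipliers. First, I would observe that on $\tP_{\rho,t}$, the objective $\tL(\rho,k,\bp)$ depends on $\bp$ only through the term $-\sum_{u=1}^t p_u\ln(p_u d_u)$, since the remaining summand $\rho\ln(\rho k) - \ln k - \rho + 1$ is $\bp$-independent. In particular, the maximizer cannot depend on $k$, which already explains why $x_t(\rho), y_t(\rho)$ are functions of $\rho,t$ alone. Since $p\mapsto -p\ln p$ is strictly concave on $[0,1]$ (with the convention $0\ln 0 = 0$), the sum $-\sum_u p_u\ln p_u$ is strictly concave on the simplex; adding the linear term $-\sum_u p_u \ln d_u$ preserves strict concavity. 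The feasible set $\tP_{\rho,t}$ is the intersection of the simplex with the hyperplane $\sum_u u p_u = \rho$, hence compact and convex, and nonempty since $1<\rho<t$ permits a convex combination of $\delta_1$ and $\delta_t$ with the prescribed mean. Existence and uniqueness of the maximizer $\bp_{\rho,t}$ follow.

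Second, I would show that the maximizer lies in the relative interior, i.e. $p_u>0$ for every $1\le u\le t$. Suppose for contradiction that $p_u=0$ at the optimum for some $u$. Because $1<\rho<t$ the mean constraint forces at least two indices $v_1\ne v_2$ with $p_{v_1},p_{v_2}>0$. One can then perturb $p_u\mapsto \epsilon$ while adjusting $p_{v_1},p_{v_2}$ linearly in $\epsilon$ so as to keep both $\sum p_w = 1$ and $\sum w p_w = \rho$ (the $2\times 2$ linear system is solvable as $v_1\neq v_2$). The resulting change in the objective is $-\epsilon\ln\epsilon + \bigO(\epsilon)$, positive for small $\epsilon>0$, contradicting optimality.

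Third, I would apply Lagrange multipliers at the interior maximizer. With multipliers $\lambda$ for $\sum p_u = 1$ and $\mu$ for $\sum u p_u = \rho$, the stationarity condition
\[
    -\ln(p_u d_u) - 1 = \lambda + \mu u, \qquad 1\le u\le t,
\]
rearranges to $p_u = e^{-1-\lambda-\mu u}/d_u$. Setting $x_t(\rho):=-1-\lambda$ and $y_t(\rho):=-\mu$ yields exactly \eqref{pixy}, and the two constraints defining $\tP_{\rho,t}$ become \eqref{xy1} and \eqref{xy2}, as required.

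The one subtle point is uniqueness of the pair $(x_t(\rho),y_t(\rho))$. This is automatic from the uniqueness of the positive maximizer $\bp_{\rho,t}$: given \eqref{pixy} with all $p_u>0$, one recovers $y_t(\rho)=(u-v)^{-1}\ln\bigl(p_u d_u/(p_v d_v)\bigr)$ for any $u\ne v$, and then $x_t(\rho)=\ln(p_1 d_1)-y_t(\rho)$. I do not anticipate a real obstacle: the only thing one must be careful about is the interior argument in step two, since without it Lagrange multipliers cannot be invoked; everything else is a standard entropy-maximization computation.
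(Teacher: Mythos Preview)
The paper does not prove this lemma; it is quoted from \cite{HRHowdoes} (Lemma~32 there), so there is no in-paper proof to compare against. Your argument is the standard entropy-maximization/Lagrange-multiplier proof, and it is essentially correct and is what one would expect the cited proof to do as well.

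One small point: your claim that ``the mean constraint forces at least two indices $v_1\ne v_2$ with $p_{v_1},p_{v_2}>0$'' does not follow from $1<\rho<t$ alone when $\rho$ happens to be an integer, since then the point mass at $u=\rho$ is feasible. The fix is immediate: if the optimum were a point mass at $\rho$, perturb symmetrically to $p_{\rho-1}=p_{\rho+1}=\epsilon$, $p_\rho=1-2\epsilon$ (both neighbours exist as $1<\rho<t$), and the $-\epsilon\ln\epsilon$ gain again contradicts optimality. With that adjustment your interior argument goes through, and the rest is fine.
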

	We will also need the following results from \cite{HRHowdoes}.
	
\begin{lemma}[\cite{HRHowdoes}, Cor.~37]\label{lemma:ybound}
Suppose that $t=t(n)=\alpha_0(n)-\bigO(1)$ is an integer. Uniformly over all $n$ and all real $\rho\ge 2$ such that $t-\rho=\Theta(1)$ we have
\begin{equation*}
    y_t(\rho) = 2\ln n-\ln\ln n + \bigO(1).
\end{equation*}
\end{lemma}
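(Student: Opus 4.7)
The plan is to identify, under the hypothesis $t-\rho=\Theta(1)$, the approximate shape of the distribution $\bp=\bp_{\rho,t}$ from Lemma~\ref{lem:tLmax} and read $y_t(\rho)$ off from the normalization. Since the support has size $t$ while the mean $\rho$ sits within bounded distance of the upper endpoint $u=t$, the natural coordinate is $v:=t-u$, and I would therefore compute the ratio $p_{t-v}/p_t$.

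Using the identity $\binom{t}{2}-\binom{t-v}{2}=vt-\binom{v+1}{2}$ together with $t!/(t-v)!\le t^v$ (with equality up to a factor $1+\bigO(v^2/t)$ for each fixed $v$ as $t\to\infty$), the formula \eqref{pixy} gives, setting $s:=e^{-y_t(\rho)}\cdot 2^t\cdot t$,
\[
    \frac{p_{t-v}}{p_t}=e^{-vy_t(\rho)}\frac{d_t}{d_{t-v}}=s^v\,2^{-\binom{v+1}{2}}\bigl(1+\bigO(v^2/t)\bigr)
\]
for bounded $v$, and the corresponding exact upper bound $p_{t-v}/p_t\le s^v\,2^{-\binom{v+1}{2}}$ valid for all $0\le v\le t-1$. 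The exact bound combined with the super-exponential decay of $s^v\,2^{-\binom{v+1}{2}}$ in $v$ shows that for any $s$ confined to a bounded set the truncation at $v=t-1$ is irrelevant and $\bp$ (viewed in the coordinate $v$) is asymptotically the probability distribution $\tilde p_v\propto s^v\,2^{-\binom{v+1}{2}}$ on $\mathbb{N}_0$.

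Let $M(s):=\sum_{v\ge 0} v\tilde p_v$. Then $M$ is a continuous strictly increasing function on $(0,\infty)$ with $M(s)\to 0$ as $s\to 0$ and $M(s)\to\infty$ as $s\to\infty$. The mean constraint $\sum_u up_u=\rho$ translates to $\E_{\bp}[t-u]=t-\rho$, which equals $M(s)+o(1)$ by the previous paragraph. Since $t-\rho=\Theta(1)$ forces $M(s)$ into a compact subinterval of $(0,\infty)$, we conclude $s\in[s_1,s_2]$ for constants $0<s_1\le s_2<\infty$ depending only on the implicit constants in $t-\rho=\Theta(1)$. In particular $\ln s=\bigO(1)$, and rearranging $s=e^{-y_t(\rho)}\cdot 2^t\cdot t$ gives $y_t(\rho)=t\ln 2+\ln t+\bigO(1)$ uniformly.

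The lemma then follows from the asymptotics of $t$. Since $b=2$ and $t=\alpha_0(n)-\bigO(1)$ with $\alpha_0=2\log_2 n-2\log_2\log_2 n+\bigO(1)$,
\[
    t\ln 2 = 2\ln n - 2\ln\ln n + \bigO(1), \qquad \ln t = \ln\ln n + \bigO(1),
\]
so $y_t(\rho)=2\ln n-\ln\ln n+\bigO(1)$, as claimed. The main obstacle is managing uniformity as $\rho$ varies, which is handled by the monotonicity of $M$ together with the \emph{exact} (not just asymptotic) tail bound $p_{t-v}/p_t\le s^v\,2^{-\binom{v+1}{2}}$; all other steps reduce to elementary manipulations once the substitution $v=t-u$ has been made.
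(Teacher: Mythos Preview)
The paper does not prove this lemma; it is quoted verbatim as Corollary~37 of \cite{HRHowdoes}. So there is no ``paper's own proof'' to compare against here. Your direct argument is the right shape and, with one small patch, goes through. Incidentally, the paper carries out essentially the same reparametrisation (your $v=t-u$ is its $i=\alpha-u$) in \S\ref{section:previousresults}--\S\ref{section:convergence}, but only \emph{after} importing Lemma~\ref{lemma:ybound} as a black box to get $\mu_n=\bigO(1)$; your argument effectively supplies that black box.

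The one genuine soft spot is the sentence ``$\E_{\bp}[t-u]=t-\rho$, which equals $M(s)+o(1)$ by the previous paragraph'': the previous paragraph only established this for $s$ in a bounded set, so using it to conclude $s$ is bounded is circular. Your final remark that ``monotonicity of $M$ together with the exact tail bound'' handles this is not quite enough: the exact bound $r_v:=p_{t-v}/p_t\le s^v2^{-\binom{v+1}{2}}$ combined with $r_0=1$ gives $\E_{\bp}[v]\le \sum_{v\ge 1} v\,s^v2^{-\binom{v+1}{2}}\to 0$ as $s\to 0$, which yields the \emph{lower} bound on $s$, but it does not by itself give the upper bound. The cleanest fix is to note that, for each fixed $t$, the map $y\mapsto \sum_u u\,p_u$ is strictly increasing (this is just positivity of the variance in the exponential family \eqref{pixy}), hence $s\mapsto \E_{\bp}[v]$ is strictly increasing. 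Then pick $s_2$ with $M(s_2)$ exceeding the upper constant in $t-\rho=\Theta(1)$; since $\E_{\bp}[v]\big|_{s=s_2}\to M(s_2)$ as $t\to\infty$ (your asymptotic for bounded $s$), monotonicity forces $s\le s_2$ for large $n$. With this one line added, your proof is complete and the final arithmetic $y_t(\rho)=t\ln 2+\ln t+\bigO(1)=2\ln n-\ln\ln n+\bigO(1)$ is correct.
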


\begin{lemma}[\cite{HRHowdoes}, Cor.~39]\label{lemma:onemorecolour}
    Suppose that $t=t(n)=\alpha_0(n)-\bigO(1)$ is an integer. Uniformly over all $k\le n/2$ such that $k=n/(t-\Theta(1))$,
\[
    \frac{\partial}{\partial k} L_0(n,k,t)
    = \frac{2}{\ln 2}\ln^2 n+\bigO(\ln n\ln\ln n).
\]
\end{lemma}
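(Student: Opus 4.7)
My plan is to reduce $\partial L_0/\partial k$ to something involving only $x_t(\rho)$ and $\ln k$, by exploiting the fact that at the maximizer $\bp^*=\bp_{\rho,t}$ we have the explicit form \eqref{pixy}, which implies the clean identity $\sum_u p_u^* \ln(p_u^* d_u)=\sum_u p_u^*(x_t(\rho)+uy_t(\rho))=x_t(\rho)+\rho y_t(\rho)$. Substituting this into the definition of $\tL$ and using $L_0(n,k,t)=k\tL_0(n/k,k,t)$ with $\rho=n/k$, I would obtain the closed form
\[
L_0(n,k,t)=n\ln n-k\ln k-n+k-k\,x_t(\rho)-n\,y_t(\rho).
\]

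Next I would differentiate with respect to $k$ at fixed $n,t$, noting $d\rho/dk=-n/k^2$, which yields
\[
\frac{\partial L_0}{\partial k}=-\ln k-x_t(\rho)+\rho\,x_t'(\rho)+\rho^2\,y_t'(\rho).
\]
To show the last two terms cancel, I would differentiate the constraint equations \eqref{xy1} and \eqref{xy2} with respect to $\rho$: the first gives $\sum_u p_u^*(x_t'+uy_t')=0$, i.e.\ $x_t'(\rho)=-\rho\,y_t'(\rho)$, and the second gives $\rho x_t'(\rho)+\langle u^2\rangle y_t'(\rho)=1$ where $\langle u^2\rangle=\sum_u u^2 p_u^*$. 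Substituting, $\rho x_t'(\rho)+\rho^2 y_t'(\rho)=0$ regardless of the variance. Hence $\partial L_0/\partial k=-\ln k-x_t(\rho)$.

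It then remains to estimate $x_t(\rho)$ accurately. From \eqref{xy1} we have $e^{-x_t(\rho)}=\sum_{1\le u\le t}e^{uy_t(\rho)}/d_u$ with $d_u=2^{\binom{u}{2}}u!$. The exponent $g(u):=uy_t(\rho)-\binom{u}{2}\ln 2-\ln u!$ is a concave function of $u$ (treated as real) whose maximum lies at $u^*\approx y_t/\ln 2$. Using Lemma~\ref{lemma:ybound} ($y_t(\rho)=2\ln n-\ln\ln n+\bigO(1)$) gives $u^*\sim 2\log_2 n\sim t$, and plugging back,
\[
\max_u g(u)=\frac{2\ln^2 n}{\ln 2}+\bigO(\ln n\ln\ln n).
\]
The sum has only $t=\bigO(\ln n)$ terms, so it is squeezed between its maximum and $t$ times its maximum; taking logarithms, $-x_t(\rho)=2\ln^2 n/\ln 2+\bigO(\ln n\ln\ln n)$. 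Since $k=n/(t-\Theta(1))$ implies $\ln k=\ln n+\bigO(\ln\ln n)$, combining yields $\partial L_0/\partial k=2\ln^2 n/\ln 2+\bigO(\ln n\ln\ln n)$ uniformly, as claimed.

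The main technical issue will be making the concave-maximum estimate of $g(u)$ fully rigorous and uniform in $\rho$ across the stated range, in particular controlling the error when $u^*$ is not an integer and verifying that the subleading terms from Stirling's formula and the $\bigO(1)$ fluctuation in $y_t$ do not propagate beyond $\bigO(\ln n\ln\ln n)$ in the final bound; everything else is essentially envelope-theorem bookkeeping.
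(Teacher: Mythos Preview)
The paper does not prove this lemma; it is quoted verbatim as Corollary~39 of \cite{HRHowdoes} and used as a black box. So there is no in-paper proof to compare against.

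That said, your argument is correct and is essentially the natural one. The envelope-theorem reduction to $\partial L_0/\partial k=-\ln k-x_t(\rho)$ via $x_t'(\rho)=-\rho y_t'(\rho)$ is exactly right, and your Laplace-type estimate of $-x_t(\rho)=\ln\sum_{u\le t}e^{g(u)}$ goes through. One point worth tightening in the write-up: the unconstrained maximizer of the quadratic $u\mapsto uy_t-\tfrac{u^2}{2}\ln 2$ sits at $y_t/\ln 2=(2\ln n-\ln\ln n+\bigO(1))/\ln 2$, which can exceed $t=\alpha_0-\bigO(1)=(2\ln n-2\ln\ln n)/\ln 2+\bigO(1)$ by about $(\ln\ln n)/\ln 2$; but the resulting loss at the boundary $u=t$ is only $\bigO((\ln\ln n)^2)$, well inside your $\bigO(\ln n\ln\ln n)$ budget. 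After that, the sandwich $\max_u g(u)\le -x_t(\rho)\le \max_u g(u)+\ln t$ and $\ln k=\ln n+\bigO(\ln\ln n)$ finish the job uniformly in $\rho$.
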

The following statement is a direct consequence of Lemma 41 in \cite{HRHowdoes} (noting that in that lemma, we can plug in $\ln \mu_\alpha = \theta \ln n$ by \eqref{eq:defx2}, and that $\alpha_0=\alpha+\theta+o(1)$ by \eqref{eq:theta}).
\begin{lemma}\label{lemma:averagecolourclass}
	Suppose that $t\in \{\alpha(n)-1, \alpha(n)-2\}$, and let $\textbf{k}_t= \min \{ k: E_{n,k,t} \ge 1\}$ denote the $t$-bounded first moment threshold first defined in \eqref{ktdef}. Then
	\[
	\frac n{\textbf{k}_t} = \alpha_0-1-\frac{2}{\ln 2}+o(1).
	\]
\end{lemma}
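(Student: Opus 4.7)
The plan is to appeal to Lemma~41 of~\cite{HRHowdoes}, which already establishes, for $t\in\{\alpha-1,\alpha-2\}$, the asymptotic identity
\begin{equation*}
\frac{n}{\bfk_t} \;=\; \alpha - 1 - \frac{2}{\ln 2} + \frac{\ln\mu_\alpha}{\ln n} + o(1).
\end{equation*}
Granted this, the claim is one line of algebra. First substitute \eqref{eq:defx2}, i.e.\ $\ln\mu_\alpha = \theta\ln n$, which turns the middle term into $\theta$. Then substitute \eqref{eq:theta}, i.e.\ $\alpha_0 - \alpha = \theta + \bigO(\ln\ln n/\ln n)$, to absorb $\alpha+\theta$ into $\alpha_0 + o(1)$, yielding $n/\bfk_t = \alpha_0 - 1 - 2/\ln 2 + o(1)$.

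For completeness, let me outline how Lemma~41 of~\cite{HRHowdoes} is put together from the ingredients already quoted here. By~\eqref{eq:oldapprox} we have $\ln E_{n,k,t} = L_0(n,k,t) + \bigO(\ln^4 n)$, and by definition $\bfk_t$ is the least $k$ with $\ln E_{n,k,t}\ge 0$. Inserting the explicit maximizer $p_u = e^{x_t(\rho)+uy_t(\rho)}/d_u$ from Lemma~\ref{lem:tLmax} into $L_0 = k\tL_0$ collapses the entropy and gives the closed form
\begin{equation*}
L_0(n,k,t) \;=\; n\ln n - k\ln k - n + k - k\,x_t(\rho) - n\,y_t(\rho), \qquad \rho = n/k.
\end{equation*}
Setting the right-hand side to zero, using $y_t(\rho) = 2\ln n - \ln\ln n + \bigO(1)$ from Lemma~\ref{lemma:ybound}, and carrying out the companion saddle-point analysis of the normalization sums \eqref{xy1}–\eqref{xy2} for $x_t(\rho)$ determines $\rho$ up to lower-order terms; the $\ln\mu_\alpha$ contribution arises because those sums are dominated by $u$ near~$\alpha$, where $d_u = 2^{\binom u2}u!$ matches $\mu_u$ up to simple factors. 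That this determination has $o(1)$ precision in $\rho$ is ensured by Lemma~\ref{lemma:onemorecolour}: the $\Theta(\ln^2 n)$ derivative of $L_0$ in $k$ converts the $\bigO(\ln^4 n)$ slack from \eqref{eq:oldapprox} into $\bigO(\ln^2 n)$ slack in $\bfk_t$, hence $\bigO(\ln^4 n/n) = o(1)$ slack in $\rho = n/\bfk_t$.

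The only genuine obstacle is the saddle-point analysis of $x_t(\rho)$, which is already done in~\cite{HRHowdoes}; everything remaining here is the two-line substitution with \eqref{eq:defx2} and \eqref{eq:theta}.
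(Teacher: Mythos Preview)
Your proposal is correct and takes essentially the same approach as the paper: both cite Lemma~41 of~\cite{HRHowdoes} and then perform the two substitutions $\ln\mu_\alpha=\theta\ln n$ from~\eqref{eq:defx2} and $\alpha+\theta=\alpha_0+o(1)$ from~\eqref{eq:theta}. Your added outline of how Lemma~41 is assembled is a nice bonus but not required, since the paper treats the lemma as a direct consequence of that external result.
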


\subsection{Reparametrisation}

From here on we deviate from~\cite{HRHowdoes}, bringing the characterisation of $\bp$ in Lemma~\ref{lem:tLmax} into a form that we can study more easily in our setting. The advantage of our parametrisation is that we replace $x_t(\rho)$ and $y_t(\rho)$ by quantities $\lambda_n(t, \rho)$ and $\mu_n(t, \rho)$ that are bounded and that we can approximate analytically and also numerically.
Moreover, it will be convenient to replace $u$ (where $1 \le u \le t$) by $\alpha-i$ (where $\alpha-t \le i \le \alpha-1$).	We first define the auxiliary function
	\begin{equation} \label{eq:defh}
	h_n : \mathbb{N} \to \mathbb{R}, \quad  i \mapsto - \frac{\ln 2}{2}i^2 - \ln((\alpha-i)!) + \ln(\sqrt{2\pi})+(\alpha-i+1/2)\ln \alpha - \alpha.
	\end{equation}
	Note that the last three terms are similar to the leading terms in  
the Stirling approximation of $\ln((\alpha-i)!)$ for large $\alpha-i$. 
We state without proof some easy asymptotic properties of $h_n(i)$ for later use.
\begin{lemma}\label{lemma:hasymp}		
Uniformly for all $i = o(\ln^{1/2} n)$,
\[
    h_n(i) = - \frac{\ln 2}2 i^2 +o(1) \quad \text{as} \quad n \rightarrow \infty,
\]
 and uniformly for all $i \le \alpha-1 = \bigO(\ln n)$,
\[
    h_n(i) = - \Big(\frac{\ln 2}2 -o(1) \Big) i^2  \quad \text{as} \quad n \rightarrow \infty. 
\] 
\end{lemma}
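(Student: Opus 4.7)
The plan is to apply Stirling's formula to $\ln((\alpha-i)!)$ and then expand the remaining logarithm. Since the hypothesis $i\le\alpha-1$ guarantees $\alpha-i\ge 1$, Stirling in the form
\[
  \ln(m!) = m\ln m - m + \tfrac12 \ln(2\pi m) + \bigO(1/m), \qquad m\ge 1,
\]
applies to $m=\alpha-i$. Plugging this into~\eqref{eq:defh} and carrying out the obvious cancellations (the $\ln\sqrt{2\pi}$ absorbs the $\tfrac12\ln(2\pi)$, the linear-in-$\alpha$ terms combine with $-\alpha$, and the $m\ln m$ term combines with $(m+\tfrac12)\ln\alpha$ into $(m+\tfrac12)\ln(\alpha/m)$) should leave the identity
\[
  h_n(i) \;=\; -\frac{\ln 2}{2}\, i^2 \;+\; E(i) \;+\; \bigO\!\left(\frac{1}{\alpha-i}\right),
  \qquad
  E(i) := \left(\alpha - i + \tfrac12\right) \ln\frac{\alpha}{\alpha-i} - i,
\]
which reduces both claims to estimating $E(i)$.

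First I would address the case $i = o(\sqrt{\ln n})$. Setting $r = i/\alpha$ and Taylor-expanding $-\ln(1-r) = r + r^2/2 + \bigO(r^3)$ gives, after a short algebraic manipulation,
\[
  E(i) \;=\; -\frac{i^2}{2\alpha} + \frac{i}{2\alpha} + \bigO\!\left(\frac{i^3}{\alpha^2}\right).
\]
Since $\alpha = \Theta(\ln n)$ and $i^2 = o(\ln n) = o(\alpha)$, every term on the right is $o(1)$, and the Stirling remainder $\bigO(1/(\alpha-i)) = \bigO(1/\alpha)$ is $o(1)$ as well. This yields the first assertion, $h_n(i) = -\tfrac{\ln 2}{2}\,i^2 + o(1)$.

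For the second claim, valid uniformly for $1\le i\le\alpha-1$, I would split into two ranges. When $1\le i\le \alpha/2$, the same Taylor expansion of $E(i)$ converges absolutely and yields $|E(i)| = \bigO(i^2/\alpha + 1/\alpha) = o(i^2)$, using $i\ge 1$ and $\alpha\to\infty$. When $\alpha/2 < i \le \alpha-1$, the Taylor expansion breaks down (one has $\alpha-i$ possibly bounded), but in this range $i^2 = \Theta(\alpha^2) = \Theta(\ln^2 n)$, so even the crude estimate
\[
  |E(i)| \;\le\; (\alpha-i+\tfrac12)\ln\alpha + i \;=\; \bigO(\alpha\ln\alpha) \;=\; \bigO(\ln n \,\ln\ln n)
\]
is $o(i^2)$. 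The Stirling remainder $\bigO(1/(\alpha-i))\le \bigO(1)$ is likewise $o(i^2)$ for $i\ge 1$. Combining these estimates gives $|h_n(i) + \tfrac{\ln 2}{2}i^2| = o(i^2)$ uniformly, which is exactly $h_n(i) = -\bigl(\tfrac{\ln 2}{2}-o(1)\bigr)i^2$.

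The only point that needs a little care is the regime where $\alpha-i$ is bounded: there both the Stirling remainder and the Taylor expansion of $\ln(\alpha/(\alpha-i))$ cease to be negligible on their own, and one must fall back on the crude $\bigO(\alpha\ln\alpha)$ bound on $E(i)$. This is still comfortably swallowed by $i^2 = \Theta(\ln^2 n)$, so no sharper expansion is needed. Away from that regime the argument is a direct Stirling-plus-Taylor computation.
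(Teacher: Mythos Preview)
Your proof is correct. The paper actually states this lemma \emph{without proof} (``We state without proof some easy asymptotic properties of $h_n(i)$ for later use''), so there is nothing to compare against; your Stirling-plus-Taylor argument is the natural way to verify it, and your case split at $i=\alpha/2$ to handle the breakdown of the Taylor expansion when $\alpha-i$ is bounded is exactly the right fix.
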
	
	 Now, letting $i = \alpha-u$, we have
	\begin{equation}\label{eq:logdu}
	-\ln d_u = -{\alpha-i \choose 2}\ln 2 - \ln (\alpha-i)! = h_n(i) 
	+ \Big( \alpha \ln 2 + \ln \alpha - \frac{\ln 2 }{2} \Big)i + f(\alpha),
	\end{equation}
	where $f(\alpha)$ is some function of $\alpha$.
	Thus, if we let $\xi_i = p_{\alpha-i}$, absorbing the terms above other than $h_n(i)$ into $\lambda_n$, $\mu_n$, we can rewrite \eqref{pixy}--\eqref{xy2} as
	\begin{equation}
		\label{eq:xiu}
		\xi_i 
  =\xi_i(n) 
		 = e^{h_n(i) + \lambda_n + \mu_n i}, \quad \alpha-t \le i \le \alpha-1,
	\end{equation}
	where $\lambda_n=\lambda_n(t, \rho)$ and $\mu_n=\mu_n(t,\rho)$ are chosen so that
	\begin{equation}
		\label{eq:constraints_xiu}
		\sum_{i=\alpha-t}^{\alpha-1} \xi_i = 1, \qquad 
		\sum_{i=\alpha-t}^{\alpha-1}  i\xi_i = \alpha-\rho.
	\end{equation}
Note that the dependence on $n$ of all of these quantities is only through $\alpha=\alpha(n)$. The advantage of this new formulation is that (at least for the values of $n,k,t$ which are relevant for Theorems~\ref{twopointrestricted} and \ref{theorem:announcedbounds}) $\lambda_n, \mu_n$ are of order $\bigO(1)$, as we show below. Furthermore, as $n\rightarrow \infty$, $\lambda_n, \mu_n$ converge towards values (or rather, functions) $\lambda, \mu$ which only depend on $\alpha-t$ and $\alpha_0-\alpha$, as we will see in the next section. 
	
\begin{lemma}\label{lemma:mubound} Suppose that $t=\alpha_0(n)-\bigO(1)$ is an integer. Uniformly over all  $n, k \in \mathbb{N}$ so that $2 \le n/k = t- \Theta(1)$, 
	\begin{equation}
		\mu_n= \bigO(1) \quad \text{and} \quad \lambda_n = \bigO(1).
	\end{equation}	
\end{lemma}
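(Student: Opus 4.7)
The plan is to relate $\mu_n$ and $\lambda_n$ to the quantities $x_t(\rho), y_t(\rho)$ from Lemma~\ref{lem:tLmax}, and then invoke the already-proved Lemma~\ref{lemma:ybound} together with the normalisation constraint \eqref{eq:constraints_xiu}. Combining $p_u = e^{x_t(\rho) + u y_t(\rho)} d_u^{-1}$ with the substitution $u = \alpha - i$, the definition $\xi_i = p_{\alpha - i}$, and the identity \eqref{eq:logdu} for $-\ln d_u$, a direct comparison of coefficients with \eqref{eq:xiu} yields
\[
\mu_n = \alpha \ln 2 + \ln \alpha - \tfrac{\ln 2}{2} - y_t(\rho), \qquad \lambda_n = x_t(\rho) + \alpha y_t(\rho) + f(\alpha).
\]

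For $\mu_n$, Lemma~\ref{lemma:ybound} gives $y_t(\rho) = 2\ln n - \ln\ln n + \bigO(1)$ uniformly, and from \eqref{eq:alpha0} (with $b=2$) we have $\alpha \ln 2 = 2\ln n - 2\ln\ln n + \bigO(1)$ and $\ln \alpha = \ln\ln n + \bigO(1)$. Substituting, the leading terms cancel pairwise, leaving $\mu_n = \bigO(1)$ uniformly. The reparametrisation was in fact designed precisely so that these terms of order $\ln n$ cancel against the dominant part of $-\ln d_u$, leaving only a bounded residue.

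For $\lambda_n$, I would avoid bounding $x_t(\rho)$ directly (it is of order $\Theta(\ln^2 n)$ and would require controlling its cancellation with $\alpha y_t(\rho)$) and instead use the normalisation $\sum_i \xi_i = 1$: factoring out $e^{\lambda_n}$ gives $\lambda_n = -\ln S$, where $S := \sum_{i=\alpha-t}^{\alpha-1} e^{h_n(i) + \mu_n i}$, so it suffices to show $S = \Theta(1)$. For the upper bound, the second part of Lemma~\ref{lemma:hasymp} supplies $h_n(i) \le -c\, i^2$ for some constant $c > 0$ and all $i$ in the range, so combined with $|\mu_n i| \le C|i|$ (from the $\mu_n$ bound) the sum is dominated by a convergent Gaussian-type series, uniformly in $n$. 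For the lower bound, since $t = \alpha_0 - \bigO(1)$ and $\alpha = \alpha_0 + \bigO(1)$ we have $\alpha - t = \bigO(1)$, so the range contains some bounded integer $i_0$; by the first part of Lemma~\ref{lemma:hasymp}, $h_n(i_0) = o(1)$, and hence $S \ge e^{h_n(i_0) + \mu_n i_0} = \Omega(1)$. No step poses a significant obstacle; the argument essentially unpacks the definitions and appeals to Lemma~\ref{lemma:ybound}.
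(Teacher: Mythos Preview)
Your proposal is correct and follows essentially the same route as the paper: derive $\mu_n = \alpha \ln 2 + \ln\alpha - \tfrac{\ln 2}{2} - y_t(\rho)$ from the reparametrisation, invoke Lemma~\ref{lemma:ybound} and the asymptotics of $\alpha$ for the cancellation, and then handle $\lambda_n$ via $\lambda_n = -\ln\big(\sum_i e^{h_n(i)+\mu_n i}\big)$ together with Lemma~\ref{lemma:hasymp}. One small slip: for a bounded $i_0 \neq 0$ the first part of Lemma~\ref{lemma:hasymp} gives $h_n(i_0) = -\tfrac{\ln 2}{2} i_0^2 + o(1) = \bigO(1)$, not $o(1)$; this does not affect your conclusion $S \ge e^{h_n(i_0)+\mu_n i_0} = \Omega(1)$.
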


\begin{proof}
	Observe that by the definition of $\mu_n$ and \eqref{eq:logdu},
	\[\mu_n  = - y_t(\rho)  + \alpha \ln 2 + \ln \alpha - \frac {\ln 2}{2}. \]
Noting that $\alpha = 2 \log_2 n - 2 \log_2 \log_2 n +\bigO(1)$, the claim for $\mu_n$ now follows directly from Lemma \ref{lemma:ybound}.
Moreover, note that by \eqref{eq:constraints_xiu},  
\[
    \lambda_n
    = -\ln \Big( \sum_{\alpha-t \le i  \le \alpha-1}e^{h_n(i)+\mu_n i}\Big),
\]
and together with Lemma~\ref{lemma:hasymp} this implies the claim for $\lambda_n$.
\end{proof}

	We will need the following useful property of the $\xi_i$'s, which says that they decrease very fast when viewed as a function of $i$.
\begin{lemma} 
	\label{lem:monoxi}
	Under the conditions of Lemma~\ref{lemma:mubound} there is a $C>0$ such that $\xi_{i+1}\le C 2^{-i} \cdot\xi_i$ for all $\alpha-t\le i < \alpha-1$.
\end{lemma}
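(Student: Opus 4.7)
The plan is to compute the ratio $\xi_{i+1}/\xi_i$ directly from the explicit formula \eqref{eq:xiu}. This gives
\[
\frac{\xi_{i+1}}{\xi_i} = \exp\bigl(h_n(i+1) - h_n(i) + \mu_n\bigr),
\]
so everything reduces to understanding the forward difference of $h_n$ and to using that $\mu_n$ is bounded.

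First I would evaluate the difference $h_n(i+1) - h_n(i)$ using the definition \eqref{eq:defh}. The quadratic term contributes $-\tfrac{\ln 2}{2}\bigl((i+1)^2 - i^2\bigr) = -\tfrac{\ln 2}{2}(2i+1)$, while the factorial term contributes $-\ln((\alpha-i-1)!) + \ln((\alpha-i)!) = \ln(\alpha-i)$, and the linear-in-$i$ term contributes $-\ln\alpha$. The constant terms cancel. Putting this together,
\[
h_n(i+1) - h_n(i) = -\frac{\ln 2}{2}(2i+1) + \ln\!\Bigl(\frac{\alpha-i}{\alpha}\Bigr).
\]

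Substituting back, I obtain the clean identity
\[
\frac{\xi_{i+1}}{\xi_i} = 2^{-i-1/2} \cdot \frac{\alpha-i}{\alpha} \cdot e^{\mu_n}.
\]
Now, by Lemma \ref{lemma:mubound}, $\mu_n = \bigO(1)$, so $e^{\mu_n}$ is bounded above by an absolute constant; and for $\alpha-t \le i \le \alpha-1$ we have $0 < (\alpha-i)/\alpha \le 1$. Taking $C := 2^{-1/2}\sup_n e^{\mu_n}$ (finite by Lemma \ref{lemma:mubound}), we conclude $\xi_{i+1} \le C\, 2^{-i}\, \xi_i$ as required.

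There is no serious obstacle here; the whole argument is a short direct computation once the difference $h_n(i+1)-h_n(i)$ is written out, and the only non-trivial input is the uniform boundedness of $\mu_n$ supplied by Lemma \ref{lemma:mubound}. The only thing to watch out for is that the bound needs to be \emph{uniform} in $i$ and $n$ in the range under consideration, which is automatic since $\mu_n$ is bounded uniformly and the factor $(\alpha-i)/\alpha$ is bounded by $1$ for every admissible $i$.
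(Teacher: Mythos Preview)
Your proof is correct and follows the same approach as the paper: compute $\ln(\xi_{i+1}/\xi_i) = h_n(i+1)-h_n(i)+\mu_n$ explicitly and use the uniform bound on $\mu_n$ from Lemma~\ref{lemma:mubound}. Your version is in fact slightly more detailed than the paper's, which just writes the difference as $-i\ln 2 + \ln(\alpha-i) - \ln\alpha + \bigO(1)$ and concludes; one tiny remark is that the bound $(\alpha-i)/\alpha \le 1$ assumes $i\ge 0$, whereas the hypothesis $t=\alpha_0-\bigO(1)$ alone could allow $\alpha-t<0$, but in that case $(\alpha-i)/\alpha \le t/\alpha = 1+o(1)$ is still uniformly bounded, so the conclusion is unaffected.
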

\begin{proof}
	From the definitions \eqref{eq:defh} and \eqref{eq:xiu} of $h_n$ and $\xi_i$ and from Lemma \ref{lemma:mubound} we obtain uniformly in $i$ that
	\[	
	\ln \frac{\xi_{i+1}}{\xi_i} = h_n(i+1) - h_n(i) + \mu_n = -i \ln 2  +  \ln(\alpha-i) - \ln\alpha + \bigO(1).
	\]
	The claim follows.
\end{proof}

\subsection{Convergence of $\lambda_n$, $\mu_n$, and of the $\xi_i$'s}
\label{section:convergence}

In this section we show that if $t\in \{\alpha-1, \alpha-2\}$ and $n/k = \alpha_0-1- 2/ {\ln 2}+o(1) $ -- that is, in the cases that will turn out to be relevant in the proofs of Theorems~\ref{twopointrestricted} and~\ref{theorem:announcedbounds} -- then, as~$n$ gets large, the quantities $\lambda_n, \mu_n$ and the $\xi_i$'s get close to some functions $\lambda,\mu$ and $\zeta_i$ that only depend on $\alpha-t \in \{1,2\}$ and $ \alpha_0 - \alpha \in [0,1]$. 
We start by defining these `limiting' functions. Let $i_0 \in \{1,2\}$ and $x \in[0,1]$. 
(Later on we will set $i_0=\alpha-t$ and $x=\alpha_0-\alpha$.) 
Let $\lambda = \lambda(i_0,x), \mu = \mu(i_0,x)$ be given in the following way. Set
\begin{equation}
\label{eq:defxi}
	\zeta_i = e^{\lambda+\mu i-\frac{\ln 2}{2}i^2},
 \quad
 i \ge i_0
\end{equation}
and
\begin{equation}
\label{eq:defT}
T=T(x) = 1+\frac{2}{\ln 2} -x,
\end{equation}
and choose $\lambda , \mu$ so that
\begin{align}
\label{eq:deflambdamu}
    \sum_{i \ge i_0} \zeta_i
    = \sum_{i \ge i_0} e^{\lambda +\mu i - \frac{\ln 2}{2} i^2}=1, 
    \qquad
    \sum_{i \ge i_0} i \zeta_i
    = \sum_{i \ge i_0}  i e^{\lambda +\mu i - \frac{\ln 2}{2} i^2} = T(x) . 
\end{align}
Note that $\lambda, \mu$ exist and are unique. Indeed, $\mu$ uniquely solves $\sum_{i \ge i_0} (T(x)-i)e^{\mu i - \frac{\ln 2}{2}i^2}=0$, where, since $T(x) = 1+{2}/{\ln 2} -x > 2 \ge i_0$, the first terms of $(T(x)-i)_{i \ge i_0}$ are positive  and the others are negative. Moreover, if we have determined $\mu$, then $\lambda = -\ln (\sum_{i \ge i_0} e^{\mu i - \frac{\ln 2}{2}i^2})$ is also unique. Furthermore, note that given $i_0$, $\mu(i_0, x)$ and $\lambda(i_0, x)$ are continuous functions of~$x$. The aim of this section is to show the following statement.
\begin{lemma}\label{lemma:convergencemulambda}
Suppose that $t \in \{\alpha-1, \alpha-2\}$, and $k=k(n)$ is an integer so that $n/k = \alpha_0(n)-1- 2 / {\ln 2} + o(1)$. Define $\lambda_n$, $\mu_n$ as in \eqref{eq:constraints_xiu} with $\rho = n/k$, and let $\lambda = \lambda(i_0, x)$, $\mu =\mu (i_0, x)$ as in \eqref{eq:deflambdamu} with $i_0=\alpha-t$ and $x=\alpha_0(n)-\alpha(n)$. Then
\[
    \lim_{n \rightarrow \infty} |\lambda_n - \lambda| =
    \lim_{n \rightarrow \infty} |\mu_n - \mu| =
    \lim_{n \rightarrow \infty}\sum_{i \ge i_0}|\xi_i - \zeta_i| =
    0.
\]
\end{lemma}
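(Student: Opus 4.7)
The plan is to argue by compactness plus subsequence extraction, identifying every limit of $(\lambda_n,\mu_n)$ as the unique solution of the limiting system \eqref{eq:deflambdamu}. By Lemma~\ref{lemma:mubound} the sequences $(\lambda_n),(\mu_n)$ are bounded, and $x(n)=\alpha_0(n)-\alpha(n)\in[0,1]$, $i_0(n)=\alpha(n)-t(n)\in\{1,2\}$ are clearly bounded as well. Hence to prove $\lambda_n-\lambda(i_0(n),x(n))\to 0$, and likewise for $\mu$, it suffices to show that along any subsequence $(n_k)$ with $\lambda_{n_k}\to\lambda^*$, $\mu_{n_k}\to\mu^*$, $x(n_k)\to x^*$, and $i_0(n_k)\equiv i_0^*$ (such a subsequence can always be extracted), we have $(\lambda^*,\mu^*)=(\lambda(i_0^*,x^*),\mu(i_0^*,x^*))$. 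The continuity of $\lambda(i_0,\cdot),\mu(i_0,\cdot)$ in $x$, noted just after \eqref{eq:deflambdamu}, then gives $\lambda(i_0(n_k),x(n_k))\to\lambda(i_0^*,x^*)=\lambda^*$, and the subsequence principle closes the loop.

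Fix such a subsequence. For every fixed $i\ge i_0^*$, Lemma~\ref{lemma:hasymp}(a) gives $h_{n_k}(i)\to-(\ln 2/2)\,i^2$ and hence
\[
\xi_i(n_k)=e^{h_{n_k}(i)+\lambda_{n_k}+\mu_{n_k}i}\;\longrightarrow\;e^{\lambda^*+\mu^* i-(\ln 2/2)\,i^2}.
\]
To move this to the constraints \eqref{eq:constraints_xiu}, I combine it with a uniform tail bound: Lemma~\ref{lemma:hasymp}(b) guarantees constants $c,C>0$ so that $h_n(i)\le -c\,i^2$ for all $i_0\le i\le\alpha(n)-1$ and all large $n$, and combining this with the boundedness of $(\lambda_{n_k}),(\mu_{n_k})$ yields $\xi_i(n_k)\le C'e^{-c'i^2}$ uniformly in $k$ (extending $\xi_i(n_k):=0$ for $i>\alpha(n_k)-1$). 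Since $\sum_{i\ge 1}i\,e^{-c'i^2}<\infty$, dominated convergence applies to both $\sum_i \xi_i(n_k)=1$ and $\sum_i i\,\xi_i(n_k)=\alpha(n_k)-n/k=T(x(n_k))+o(1)\to T(x^*)$; the last identity uses the hypothesis $n/k=\alpha_0-1-2/\ln 2+o(1)$ and the definition \eqref{eq:defT} of $T$. Thus $(\lambda^*,\mu^*)$ satisfies exactly the system \eqref{eq:deflambdamu} for the parameters $(i_0^*,x^*)$, and the uniqueness statement recorded immediately after \eqref{eq:deflambdamu} forces $(\lambda^*,\mu^*)=(\lambda(i_0^*,x^*),\mu(i_0^*,x^*))$.

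For the $\ell^1$ statement, the convergences $\lambda_n-\lambda(i_0(n),x(n))\to 0$ and $\mu_n-\mu(i_0(n),x(n))\to 0$ just established, combined with Lemma~\ref{lemma:hasymp}(a), yield $\xi_i(n)-\zeta_i(n)\to 0$ pointwise in $i$; both sequences are dominated by $C'e^{-c'i^2}$ as above, so $\sum_{i\ge i_0}|\xi_i-\zeta_i|\to 0$ by dominated convergence. The principal technical obstacle is the domination step: one needs genuine quadratic decay of $h_n$ throughout the full range $i\le\alpha-1=\Theta(\ln n)$, not merely for $i=o(\sqrt{\ln n})$ where Lemma~\ref{lemma:hasymp}(a) gives a sharper pointwise statement, and this is precisely what Lemma~\ref{lemma:hasymp}(b) supplies; without it the limiting sums over $i$ in \eqref{eq:deflambdamu} could not be handled.
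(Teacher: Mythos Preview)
Your argument is correct and essentially the same as the paper's: both rely on the boundedness from Lemma~\ref{lemma:mubound}, the quadratic decay of $h_n$ from Lemma~\ref{lemma:hasymp}, and the uniqueness of the solution to \eqref{eq:deflambdamu}. The only difference is in presentation: the paper first replaces $h_n(i)$ by $-\tfrac{\ln 2}{2}i^2$ in the constraint sums and then invokes ``continuity'' in one line, whereas you spell out that step via subsequence extraction and dominated convergence, which is more explicit. For the $\ell^1$ claim the paper uses a manual split (small $i$ versus large $i$) rather than dominated convergence, but the underlying estimates are identical.
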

\begin{proof}
First note that $
T(x) = 1+ 2 /{\ln 2} - x = \alpha- n/k+o(1).
$
By Lemma~\ref{lemma:mubound} we have $\mu_n, \lambda_n = \bigO(1)$, so together with Lemma~\ref{lemma:hasymp} and~\eqref{eq:constraints_xiu} this implies
\begin{equation*}
	\sum_{i \ge i_0} e^{-\frac{\ln 2}{2}i^2+ \lambda_n + \mu_n i}= 1+o(1), \qquad 
	\sum_{i \ge i_0} ie^{-\frac{\ln 2}{2}i^2+ \lambda_n + \mu_n i}= T(x)+o(1).
\end{equation*}
Comparing to \eqref{eq:deflambdamu}, the claims for $\lambda_n,\mu_n$ follow by continuity. In order to establish the last claim, using again that $\mu_n, \lambda_n = \bigO(1)$,  Lemma~\ref{lemma:hasymp} guarantees that there is an $M \in \mathbb{N}$ such that for sufficiently large $n$,
\[
    |\xi_i-\zeta_i| = o(1) , ~~ i_0 \le i \le \ln\ln n
    \quad
    \text{and}
    \quad
    |\xi_i-\zeta_i| \le \xi_i+\zeta_i \le e^{-i^2/4}, ~~ i \ge M
    \enspace ,
\]
where the first bound is uniform in $i$. Summing the first bound for $i$ up to some $M' \ge M$ and the second bound for all $i > M'$, and then letting $M'$ grow slowly establishes the claim.
\end{proof}

\subsection{The expected number of partial colourings}
\label{section:partialcolourings}
Define $\zeta_i$ as in \eqref{eq:defxi}--\eqref{eq:deflambdamu}. For $i_0 \in \{1,2\}$, $s \ge i_0$ and $x \in [0,1]$, we let
\begin{equation}\label{eq:phisxi}
\varphi(s,x,i_0) = 	- \Big(1-\sum_{i_0 \le i \le s} \zeta_i \Big) \ln  \Big(1-\sum_{i_0 \le i \le s} \zeta_i \Big) + \frac{\ln 2}{2}\sum_{i_0 \le i \le s} \Big( \zeta_i \big(i-T(x)\big) \Big) .
\end{equation}
Comparing to Lemma~\ref{expectationlemma}, we see that for $x=\alpha_0-\alpha$ this expression corresponds to the function $\varphi$	in the logarithm of the expected number of partial colourings with exactly $\zeta_i (\alpha-i)/n$ colour classes of size $\alpha-i$ for $i_0 \le i \le s$. 
The aim of this section is to establish the following two lemmas, which will be proved in Section~\ref{section:proofofpartialprofiles}. They will later help us to bound the expected number of such partial colourings from below.
\begin{lemma}\label{lemma:partialprofiles2}
Let  $i_0\in \{1,2\}$. For every $s \ge 2$,
\begin{equation} \label{eq:goal2}
    \inf_{x \in [0,1]}	\varphi(s,x,i_0)   >0.
\end{equation}
\end{lemma}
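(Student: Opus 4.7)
Using the identity $\sum_{i \ge i_0}\zeta_i(i - T(x)) = 0$ coming from the second constraint in \eqref{eq:deflambdamu}, the plan is to first rewrite
\[
\varphi(s,x,i_0) \;=\; r\cdot\Bigl[-\ln r \,-\, \tfrac{\ln 2}{2}(\tilde T_s - T(x))\Bigr],
\]
where $r := 1 - Z_s = \sum_{i > s}\zeta_i$ and $\tilde T_s := r^{-1}\sum_{i > s}i\,\zeta_i$. Since $\zeta_i > 0$ for every $i \ge i_0$, we have $r > 0$ for any finite $s$, so strict positivity of $\varphi$ reduces to strict positivity of the bracket.

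Fix $s \ge 2$ and $i_0 \in\{1,2\}$. The functions $\lambda(i_0,x), \mu(i_0,x)$ are continuous in $x \in [0,1]$ (as $(\lambda,\mu)$ depends continuously on $T(x)$ via \eqref{eq:deflambdamu}), hence so is $\varphi(s,\cdot,i_0)$; its infimum on the compact set $[0,1]$ is then attained at some $x^*$. If $\tilde T_s(x^*) \le T(x^*)$, the bracket is the sum of $-\ln r > 0$ (since $0 < r < 1$) and a non-negative quantity, so it is strictly positive. Otherwise, the task is to verify $r < 2^{-(\tilde T_s - T(x^*))/2}$.

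For the latter I will exploit the Gaussian-like decay $\zeta_{i+1}/\zeta_i = 2^{\mu/\ln 2 - i - 1/2}$. Since $\mu$ is uniformly bounded on $\{1,2\}\times[0,1]$, this ratio is at most $2^{-i + O(1)}$, giving via a geometric sum the uniform tail estimates $r = (1+O(2^{-s}))\zeta_{s+1}$ and $\tilde T_s = (s+1) + O(2^{-s})$. Substituting $-\ln\zeta_{s+1} = (\ln 2/2)(s+1)^2 - \mu(s+1) - \lambda$, the bracket becomes
\[
\tfrac{\ln 2}{2}\bigl(s(s+1) + T(x^*)\bigr) \,-\, \lambda \,-\, \mu(s+1) \,+\, O(2^{-s}),
\]
and the quadratic-in-$s$ leading term dominates the at-most-linear $\lambda+\mu(s+1)$ once $s$ exceeds some absolute constant $s_0$.

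The hard part is to handle the finitely many remaining values $s \in \{2, \dots, s_0-1\}$. For each such $s$, strict positivity of $\varphi(s, x^*, i_0)$ must be established via a quantitative sharpening: use the exact (not asymptotic) representations of $r$ and $\tilde T_s$ in terms of $\zeta_{s+1}$ and its neighbours, combine with quantitative bounds on $\lambda(i_0, x^*), \mu(i_0, x^*)$ obtained by solving the constraints \eqref{eq:deflambdamu} for the specific value of $T(x^*) \in [2/\ln 2,\,1+2/\ln 2]$, and verify that the bracket is positive throughout $[0,1]$. The continuity argument reduces this to checking a finite (indexed by $s$ and $i_0$) collection of pointwise inequalities.
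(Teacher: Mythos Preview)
Your approach is essentially the same as the paper's: the rewriting $\varphi(s,x,i_0)=r\bigl[-\ln r-\tfrac{\ln 2}{2}(\tilde T_s-T(x))\bigr]$ is exactly Lemma~\ref{lemma:toE} in disguise, and your reduction to finitely many $s$ via the quadratic-in-$s$ growth of the bracket mirrors Lemma~\ref{lemma:E4}, which the paper carries out with an explicit monotonicity argument yielding the concrete threshold $s_0=4$ (this requires the numerical bound $\mu(i_0,0)\le 2.69$ from \S\ref{section:numerics}). The genuine work you defer---the finite-case verification---is precisely Lemma~\ref{lemma:checksle3}, and there is no way around it: the paper does this by combining monotonicity properties of $\zeta_1,\zeta_2,\zeta_3$ in $x$ (Lemma~\ref{lemma:monoxi}) with numerical enclosures of $\lambda,\mu,\zeta_i$ at a few sample points, then checking concave lower bounds $h_1,\dots,h_8$ at the corners of small boxes. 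Your outline is sound, but your asymptotic reduction does not pin down $s_0$ without those same numerical bounds on $\lambda,\mu$, and your last paragraph does not actually carry out the verification---so as a \emph{proof} it is incomplete in exactly the place you label ``the hard part''.
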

For the remaining case $i_0 = s = 1$ such a statement is no longer true. However, we still obtain the following property. 
\begin{lemma}\label{lemma:partialprofiles1}
Let  $i_0=s=1$. Then
\begin{equation} \label{eq:goal1}
    \inf_{x \in [0.04,1]}		\varphi(1,x,1)   >0.
\end{equation}
\end{lemma}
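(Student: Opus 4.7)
The plan is to reduce the statement to a one-variable positivity question in $x$ and then verify it through a combination of analytic monotonicity and a careful numerical check.

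First, with $s = i_0 = 1$ only the $i=1$ term survives in~\eqref{eq:phisxi}, so
\[
\varphi(1,x,1) = -(1-\zeta_1)\ln(1-\zeta_1) + \tfrac{\ln 2}{2}\zeta_1\bigl(1 - T(x)\bigr).
\]
Since $\zeta_1 > 0$ and $T(x) - 1 = 2/\ln 2 - x > 0$ on $[0,1]$, positivity of $\varphi$ is equivalent to $F(x) > 0$, where
\[
F(x) := G(\zeta_1(x)) - 1 + \tfrac{x\ln 2}{2},
\qquad G(z) := -\frac{(1-z)\ln(1-z)}{z}.
\]
From $-\ln(1-z) = \sum_{k\ge 1} z^k/k$ one derives $G(z) = 1 - \sum_{k \ge 2} z^{k-1}/\bigl(k(k-1)\bigr)$, so $G$ is continuous and strictly increasing on $(0,1)$ with $G(0^+) = 1$, and $1 - G(z) = z/2 + z^2/6 + O(z^3)$.

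Next, I would establish that $\zeta_1(x)$ is a real-analytic, strictly increasing function of $x \in [0,1]$. The constraints \eqref{eq:deflambdamu} define $(\lambda,\mu)$ implicitly as smooth functions of the prescribed mean $T(x)$; their Jacobian with respect to $(\lambda,\mu)$ is the covariance matrix of $(\zeta_i)_{i\ge 1}$, viewed as a probability distribution with sufficient statistics $(1,i)$, and is strictly positive-definite. Moreover, decreasing $T(x)$ (equivalently, increasing $x$) shifts the mass of the exponential family toward the smallest allowed value $i = 1$, giving $\zeta_1'(x) > 0$. The implicit function theorem then furnishes an explicit Lipschitz constant for $\zeta_1$ on $[0.04,1]$.

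Finally, I would verify $F(x) > 0$ pointwise on a sufficiently fine grid. The rapid Gaussian-like decay $\zeta_{i+1}/\zeta_i = \exp\bigl(\mu - (i+\tfrac12)\ln 2\bigr)$ (analogous to Lemma~\ref{lem:monoxi}) lets one truncate \eqref{eq:deflambdamu} at some $i \le I_0 = \bigO(1)$ with tail error $\bigO(2^{-I_0^2/4})$, which yields rigorous high-precision enclosures of $\zeta_1(x_j)$. Combining the inequalities $F(x_j) \ge \delta$ obtained at the grid points with the Lipschitz bound from the previous paragraph propagates strict positivity to every $x \in [0.04,1]$.

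The main obstacle is that $F(x)$ is very small near the left endpoint: $F$ vanishes near the critical value $x_0 \approx 0.029$, below which $\varphi(1,x,1) \le 0$, which is precisely why the interval is taken to be $[0.04,1]$ rather than $[0,1]$. Thus near $x = 0.04$ the margin $F(x)$ is only of order $10^{-3}$, so both the grid spacing and the truncation level $I_0$ must be chosen with enough care to safely propagate the pointwise bounds over the whole interval. Conceptually, however, the problem reduces to a one-dimensional verification with finite, exponentially convergent sums, which makes the numerical step tractable.
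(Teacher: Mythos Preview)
Your strategy is sound, modulo a harmless slip: from the expansion $G(z) = 1 - \sum_{k\ge 2} z^{k-1}/\bigl(k(k-1)\bigr)$ you should conclude that $G$ is strictly \emph{decreasing} on $(0,1)$, not increasing. Since you never actually use the monotonicity of $G$, this does not affect the rest of the argument.

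Your route is genuinely different from the paper's. The paper also starts from the explicit formula $\varphi(1,x,1) = -(1-\zeta_1)\ln(1-\zeta_1) + \zeta_1\bigl(-1 + \tfrac{\ln 2}{2}x\bigr)$, but instead of a Lipschitz grid it splits $[0.04,1]$ into three subintervals, freezes $x$ at each left endpoint to obtain a one-variable concave function $h_j(\zeta_1)$, and then uses the monotonicity $\zeta_1'(x)>0$ (Lemma~\ref{lemma:monoxi}) together with rigorous numerical enclosures of $\zeta_1$ at the subinterval endpoints (the tables in \S\ref{section:numerics}) to bound the range of $\zeta_1$ on each piece. Concavity then reduces positivity of $h_j$ on that range to checking it at two boundary values. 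The upshot is a handful of explicit evaluations and no need to compute Lipschitz constants or bound $\zeta_1'(x)$ quantitatively. Your grid-plus-Lipschitz approach would also work and is arguably more systematic, but it requires more numerical machinery and, as you note, real care near $x=0.04$ where the margin is only of order $10^{-3}$; the paper's concavity trick sidesteps precisely this difficulty.
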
	
The awkward constraint $x \ge 0.04$ is unfortunately necessary here, because it turns out that $\varphi(1,x,1)<0$ for $0 \le x < x_0$, where $x_0$ satisfies
\begin{equation}
\label{eq:x0}
    \varphi(1,x_0,1)
    = -(1-\zeta_1(x_0)) \ln(1-\zeta_1(x_0)) + \zeta_1(x_0)\left(\frac{\ln 2}{2}x_0-1\right)
    = 0
\end{equation}
and $\zeta_1$ is, as before, given by \eqref{eq:defxi}--\eqref{eq:deflambdamu}. A numerical approximation reveals that $x_0 \approx 0.02905$. We pick up this fact again after Lemma~\ref{lemma:kstartame}.

Note that for all $s \ge i_0 \in \{1,2\}$, $\varphi(s,x,i_0)$ is a continuous function of $x$ because $\mu(i_0,x)$, $\lambda(i_0,x)$ are. So, by compactness, it suffices to show that $\varphi(s,x,i_0)>0$ for all $0 \le x \le1$, or for $0.04\le x \le 1$ in the case $i_0=s=1$.  We prove this in \S\ref{section:proofofpartialprofiles} after noting some monotonicity properties and numerical approximations for $\lambda(i_0,x), \mu(i_0,x)$ in \S\ref{section:monotonicity}--\ref{section:numerics}. Let us remark at this point that we do not provide an entirely analytical proof of Lemmas~\ref{lemma:partialprofiles2} and~\ref{lemma:partialprofiles1}, as $\lambda$ and $\mu$ are only defined implicitly through~\eqref{eq:deflambdamu} as functions of $x$ and $i_0$. However, what we do show (in the proof of Lemma~\ref{lemma:checksle3}) is that it is enough to the verify the value of $\varphi$ numerically for a finite number of explicit values 
in order to establish Lemmas~\ref{lemma:partialprofiles2} and \ref{lemma:partialprofiles1}, which we proceed to do.

\subsubsection{Monotonicity}\label{section:monotonicity}
	We start by noting some handy monotonicity properties of various quantities that appear in our calculations.
	\begin{lemma}\label{lemma:monomu}
		Let $i_0 \in \{1,2\}$. Then $x \mapsto \mu(i_0, x)$ is strictly decreasing and $x \mapsto \lambda(i_0,x)$	is strictly increasing for $x\in [0,1]$.
	\end{lemma}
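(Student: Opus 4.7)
The plan is to exploit the exponential-family structure of $(\zeta_i)_{i\ge i_0}$. Since $T(x)=1+\tfrac{2}{\ln 2}-x$ is strictly decreasing in $x\in[0,1]$, it suffices to show that, as a function of $T>0$, $\mu$ is strictly increasing and $\lambda$ is strictly decreasing. Observe that under the normalisation \eqref{eq:deflambdamu}, the sequence $(\zeta_i)_{i\ge i_0}$ is a probability distribution on $\{i_0,i_0+1,\dots\}$ with mean $T$, and it is precisely the $\mu$-tilted version of the reference measure $e^{-(\ln 2/2)i^2}$. Since this reference measure is supported on more than one point, the distribution is non-degenerate for every choice of $\mu\in\mathbb{R}$, and its variance is strictly positive; this will be the only analytic fact needed.

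First I would set up implicit differentiation. Fix $i_0\in\{1,2\}$, write $c=\tfrac{\ln 2}{2}$, and regard $\lambda,\mu$ as (smooth) functions of $T$ defined implicitly by
\begin{equation*}
\sum_{i\ge i_0} e^{\lambda+\mu i-ci^2}=1,\qquad \sum_{i\ge i_0} i\,e^{\lambda+\mu i-ci^2}=T.
\end{equation*}
Differentiating in $T$ and using $\sum_i \zeta_i=1$ and $\sum_i i\zeta_i=T$, the first equation gives $\lambda'+T\mu'=0$, hence $\lambda'=-T\mu'$. Differentiating the second yields $\lambda'T+\mu'\sum_i i^2\zeta_i=1$, and substituting $\lambda'=-T\mu'$ gives
\begin{equation*}
\mu'\bigl(\mathrm{Var}_\zeta(i)\bigr)=\mu'\Bigl(\textstyle\sum_i i^2\zeta_i-T^2\Bigr)=1.
\end{equation*}

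Since $(\zeta_i)$ has support $\{i_0,i_0+1,\dots\}$ containing at least two points, $\mathrm{Var}_\zeta(i)>0$, so $\mu'=1/\mathrm{Var}_\zeta(i)>0$. Moreover $T(x)\ge 2/\ln 2>0$ on $[0,1]$, so $\lambda'=-T\mu'<0$. Translating back to $x$ via $dT/dx=-1$, I obtain $d\mu/dx<0$ and $d\lambda/dx>0$ on $[0,1]$, which is exactly the claim. The only point that needs any care is the smoothness of $\lambda(i_0,\cdot),\mu(i_0,\cdot)$, which follows routinely from the implicit function theorem applied to the defining equations \eqref{eq:deflambdamu} (their Jacobian in $(\lambda,\mu)$ has determinant equal to the variance, which is nonzero); there is no real obstacle here.
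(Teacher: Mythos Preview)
Your proof is correct and follows essentially the same idea as the paper's: both exploit that $(\zeta_i)$ is an exponential tilt of a fixed reference measure, so the mean $T$ is strictly increasing in the tilt parameter $\mu$, and $\lambda$ is determined by $\mu$ with $d\lambda/d\mu=-T<0$. The only difference is presentation: the paper argues qualitatively that increasing $\mu$ shifts weight toward larger $i$ (hence the weighted average $\sum_i(i-T)\zeta_i$ is strictly increasing in $\mu$), whereas you carry out the implicit differentiation explicitly and obtain $d\mu/dT=1/\mathrm{Var}_\zeta(i)>0$. Both routes rest on the same positive-variance fact, so there is no genuine methodological difference.
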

	
	\begin{proof}
		By rearranging the equations in \eqref{eq:deflambdamu}, we find that $\mu$ is the solution of
		\begin{equation}\label{eq:defmusimpler}
			\sum_{i \ge i_0} \big(i-T(x)\big)e^{\mu i - \frac{\ln 2}{2} i^2} =0,
		\end{equation}
		or equivalently of 
		\begin{equation}
			\sum_{i \ge i_0} \big(i-T(x)\big)\frac{e^{\mu i - \frac{\ln 2}{2} i^2}}{\sum_{j \ge i_0} e^{\mu j - \frac{\ln 2}{2} j^2}} =0. \label{eq:muroot}
		\end{equation}
		The left-hand side of this equation is a weighted average of the numbers $i - T(x)$, $i \ge i_0$. This weighted average is strictly increasing in $\mu$, since increasing $\mu$ puts more weight on the larger elements of the sequence where $i$ is larger. It is also strictly increasing in $x$ as $T(x)=1+{2}/{\ln 2} -x$. So if we increase $x$, this means that $\mu$, as the root of \eqref{eq:muroot},  decreases.	
		
For the function $\lambda(x)$, we obtain from \eqref{eq:deflambdamu} that
		\[
		\lambda = - \ln  \Big( \sum_{i \ge i_0} e^{\mu i-\frac{\ln 2 }{2}i^2} \Big),
		\]
		and so 
		\begin{equation} \label{eq:dlambdadmu}
			\frac{\text{d} \lambda}{\text{d}  \mu} = -\frac{ \sum_{i \ge i_0} ie^{\mu i-\frac{\ln 2 }{2}i^2}}{ \sum_{i \ge i_0} e^{\mu i-\frac{\ln 2 }{2}i^2}} = -T(x) = x - 1- \frac{2}{\ln2} <0.
		\end{equation}
		So $\lambda$ is strictly decreasing as a function of $\mu$ and strictly increasing as a function of $x$.
	\end{proof}
	
	\begin{lemma}\label{lemma:monoxi}
For $i_0 =1$, $\zeta_1(x)$ is increasing in $x \in [0,1]$. For $i_0 \in \{1, 2\}$, $\zeta_2(x)$ is increasing in $x \in [0,1]$, and $\zeta_3(x)$ is {increasing} for $x \in \left [0, \frac{2}{\ln 2} - 2\right]$ and {decreasing} for $x \in \left [\frac{2}{\ln 2} - 2,1\right]$.
	\end{lemma}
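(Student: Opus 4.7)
The plan is to differentiate $\ln \zeta_i(x)$ and to extract the sign from the defining equations \eqref{eq:deflambdamu}.

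First I would establish that $x \mapsto \lambda(i_0,x)$ and $x \mapsto \mu(i_0,x)$ are $C^1$ on $[0,1]$. Writing $F_1(\lambda,\mu) = \sum_{i\ge i_0} e^{\lambda + \mu i - (\ln 2) i^2/2}$ and $F_2(\lambda,\mu) = \sum_{i\ge i_0} i\, e^{\lambda + \mu i - (\ln 2) i^2/2}$, the system \eqref{eq:deflambdamu} reads $(F_1, F_2) = (1, T(x))$. The Gaussian decay in $i$ makes both sums smooth functions of $(\lambda,\mu)$, and at a solution the Jacobian is
\[
    \begin{pmatrix} 1 & T(x) \\ T(x) & \sum_{i\ge i_0} i^2 \zeta_i \end{pmatrix},
\]
with determinant $\sum_{i\ge i_0} i^2 \zeta_i - T(x)^2 = \Var_\zeta(I) > 0$, where $I$ is the random variable with $\Pb(I=i) = \zeta_i$ for $i \ge i_0$. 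By the implicit function theorem, $\lambda$ and $\mu$ are $C^1$ in $x$.

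Next I would compute the derivatives explicitly. Differentiating the normalisation $\sum_{i\ge i_0} \zeta_i = 1$ in $x$ gives $\frac{d\lambda}{dx} = -T(x) \frac{d\mu}{dx}$ (this is the chain-rule form of \eqref{eq:dlambdadmu}). Substituting into the derivative of $\sum i\zeta_i = T(x) = 1 + 2/\ln 2 - x$ yields
\[
    \frac{d\mu}{dx} = -\frac{1}{\Var_\zeta(I)} < 0
\]
strictly. Consequently, for every $i \ge i_0$,
\[
    \frac{d \ln \zeta_i}{dx} = \frac{d\lambda}{dx} + i \, \frac{d\mu}{dx} = \big(i-T(x)\big)\frac{d\mu}{dx},
\]
which has strict sign opposite to $i - T(x)$.

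Finally, I would read off the three cases. Since $T(x)$ takes values in $[2/\ln 2,\ 1+2/\ln 2] \approx [2.885,\ 3.885]$ as $x$ ranges over $[0,1]$, we have $i < T(x)$ throughout $[0,1]$ for $i \in \{1, 2\}$; hence $\zeta_1$ (when $i_0 = 1$) and $\zeta_2$ (for both choices $i_0 \in \{1,2\}$) are strictly increasing on $[0,1]$. For $i = 3$, $i < T(x)$ precisely when $x < 2/\ln 2 - 2$, so $\zeta_3$ is increasing on $[0,\ 2/\ln 2 - 2]$ and decreasing on $[2/\ln 2 - 2,\ 1]$, as claimed. The only mildly delicate point is justifying the implicit function theorem and termwise differentiation for the infinite-sum system, but the Gaussian tails make this routine; no real obstacle arises.
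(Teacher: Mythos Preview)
Your proof is correct and follows essentially the same route as the paper: compute $\frac{d}{dx}\ln\zeta_i = (i-T(x))\frac{d\mu}{dx}$ via the relation $d\lambda = -T(x)\,d\mu$, use that $\mu$ is strictly decreasing in $x$, and read off the sign from $i-T(x)$. The only difference is cosmetic: the paper invokes the preceding Lemma~\ref{lemma:monomu} for the monotonicity of $\mu$, whereas you rederive it (and the $C^1$ regularity) via the implicit function theorem and the explicit formula $d\mu/dx = -1/\Var_\zeta(I)$.
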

	
	\begin{proof}
For a given $i$, by \eqref{eq:defxi} and \eqref{eq:dlambdadmu},
		\begin{equation}\label{eq:derkappa}
			\frac{\text{d} \zeta_i}{\text{d}  \mu} = \Big(\frac{\text{d} \lambda}{ \text{d}\mu} + i \Big) \zeta_i = (x-1 - \frac{2}{\ln 2}+i) \zeta_i.
		\end{equation}
By Lemma~\ref{lemma:monomu}, $\mu=\mu(x)$ is strictly decreasing as a function of $x$, and since $\zeta_i >0$ for all $i \ge i_0$, we obtain the required monotonicity properties directly from \eqref{eq:derkappa}.
	\end{proof}
	
	\subsubsection{Numerical approximations}\label{section:numerics}
	
	We will need some approximations for $\lambda$, $\mu$ and $\zeta_i$ at certain values $x\in[0,1]$ and $i_0\in\{1,2\}$. For more details on how these approximations were obtained, see Appendix~\ref{app:numApprox}. The following tables indicate intervals in which the respective values are contained.

 \vspace{2mm}
	\noindent \textbf{For $i_0=1$:}
 
	\renewcommand{\arraystretch}{1.2}

\begin{table}[H]
\small
\begin{center}
    \begin{tabular}{|l||l|l|l|l|l|}
        \hline
        $x$  &  $\mu(x)$ & $\lambda(x)$ & $\zeta_1(x)$ & $\zeta_2(x)$ & $\zeta_3(x)$ \\
        \hline
        $0$ &   $[2.6879,2.6880]$ &$[-6.313,-6.311]$ &  $[0.0188,0.0189]$&  $[0.0980, 0.0981]$ & $[0.254, 0.255]$\\
        \hline
        $0.15$ & $[2.5816,2.5817]$&  $[-5.908,-5.906]$ &  $[0.0254,0.0255]$&$[0.118, 0.119]$  &$[0.277, 0.278]$\\
        \hline
        $\frac{2}{\ln 2}-2$ & $[2.0407, 2.0408]$ & $[-4.089, -4.087]$ &  $[0.0912,0.0913]$&  $[0.248, 0.249]$ & $[0.337, 0.338]$\\
        \hline
        $1$ & $[1.9512, 1.9513]$&  $[-3.825, -3.824]$ &  $[0.108,0.109]$&  $[0.270, 0.271]$ & $[0.336, 0.337]$ \\
        \hline
    \end{tabular}
\end{center}
\end{table}

\vspace{-6mm}
\noindent \textbf{For $i_0=2$:}

\begin{table}[H]
\small
\begin{center}
    \begin{tabular}{|l||l|l|l|l|l|}
        \hline
        $x$ & $\mu(x)$   & $\lambda(x)$ &  $\zeta_2(x)$ & $\zeta_3(x)$ \\
        \hline
        $0$ &   $[2.6443,2.6444]$& $[-6.123, -6.122]$& $[0.108, 0.109]$ & $[0.270, 0.271]$ \\
        \hline
        $\frac{2}{\ln 2}-2$ & $[1.8229, 1.8230]$&  $[-3.318, -3.317]$ &   $[0.347,0.348]$&  $[0.379, 0.380]$ \\
        \hline
        $1$ &  $[1.6836, 1.6837]$& $[-2.909, -2.907]$ &   $[0.395,0.396]$&  $[0.376, 0.377]$ \\
        \hline
    \end{tabular}
\end{center}
\end{table}

	\subsubsection{Proof of Lemmas~\ref{lemma:partialprofiles2} and \ref{lemma:partialprofiles1}}\label{section:proofofpartialprofiles}
	
	We start by showing that in Lemma~\ref{lemma:partialprofiles2}, the cases $s \ge 4$ are implied by the case $s=3$. For this we need some notation. For $\ell \ge 0$, let
\begin{align}
\label{eq:Sdef}	
    S_{\ell} = \sum_{j \ge 0} e^{\mu j -\frac{\ln 2}{2} (j^2+2j\ell)} ,
    \qquad
    S_{\ell}' =  \sum_{j \ge 0} je^{\mu j -\frac{\ln 2}{2} (j^2+2j\ell)}
\end{align}
and, recalling the definitions~\eqref{eq:defxi} of the $\zeta_i$'s and~\eqref{eq:defT} of $T$, set
\[
    E_\ell = S_\ell \Big( -\ln(\zeta_\ell S_{\ell}) -\frac{\ln 2}{2}\ell +\frac{\ln 2 }{2} T(x) \Big) - \frac{\ln 2}{2} S_\ell' \enspace.
\]
	Then we have the following lemma giving an alternative condition for Lemma~\ref{lemma:partialprofiles2}.
	\begin{lemma}\label{lemma:toE}
		Let $i_0 \in\{1,2\}$ and $s \ge i_0$. Then $\varphi(s,x,i_0)>0$	is equivalent to $E_\ell >0$, where $\ell=s+1$.
	\end{lemma}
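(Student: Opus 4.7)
The strategy is purely algebraic: I will rewrite both terms of $\varphi(s,x,i_0)$ in terms of tail quantities starting at index $\ell=s+1$, factor out $\zeta_\ell>0$, and recognise the remaining bracket as $E_\ell$.

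\textbf{Step 1: Identify the complementary mass as $\zeta_\ell S_\ell$.} From \eqref{eq:deflambdamu}, $\sum_{i\ge i_0}\zeta_i=1$, so
\[
1-\sum_{i_0\le i\le s}\zeta_i \;=\;\sum_{i\ge \ell}\zeta_i.
\]
Using $\zeta_i=e^{\lambda+\mu i-\frac{\ln 2}{2}i^2}$ from \eqref{eq:defxi} and shifting the summation index by writing $i=\ell+j$ with $j\ge 0$,
\[
\zeta_{\ell+j}=\zeta_\ell\,e^{\mu j-\frac{\ln 2}{2}(j^2+2j\ell)},
\]
so that, with $S_\ell$ as in \eqref{eq:Sdef}, $\sum_{i\ge\ell}\zeta_i=\zeta_\ell S_\ell$.

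\textbf{Step 2: Simplify the second sum via the mean constraint.} Splitting the full sum and using the second equation of \eqref{eq:deflambdamu}, namely $\sum_{i\ge i_0}i\zeta_i=T(x)$, gives
\[
\sum_{i_0\le i\le s}\zeta_i\bigl(i-T(x)\bigr)=\Bigl(T(x)-\sum_{i\ge\ell}i\zeta_i\Bigr)-T(x)\bigl(1-\zeta_\ell S_\ell\bigr).
\]
The same index shift yields $\sum_{i\ge\ell}i\zeta_i=\zeta_\ell(\ell S_\ell+S_\ell')$, hence
\[
\sum_{i_0\le i\le s}\zeta_i\bigl(i-T(x)\bigr)=\zeta_\ell\bigl(S_\ell(T(x)-\ell)-S_\ell'\bigr).
\]

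\textbf{Step 3: Assemble.} Substituting both computations into \eqref{eq:phisxi},
\[
\varphi(s,x,i_0)=\zeta_\ell\Bigl[S_\ell\bigl(-\ln(\zeta_\ell S_\ell)-\tfrac{\ln 2}{2}\ell+\tfrac{\ln 2}{2}T(x)\bigr)-\tfrac{\ln 2}{2}S_\ell'\Bigr]=\zeta_\ell\,E_\ell.
\]
Since $\zeta_\ell>0$, the sign of $\varphi(s,x,i_0)$ coincides with that of $E_\ell$, which proves the equivalence. The computation is entirely mechanical; there is no real obstacle beyond bookkeeping the index shift and correctly applying the two constraints in \eqref{eq:deflambdamu}.
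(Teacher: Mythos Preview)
Your proof is correct and follows essentially the same approach as the paper: both use the two constraints in \eqref{eq:deflambdamu} to rewrite the partial sums as tail sums starting at $\ell=s+1$, perform the index shift $i=\ell+j$ to express these tails as $\zeta_\ell S_\ell$ and $\zeta_\ell(\ell S_\ell+S_\ell')$, and then factor out $\zeta_\ell>0$ to obtain $\varphi(s,x,i_0)=\zeta_\ell E_\ell$.
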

	\begin{proof}
		It follows from \eqref{eq:deflambdamu} that
\[
    1- \sum_{i=i_0}^s \zeta_i
    = \sum_{i \ge s+1} \zeta_i
    = \zeta_{s+1} \sum_{j \ge 0} \frac{\zeta_{j+s+1}}{\zeta_{s+1}}
    = \zeta_{s+1}S_{s+1} \enspace.
\]
		Furthermore, again from \eqref{eq:deflambdamu}, we obtain
{
\small
\[
    \sum_{i=i_0}^s i \zeta_i
    = T - \sum_{i \ge s+1} i \zeta_i
    = T - (s+1)\zeta_{s+1 }\sum_{j\ge 0}  \frac{(j+s+1)\zeta_{j+s+1}}{(s+1)\zeta_{s+1}}
    = T-(s+1)\zeta_{s+1} \Big(S_{s+1}+\frac{S'_{s+1}}{s+1}\Big).
\]
}
Plugging in these relationships, we find that, letting $\ell=s+1$,
		\begin{align}
			\varphi(s,x,i_0)& =- \Big(1-\sum_{i=i_0}^s \zeta_i \Big)   \ln  \Big(1-\sum_{i=i_0}^s \zeta_i \Big) + \frac{\ln 2}{2}\sum_{i=i_0}^s \Big( \zeta_i \big(i-T\big) \Big)\nonumber \\
			&=  -\zeta_{\ell}S_{\ell}  \ln(\zeta_\ell S_{\ell})+ \frac{\ln 2}{2} \Big( T-\ell\zeta_{\ell} \Big(S_{\ell}+\frac{S'_{\ell}}{\ell}\Big) -T (1- \zeta_{\ell}S_{\ell}) \Big) \nonumber \\	
			&= \zeta_\ell S_\ell \Big( -\ln(\zeta_\ell S_{\ell}) -\frac{\ln 2}{2}\ell +\frac{\ln 2 }{2} T\Big) - \frac{\ln 2}{2} \zeta_\ell S_\ell'. \nonumber  \label{eq:transtoS}
		\end{align}
		Since $\zeta_\ell>0$ for all $\ell$, the claim follows.
	\end{proof}
	
With this preparation at hand, we show that we can reduce the analysis for Lemma~\ref{lemma:partialprofiles2} to the case $s \le 3$.

\begin{lemma}\label{lemma:E4}
For $i_0 \in \{1,2\}$ and $x \in [0,1]$, if $E_4 >0$, then $E_\ell >0$ for all $\ell \ge 5$.
\end{lemma}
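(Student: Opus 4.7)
My plan will be induction on $\ell\ge 4$: the base case $\ell=4$ is the hypothesis, and I will prove the step $E_\ell>0\Rightarrow E_{\ell+1}>0$ for every $\ell\ge 4$. The first step will be to put $\zeta_\ell E_\ell$ into a much cleaner form. Performing the substitution $i=j+\ell$ in the sums that define $S_\ell$ and $S'_\ell$ gives $\zeta_\ell S_\ell=R_\ell$ and $\zeta_\ell S'_\ell=R'_\ell-\ell R_\ell$, where $R_\ell:=\sum_{i\ge\ell}\zeta_i$ and $R'_\ell:=\sum_{i\ge\ell}i\zeta_i$. Substituting into the definition of $E_\ell$ the two $\ell$-terms cancel, leaving
\[
\zeta_\ell E_\ell \;=\; -R_\ell\ln R_\ell \;+\; \tfrac{\ln 2}{2}\bigl(TR_\ell-R'_\ell\bigr).
\]
Since $T<4$, for $\ell\ge 4$ the sum $R'_\ell-TR_\ell=\sum_{i\ge\ell}(i-T)\zeta_i$ is strictly positive, while $-R_\ell\ln R_\ell>0$ because $R_\ell<1$. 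Setting $M_\ell:=R'_\ell/R_\ell-\ell$, the condition $E_\ell>0$ becomes
\begin{equation}\label{eq:posplan}
\log_2(1/R_\ell)\;>\;\tfrac{M_\ell+\ell-T}{2}.
\end{equation}

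Next I will parametrise by $q_\ell:=e^\mu\,2^{-\ell-1/2}$: since $\zeta_{i+1}/\zeta_i=e^\mu\,2^{-i-1/2}$, the ratio $\zeta_{\ell+j}/\zeta_\ell$ equals $q_\ell^{\,j}\,2^{-j(j-1)/2}$, so that $\sigma_\ell:=R_\ell/\zeta_\ell=\sigma(q_\ell)$ and $M_\ell=\tau(q_\ell)/\sigma(q_\ell)$, where
\[
\sigma(q):=\sum_{j\ge 0}q^{\,j}2^{-j(j-1)/2}, \qquad \tau(q):=\sum_{j\ge 0}j\,q^{\,j}2^{-j(j-1)/2}.
\]
Since $q_\ell$ is decreasing in $\ell$ and $\sigma$ is visibly increasing in $q$, $\sigma_\ell$ is non-increasing in $\ell$. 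A Cauchy--Schwarz inequality with weights $w_j=q^{\,j}2^{-j(j-1)/2}$ gives $\tau(q)^2\le\sigma(q)\sum_j j^2 w_j$, which translates into $(\tau/\sigma)'(q)\ge 0$; hence $M_\ell$ is also non-increasing in $\ell$.

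The inductive step now reduces to a single clean condition. Subtracting \eqref{eq:posplan} at level $\ell$ from its version at level $\ell+1$, it suffices to show $\log_2(R_\ell/R_{\ell+1})\ge(M_{\ell+1}-M_\ell+1)/2$. Since $M_\ell$ is non-increasing, the right-hand side is at most $1/2$; and since $R_\ell=R_{\ell+1}+\zeta_\ell$ gives $R_\ell/R_{\ell+1}=\sigma_\ell/(\sigma_\ell-1)$, the requirement becomes simply $\sigma_\ell\le 2+\sqrt 2$. By the monotonicity of $\sigma_\ell$ in $\ell$ this needs only to be checked at $\ell=4$. From the numerical tables of \S\ref{section:numerics}, combined with Lemma~\ref{lemma:monomu} (which shows $\mu(i_0,x)$ is maximised at $(i_0,x)=(1,0)$ on $\{1,2\}\times[0,1]$), I get $\mu\le 2.69$; hence $q_4\le e^{2.69}/2^{9/2}<0.66$, and summing the rapidly convergent series yields $\sigma(q_4)<1.91$, comfortably below $2+\sqrt 2\approx 3.414$.

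The main hurdle is spotting the telescoping identity \eqref{eq:posplan}, in which the $\ell$-dependence of $E_\ell$ through $\zeta_\ell$ decouples from that through the tail $R_\ell$; once this is in place, everything reduces to a \emph{single} numerical check $\sigma(q_4)\le 2+\sqrt 2$ with ample slack. The non-increasing behaviour of $M_\ell$ is also essential: without it the right-hand side of the inductive increment could not be replaced by the simple constant $1/2$ and a more delicate comparison would be needed.
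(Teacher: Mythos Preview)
Your proof is correct and takes a genuinely different route from the paper's. The paper never discovers the identity $\zeta_\ell E_\ell=-R_\ell\ln R_\ell+\tfrac{\ln 2}{2}(TR_\ell-R'_\ell)$; instead it works directly with the expansion
\[
E_\ell=\sum_{j\ge 0}e^{\mu j-\frac{\ln 2}{2}(j^2+2j\ell)}\Bigl(\tfrac{\ln 2}{2}\ell^2-\mu\ell-\tfrac{\ln 2}{2}\ell-\lambda-\ln S_\ell+\tfrac{\ln 2}{2}T-\tfrac{\ln 2}{2}j\Bigr),
\]
observes (using the same bound $\mu\le 2.69$) that the quadratic $\tfrac{\ln 2}{2}\ell^2-\mu\ell-\tfrac{\ln 2}{2}\ell$ is minimised before $\ell=4.5$ and that $-\ln S_\ell$ is increasing, replaces both by their values at $\ell=4$, and then uses a weighted-average monotonicity argument to conclude $E_\ell/S_\ell\ge E_4/S_4>0$ in one shot for all $\ell\ge 5$. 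Your approach is inductive rather than direct; its main gain is the clean reformulation \eqref{eq:posplan}, which hides the parameters $\lambda,\mu$ inside the tail sums $R_\ell,R'_\ell$ and reduces the entire step to the single numerical inequality $\sigma(q_4)\le 2+\sqrt 2$ with far more slack ($1.91$ versus $3.41$) than the paper's check that the quadratic vertex lies below $4.5$. The paper's version, on the other hand, avoids the Cauchy--Schwarz detour for the monotonicity of $M_\ell$ and is non-inductive, so slightly more self-contained. Both arguments rely on the same input $\mu\le 2.69$ from the tables in \S\ref{section:numerics} and Lemma~\ref{lemma:monomu}.
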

	
	\begin{proof}
		Suppose that $E_4 >0$, and note that by \eqref{eq:defxi} and \eqref{eq:Sdef},
		\begin{equation}\label{eq:Eequ}
			E_\ell = \sum_{j \ge 0} \Big(e^{\mu j- \frac{\ln 2}{2} (j^2+2j\ell)} \Big( \frac{\ln 2}{2}\ell^2 - \lambda - \mu\ell -\ln S_\ell  - \frac{\ln 2}{2}\ell + \frac{\ln 2}{2} T  -\frac{\ln 2 }{2}j\Big) \Big).
		\end{equation}
		 Further note that (interpreting $\ell$ as a real number)
		\[
		\frac{\text{d}}{\text{d} \ell} \Big(	 \frac{\ln 2}{2}\ell^2 - \mu\ell - \frac{\ln 2}{2}\ell \Big)= \ell \ln 2 - \mu - \frac{\ln 2}{2}.
		\]
		Recall from \S\ref{section:numerics} that for $i_0=1$, $\mu(0) \le2.69$, and for $i_0=2$, $\mu(0) \le 2.65$. By Lemma~\ref{lemma:monomu}, $\mu(x)$ is strictly decreasing for $x \in [0,1]$, so in both cases $\mu(x) \le 2.69$, and we have 
		\[
		\frac{\text{d}}{\text{d} \ell} \Big(	 \frac{\ln 2}{2}\ell^2 - \mu\ell - \frac{\ln 2}{2}\ell \Big) \ge \ell \ln 2 -2.69 - \frac{\ln 2}{2} \ge \ell \ln 2 - 3.04.
		\]
		This is strictly positive for $\ell \ge 4.5$, thus the minimum of the quadratic function $ \frac{\ln 2}{2}\ell^2 - \mu\ell - \frac{\ln 2}{2}\ell $ occurs at $\ell <4.5$, and so for $\ell \ge 5$ we have
		\[
		\frac{\ln 2}{2}\ell^2 - \mu\ell - \frac{\ln 2}{2}\ell  \ge  \frac{\ln 2}{2}4^2 - 4\mu - \frac{\ln 2}{2}\cdot 4. 
		\]	
Furthermore, it is clear from the  definition \eqref{eq:Sdef} that $S_{\ell}$ is decreasing in $\ell$, which means $-\ln S_\ell \ge -\ln S_4$ for $\ell \ge 5$. Plugging these bounds into \eqref{eq:Eequ}, we obtain for $\ell \ge 5$
		\[
		E_\ell \ge \sum_{j \ge 0} \Big(e^{\mu j- \frac{\ln 2}{2} (j^2+2j\ell)} \Big( \frac{\ln 2}{2}4^2 - 4 \mu- \frac{\ln 2}{2}\cdot 4-\ln S_4  -\lambda + \frac{\ln 2}{2} T  -\frac{\ln 2 }{2}j\Big) \Big).
		\]
		Now consider 
		\[ \frac{E_\ell}{S_\ell } \ge \sum_{j \ge 0} \Big( \frac{e^{\mu j- \frac{\ln 2}{2} (j^2+2j\ell)}}{\sum_{i \ge 0}e^{\mu i- \frac{\ln 2}{2} (i^2+2i\ell)} } \Big( \frac{\ln 2}{2}4^2 - 4 \mu- \frac{\ln 2}{2}\cdot 4-\ln S_4  -\lambda + \frac{\ln 2}{2} T  -\frac{\ln 2 }{2}j\Big) \Big).\]
		The expression on the right-hand side is a weighted average of the term in the brackets, which we denote by $R(j)$. Note $R(j)$ is decreasing in $j$ (the only part that depends on $j$ is $-\frac{\ln 2}{2}j$). If we decrease $\ell$, the weights in the weighted sum are shifted towards  larger $j$ (since for larger $j$, $\exp\{- j\ell \ln 2\}$ becomes disproportionally larger) -- so smaller terms $R(j)$ get more weight, and the average decreases. So for $\ell \ge 5$ we obtain 
		\[
		\frac{E_\ell}{ S_\ell } \ge \sum_{j \ge 0} \Big( \frac{e^{\mu j- \frac{\ln 2}{2} (j^2+2j\cdot 4)}}{\sum_{i \ge 0}e^{\mu i- \frac{\ln 2}{2} (j^2+2i\cdot 4)} } \Big( \frac{\ln 2}{2}4^2 - 4 \mu- \frac{\ln 2}{2}\cdot 4-\ln S_4  -\lambda + \frac{\ln 2}{2} T  -\frac{\ln 2 }{2}j\Big) \Big).
		\]
But this last expression is just $E_4/S_4 >0$, and so $E_\ell >0$ as required.
	\end{proof}
	The following corollary is now immediate from Lemmas \ref{lemma:toE} and \ref{lemma:E4}.
\begin{corollary}
Let $i_0 \in \{1,2\}, x \in [0,1]$. If $\varphi(3,x,i_0)>0$, then $\varphi(s,x,i_0)>0$ for $s \ge 4$.
\end{corollary}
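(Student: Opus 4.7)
The plan is to chain the two preceding lemmas together; no additional calculation is needed, so the proof will be essentially a one-paragraph deduction. Concretely, I will read Lemma~\ref{lemma:toE} as an equivalence $\varphi(s,x,i_0)>0 \Leftrightarrow E_{s+1}>0$, and then use Lemma~\ref{lemma:E4} to propagate positivity of $E_\ell$ from $\ell=4$ upwards to all $\ell\ge 5$.

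In more detail: first I would apply Lemma~\ref{lemma:toE} in the ``forward'' direction with $s=3$. The hypothesis of the corollary, $\varphi(3,x,i_0)>0$, is by that lemma equivalent to $E_4>0$ (since $\ell = s+1 = 4$). Next I would invoke Lemma~\ref{lemma:E4}, which under the assumption $E_4>0$ (and for $i_0\in\{1,2\}$, $x\in[0,1]$, which are the standing hypotheses of the corollary) yields $E_\ell>0$ for every $\ell\ge 5$. Finally, I would apply Lemma~\ref{lemma:toE} once more, this time in the ``reverse'' direction: for an arbitrary integer $s\ge 4$, set $\ell = s+1\ge 5$; the equivalence then upgrades the conclusion $E_\ell>0$ to $\varphi(s,x,i_0)>0$, as desired.

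There is essentially no obstacle, since both ingredients are already proved above, and Lemma~\ref{lemma:toE} gives an honest ``if and only if'' (because $\zeta_\ell S_\ell>0$ is used to divide through), which is what allows the chain to close. The value of the corollary lies in the reduction: combined with the later plan to verify $\varphi(s,x,i_0)>0$ numerically at finitely many $x$ for $s\le 3$ and $i_0\in\{1,2\}$, it reduces the infinitely many cases $s\ge 2$ in Lemma~\ref{lemma:partialprofiles2} to checking only $s\in\{2,3\}$.
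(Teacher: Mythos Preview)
Your proposal is correct and matches the paper's approach exactly: the corollary is stated there as ``immediate from Lemmas~\ref{lemma:toE} and~\ref{lemma:E4}'', which is precisely the chain you describe.
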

It remains to verify Lemmas~\ref{lemma:partialprofiles2} and~\ref{lemma:partialprofiles1} for $s \le 3$. After our preparations this is not very difficult, but it will take some time: 
We study the function $\varphi(s,x,i_0)$ -- which is different for every choice of $s, i_0$ --  and use monotonicity properties from \S\ref{section:monotonicity}  to show that it is positive if it is positive for $s,x,i_0$ in a certain \emph{finite} set. We then check those finitely many points using the numerical approximations from \S\ref{section:numerics}. The  proof is in the appendix.	
\begin{lemma}
\label{lemma:checksle3}
Let either $i_0 = s=1$ and $x \in [0.04, 1]$, or $i_0 \in \{1,2\}$, $s \in \{2,3\}$ and $x \in [0,1]$. Then $\varphi(s,x,i_0)>0$. 
\end{lemma}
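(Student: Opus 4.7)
The plan is to handle each of the five cases $(s, i_0) \in \{(1,1), (2,1), (3,1), (2,2), (3,2)\}$ separately, reducing each to a finite numerical check. The key tool throughout is to exploit the monotonicity of the $\zeta_i(x)$ established in Lemma~\ref{lemma:monoxi}: on any subinterval of $[0,1]$ on which each $\zeta_i$ with $i_0\le i\le s$ is monotonic, the two summands of $\varphi(s,x,i_0)$ can be bounded below by their values at the endpoints. Here I use that $y\mapsto -(1-y)\ln(1-y)$ is strictly increasing on $[0,1-1/e]$ (which covers the range $\sum_i \zeta_i$ attains in all relevant cases, since each $\zeta_i$ is small), and that the linear piece $\tfrac{\ln 2}{2}\zeta_i(i-T(x))$ is separately monotonic in $x$ once the sign of $i-T(x)$ is fixed.

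For the four cases with $s\in\{2,3\}$ the natural breakpoint is $x=\tfrac{2}{\ln 2}-2$, where $\zeta_3$ switches from increasing to decreasing by Lemma~\ref{lemma:monoxi}. On each of $[0,\tfrac{2}{\ln 2}-2]$ and $[\tfrac{2}{\ln 2}-2,1]$, every $\zeta_i$ with $i_0\le i\le 3$ is monotonic, and $T(x)=1+\tfrac{2}{\ln 2}-x$ is linear with explicit coefficients. Combining the tabulated enclosures for $\zeta_i$ at the three breakpoints $x\in\{0,\tfrac{2}{\ln 2}-2,1\}$ from \S\ref{section:numerics} with the sign analysis above yields, on each subinterval, a rigorous numerical lower bound on $\varphi(s,x,i_0)$ which can be checked to be bounded away from zero with considerable margin, since the entropy-like term $-(1-\sum_{i_0\le i\le s}\zeta_i)\ln(1-\sum_{i_0\le i\le s}\zeta_i)$ stays comfortably positive (e.g.\ for $(s,i_0)=(3,1)$, $\sum_{i=1}^3 \zeta_i\in [0.37,0.72]$ across $[0,1]$).

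The main obstacle is the case $s=i_0=1$ on $x\in[0.04,1]$, for two reasons: $\varphi(1,\cdot,1)$ vanishes at $x_0\approx 0.02905$, so the function is only barely positive near the left endpoint, and the tables of \S\ref{section:numerics} do not list an entry at $x=0.04$. I would handle this by (i) establishing monotonicity of $\varphi(1,\cdot,1)$ on $[0.04,0.15]$, and (ii) verifying positivity numerically at the single point $x=0.04$. For (i), a direct computation gives
\[
\frac{d}{dx}\varphi(1,x,1) \;=\; \zeta_1'(x)\bigl[\ln(1-\zeta_1(x))+\tfrac{\ln 2}{2}x\bigr]+\tfrac{\ln 2}{2}\zeta_1(x),
\]
and since $\zeta_1' > 0$ on $[0,1]$ (Lemma~\ref{lemma:monoxi}) and $\zeta_1$ is small, a crude bound on $\zeta_1'$ together with $\zeta_1(x)\ge \zeta_1(0.04)$ shows the right-hand side is positive for $x\ge 0.04$. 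For (ii), one extends the interval-arithmetic calculation of \S\ref{section:numerics} to obtain an enclosure for $\zeta_1(0.04)$ (and hence for $\varphi(1,0.04,1)$) that is tight enough to conclude positivity, the number $0.04$ being chosen so as to sit comfortably above $x_0$ while still yielding a workable enclosure. Combined with the monotonicity argument on $[0.15,\tfrac{2}{\ln 2}-2]\cup[\tfrac{2}{\ln 2}-2,1]$ (handled exactly as in the other cases, using the existing table entries at $x=0.15$, $x=\tfrac{2}{\ln 2}-2$ and $x=1$), this completes the proof.
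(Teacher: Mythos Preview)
Your strategy of bounding the entropy term and the linear term separately by their worst endpoint values is too crude and fails in several of the cases you claim to cover. Take $(i_0,s)=(2,2)$ on the subinterval $[0,\tfrac{2}{\ln 2}-2]$: here $\zeta_2$ increases from about $0.108$ to about $0.348$ (using the $i_0=2$ table), so the entropy term $-(1-\zeta_2)\ln(1-\zeta_2)$ is minimised at the left endpoint with value $\approx 0.102$, while the linear term $\tfrac{\ln 2}{2}\zeta_2\,(1-\tfrac{2}{\ln 2}+x)$ is minimised at the right endpoint with value $\approx -0.121$. The sum of these separate minima is about $-0.02$, so your lower bound does not establish $\varphi(2,x,2)>0$. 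A similar failure occurs for $(i_0,s)=(1,2)$. The two summands attain their minima at \emph{different} endpoints, and separating them discards precisely the correlation that keeps $\varphi$ positive.

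The paper's approach avoids this by first eliminating $x$: on each subinterval one replaces $x$ by its minimum value, obtaining a function $h(\zeta_{i_0},\dots,\zeta_s)$ that lower-bounds $\varphi$ and no longer depends on $x$ except through the $\zeta_i$. Crucially, this $h$ is \emph{concave} (all entries of its Hessian equal $-1/(1-\sum_i\zeta_i)<0$), so its minimum over the box of admissible $(\zeta_i)$-values is attained at a corner, reducing the problem to checking $2^{s-i_0+1}$ explicit points. Concavity is the missing ingredient in your argument.

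Two smaller issues. First, your claim that $y\mapsto -(1-y)\ln(1-y)$ is increasing on the relevant range is false: for $(s,i_0)=(3,2)$ the sum $\zeta_2+\zeta_3$ reaches about $0.77>1-1/e$. Second, your treatment of the case $(s,i_0)=(1,1)$ requires a bound on $\zeta_1'$, which is not established anywhere and would need implicit-function-theorem estimates on~\eqref{eq:deflambdamu}. The paper sidesteps this entirely: since the concave function $h_1(y)=-(1-y)\ln(1-y)+y(-1+\tfrac{\ln 2}{2}\cdot 0.04)$ satisfies $h_1(0)=0$ and $h_1(0.026)>0$, it is positive on all of $(0,\zeta_1(0.15)]$, so neither a table entry at $x=0.04$ nor a derivative bound is needed.
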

With this lemma, the proof of Lemmas~\ref{lemma:partialprofiles2} and~\ref{lemma:partialprofiles1} is complete.\qed

\subsection{Approximation by an integer profile}
Recall the definition of $L_0(n,k,t)$ given in \eqref{eq:L0def}, and of the rescaled version  $\tL_0(\rho,k,t)$ in \eqref{eq:L0tildedef}. We already saw in \eqref{eq:oldapprox} that $L_0(n,k,t)$ is a good approximation for $\ln E_{n,k,t}$. In this section we prove the promised sharpening of this relationship for appropriate values $n,k,t$. The first step is approximating the optimal continuous profile by an integer profile.
\begin{lemma}\label{lemma:integerapprox}
Let $n,k,t$ be integers so that $t =\alpha-\bigO(1)$ and $n/k = t-\Theta(1)$, and let $\boldsymbol{\xi}=(\xi_i(n,t,\rho))_i$ be the continuous profile maximizing $\tL_0(\rho,k,t)$ characterised in \eqref{eq:xiu} and~\eqref{eq:constraints_xiu}, where $\rho = n/k$. Then there is an integer profile
\[
    \bfk^* = (k_u^*)_{1\le u \le t}\in P_{n,k,t}
\]
with the following properties:
\begin{itemize}
    \item[a)] $\sum_u |k_u^* - \xi_{\alpha-u}k|  =\bigO(\sqrt{\ln n})$,
    \item[b)] there is an integer $u^* = \alpha - \bigO(\sqrt{\ln n})$ so that $k_u^*=0$ for all $u<u^*$, and
    \item[c)] $\ln \E_{1/2}[\bar X_{\bfk^*}] = L_0(n,k,t) +\bigO (\ln^{3/2} n)$.
\end{itemize}
\end{lemma}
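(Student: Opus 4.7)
The strategy is to discretise the continuous optimiser from Lemma~\ref{lem:tLmax}. I would set $\tilde k_u := \xi_{\alpha-u} k$ for $1 \le u \le t$, so that $\tilde\bfk$ attains $L_0(n,k,t)$ over $P^0_{n,k,t}$. By Lemma~\ref{lem:monoxi}, $\xi_i$ decays super-exponentially in $i$, and since $k = \Theta(n/\ln n)$, the smallest $u^*$ with $\tilde k_{u^*} \ge 1$ satisfies $u^* = \alpha - \bigO(\sqrt{\ln n})$. Put $k_u^0 := \lfloor \tilde k_u \rfloor$ for $u \ge u^*$ and $k_u^0 := 0$ otherwise. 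The initial rounding creates defects $|\sum_u k_u^0 - k|, |\sum_u u k_u^0 - n| = \bigO(\sqrt{\ln n})$, which I would repair by modifying two entries $k_{u_1}^0, k_{u_2}^0$ chosen in the bulk of the profile (say $u_i = \alpha - i$ for some small fixed $i$, where $\tilde k_{u_i} = \Theta(k)$) by amounts $a,b = \bigO(\sqrt{\ln n})$ solving a $2\times 2$ linear system. The resulting integer profile $\bfk^*$ lies in $P_{n,k,t}$; property (b) follows by construction, and property (a) follows from $|k_u^* - \tilde k_u| \le 1$ on the $\bigO(\sqrt{\ln n})$ rounded entries (plus an $\bigO(\sqrt{\ln n})$ bump on two of them) and $\sum_{u<u^*} \tilde k_u = \bigO(1)$ by the super-exponential decay.

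For property (c), Lemma~\ref{lem:expxk} together with Stirling's formula --- applied only to the $\bigO(\sqrt{\ln n})$ nonzero factorials $k_u^*!$, each with $k_u^* \le k = \bigO(n/\ln n)$ --- yields $\ln \E_{1/2}[\bar X_{\bfk^*}] = L_{\bfk^*} + \bigO(\sqrt{\ln n}\cdot\ln n) = L_{\bfk^*} + \bigO(\ln^{3/2} n)$. It then remains to show $|L_{\bfk^*} - L_0| = \bigO(\ln^{3/2} n)$. Writing $L_\bfk = n \ln n - n + k - \sum_u f_u(k_u)$ with $f_u(x) := x \ln(x d_u)$, and setting $\delta_u := k_u^* - \tilde k_u$, I would Taylor-expand $f_u$ around $\tilde k_u$ on indices $u \ge u^*$ and compute $f_u(0) - f_u(\tilde k_u)$ exactly on indices $u < u^*$. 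Using the first-order optimality $f_u'(\tilde k_u) = -1 - \lambda^* - \mu^* u$ from Lemma~\ref{lem:tLmax} (with Lagrange multipliers $\lambda^*, \mu^* = \bigO(\ln n)$), the linear contributions in $\delta_u$ combine with the tail via the identities $\sum_u \delta_u = \sum_u u\delta_u = 0$, which hold because $\bfk^*$ and $\tilde\bfk$ satisfy the same two constraints, to leave only $\bigO(\ln n)\cdot \sum_{u<u^*}\tilde k_u = \bigO(\ln n)$. The quadratic Taylor remainder $\sum_{u\ge u^*}\delta_u^2/(2\tilde k_u)$ contributes $\bigO(\sqrt{\ln n})$ from rounded entries (since $|\delta_u|\le 1$ and $\tilde k_u \ge 1$) and $\bigO(\ln^2 n/n) = o(1)$ from the two bumped entries; higher-order Taylor terms are controlled similarly since $|\delta_u|/\tilde k_u \ll 1$ throughout.

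The main obstacle is the error bookkeeping in part (c): one must exploit first-order optimality to cancel the linear Taylor contribution via the preserved constraints, and simultaneously balance two competing requirements on the truncation threshold --- close enough to $\alpha$ that only $\bigO(\sqrt{\ln n})$ factorials contribute to the Stirling error, but far enough that $\tilde k_{u^*} \ge 1$ so the quadratic Taylor bound controls the rounding error. This balancing is precisely what produces the exponent $3/2$ in the target bound, improving on the $\bigO(\ln^4 n)$ estimate in~\eqref{eq:oldapprox} from~\cite{HRHowdoes}. A secondary subtlety is to ensure that the two adjusted entries can legally absorb an $\bigO(\sqrt{\ln n})$ correction (positivity and the $t$-boundedness), which is guaranteed by placing them in the bulk where $\tilde k_{u_i} = \Theta(k)$.
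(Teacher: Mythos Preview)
Your Taylor/KKT argument for (c) is correct and is a clean alternative to the paper's approach (which directly bounds the effect of each ``small change'' on $L_\bfk$ term by term). But there is a genuine gap in your proof of (a).

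You claim both rounding defects are $O(\sqrt{\ln n})$. The first, $|\sum_u k_u^0 - k|$, is fine. The second is not: writing $\{\cdot\}$ for fractional part,
\[
\sum_u u k_u^0 - n \;=\; -\sum_{u\ge u^*} u\{\tilde k_u\} \;-\; \sum_{u<u^*} u\tilde k_u,
\]
and the first sum has $O(\sqrt{\ln n})$ terms each weighted by $u = \Theta(\ln n)$, so it can be as large as $\Theta(\ln^{3/2} n)$. Solving the $2\times 2$ system with adjacent $u_1 = u_2+1$ gives $a = -\Delta_2 + u_2\Delta_1 = \sum_{u\ge u^*}(u-u_2)\{\tilde k_u\} + O(1)$, and since $|u - u_2| \le t - u^*$ for all $u$ in the support, $|a|$ can be of order $(t-u^*)^2 = \Theta(\ln n)$. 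Your construction therefore only yields $\sum_u|k_u^* - \tilde k_u| = O(\ln n)$, not the claimed $O(\sqrt{\ln n})$. (Your argument for (c) would survive this, since $(\ln n)^2/k = o(1)$ on the two bumped entries, but the lemma as stated is not proved.)

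The paper achieves the $O(\sqrt{\ln n})$ bound by \emph{cascade rounding}: round $k_{u^*}$ down, carry the fractional part $\delta_{u^*}\in[0,1)$ into $k_{u^*+1}$, round that down, carry $\delta_{u^*+1}$ into $k_{u^*+2}$, and so on through $u=t$. This preserves $\sum_u k_u$ exactly and — crucially — changes $\sum_u u k_u$ by only $\sum_u \delta_u < t - u^* + 1 = O(\sqrt{\ln n})$, since each carry moves less than one unit of mass exactly one position to the right. A final batch of at most $t-u^*+1$ neighbour swaps (increase some $k_u$ by $1$, decrease $k_{u+1}$ by $1$) then restores $\sum_u u k_u = n$. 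Since each cascade step alters one entry by at most $1$ and each swap alters two entries by $1$, the total variation $\sum_u|k_u^*-\tilde k_u|$ is $O(t-u^*) = O(\sqrt{\ln n})$, giving (a).
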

\begin{proof}
Let $k_u = \xi_{\alpha-u} k \in \mathbb{R}$, where $1\le u \le t$. We will make a series of small changes to the $k_u$'s to obtain an integer profile.
Lemma~\ref{lem:monoxi} implies that $\ln \xi_i \le -\frac{i^2}{2}\ln 2 + \bigO(i)$ as $i$ increases, and so there is some constant $C'>0$ so that
\[
    \sum_{i > C'\sqrt{\ln n} } \xi_i < \frac 1{n^2} .
\]
We let $u^* = \alpha -  \lceil C' \sqrt{\ln n}\rceil$, and set $k^*_u = 0$ for all $u < u^*$, which gives b). In particular, 
\begin{equation}
\label{eq:tailsumk*k}    
    \sum_{1 \le u < u^*} |k_u^*- \xi_{\alpha-u}k|
    = \sum_{1\le u < u^*} \xi_{\alpha-u}k    
    < \frac{k}{n^2} <\frac 1n.
\end{equation}
We will now successively define some auxiliary values $k_u' \in \R$ and $k_u'' \in \mathbb{N}_0$ for all $u^* \le u \le t$ by making small changes to the values $k_u$. First we add what we removed from the $k_u$'s with $u<u^*$ to $k_{u^*}$, setting			
\begin{equation}\label{eq:kustarprime}
    k_{u^*}' = k_{u^*} +  \sum_{1 \le u < u^*} k_u  \in  \Big[k_{u^*},  k_{u^*} + \frac 1n \Big]. 
\end{equation}
To create the integer values  $k_u'' \in \mathbb{N}_0$, we start with~$k'_{u^*}$, round it down to $k''_{u^*} = \left \lfloor k'_{u^*} \right \rfloor \in \mathbb{N}_0$, and add $\delta_{u^*} = k'_{u^*} - k''_{u^*}$ to $k_{u^*+1}$, setting $k'_{u^*+1} = k_{u^*+1} +\delta_{u^*}$. We proceed like this for $u=u^*+1, u=u^*+2, \dots, u=t-1$, always rounding the current value $k_u'$ down to $k''_u = \left \lfloor k'_u \right\rfloor \in \mathbb N_0$ and adding the difference $\delta_u=k_u'-k_u''$ to $k_{u+1}$, setting $k'_{u+1}=k_{u+1}+\delta_u$. Finally, we set $k_t'' = k_t'$. 
Note that for all $u$, $|k_u'' - k_u| \le 1$.
		
We have made $\bigO(t-u^*)=\bigO(\sqrt{\ln n})$ `small changes' so far. By construction, we have not altered the sum $\sum_u k_u'' =\sum_u k_u = k$, and in particular $k''_t$ is also an integer. However, we have changed the sum $\sum_u k_u u = n$: the first of our changes (when we set $k_u^*=0$ for all $u<u^*$ and defined $k_{u^*}'$ via \eqref{eq:kustarprime}) increased this sum by at most $u^* \sum_{1\le u<u^*}k_u < u^*k/n^2 <1$. Each subsequent small change (when moving from $k_u'$ to $k_u''$ and adding the difference $\delta_u$ to $k_{u+1}$) increased the sum by $\delta_u \in [0,1)$. Since we made $t-u^*$ such changes, we have $\sum_u k''_u u  \in [n, n + t-u^*+1) $.
		
It is clear that we can fix this by performing up to $t-u^*+1 = \bigO(\sqrt{\ln n})$ further `small neighbour changes' where we increase some $k_u''$ by $1$ and decrease~$k_{u+1}''$ by $1$, where $u^* \le u < t$. (Since $\sum_u k_u'' = k$, there is certainly a large enough $k_{u+1}''$ so that we can do this while keeping all values non-negative.)  
This defines our final sequence $\bfk^* \in P_{n,k,t}$. As we made at most $2(t-u^*+1)$ small changes in total, we have
\[ \sum_{u^* \le u \le t} | k_u^* - k_u| \le 2(t-u^*+1)= \bigO(\sqrt{\ln n}),\]
which, together with~\eqref{eq:tailsumk*k}, establishes a). It only remains to check  for c) that $\ln \E_{1/2}[\bar X_{\bfk^*}]$ is not too far away from $L_0(n,k,t) =k \tL_0 (\frac nk, k, t)= k \tL (\frac nk, k, \mathbf{p})$, where $p_u = \xi_{\alpha-u}$ for all $u$. For this, first consider how much $L_{\bfk^*}$ has changed from 
\begin{equation}
\label{eq:ltildeterms}
 k  \tL \Big(\frac nk, k, \mathbf{p} \Big) 
    = L_{\mathbf{p}k}
    = n \ln n - n + k - \sum_{1\le u\le t} \xi_{\alpha-u}k \ln (\xi_{\alpha-u}k d_u).
\end{equation}
Since $n$ and $k$ are preserved by our changes, we only need to study how much the terms in the sum change. First of all, since $\sum_{u <u^*} \xi_{\alpha-u} k < {k}/{n^2} <  1/n $, by Jensen's inequality, 
		\[0 \ge \sum_{u<u^*} \xi_{\alpha-u}k \ln (\xi_{\alpha-u}k ) \ge \frac{1}{n} \ln (1/(n u^*)) = - \ln(n u^*) / n > -  \frac{\ln^2 n}{n} .\]
	Note further that each $d_u$ is at most $2^{t^2}t!=\exp(\bigO(t^2))$, and so $0 \le \sum_{u<u^*} \xi_{\alpha-u}k \ln (d_u) \le  \frac{1}{n} \bigO(t^2) = \bigO\big( \frac{\ln^2 n}{n} \big)$. Thus, our first change (setting $k_u^*=0$ for $u<u^*$ and adding the difference to $k_{u^*}$) does not change the sum in \eqref{eq:ltildeterms} by more than $\bigO\big( \frac{\ln^2 n}{n} \big)$. 
		
Next we examine how much the $\bigO(t-u^*+1) = \bigO(\sqrt{\ln n})$ small neighbour changes --- where we change $k_u'$ or $k_u''$ by up to $\pm 1$ and shift the difference to its neighbour $k'_{u+1}$ or $k''_{u+1}$ --- change the sum in \eqref{eq:ltildeterms}. First note that for $1 \le x \le n$, changing $x$ by at most $1$ increases or decreases $x \ln x$ by at most $\ln n$. 
For $x\in [0,1]$, we have $x \ln x = \bigO(1)$.
So the $\bigO(\sqrt{\ln n})$ small neighbour changes can change the value of the sum $\sum_{u^* \le u \le t} \xi_{\alpha-u}k \ln (\xi_{\alpha-u}k ) $ by at most $\bigO(\ln^{3/2} n)$.
Furthermore, for all $1\le u \le t$ we have $\ln d_{u+1} - \ln d_u = \ln(d_{u+1}/d_u) = u\ln 2 + \ln (u+1) = \bigO(u) = \bigO(\ln n)$. So our $\bigO(\sqrt{\ln n})$ small neighbour changes also only change the sum $\sum_{u \ge u^*} \xi_{\alpha-u}k \ln (d_{u} )$ by at most $\bigO(\ln^{3/2} n)$. Overall, we obtain
\begin{equation}\label{eq:Lkstar}
    L_{\bfk^*} = n \ln n - n + k - \sum_{1 \le u \le t} k_u^* \ln (k_u^* d_u) = L_0(n,k,t) + \bigO(\ln^{3/2} n).
\end{equation}
This is not quite what we want yet, as $L_{\bfk^*}$ is only an approximation for $\ln \E_{1/2}[\bar X_{\bfk^*}]$. However, using Stirling's approximation, it is easy to see that $\E_{1/2}[\bar X_{\bfk^*}] = L_{\bfk^*} +\bigO(\ln^{3/2} n) $. This follows by absorbing the logarithm of all factors $\sqrt{2\pi m}$ into the $\bigO(\ln^{3/2} n)$-term (noting that only $\bigO(\sqrt{\ln n})$ such factors exist, as $k_u^* = 0$ for all $u<u^* = \alpha - \bigO(\sqrt{\ln n})$). It follows from \eqref{eq:Lkstar} that
\[
\ln \E_{1/2}[\bar X_{\bfk^*}]=L_0(n,k,t) + \bigO(\ln^{3/2} n).
\]
\end{proof}

	In the next lemma, we show that colouring profiles which contain colour classes of size less than $\alpha-\bigO(\sqrt{\ln n})$ only contribute a negligible amount to $E_{n,k,t}$.	
	\begin{lemma}\label{lemma:smallclasses}
		Let $n,k,t$ be integers so that $t =\alpha-\bigO(1)$ and $n/k = t - \Theta(1)$. Let $P_{n,k,t}' \subset P_{n,k,t}$ be the set of all profiles where $k_s>0$ for some $s< \alpha - 10\sqrt{\ln n} $. Then
		\[
		\sum_{\bfk \in P_{n,k,t}'} \E_{1/2}[\bar X_\bfk] = o (E_{n,k,t}).
		\]
	\end{lemma}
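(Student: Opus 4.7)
The plan is to factorise the expectation by separating small and large colour classes, showing that the presence of any small class induces a quantitative penalty that outweighs the combinatorial count. Set $s^* = \lfloor \alpha - 10\sqrt{\ln n}\rfloor$. For every $\bfk \in P'_{n,k,t}$, split $\bfk = \bfk_S + \bfk_L$ where $\bfk_S = (k_u)_{u \le s^*}$ is non-empty by the definition of $P'$ and $\bfk_L = (k_u)_{u > s^*}$; write $n_S = \sum_{u \le s^*} u k_u$ and $k_S = \sum_{u \le s^*} k_u$. Because the pairs of vertices within small classes and those within large classes are disjoint and independent in $G_{n,1/2}$, the expectation factorises as
\[
    \E_{1/2}[\bar X_\bfk] = \binom{n}{n_S}\, \E^{n_S}_{1/2}[\bar X_{\bfk_S}] \cdot \E^{n-n_S}_{1/2}[\bar X_{\bfk_L}].
\]

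Bounding the first two factors by comparing disjoint to arbitrary (possibly overlapping) tuples gives $\binom{n}{n_S} \E^{n_S}_{1/2}[\bar X_{\bfk_S}] \le \prod_{s \le s^*} \mu_s^{k_s}/k_s!$, where $\mu_s = \binom{n}{s} 2^{-\binom{s}{2}}$. Writing $F(m, k')$ for the total expected number of unordered colourings of $m$ vertices into $k'$ classes of sizes in $[s^*+1, t]$ and summing over $\bfk_L$, I obtain
\[
    \sum_{\bfk \in P'} \E_{1/2}[\bar X_\bfk] \;\le\; \sum_{\bfk_S \neq \mathbf{0}} \Big(\prod_{s \le s^*} \frac{\mu_s^{k_s}}{k_s!}\Big) \cdot F(n - n_S,\, k - k_S).
\]
Since $E_{n,k,t} \ge F(n,k)$, it suffices to show the right-hand side divided by $F(n,k)$ is $o(1)$.

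The key estimate is the ratio
\[
    \frac{F(n - n_S, k - k_S)}{F(n, k)} \;=\; \exp\!\Big( n_S \ln n - k_S \cdot \tfrac{2}{\ln 2}\ln^2 n + O(\ln^{3/2} n)\Big),
\]
which I would establish by combining a restricted analogue of Lemma~\ref{lemma:integerapprox}, namely $\ln F(m, k') = L_0^{\mathrm{rest}}(m, k', t, s^*+1) + O(\ln^{3/2} n)$ (proved by the same route while confining the support of the continuous optimum to $u > s^*$, a restriction with negligible effect since by the Gaussian-like tail of the $\xi_i^*$'s the unconstrained optimum places only $k\sum_{i \ge 10\sqrt{\ln n}} \xi_i^* = n^{-\Omega(1)}$ of its mass on small sizes), with a Lagrangian/envelope computation: $\partial L_0/\partial n = \ln n - y_t(n/k) = -\ln n + O(\ln \ln n)$ via Lemma~\ref{lemma:ybound}, and $\partial L_0/\partial k = \tfrac{2}{\ln 2}\ln^2 n + O(\ln n\,\ln\ln n)$ via Lemma~\ref{lemma:onemorecolour}, integrated along the segment from $(n,k)$ to $(n - n_S, k - k_S)$.

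Substituting this back, the $\bfk_S$-sum becomes $\sum_{\bfk_S \neq \mathbf{0}} \prod_s z_s^{k_s}/k_s!$ with $z_s = \mu_s \exp(s \ln n - \tfrac{2}{\ln 2}\ln^2 n)$. Writing $s = \alpha - i$ and using $\alpha \ln 2 = 2\ln n + O(\ln\ln n)$, one checks that the leading $\ln^2 n$ contributions cancel and the total exponent in $z_s$ collapses to $-\tfrac{\ln 2}{2}\,i^2 + O(i\,\ln \ln n)$. For $s \le s^*$, i.e.\ $i \ge 10\sqrt{\ln n}$, this yields $z_s \le n^{-34}$. The exponential generating function identity $\sum_{\bfk_S \neq \mathbf{0}} \prod_s z_s^{k_s}/k_s! = \exp(\sum_s z_s) - 1$ now gives a bound of $\exp(O(\ln n \cdot n^{-34})) - 1 = o(1)$, completing the proof. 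The main obstacle will be the rigorous verification of the ratio estimate in paragraph~3: the Taylor expansion of $L_0$ has to be controlled to sufficient accuracy uniformly over the polylogarithmic range of $(n_S, k_S)$ that arises from profiles in $P'$, so the second-order corrections must be shown to be subdominant compared to $i^2$.
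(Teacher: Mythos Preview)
Your decomposition and the generating-function endgame are fine, but the quantitative core—your claimed error bound for $\ln z_s$—does not close at the stated threshold $s^*=\alpha-10\sqrt{\ln n}$. You write that the exponent of $z_s$ ``collapses to $-\tfrac{\ln 2}{2}i^2+O(i\,\ln\ln n)$''. It does not. Trace the two sources of error: (i) from $\alpha=\tfrac{2}{\ln 2}\ln n+O(\ln\ln n)$ you get $\alpha\ln n=\tfrac{2}{\ln 2}\ln^2 n+O(\ln n\,\ln\ln n)$, and (ii) Lemma~\ref{lemma:onemorecolour}, which you cite, only gives $\partial_k L_0=\tfrac{2}{\ln 2}\ln^2 n+O(\ln n\,\ln\ln n)$. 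Each of these contributes an $O(\ln n\,\ln\ln n)$ term to $\ln z_s$ for a single small class ($k_S=1$), and nothing in your argument forces them to cancel—you never compute the $\ln\ln n$ correction to $\partial_k L_0$, so you cannot claim it matches the one from $\alpha\ln n$. The honest bound is therefore $\ln z_s=-\tfrac{\ln 2}{2}i^2+O(\ln n\,\ln\ln n)$. At $i=10\sqrt{\ln n}$ the main term is $\approx 34.7\ln n$, which is $o(\ln n\,\ln\ln n)$, so you cannot conclude $z_s\to 0$. Your method would prove the lemma with $s^*=\alpha-C\sqrt{\ln n\,\ln\ln n}$ for suitable $C$, but not as stated. (The secondary issues you flag—uniformity over the full range of $(n_S,k_S)$ and avoiding circularity with Lemma~\ref{lemma:improvedapproximation}—are real but surmountable; the error budget is the genuine obstruction.)

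The paper's proof bypasses $L_0$ entirely via a local swap. Given $\bfk\in P'_{n,k,t}$, pick any small class of size $s<\alpha-10\sqrt{\ln n}$ and any large class of size $r=\alpha-O(1)$ (one exists since $n/k=\alpha-O(1)$), and replace them by two classes of sizes $\lfloor(r+s)/2\rfloor,\lceil(r+s)/2\rceil$; call the resulting profile $\psi(\bfk)$. A direct comparison of the two expectations gives: the factorial/size factors change by at most $k^4\binom{r+s}{r}\le e^{8\ln n}$, while the power of $2$ improves by $2^{(r-s)^2/4-O(1)}\ge 2^{20\ln n}$, so $\E_{1/2}[\bar X_{\psi(\bfk)}]>n\,\E_{1/2}[\bar X_\bfk]$. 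Since each image has at most $t^2$ preimages (the choice of $r,s$ determines $\bfk$ from $\psi(\bfk)$), summing gives $\sum_{P'}\E[\bar X_\bfk]\le (t^2/n)E_{n,k,t}=o(E_{n,k,t})$. This is both shorter and sharp enough for the $10\sqrt{\ln n}$ threshold without any derivative estimates.
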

	
	\begin{proof}
We define a map
\[
    \psi: P_{n,k,t}' \mapsto P_{n,k,t}
\]
as follows. Let $\bfk \in P_{n,k,t}'$.
We pick some $s<  \alpha - 10\sqrt{\ln n} $ and some $r> \alpha-\bigO(1)$ such that $k_s, k_r>0$; such an $r$ exists as the average colour class size $n/k$ is $\alpha-\bigO(1)$.  
Then we set 
		\begin{itemize}
			\item $\tilde k_s = k_s -1$ and $\tilde k_r = k_r -1$,
			\item if $r+s$ is even: $\tilde k_{(r+s)/2} = k_{(r+s)/2 } +2$,
			\item  if $r+s$ is odd: $\tilde k_{\left \lfloor (r+s)/2 \right \rfloor} = k_{\left \lfloor (r+s)/2 \right \rfloor} + 1$ and $\tilde k_{\left \lceil (r+s)/2 \right \rceil} = k_{\left \lceil (r+s)/2 \right \rceil} + 1$,
			\item for all other $u$, $\tilde k_u = k_u$,
		\end{itemize}
		and let $\psi(\bfk) = \tilde \bfk = (\tilde k_u)_u$. From the definition it is clear that $\psi(\bfk)  \in P_{n,k,t}$. Now consider
		\[
		\E_{1/2}[\bar X_{\psi(\bfk)}] = \frac{n!}{\prod_u \tilde k_u! u!^{ \tilde k_u}}2^{-\sum_u \tilde k_u {u \choose 2}} \enspace.
		\]
		First note that, as $r= \alpha-\bigO(1)$, $s< \alpha - 10\sqrt{\ln n}$  and $\alpha \le  2 \log_2 n$, for $n$ large enough
\begin{align}
\frac{\prod_u \tilde k_u! u!^{ \tilde k_u}}{\prod_u k_u! u!^{ k_u}}
     &\le k^{4} \cdot \frac{\big\lfloor \frac{(r+s)}{2}  \big \rfloor !\big\lceil \frac{(r+s)}{2}  \big \rceil !}{s!r!}
     \le e^{4 \ln n} \frac{{r+s \choose r}}{{r + s \choose \lfloor \frac{(r+s)}{2}  \rfloor}} \le e^{4 \ln n}2^{r+s}   \le e^{8\ln n}.
     \label{eq:productbound}
\end{align}
Further, we have
		\[
		2^{-\sum_u \tilde k_u {u \choose 2}} = 2^{-\sum_u  k_u {u \choose 2} +{r \choose 2} + {s \choose 2} - {\left \lfloor (r+s)/2 \right \rfloor \choose 2}- {\left \lceil (r+s)/2 \right \rceil \choose 2}} \ge 2^{-\sum_u k_u {u \choose 2} +\frac {r^2}{4} + \frac{s^2}{4} - \frac{rs}{2} -\frac 14}.  
		\]
		As $r= \alpha-\bigO(1)$ and  $s<\alpha-10 \sqrt{\ln n}$, for $n$ large enough
		\[
		\frac {r^2}{4} + \frac{s^2}{4} - \frac{rs}{2} -\frac{1}{4}= \frac{(r-s)^2-1}{4}\ge 20 \ln n.
		\]
		Together with \eqref{eq:productbound}, we obtain for $n$ large enough that
		\begin{equation}
			\E_{1/2}[\bar X_{\psi(\bfk)}]  \ge \E_{1/2}[\bar X_\bfk]  e^{-8 \ln n + (20 \ln 2)\ln n } > n \E_{1/2}[\bar X_\bfk]. \label{eq:boundaftersqrt}
		\end{equation}
Now for every $\tilde \bfk \in P_{n, k, t}$, there are at most $t^2$ profiles $\bfk \in P_{n,k,t}'$ so that $\psi(\bfk) = \tilde \bfk$ --- this is because once we pick $1\le r, s \le t$, for which there are at most $t^2$ possibilities, we can reconstruct $\bfk$ from $\tilde \bfk$. So it follows from  \eqref{eq:boundaftersqrt} that
		\[
		\sum_{\bfk \in P_{n,k,t}'}  \E_{1/2}[\bar X_\bfk] < \frac{t^2}{ n } \sum_{\tilde \bfk \in P_{n,k,t}}  	\E_{1/2}[\bar X_{\tilde \bfk}] = \frac{t^2}{n}E_{n,k,t} = o(E_{n,k,t}).
		\]
	\end{proof}
	
	We now have all the preparations in place to improve upon~\eqref{eq:oldapprox}.
	\begin{lemma}\label{lemma:improvedapproximation}
		Let $n,k,t$ be integers so that $t =\alpha-\bigO(1)$ and $n/k = t - \Theta(1)$.  Then
		\[
		\ln(E_{n,k,t}) = L_0(n,k,t) + \bigO(\ln^{3/2} n).
		\]
	\end{lemma}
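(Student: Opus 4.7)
The plan is to prove the two inequalities
\[
L_0(n,k,t) - \bigO(\ln^{3/2} n) \leq \ln E_{n,k,t} \leq L_0(n,k,t) + \bigO(\ln^{3/2} n)
\]
separately, using the two preceding lemmas as the main inputs.

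\textbf{Lower bound.} This is essentially immediate. By Lemma~\ref{lemma:integerapprox} there exists an integer profile $\bfk^* \in P_{n,k,t}$ with
$\ln \E_{1/2}[\bar X_{\bfk^*}] = L_0(n,k,t) + \bigO(\ln^{3/2} n)$.
Since $E_{n,k,t}$ is a sum of non-negative terms over $P_{n,k,t}$, one of which is $\E_{1/2}[\bar X_{\bfk^*}]$, we obtain $\ln E_{n,k,t} \geq \ln \E_{1/2}[\bar X_{\bfk^*}] \geq L_0(n,k,t) - \bigO(\ln^{3/2} n)$.

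\textbf{Upper bound.} By Lemma~\ref{lemma:smallclasses}, the contribution from profiles in $P'_{n,k,t}$ (those with a non-empty colour class of size below $\alpha - 10\sqrt{\ln n}$) is a $o(1)$ fraction of $E_{n,k,t}$, so it suffices to bound the contribution from $P''_{n,k,t} := P_{n,k,t} \setminus P'_{n,k,t}$. Every $\bfk \in P''_{n,k,t}$ has $k_u = 0$ outside the window $\alpha - 10\sqrt{\ln n} \leq u \leq t$, a window containing only $\bigO(\sqrt{\ln n})$ integer values; and each $k_u \leq k = \bigO(n/\ln n)$. Consequently
\[
|P''_{n,k,t}| \leq (k+1)^{\bigO(\sqrt{\ln n})} = \exp\bigl(\bigO(\ln^{3/2} n)\bigr).
\]
For any such $\bfk$, Stirling's formula gives $\ln m! = m \ln m - m + \bigO(\ln m)$ uniformly for $m \geq 1$, and only $\bigO(\sqrt{\ln n})$ of the factorials $k_u!$ are non-trivial. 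Applying Stirling to $n!$ and each $k_u!$ in the formula $\E_{1/2}[\bar X_\bfk] = n!/\bigl(\prod_u k_u!\, u!^{k_u}\bigr) \cdot 2^{-\sum_u k_u \binom{u}{2}}$, and using that the $\bigO(\sqrt{\ln n})$ correction terms are each $\bigO(\ln n)$, yields
\[
\ln \E_{1/2}[\bar X_\bfk] = L_\bfk + \bigO(\ln^{3/2} n) \leq L_0(n,k,t) + \bigO(\ln^{3/2} n),
\]
where the last inequality uses $P_{n,k,t} \subset P^0_{n,k,t}$. Summing over $P''_{n,k,t}$ and using the cardinality bound above gives
\[
\sum_{\bfk \in P''_{n,k,t}} \E_{1/2}[\bar X_\bfk] \leq \exp\bigl(L_0(n,k,t) + \bigO(\ln^{3/2} n)\bigr),
\]
and combined with the negligible contribution from $P'_{n,k,t}$ this proves the upper bound.

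\textbf{Expected obstacle.} Both heavy lifts have been done: Lemma~\ref{lemma:integerapprox} handles the lower bound, and Lemma~\ref{lemma:smallclasses} cuts off the problematic ``long tail'' of small colour classes that would otherwise blow up the counting factor $|P''_{n,k,t}|$ from $\exp(\bigO(\ln^{3/2} n))$ to something like $\exp(\bigO(\ln^2 n))$ (and prevent the sharpening over~\eqref{eq:oldapprox}). The only point that requires care is keeping track of the $\bigO(\sqrt{\ln n})$ Stirling corrections so they multiply up to $\bigO(\ln^{3/2} n)$ rather than something worse; this is routine once the restriction to $P''_{n,k,t}$ has been imposed.
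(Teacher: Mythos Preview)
Your proof is correct and follows essentially the same approach as the paper: use Lemma~\ref{lemma:integerapprox} for the lower bound, use Lemma~\ref{lemma:smallclasses} to restrict attention to profiles supported on a window of width $\bigO(\sqrt{\ln n})$, bound the number of such profiles by $\exp(\bigO(\ln^{3/2}n))$, and compare $\ln\E_{1/2}[\bar X_\bfk]$ with $L_\bfk$ via Stirling with $\bigO(\sqrt{\ln n})$ correction terms. The paper organises the argument around $\max_{\bfk}\ln\E_{1/2}[\bar X_\bfk]$ over the restricted set rather than splitting explicitly into upper/lower bounds, but the content is identical.
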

	
	\begin{proof}
		Let $u_0 = \min\big(u^*, \,\alpha - 10\sqrt{\ln n} \big) = \alpha - \bigO(\sqrt{\ln n})$, where $u^*$ was defined in Lemma~\ref{lemma:integerapprox}. Let $\tilde E_{n,k,t}$ be the expected number of all unordered colourings with a profile $\bfk \in P_{n,k,t}$ where $k_u=0$ for all $u<u_0$, and let $\tilde P_{n,k,t}$ be the set of all such profiles. Then it follows from Lemma~\ref{lemma:smallclasses} that $E_{n,k,t} - \tilde E_{n,k,t} = o(E_{n,k,t}) $, and so
		\[
		\ln(E_{n,k,t}) = \ln(\tilde E_{n,k,t}) +o(1).
		\]
		Note that $|\tilde P_{n,k,t}| \le n^{\alpha - u_0} = \exp(\bigO(\ln^{3/2} n))$, and so
		\[\ln(\tilde E_{n,k,t}) = \max_{\bfk \in \tilde P_{n,k,t}} \ln \E_{1/2}[\bar X_{\bfk}] + \bigO(\ln^{3/2} n).
		\]
		It is not hard to see that for every  $\bfk \in \tilde P_{n,k,t}$, $\ln \E_{1/2}[\bar X_{\bfk}]$ is within $ \bigO(\ln^{3/2} n)$ of $L_\bfk$: this follows by using Stirling's approximation and absorbing each of the $\bigO(\alpha-u_0) = \bigO{(\sqrt{\ln n})}$ terms of the form $\sqrt{2\pi m}$ into the $ \bigO(\ln^{3/2} n)$-term. So we obtain
		\[
		\ln(E_{n,k,t}) =  \max_{\bfk \in \tilde P_{n,k,t}}  L_\bfk+ \bigO(\ln^{3/2} n).
		\]
		Since ${\tilde P_{n,k,t}} \subset P_{n,k,t}^0$, this is at most $L_0(n,k,t) + \bigO(\ln^{3/2} n)$. On the other hand, from Lemma~\ref{lemma:integerapprox} we know that it is also at least $L_0(n,k,t) + \bigO(\ln^{3/2} n)$, and we are done.
	\end{proof}
	
\subsection{The optimal profile is well-behaved}
We will need the following lemma in order to apply Theorem~\ref{theorem:general}, which will then imply Theorems \ref{twopointrestricted} and \ref{theorem:announcedbounds}. We fix $m= \lfloor N/2  \rfloor$ throughout. 
The lemma implies that, under certain conditions, the (near) optimal profile $\bfk^*$ from Lemma~\ref{lemma:integerapprox} is tame and fulfils conditions~\eqref{eq:lowerboundexpectation} and \eqref{eq:lowerboundbeta} from Theorem~\ref{theorem:general} with room to spare.
\begin{lemma}\label{lemma:kstartame}
	Let $a \in \{\alpha-1, \alpha-2\}$, and suppose that~$k=k(n) = \textbf{k}_a+o(n/\ln^2 n)$, where~$\textbf{k}_a$ is the $a$-bounded first moment threshold  defined in \eqref{ktdef}. Then there is an $a$-bounded $k$-colouring profile $\bfk^*=\bfk^*(n)$ so that all of the following hold.
	\begin{itemize}
 \itemsep1pt
            \item[a)] 	$\ln \E_{1/2}[\bar X_{\bfk^*}] 
		= L_0(n,k,a) +\bigO (\ln^{3/2} n)$.
		\item[b)] There is an increasing function $\gamma: \mathbb N_0 \rightarrow \R$ with $\gamma(y) \rightarrow \infty$ as $y \rightarrow \infty$ so that for all large enough~$n$, and for all $1 \le u \le a$,
		\[
		\kappa_u^* := k_u^* u / n	< b^{-(\alpha-u)\gamma(\alpha-u)}.
		\]
		\item[c)] If 	\[
		\ln \E_{1/2}[\bar{X}_{\bfk^*}] \ge 1.1 \log_2^2 n 
		\]
then 
		\begin{equation} \label{eq:bound1}
			\ln \E_{m}[\bar{X}_{\bfk^*}] \ge \Theta (\ln^2 n),
		\end{equation}
	so in particular \eqref{eq:lowerboundexpectation} holds.
		\item[d)] If $\mu_a \ge n^{1.05}$, then for any ${\delta}>0$ there is a constant $C_\delta>0$ so that if $n$ is large enough, then for all colouring profiles $\boldlambda \le \boldkappa^*=(\kappa_u^*)_{1\le u \le a}$ such that ${\delta} \le \sum_{1\le u \le a} \lambda_u \le 1-{\delta}$, 
\begin{equation}\label{eq:exponentiallower}
	\E_p\left[\bar{X}_{\boldlambda}\right] \ge \exp \big( C_\delta n \big) \prod_{1 \le u \le a}{k_u^* \choose \ell_u}^2,   \end{equation}
where $\ell_u = \lambda_u n/u$. In particular, \eqref{eq:lowerboundbeta} holds.

	\end{itemize}
\end{lemma}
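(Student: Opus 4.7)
The plan is to define $\bfk^* = \bfk^*(n)$ to be the integer profile produced by Lemma~\ref{lemma:integerapprox} applied with $t = a$; this choice yields (a) immediately. For (b), Lemma~\ref{lemma:integerapprox}(a) gives $|k^*_u - \xi_{\alpha-u} k| \le \bigO(\sqrt{\ln n})$ uniformly in $u$, while Lemma~\ref{lemma:integerapprox}(b) gives $k^*_u = 0$ for $u < u^* = \alpha - \bigO(\sqrt{\ln n})$. Combined with $h_n(i) \le -(\frac{\ln 2}{2} - o(1)) i^2$ from Lemma~\ref{lemma:hasymp} and $\lambda_n, \mu_n = \bigO(1)$ from Lemma~\ref{lemma:mubound}, this yields $\kappa^*_u \le 2^{-(\alpha-u)^2/2 + \bigO(\alpha-u)}$ for $u \ge u^*$ (and $\kappa^*_u = 0$ otherwise). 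An increasing $\gamma : \mathbb{N}_0 \to \R$ with $\gamma \to \infty$ matching the required bound can then be constructed explicitly, using the numerical values of $\zeta_1, \zeta_2, \zeta_3$ from Section~\ref{section:numerics} for small $i$ and the super-exponential decay for large $i$.

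For (c), a direct computation, using $\sum_u u k^*_u = n$ together with the approximation $\kappa^*_u \approx \zeta_{\alpha-u}$ and $\sum_i i\zeta_i = T(x)$, gives $\sum_u u^2 k^*_u/n = \alpha_0 - 1 - 2/\ln 2 + o(1)$, hence $f_{\bfk^*} = \sum_u \binom{u}{2} k^*_u = (1+o(1)) n \log_2 n$. Combining the explicit formulas from Lemma~\ref{lem:expxk} with Lemma~\ref{gnmlemma}, we obtain $\ln \E_m[\bar X_{\bfk^*}] = \ln \E_{1/2}[\bar X_{\bfk^*}] - f_{\bfk^*}^2/n^2 + o(\log_2^2 n) \ge 1.1 \log_2^2 n - \log_2^2 n - o(\log_2^2 n) = \Theta(\ln^2 n)$, as required.

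Part (d) is the most delicate. Since $\sum_u k^*_u = k = \bigO(n/\ln n)$ we have $\prod_u \binom{k^*_u}{\ell_u}^2 \le 4^k = \exp(o(n))$, so (by Lemma~\ref{expectationlemma}, which gives $\E_p[\bar X_\boldlambda] = \exp(\varphi(\boldlambda) n + o(n))$) the claim reduces to showing that
\[
\varphi(\boldlambda) = -(1-\lambda)\ln(1-\lambda) + \tfrac{\ln 2}{2}\sum_u \lambda_u(\alpha_0 - 1 - 2/\ln 2 - u) \ge C'_\delta > 0
\]
uniformly for $\boldlambda \le \boldkappa^*$ with $\lambda = \sum_u \lambda_u \in [\delta, 1-\delta]$. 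Fixing $\lambda$, the entropy term is constant and $\varphi$ becomes a linear form in $\boldlambda$ with coefficients $c_u = \frac{\ln 2}{2}(\alpha_0 - 1 - 2/\ln 2 - u)$ strictly decreasing in $u$; therefore its minimum over the polytope $\{0 \le \lambda_u \le \kappa^*_u,\ \sum_u \lambda_u = \lambda\}$ is attained by the \emph{greedy profile} $\boldlambda^{\mathrm{gr}}$ that fills $\kappa^*_u$ for the largest $u$'s first. This profile is piecewise-linear in $\lambda$, with breakpoints $\lambda_s = \sum_{i=i_0}^s \kappa^*_{\alpha-i}$, where $i_0 := \alpha - a \in \{1,2\}$; at the breakpoint $\lambda_s$ we have $\lambda_{\alpha-i} = \kappa^*_{\alpha-i}$ for $i_0 \le i \le s$, and $0$ otherwise, so by Lemma~\ref{lemma:convergencemulambda} ($\kappa^*_u \to \zeta_{\alpha-u}$ uniformly) we obtain $\varphi(\boldlambda^{\mathrm{gr}}(\lambda_s)) = \varphi(s, x, i_0) + o(1)$.

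Lemmas~\ref{lemma:partialprofiles2} and~\ref{lemma:partialprofiles1} now give uniform positive lower bounds on $\varphi(s, x, i_0)$: the former covers $s \ge 2$ or $i_0 = 2$, the latter the remaining case $i_0 = s = 1$ under the restriction $x \ge 0.04$. The hypothesis $\mu_a \ge n^{1.05}$ is used precisely to ensure that when $a = \alpha - 1$ we have $x = \alpha_0 - \alpha \ge 0.05 > 0.04$, putting us inside the regime of Lemma~\ref{lemma:partialprofiles1}. Between consecutive breakpoints, $\varphi(\boldlambda^{\mathrm{gr}}(\lambda))$ has the shape $-(1-\lambda)\ln(1-\lambda) + (\text{affine in }\lambda)$, which is concave in $\lambda$; hence on each piece the minimum lies above the minimum of its endpoint values. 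For $\lambda \in [\delta, 1-\delta]$ only finitely many breakpoints matter (the super-exponential decay $\zeta_i \lesssim 2^{-i^2/2}$ bounds the number in terms of $\delta$), so the minimum over this finite set gives the desired $C'_\delta > 0$. The principal technical obstacle is propagating the approximation error $\kappa^*_u = \zeta_{\alpha-u} + o(1)$ (together with the integer discretisation of $\boldlambda$) through the piecewise analysis, but both errors contribute only $o(1)$ to $\varphi$ and are absorbed by slightly shrinking $C'_\delta$.
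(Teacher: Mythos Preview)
Your outline follows the paper's proof essentially step for step: invoke Lemma~\ref{lemma:integerapprox} for (a) and (b), compare $\E_m$ and $\E_{1/2}$ via Lemma~\ref{gnmlemma} using $f_{\bfk^*}\sim n\log_2 n$ for (c), and for (d) reduce to showing $\varphi(\boldlambda)\ge C'_\delta$ by minimising over the greedy profile, using concavity on each piece and Lemmas~\ref{lemma:partialprofiles2}/\ref{lemma:partialprofiles1} at the breakpoints. Two small points are worth tightening. First, you should record (via Lemma~\ref{lemma:averagecolourclass}) that $n/k=\alpha_0-1-2/\ln 2+o(1)=a-\Theta(1)$, since this is the hypothesis needed to invoke Lemmas~\ref{lemma:integerapprox}, \ref{lemma:mubound} and~\ref{lemma:convergencemulambda}. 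Second, and more substantively, your concavity argument in (d) does not quite cover the first piece $[0,\lambda_{i_0}]$: its left endpoint has $\varphi=0$, so ``the minimum over the finite set of breakpoint values'' is $0$, not a positive constant. The paper handles this case separately: since $\widetilde\varphi(\lambda):=\varphi(\boldlambda^{\mathrm{gr}}(\lambda))$ is concave on $[0,\kappa_a^*]$ with $\widetilde\varphi(0)=0$ and $\widetilde\varphi(\kappa_a^*)=\varphi(i_0,x,i_0)+o(1)>0$, concavity gives $\widetilde\varphi(\lambda)\ge(\lambda/\kappa_a^*)\,\widetilde\varphi(\kappa_a^*)\ge \delta\cdot\varphi(i_0,x,i_0)+o(1)$ for $\lambda\ge\delta$. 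With this one extra line your argument is complete and matches the paper's.
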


The constraint $\mu_a \ge n^{1.05}$ in d) may seem strange at first, but it is not simply an artefact of our proofs. It turns out that \eqref{eq:exponentiallower} (or the weaker~\eqref{eq:lowerboundbeta}) does \emph{not} hold if we allow $\mu_a < n^{1+x_0-\varepsilon}$, where $x_0 \approx 0.02905$ is the constant given by \eqref{eq:x0}. 
More specifically, in this case the profile $\bfk^*$ optimising the expected number of colourings contains an unrealizable sub-profile: the expected number of partial unordered colourings consisting of exactly $k_a^*$ disjoint $a$-sets is $\exp(-\Theta(n))$, so in particular we do not have the required lower bound \eqref{eq:lowerboundbeta}.

\begin{proof}
By Lemma~\ref{lemma:averagecolourclass} and since $k=\boldsymbol{k}_a+o(n /\ln^2 n)$, we have
\begin{equation} \label{eq:rationk}
\frac{n}k
= \alpha_0-1-\frac{2}{\ln 2}+o(1) = \alpha - 2 - \Theta(1),
\end{equation}
so we may apply Lemma~\ref{lemma:integerapprox} with $t=a$. Let $\bfk^*$ be the near optimal $a$-bounded $k$-colouring profile defined in (the proof of) that lemma. Then we obtain immediately that $\bfk^*$ fulfills a). 

We continue with the proof of b). Letting $u^*$ as Lemma~\ref{lemma:integerapprox}b), for $u < u^* = \alpha - \bigO(\sqrt{\ln n})$ we know that $k^*_u=0$, so these~$u$ certainly satisfy b). For $u^* \le u \le a$, by Lemma~\ref{lemma:integerapprox}a), uniformly
	\[
	\kappa_u^* = \frac{u k_u^*}{n} =\frac{u \xi_{a-u}k}{n} + \bigO((\ln^{3/2} n) / n).
	\]
Recall the definition~\eqref{eq:xiu} of $\xi_{i}$, where, by Lemma~\ref{lemma:mubound}, $\mu_n, \lambda_n = \bigO(1)$. Since $h_n(i)=-\big(\frac{\ln 2}{2}+o(1)\big)i^2$ by Lemma~\ref{lemma:hasymp} we infer that $\xi_i = \exp\big(-\frac{\ln 2}2i^2+o(i^2) + \bigO(i+1)\big)$ uniformly in $i \le \alpha-u^*$ for $n \rightarrow \infty$. As $i=\alpha-u=\bigO(\sqrt{\ln n})$,  there is an increasing (actually, linear) function $\gamma$ with $\gamma(y) \rightarrow \infty$ as $y \rightarrow \infty$ so that for all large enough $n$,
\[
    \kappa_u^* 	< b^{-(\alpha-u)\gamma(\alpha-u)},
\]
establishing b).  
	
Let us turn to the proof of c). Note that $k \sim {n}/{2 \log_2 n}$ by Lemma~\ref{lemma:averagecolourclass}, and so the number~$f$ of `forbidden edges' of the profile $\bfk^*$ (edges within the parts of a vertex partition with profile $\bfk^*$, which may not be present if the partition defines a colouring) satisfies 
	\[
	f = \sum_u {u \choose 2} k^*_u \sim \alpha^2 k /2 \sim n \log_2 n,
	\]
	and so by Lemma~\ref{gnmlemma},
	\[
	\ln \E_{m}[\bar{X}_{\bfk^*}]= \ln \E_{1/2}[\bar{X}_{\bfk^*}]- \log_2^2 n + o(\ln^2 n) \ge  0.1 \log_2^2 n + o(\ln^2 n),
	\]
	giving \eqref{eq:bound1}.
It remains to show d), so assume $\mu_a \ge n^{1.05}$ and let $\delta>0$. Suppose that $\boldlambda \le \boldkappa^*$ such that $ \delta \le \lambda \le 1-{\delta}$, where $\lambda = \sum_{u^* \le u \le a} \lambda_u$. Lemma~\ref{expectationlemma} gives
	\begin{equation}\label{eq:expectationpartialprofile}
		\ln \E_{1/2}\left[\bar{X_{\boldlambda}}\right]
  =\varphi(\boldlambda)n + o(n),
	\end{equation}
	where
 \[
	\varphi(\boldlambda)=-\parenth{1-\lambda} \ln\parenth{1-\lambda} +\frac{\ln 2}{2} \sum_{u^* \le u \le a} \lambda_u \parenth{\alpha_0-1-\frac{2}{\ln 2}-u}.
	\]
It clear from this expression that if we fix $\lambda \in [\delta,1-\delta]$, then $\varphi(\boldlambda)$ is minimised if we choose~$\lambda_u$ as large as possible for larger values~$u$, that is, if we pick $\lambda_u=\kappa_u^*$ for all $\alpha- s< u \le a$ for some integer $s=s(\lambda)$, and set $\lambda_{\alpha-s} = \lambda -\sum_{u > \alpha-s} \kappa^*_u \in[0, \kappa_{\alpha-s}^*)$.
Note that it is certainly possible that $s(\lambda) = \alpha-a$, in which case 
there is no $u$ such that $\lambda_u = \kappa_u^*$ and $\lambda_a = \lambda <\kappa_a^*$.
The next step is to show that if $n$ is large enough, then $s(\lambda) \le S(\delta)$ for some $S(\delta)$ that only depends on~$\delta$. 
To see this, first note that by Lemma~\ref{lemma:integerapprox}, using also again that $\xi_i = \exp\big(-\frac{\ln 2}2i^2+o(i^2)+\bigO(i+1)\big)$ and $u^* = \alpha - \bigO(\sqrt{\ln n})$,
\begin{equation*}
\label{eq:kappaconvergence1}
    \sum_u |\kappa_u^* - \xi_{\alpha-u}|
    \le
    \sum_{u^*\le u\le a} \Big|\frac{k_u^* u }{n} - \xi_{\alpha-u} \Big| + o(1)
    = \frac 1k \sum_{u^*\le u\le a} \Big|k_u^*(1+o(1)) - \xi_{\alpha-u} k \Big|
    + o(1)
    = o(1).
\end{equation*}
Recall the definition of the $\zeta_i$'s in~\eqref{eq:defxi}, where we set $x=\alpha_0-\alpha$ and $i_0=\alpha-a \in \{1,2\}$ as before. 
Using  Lemma~\ref{lemma:convergencemulambda} and the triangle inequality we obtain that
	\begin{equation}\label{eq:convergencekappaxi}
		\sum_u |\kappa_u^* - \zeta_{\alpha-u}| = o(1).	
	\end{equation}
For every fixed $r$ and $i_0$, the sum $\sum_{i_0 \le i \le r} \zeta_i(x)$ is continuous in $x \in [0,1]$, as $\mu(i_0,x), \lambda(i_0,x)$ are, and tends to $1$ as $r \rightarrow \infty$ for every fixed $x \in [0,1]$. By applying Lemma~\ref{lemma:technicalsequence} we deduce that 
\[
	\min_{x \in [0,1 ], i_0 \in \{1,2\}} \left(\sum_{i_0 \le i \le r} \zeta_i(x) \right) \rightarrow 1
 \quad\text{as}\quad r \rightarrow \infty,
\]
and so there is an integer $S(\delta)$ such that  $\sum_{i_0 \le i \le r} \zeta_i(x) \ge 1-{\delta}/{2}$ for all $x \in [0,1]$ and $r \ge S(\delta)-1$.
Together with~\eqref{eq:convergencekappaxi}, if $n$ is large enough, then $\sum_{i_0 \le i \le r} \kappa^*_{\alpha-i}(x) > 1-\delta$ for all $x\in [0,1]$ and $r \ge S(\delta)-1$.
Since for an integer $s = s(\lambda)$ as above we know that $\sum_{i_0 \le i \le s(\lambda)-1}\kappa^*_{\alpha-i}(x)\le \lambda \le 1-\delta$, we have $s(\lambda)-1 < S(\delta)-1$, so $s(\lambda) \le S(\delta)$.

We continue with estimating the value of $\varphi(\boldlambda)$ from~\eqref{eq:expectationpartialprofile}.
Let us first consider the case $s(\lambda) = i_0 = \alpha-a$, that is, we consider the profile $\boldlambda$ such that $\lambda_a = \lambda < \kappa_a^* = \zeta_{i_0} + o(1)$, and $\lambda_u = 0$ for all other $u$. Then
\[
    \varphi(\boldlambda)
    =-(1-\lambda)\ln(1-\lambda) + \frac{\ln 2}{2}\lambda\left(x + i_0 -1-\frac2{\ln 2}\right)
    =: \widetilde{\varphi}(\lambda) ,
\]
which is obviously a continuous and concave function of $\lambda$. Moreover, $ \widetilde{\varphi}(0)=0$, and by continuity and~\eqref{eq:convergencekappaxi},
\[
    \widetilde{\varphi}(\kappa_a^*)
    = \widetilde{\varphi}(\zeta_{i_0}) + o(1)
    \stackrel{\eqref{eq:phisxi}}{=} \varphi(i_0,x,i_0) + o(1).
\]
If $i_0 = 2$, then we obtain by applying Lemma~\ref{lemma:partialprofiles2} 
that $\widetilde{\varphi}(\kappa_a^*)$ is bounded away from 0 and so {as $\delta \le \lambda < \kappa_a^*$}, there is a constant $c_\delta > 0$ such that $\varphi(\boldlambda) > c_\delta$ for sufficiently large $n$. On the other hand, if $i_0 = 1$, since we assumed that $\mu_a = \mu_{\alpha-i_0} \ge n^{1.05}$, by applying Lemma~\ref{prop:basicboundsisets} we infer that $x \ge 0.05+o(1) \ge 0.04$ for $n$ large enough. As before, it follows now from Lemma~\ref{lemma:partialprofiles1} that there is a constant $c'_\delta > 0$ such that $\varphi(\boldlambda) > c'_\delta$ for sufficiently large $n$, and we thus have established that for $C'_\delta = \min\{c_\delta, c'_\delta\}$,
\begin{equation}
\label{eq:s(lambda)=i_0}
    \varphi(\boldlambda) \ge C'_\delta > 0, \quad s(\lambda) = i_0.
\end{equation}
It remains to give a lower bound for $ \varphi(\boldlambda)$ in the case $i_0 < s(\lambda) \le S(\delta)$, where we consider the profile $\boldlambda$ defined by $\lambda_u=\kappa_u^*$ for all $\alpha-s+1 \le u \le a$, and $\lambda_{\alpha-s} = \lambda -\sum_{u > \alpha-s(\lambda)} \kappa^*_u \in[0, \kappa_{\alpha-s(\lambda)}^*)$. Given any $i_0 \le r\le S(\delta)$,  let $\boldkappa^{*, (r)}$ be the partial colouring profile obtained by truncating $\boldkappa^*$ at $\alpha-r$, that is, $\kappa^{*, (r)}_u = \kappa^*_u \mathbbm{1}_{u \ge \alpha-r}$ for $1 \le u \le a$. 
Now for $i_0 \le r \le S(\delta)-1$, consider the mapping
\[
    z
    \mapsto
    \varphi\big(\boldkappa^{*, (r)} + z(\boldkappa^{*, (r+1)}-\boldkappa^{*, (r)})\big),
    \quad
    z\in[0,1],
\]
which, as $z$ increases from 0 to 1, transforms $\boldkappa^{*, (r)}$ to $\boldkappa^{*, (r+1)}$ by increasing the $(r+1)$st entry from 0 to $\kappa^*_{\alpha-r-1}$. 
This mapping is concave and so
\[
    \min_{z \in [0,1]} \varphi\big(\boldkappa^{*, (r)} + z(\boldkappa^{*, (r+1)}-\boldkappa^{*, (r)})\big)
    \ge 
    \min \{ \varphi(\boldkappa^{*, (r)}), \varphi(\boldkappa^{*, (r+1)}) \},
\]
from which we immediately obtain that
\begin{equation*}\label{eq:boundphilambda1}
    \varphi(\boldlambda)
    \ge \min \{ \varphi(\boldkappa^{*, (s(\lambda)-1)}), \varphi(\boldkappa^{*, (s(\lambda))}) \}.
\end{equation*}
Define $\boldsymbol{\zeta}^{(s)}$ by $\zeta^{(s)}_u = \zeta_{\alpha-u} \mathbbm{1}_{u \ge \alpha-s}$. Then by \eqref{eq:convergencekappaxi} and continuity,
\begin{equation*} \label{eq:boundphi2}
    \varphi(\boldlambda)
    \ge \min \{ \varphi(\boldsymbol{\zeta}^{(s(\lambda)-1)}), \varphi(\boldsymbol{\zeta}^{(s(\lambda))}) \} + o(1)
    \stackrel{\eqref{eq:phisxi}}{=} \min\{\varphi(s(\lambda)-1,x,i_0), \varphi(s(\lambda),x,i_0)\} + o(1).
\end{equation*} 
We have already established that $\varphi(i_0,x,i_0) \ge C'_\delta$ for some constant $C'_\delta > 0$. Moreover, whenever $2 \le i_0+1 \le s(\lambda) \le S(\delta)$, by applying Lemma~\ref{lemma:partialprofiles2} we obtain that $\varphi(\boldlambda) \ge c_{s(\lambda)}$ for some constant $c_{s(\lambda)} > 0$ and sufficiently large $n$, and so $\varphi(\boldlambda) \ge c_{s(\lambda)} \ge C''_\delta$ for some $C''_\delta > 0$ and all $\delta \le \lambda \le 1-\delta$.

Overall, recalling~\eqref{eq:expectationpartialprofile} and setting $C_\delta := \frac 12 \min\{C'_\delta, C''_\delta\}$, we have just established that
\[
    \ln \E_{1/2}\left[\bar{X}_{\boldlambda}\right] =\varphi(\boldlambda)n + o(n) \ge 2 C_\delta n + o(n)
\]
	for all $\boldlambda \le \boldkappa^*$ with $\delta \le \lambda \le 1-\delta$. The rest is routine. As $a - u^* = o(\ln n)$,
\[
    \E_{1/2}\left[\bar{X_{\boldlambda}}\right]
    \ge \exp \big( 2C_\delta n + o(n)\big)
    = \exp \big( 2C_\delta n + o(n)\big) \cdot 2 ^{2k (a-u^*+1)}
    \ge
    \exp \big( 2C_\delta n + o(n)\big)\prod_{u}{k_u \choose \ell_u}^2,
\]
and~\eqref{eq:exponentiallower} is established for large enough $n$. 
\end{proof}

\section{Proof of Theorems \ref{twopointrestricted}, \ref{theorem:announcedbounds} and \ref{twopointequitable}}
\label{sec:proofsmainthms}

\subsection{Proof of Theorem  \ref{twopointrestricted} }
Let $p=1/2$ and set $a=a(n)=\alpha(n)-2$. Note that by \eqref{eq:muinfinity} and \eqref{expectationu},
\begin{equation}\label{eq:mualpha-2}
\mu_a \gg \frac{n^2}{ \ln^ 2 n}.
\end{equation}
Recall the definition \eqref{ktdef} of the 
$a$-bounded first moment threshold $\textbf{k}_a=\mathrm{min_k}  (E_{n,k, a} \ge 1)$. 
By applying Lemmas~\ref{lemma:improvedapproximation},~\ref{lemma:averagecolourclass} and~\ref{lemma:onemorecolour} we can pick a sequence
\[
    k=k(n) \in \{\textbf{k}_{a}, \textbf{k}_{a}-1\},
\]
so that
\begin{equation}\label{eq:propertyofk}
	E_{n,k+1,a}\ge \exp\Big(\frac{2}{\ln 2}\ln^2 n +o(\ln^2 n)\Big) \rightarrow \infty, \quad  \quad \text{but } 	E_{n,k-1,a} \rightarrow 0 \quad \text{ as $n \rightarrow \infty$.}
\end{equation}
By Lemma~\ref{gnmlemma}, $\E_m(X_\bfk) \lesssim \E_{1/2}(X_\bfk)$ for any $a$-bounded $(k-1)$-colouring profile $\bfk$. In particular by the second property, whp $G_{n, m}$ contains no $a$-bounded $(k-1)$-colouring, so whp
\[
    \chi_{a} (G_{n, m}) \ge k.
\]
Let $\bfk^*$ be the $a$-bounded {$(k+1)$-colouring} profile from Lemma~\ref{lemma:kstartame}. To complete the proof of Theorem~\ref{twopointrestricted} we show that whp $X_{\bfk^*}>0$ {in $G_{n,m}$, which implies $\chi_{a} (G_{n, m}) \le k+1$.} For this, note that from Lemmas~\ref{lemma:kstartame} and \ref{lemma:improvedapproximation}, we obtain for large $n$ that
\begin{equation}\label{eq:expinf}
	\E_{1/2}[X_{\bfk^*}] 	\ge \exp\Big(\frac{2}{\ln 2}\ln^2 n +o(\ln^2 n)\Big) \ge \exp(1.1 \log_2^2 n).
\end{equation}
Note that by \eqref{eq:mualpha-2}, $\mu_a \gg n^2 / \ln^2 n$. So it follows from Lemma~\ref{lemma:kstartame} that $\bfk^*$ is tame and fulfils \eqref{eq:lowerboundexpectation} and \eqref{eq:lowerboundbeta}. We may apply  Theorem~\ref{maintheorem}, giving that whp $X_{\bfk^*}>0$.
\qed

\subsection{Proof of Theorem  \ref{theorem:announcedbounds} }
First note the following lemma.
\begin{lemma}\label{lemma:lowerbound}
Let $p=1/2$, and $a=a(n) \in \{\alpha(n)-2, \alpha(n)-1\}$. Then, whp, 
   \begin{equation}\label{eq:chilower}
	\chi_a(G_{n,1/2}) \ge \textbf{k}_a-1.
\end{equation} 
\end{lemma}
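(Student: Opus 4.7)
The plan is a first-moment argument via Markov's inequality. Set $k_0 = \textbf{k}_a - 2$ and let $Y$ denote the number of unordered partitions of $V(G_{n,1/2})$ into exactly $k_0$ non-empty independent classes of size at most $a$, so that $\E_{1/2}[Y] = E_{n, k_0, a}$. If $\chi_a(G_{n,1/2}) \le k_0$, then by iteratively peeling off singleton colour classes from any class of size $\ge 2$, one obtains a partition counted by $Y$; hence $Y \ge 1$ and
\[
    \Pb_{1/2}\bigl(\chi_a(G_{n,1/2}) \le k_0\bigr) \le \Pb_{1/2}(Y \ge 1) \le \E_{1/2}[Y] = E_{n, k_0, a}.
\]
It therefore suffices to prove that $E_{n, k_0, a} \to 0$.

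To this end I would combine Lemmas~\ref{lemma:improvedapproximation} and~\ref{lemma:onemorecolour}. By the very definition of the threshold, $E_{n, \textbf{k}_a - 1, a} < 1$, so Lemma~\ref{lemma:improvedapproximation} yields $L_0(n, \textbf{k}_a - 1, a) = \bigO(\ln^{3/2} n)$. Next, by Lemma~\ref{lemma:averagecolourclass} we have $n/\textbf{k}_a = \alpha_0 - 1 - 2/\ln 2 + o(1) = a - \Theta(1)$, and since shifting $k$ by a bounded constant perturbs $n/k$ only by $o(1)$, the hypotheses of Lemma~\ref{lemma:onemorecolour} apply uniformly for all $k$ in the interval $[k_0, \textbf{k}_a - 1]$. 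Integrating the derivative estimate $\partial L_0 / \partial k = \tfrac{2}{\ln 2}\ln^2 n + \bigO(\ln n \ln\ln n)$ over this single unit step then gives
\[
    L_0(n, k_0, a) = L_0(n, \textbf{k}_a - 1, a) - \tfrac{2}{\ln 2}\ln^2 n + \bigO(\ln n \ln\ln n),
\]
and translating back via Lemma~\ref{lemma:improvedapproximation} produces
\[
    \ln E_{n, k_0, a} \le -\tfrac{2}{\ln 2}\ln^2 n + \bigO(\ln^{3/2} n) \to -\infty,
\]
as desired.

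The argument is essentially a one-step first-moment estimate, with all the heavy analytic lifting already done by Lemmas~\ref{lemma:improvedapproximation} and~\ref{lemma:onemorecolour}; in particular no second-moment computation is required here. The only point that demands attention is to confirm that the derivative estimate of Lemma~\ref{lemma:onemorecolour} is valid uniformly across the unit step we traverse, but this follows immediately from Lemma~\ref{lemma:averagecolourclass} since $n/k$ varies only by $o(1)$ when $k$ changes by a constant in that range.
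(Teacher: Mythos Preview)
Your argument is correct and essentially identical to the paper's proof: a first-moment bound via Markov, combined with Lemmas~\ref{lemma:averagecolourclass}, \ref{lemma:onemorecolour} and \ref{lemma:improvedapproximation} to show $E_{n,\textbf{k}_a-2,a}\le\exp(-\Theta(\ln^2 n))$. The only addition is that you make explicit the singleton-peeling step needed to pass from an $a$-bounded colouring with at most $k_0$ classes to one with exactly $k_0$ classes, which the paper leaves implicit.
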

\begin{proof}
This follows by the first moment method, noting that by the definition of the first moment threshold, $E_{n,\textbf{k}_a-1,a} < 1$, and that by Lemmas~\ref{lemma:onemorecolour}, ~\ref{lemma:averagecolourclass} and~\ref{lemma:improvedapproximation}, $E_{n,\textbf{k}_a-2,a} \le \exp(-\Theta(\ln^2 n)) \rightarrow 0$.   
\end{proof}
Now let $p=1/2$ and $a=a(n)$ so that $n^{1.1} \le \mu_a \le n^{2.9}$. In particular, 
by \eqref{expectationu} and the definition of $\alpha$, this implies
\begin{equation*}
a(n) \in \{\alpha(n)-2, \alpha(n)-1\}.
\end{equation*}
So by Lemma~\ref{lemma:lowerbound}, whp $\chi_a(G_{n,1/2}) \ge \textbf{k}_a-1$. Similarly, by setting $k= \textbf{k}_{a}+1$, we find analogously to \eqref{eq:expinf} that for $n$ large enough
\[\E_{1/2}[\bar{X}_{\bfk^*}] \ge\exp\Big(\frac{2}{\ln 2}\ln^2 n +o(\ln^2 n)\Big) > \exp(1.1\log_2^2 n),\]
where $\bfk^*$ is the $a$-bounded $k$-colouring profile from Lemma~\ref{lemma:kstartame}.  As we also have $\mu_a \ge n^{1.1} > n^{1.05}$, by Lemma~\ref{lemma:kstartame}, we may apply Theorem~\ref{theorem:general}  to $\bfk^*$, and as $k=\bigO(n/\ln n)$ (see Lemma~\ref{lemma:averagecolourclass}) we obtain that
\begin{equation*}
\Pb_m (\bar{X}_{\bfk^*} >0) \ge \exp \big(   - \bigO \big ( k^2 / \mu_a\big)+ o(1)\big) \ge \exp(-n^{0.9})
\end{equation*}
for $n$ large enough. Thus we have, for $n$ large enough,
\[
\Pb_{m}(\chi_a (G_{n,m}) \le \textbf{k}_a+1) \ge  \exp(-n^{0.9}).
\]
A simple coupling argument (pointed out by Alex Scott in the context of \cite{heckel2019nonconcentration}) shows that this is also true in $G_{n,1/2}$ (rather than $G_{n,m}$ with $m=\lfloor N/2 \rfloor$) if we increase the number of colours slightly. Namely, start with $G \sim G_{n,m}$ and independently sample $E \sim \mathrm{Bin}(N, 1/2)$, and then either add $E-m$ uniform edges to $G$ or remove $m-E$ edges from $G$ uniformly. Then the resulting graph $G'$ has the distribution $G_{n, 1/2}$. Now suppose that in $G$, $\bar{X}_{\bfk^*}>0$. Then $G$ has an $a$-bounded $(\textbf{k}_a+1)$-colouring consisting of $\bigO(n/\ln n)$ colour classes of size $\bigO(\ln n)$ each. If $E - m <n \ln \ln n$, say, which holds whp, then whp at most $\ln n (\ln \ln n)^2 \le \ln^2 n$ added edges `spoil' a colour class (i.e., both endpoints are in the same colour class) in this colouring. This can be fixed by introducing at most $\ln^2 n$ additional colours. Thus we have for $n$ large enough
\begin{equation}\label{eq:lowerboundprob}
\Pb_{1/2}(\chi_a (G_{n, 1/2}) \le \textbf{k}_a+1+\ln^2 n) \ge  \exp(-n^{0.89}).
\end{equation}
Note that the resulting chromatic number of the coupled $G_{n, 1/2}$ is at most $\textbf{k}_a+1+\ln^2 n$ whp for every fixed choice of $G_{n,m}$ with $\bar{X}_{\bfk^*}>0$, so to get the lower bound \eqref{eq:lowerboundprob} it is enough to establish this whp-statement; we do not need to show we only introduce at most a certain number of additional colours with probability $1 - o(\exp(-n^{0.9}))$.
We now employ a standard technique to show that if we slightly increase $k$ further, we can boost the lower bound in \eqref{eq:lowerboundprob} to $1-o(1)$. Namely, note that if $G$ is a graph on $n$ vertices and we obtain $G'$ by changing some or all of the edges incident with one particular vertex, then $|\chi_a(G)-\chi_a(G')| \le 1$ (because we can always just give that vertex its own additional colour). Therefore, we can apply the Azuma-Hoeffding inequality to the vertex exposure martingale (see \cite{janson:randomgraphs}, \S2.4) to obtain for all $t \ge0$ that
\begin{equation}\label{eq:azuma}
\Pb_{1/2}(|\chi_a(G_{n,1/2}) - \E_{1/2}[\chi_a]| \ge t) \le 2 \exp(-t^2/(2n)).
\end{equation}
Applying \eqref{eq:azuma} with $t=n^{0.96}$ and comparing to \eqref{eq:lowerboundprob}, we find that for $n$ large enough we must have 
\[
\textbf{k}_a \ge \E_{1/2}[\chi_a] - n^{0.96}.
\]
Thus, setting $k'=\textbf{k}_a+2 n^{0.96}$ and applying  \eqref{eq:azuma} again with $t=n^{0.96}$, we obtain
\begin{equation*}
	\Pb_{1/2}(\chi_a(G_{n, 1/2}) > k') \le P(|\chi_a - \E_{1/2}[\chi_a]| > n^{0.96}) \le \exp(n^{-0.91}) \rightarrow 0, 
\end{equation*}
and so whp $\chi_a \le k' = \textbf{k}_a+2n^{0.96}$. Together with \eqref{eq:chilower}, this gives (with room to spare) $\chi_a (G_{n, 1/2}) = \textbf{k}_{a-1}+\bigO(n^{0.99})$ whp as required. \qed
~\\

\begin{remark}
A closer inspection of the proof reveals that the error term $\bigO(n^{0.99})$ could be sharpened to $\bigO(n^{3/2} / (\sqrt{\mu_a} \ln n){+n^{1/2} \omega(n)})$ {where $\omega(n)\rightarrow \infty$ is arbitrary,} and of course in the case $\mu_a \gg n^2 / \ln^2 n$ we have the much sharper Theorem~\ref{twopointrestricted} anyway.
\end{remark}

\subsection{Proof of Theorem~\ref{twopointequitable}}
Let $G \sim G(n, \roundDown{pN})$ with $0<p<1-1/e$ constant. Let ${Z}^{=}_k$ and $\bar{Z}^{=}_k$ denote the number of ordered and unordered equitable $k$-colourings of $G$, respectively. To prove Theorem~\ref{twopointequitable}, we need to find a sequence $k=k(n)$ so that whp $\sum_{\ell \le k-1} Z^=_{\ell}=0$ and $Z^=_{k+1} >0 $. Let 
\[
y=y(n) = \alpha_0-1-\frac 2 {\ln 2} = 2 \log_bn -2 \log_b \log_b n-2 \log_b 2, 
\]
and
\[
k^*=k^*(n)=\min \left\{ k \mid k \ge \frac{n}{y+1} \text{ and } \E \left[ \bar{Z}^{=}_k \right] > \frac{1}{\ln n} \right\}.
\]
We will show that the sequence $k^*$ has the desired properties in Lemmas \ref{lemma:nosmaller} and \ref{lemma:colouringexists} below. 
We will need the following result from \cite{heckel:equitable}. 
\begin{lemma}[\cite{heckel:equitable}, Lemmas~3.1 and~3.3]\label{lemmafrompaper}
Let $k=k(n)$ be a sequence of integers such that
\[
k= \frac{n}{y+z}
\]
with $z=\bigO(1)$. Then
\[ \E \left[ \bar{Z}^=_{k} \right] = b^{-\frac{z}{2}n+o(n)}
\quad
\text{and}
\quad
\frac{ \E \left[ \bar{Z}^{=}_{k+1}\right]}{\E \left[ \bar{Z}^{=}_{k}\right]} \ge \exp \left( \Theta \left(\ln n \ln \ln n \right)\right).
\]
\end{lemma}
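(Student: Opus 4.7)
The plan is to evaluate $\E[\bar Z^=_k]$ directly via Stirling's formula, then extract the $z$-dependence for (1) and the $k$-difference for (2). First, writing $n = qk+r$ with $0 \le r < k$, an equitable $k$-colouring has $k-r$ classes of size $q$ and $r$ of size $q+1$, so
\begin{equation*}
    \E\!\left[\bar Z^=_k\right] = \frac{n!}{(q!)^{k-r}((q+1)!)^r(k-r)!\, r!} \cdot \frac{\binom{N - f_k}{m}}{\binom{N}{m}},
\end{equation*}
where $f_k = (k-r)\binom{q}{2} + r\binom{q+1}{2} = q(n+r-k)/2$. A short calculation (using $q = \lfloor\rho\rfloor$ and $r = \{\rho\} k$ with $\rho = n/k$) shows $f_k = n(\rho-1)/2 + \bigO(n/\ln n)$, and by Lemma~\ref{gnmlemma} the ratio of binomial coefficients equals $q^{f_k} \exp(-\Theta(\ln^2 n))$, a correction readily absorbed into the eventual $o(n)$ error.

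For (1), applying Stirling to every factorial and using $n = k\rho$ to cancel the $n\ln n$, $n\ln\rho$ and $k\ln k$ contributions against each other, one arrives at
\begin{equation*}
    \frac{1}{n}\ln \E\!\left[\bar Z^=_k\right] = \Bigl(1-\tfrac{1}{\rho}\Bigr) \ln k + \tfrac{1}{\rho} - \tfrac{\rho-1}{2}\ln b + o(1).
\end{equation*}
The crucial step is to observe that the very definition of $y$ --- specifically the correction $-2\log_b\log_b n - 2\log_b 2$ --- is engineered so that this right-hand side vanishes at $\rho = y$: substituting $\rho = y$ and using $\ln k = \ln n - \ln y$, $\ln k / y = \ln b/2 + o(1)$, and $\ln y = \ln\ln n + \ln(2/\ln b) + o(1)$, the $\ln n$ and $\ln\ln n$ contributions cancel exactly. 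A routine differentiation in $\rho$ gives $-\ln b/2 + o(1)$ uniformly for $z = \bigO(1)$ (using $\ln k/\rho^2 = o(1)$ and $1/\rho = o(1)$), so integrating from $y$ to $y+z$ yields $\ln \E[\bar Z^=_k]/n = -z\ln b/2 + o(1)$, i.e.\ $\E[\bar Z^=_k] = b^{-zn/2 + o(n)}$.

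For (2), viewing $k$ as a continuous parameter in the same expression and using $\partial\rho/\partial k = -\rho/k$ together with the derivative $-\ln b/2 + o(1)$ from (1),
\begin{equation*}
    \frac{\dd}{\dd k}\ln\E\!\left[\bar Z^=_k\right] = -\frac{\rho}{k} \cdot n \cdot \frac{\dd}{\dd\rho}\!\left(\frac{\ln\E}{n}\right) = \frac{\rho^2}{2}\ln b + o(\ln^2 n).
\end{equation*}
Since $\rho \sim 2\ln n/\ln b$, this is $\bigl(2/\ln b + o(1)\bigr)\ln^2 n$, which is well above the target $\Theta(\ln n \ln\ln n)$. To turn this into the claimed discrete ratio, I would compare $\E[\bar Z^=_{k+1}]$ and $\E[\bar Z^=_k]$ directly, handling separately the case $r \ge q$ (where one $(q+1)$-class becomes a $q$-class, so only a handful of factorials need rebalancing) and the case $r < q$ (where $q$ drops to $q-1$ and $r' = r+k+1-q$, so many classes reshuffle simultaneously). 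In both cases Lemma~\ref{ablemma} on factorial ratios shows the discrete corrections are at most $\bigO(\ln n\ln\ln n)$, preserving the lower bound comfortably.

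The main technical obstacle is tightening the $o(n)$ error in (1) uniformly in $z \in \bigO(1)$: the Stirling corrections contribute terms of order $k\ln\rho = \Theta(n\ln\ln n/\ln n)$, which are $o(n)$ but must be tracked carefully to confirm no hidden $\Omega(n)$ contribution, and similarly $\log\binom{k}{r} = \bigO(k)$ enters through $\log(k-r)! + \log r!$. A second, smaller obstacle for (2) is the discrete jump $q \to q-1$; here the explicit formula $r' = r + k + 1 - q$ reshuffles $\bigO(q) = \bigO(\ln n)$ classes, but the $\bigl(\rho^2/2\bigr)\ln b$ leading term absorbs this easily.
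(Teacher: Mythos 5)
A preliminary structural remark: the paper does not prove this lemma at all --- it is imported from \cite{heckel:equitable} (Lemmas~3.1 and~3.3) --- so your attempt can only be judged on its own terms. Your treatment of the first claim is essentially sound: the per-vertex expression $(1-\tfrac1\rho)\ln k+\tfrac1\rho-\tfrac{\rho-1}{2}\ln b$ is correct up to $o(1)$, it does vanish at $\rho=y$ (this is exactly what the $-2\log_b\log_b n-2\log_b 2$ correction is for), its $\rho$-derivative is $-\tfrac{\ln b}{2}+o(1)$ uniformly for $\rho=y+\bigO(1)$, and the corrections you flag (Stirling terms of order $k\ln\rho$, the $\ln\binom{k}{r}\le k\ln 2$ from unordering, and the $G_{n,m}$-versus-$G_{n,p}$ factor from Lemma~\ref{gnmlemma}) are all $o(n)$, so $\E[\bar Z^=_k]=b^{-zn/2+o(n)}$ follows.

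The genuine gap is in the second claim. You assert that the discrete log-ratio equals the continuous derivative $\bigl(\tfrac{2}{\ln b}+o(1)\bigr)\ln^2 n$ up to corrections of size $\bigO(\ln n\ln\ln n)$, with the case $r\ge\lfloor n/k\rfloor$ needing ``only a handful of factorials rebalanced''. That is false, and it is precisely why the lemma claims only $\exp(\Theta(\ln n\ln\ln n))$ rather than $\exp(\Theta(\ln^2 n))$ (compare the paper's remark after Theorem~\ref{theorem:general} that for equitable colourings one extra colour boosts the expectation only by $e^{\Omega(\ln n\ln\ln n)}$ in the worst case). Write $s=\lfloor n/k\rfloor$ and $r=n-ks$. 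When $k\mapsto k+1$ with $r\ge s$, the number of classes of size $s+1$ drops by $s$, so the exact ratio contains the factors $r!/(r-s)!$ and $(k-r)!/(k+1-r+s)!$ together with the gain $b^{s(s+1)/2}$ from forbidden edges; up to $\bigO(\ln n)$ its logarithm is $(s+1)\bigl(\tfrac{s}{2}\ln b-\ln (k-r)\bigr)+\ln\bigl(r!/(r-s)!\bigr)$. Generically ($r=\Theta(k)$) one has $\ln\bigl(r!/(r-s)!\bigr)\approx s\ln k$ and the ratio is indeed $\exp(\Theta(\ln^2 n))$, matching your continuous derivative. But when $n/k$ sits just above the integer $s$, i.e.\ $r$ is comparable to $s=\Theta(\ln n)$ --- a regime you classified as harmless --- one gets $\ln\bigl(r!/(r-s)!\bigr)\approx s\ln s$ instead of $s\ln k$, a deficit of order $s\ln(k/s)=\Theta(\ln^2 n)$ which cancels the leading term almost exactly, since $\tfrac{s}{2}\ln b-\ln k=\bigO(1)$ when $n/k=y+\bigO(1)$; what survives is only $\approx s\ln s=\Theta(\ln n\ln\ln n)$. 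So the ``discrete correction'' is $\Theta(\ln^2 n)$, Lemma~\ref{ablemma} does not deliver what you assert, and had your estimate been correct it would prove the false statement that the ratio is always $\exp(\Theta(\ln^2 n))$. A correct proof must carry out the exact discrete comparison in both cases $r\ge s$ and $r<s$ (the latter, where the floor drops, is in fact the easy one: there the ratio is even larger) and verify that after the near-cancellation the remaining term of order $s\ln s$ is still positive, which is the content of Lemma~3.3 of \cite{heckel:equitable}.
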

From the first part of Lemma \ref{lemmafrompaper} we obtain directly that
\begin{equation}\label{eq:kstarorder}
    k^* = \frac{n}{y+o(1)}.
\end{equation}
\begin{lemma}With high probability, \label{lemma:nosmaller}
	\[\sum_{\ell \le k^*-1} Z^=_{\ell}=0.\]
\end{lemma}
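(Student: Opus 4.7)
The plan is to apply the first moment method. Since $Z^=_\ell = 0$ if and only if $\bar{Z}^=_\ell = 0$, the statement $\sum_{\ell \le k^*-1} Z^=_\ell = 0$ whp is equivalent to $\sum_{\ell \le k^*-1} \bar{Z}^=_\ell = 0$ whp, and by Markov's inequality it suffices to prove
\[
    S := \sum_{\ell \le k^*-1} \E\bigl[\bar{Z}^=_\ell\bigr] = o(1).
\]
I would split $S$ at $\ell_1 := \lceil n/(y+1) \rceil$, writing $S = S_1 + S_2$, where $S_1$ and $S_2$ collect the contributions from $\ell_1 \le \ell \le k^*-1$ and from $\ell < \ell_1$, respectively. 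The two ranges call for different tools from Lemma~\ref{lemmafrompaper}.

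For $S_1$, the minimality in the definition of $k^*$ immediately gives $\E[\bar{Z}^=_\ell] \le 1/\ln n$ for each summand. Since $k^* = n/(y + o(1))$ by \eqref{eq:kstarorder}, every $\ell$ in this range has the form $n/(y+z_\ell)$ with $z_\ell = O(1)$, so the second part of Lemma~\ref{lemmafrompaper} applies and yields $\E[\bar{Z}^=_\ell]/\E[\bar{Z}^=_{\ell+1}] \le \exp(-\Theta(\ln n \ln \ln n))$. Iterating this bound downwards from $\ell = k^*-1$ and summing the resulting geometric series gives $S_1 = O(1/\ln n) = o(1)$.

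For $S_2$, I would further split the range at $\lfloor n/(y+C) \rfloor$ for a sufficiently large constant $C$. In the intermediate window $\lfloor n/(y+C) \rfloor \le \ell < \ell_1$, every $\ell$ has the form $n/(y+z_\ell)$ with $z_\ell \in [1,C]$, and the first part of Lemma~\ref{lemmafrompaper} gives $\E[\bar{Z}^=_\ell] = b^{-z_\ell n/2 + o(n)} \le b^{-n/2 + o(n)}$; summing over the at most $n$ values of $\ell$ in this window contributes $b^{-n/2 + o(n)} = o(1)$. For the remaining tail $\ell < \lfloor n/(y+C) \rfloor$, I would use a direct Stirling computation for the equitable profile with $m = \lceil n/\ell \rceil$: one obtains
\[
    \ln \E\bigl[\bar{Z}^=_\ell\bigr] \le n \ln \ell - \ell \ln \ell - \tfrac{1}{2} n m \ln b + o(n).
\]
Since $m \ge y + C$ and $y (\ln b)/2 = \ln n - \ln \ln n - \ln 2$, this simplifies to $-c n$ for some $c = c(C) > 0$ once $C$ is fixed large enough, so summing over the at most $n$ remaining values of $\ell$ again yields $o(1)$. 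Combining $S_1$ and $S_2$ and applying Markov then finishes the proof.

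The main obstacle: the one place where Lemma~\ref{lemmafrompaper} does not directly apply is the tail $\ell < \lfloor n/(y+C) \rfloor$, where the parameter $z = n/\ell - y$ tends to infinity; plugging this gap requires the elementary Stirling estimate sketched above. The rest is a routine telescoping of the ratio bound together with a geometric sum, plus some minor bookkeeping to make sure each split point respects the $z = O(1)$ hypothesis of Lemma~\ref{lemmafrompaper}.
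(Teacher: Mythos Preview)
Your approach is correct and, for the range $\ell_1 \le \ell \le k^*-1$, coincides with the paper's proof: both bound $\E[\bar Z^=_{k^*-1}]$ by $1/\ln n$ via the definition of $k^*$, invoke the ratio estimate from Lemma~\ref{lemmafrompaper}, and sum a geometric series.

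The treatment of the lower range $\ell < n/(y+1)$ differs. The paper bypasses any computation here: it simply quotes that whp $\chi(G_{n,p}) = n/(y+o(1))$ (equivalently, from~\cite{heckel2018chromatic}), so whp there is no colouring at all---hence no equitable colouring---with fewer than $n/(y+1)$ colours. This handles the entire tail in one line. Your route is self-contained but longer: you split again at $n/(y+C)$, apply the first part of Lemma~\ref{lemmafrompaper} in the intermediate window, and then carry out a direct Stirling calculation for the remaining tail. That works, but two small comments. First, the error in your Stirling bound is $O(n)$ rather than $o(n)$ (the forbidden-edge count contributes an additive $O(n)$, as does the factor $\prod_u k_u!$); this is harmless because you are free to choose $C$ large enough to absorb any fixed $O(n)$ constant, and indeed you already say ``once $C$ is fixed large enough''. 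Second, in bounding the main term you should use $m \ge n/\ell$ rather than merely $m \ge y+C$, since the former is what makes $n\ln\ell - \tfrac12 nm\ln b$ monotone in $\ell$ and hence maximised at the right endpoint $\ell = n/(y+C)$; with that observation the claimed $-cn$ bound follows uniformly.
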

\begin{proof}
Using the first moment method, it is not hard to show that there is no equitable colouring with fewer than ${n}/(y+1)$ colours --- and this also follows from the main result in \cite{heckel2018chromatic}, where it was shown that the (normal) chromatic number of $G_{n,p}$ is ${n}/(y+o(1))$ whp (and as the property of having a colouring with a certain number of colours is monotone, this result can be transferred easily to $G_{n,m}$). 
	So in particular, whp
	\[
	\sum_{\ell < {n}/(y + 1)} Z^=_{\ell}=0.
	\]	
By the definition of $k^*$ and \eqref{eq:kstarorder} we have $\E[\bar{Z}^=_{k^*-1}] < {1}/{\ln n}$. As  $k^* = \frac{n}{y+o(1)}$, Lemma \ref{lemmafrompaper} implies that
\begin{align*}
    \E \Big[\sum_{\frac{n}{y+ 1 } \le \ell \le k^*-1 } \bar{Z}^=_{\ell} \Big] &\le  \E \left[\bar{Z}^=_{k^*-1}\right] \sum_{\frac{n}{y + 1 } \le \ell \le k^*-1 }\exp{\left(-\Theta\left((k^*-1-\ell) \ln n \ln \ln n\right)\right)} \\
    &\sim \E \left[\bar{Z}^=_{k^*-1} \right] < \frac{1}{\ln n} =o(1),
\end{align*}
and so whp no equitable colouring with at most $k^*-1$ colours exists.
\end{proof}

\begin{lemma} \label{lemma:colouringexists}
	With high probability, ${Z}^{=}_{k^*+1}>0$.
\end{lemma}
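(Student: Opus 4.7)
The plan is to apply Theorem~\ref{maintheorem} to the unique equitable $(k^*+1)$-coloring profile: existence of a coloring with this profile in $\Gnm$ whp immediately yields $Z^=_{k^*+1}>0$ whp. Explicitly, set $v=\lfloor n/(k^*+1)\rfloor$ and let $\bfk^\diamond$ be the profile with $k_v^\diamond=(k^*+1)(v+1)-n$ classes of size $v$, $k_{v+1}^\diamond=n-(k^*+1)v$ classes of size $v+1$, and no classes of any other size, so the maximum class size is $a=v+1$.

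The hypothesis $p<1-1/e$ is exactly what makes Theorem~\ref{maintheorem} applicable. It gives $\ln b<1$ and hence $2/\ln b>2$, so that $y=\alpha_0-1-2/\ln b<\alpha_0-3$; combined with \eqref{eq:kstarorder} this yields $a=v+1=y+\bigO(1)=\alpha_0-\bigO(1)$ together with $\alpha-a\ge 2$ for all sufficiently large $n$. Because the only nonzero $\kappa_u^\diamond$ sit at $u\in\{v,v+1\}$ with $\alpha-u=\bigO(1)$, condition a) of Definition~\ref{deftame} is met by choosing the increasing function $\gamma$ sufficiently small on its bounded initial arguments, and condition b) follows from the estimates below; hence $\bfk^\diamond$ is tame. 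The condition $\alpha-a\ge 2$ combined with Lemma~\ref{prop:basicboundsisets} (which gives $\mu_\alpha\to\infty$ since $p<1-1/e<1-e^{-4}$) and Lemma~\ref{prop:muurelativetomua} yields $\mu_a\ge\mu_\alpha(n/\ln n)^{2+o(1)}\gg n^2/\ln^2 n$, verifying the strengthened hypothesis of Theorem~\ref{maintheorem}. The expectation lower bound \eqref{eq:lowerboundexpectation} follows from the definition of $k^*$ together with Lemma~\ref{lemmafrompaper}: since $\E[\bar Z^=_{k^*}]>1/\ln n$ and the one-extra-color ratio is $\exp(\Theta(\ln n\ln\ln n))$, we obtain $\E[\bar Z^=_{k^*+1}]\ge\exp(\Theta(\ln n\ln\ln n))$; passing from $\Gnh$ to $\Gnm$ via Lemma~\ref{gnmlemma} costs only a factor $\exp(\bigO(\ln^2 n))$.

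The main technical obstacle lies in the partial-coloring bound \eqref{eq:lowerboundbeta}: for each fixed $\delta>0$ and every $\boldlambda\le\boldkappa^\diamond$ with $\delta\le\lambda\le 1-\delta$, one needs $\E_p[\bar X_{\boldlambda}]\ge\exp(\ln^6 n)\prod_u\binom{k_u^\diamond}{\ell_u}^2$. Since $\prod_u\binom{k_u^\diamond}{\ell_u}^2\le\exp(\bigO(n/\ln n))$ and $\ln^6 n=o(n)$, Lemma~\ref{expectationlemma} reduces the task to showing $\varphi(\boldlambda)\ge c_\delta>0$ uniformly over such $\boldlambda$. Writing $\sigma=n/(k^*+1)-v\in[0,1)$, so $\kappa_{v+1}^\diamond=\sigma+o(1)$ and $\kappa_v^\diamond=1-\sigma+o(1)$, and minimizing the linear part $\tfrac{\ln b}{2}\sum_u\lambda_u(y-u)$ over $\boldlambda\le\boldkappa^\diamond$ subject to $\sum_u\lambda_u=\lambda$ (the worst case places maximal weight on $\lambda_{v+1}$, as $y-v>0>y-v-1$), a short calculation yields $\varphi(\boldlambda)\ge(1-\lambda)[\ln(1/(1-\lambda))-\sigma(\ln b)/2]+o(1)$. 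Since $\ln b<1$, compactness over $\sigma\in[0,1]$ and $\lambda\in[\delta,1-\delta]$ keeps this bracket strictly positive, giving the required uniform bound. Applying Theorem~\ref{maintheorem} then furnishes an equitable $(k^*+1)$-coloring in $\Gnm$ whp, which is what we wanted. The hypothesis $p<1-1/e$ enters most critically through the requirement $\mu_a\gg n^2/\ln^2 n$: without $\alpha-a\ge 2$, this condition fails and Theorem~\ref{maintheorem} cannot be invoked.
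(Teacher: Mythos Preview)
Your strategy is exactly the paper's: verify that the equitable $(k^*+1)$-profile is tame, $(\alpha-2)$-bounded, and satisfies the hypotheses of Theorem~\ref{theorem:general}, then invoke Theorem~\ref{maintheorem}. Most checks go through as you describe, but your verification of \eqref{eq:lowerboundbeta} contains a genuine error.

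The lower bound you state, $\varphi(\boldlambda)\ge(1-\lambda)\bigl[-\ln(1-\lambda)-\sigma\tfrac{\ln b}{2}\bigr]+o(1)$, is the outcome of your minimisation only in the regime $\lambda>\sigma$, where the constraint $\lambda_{v+1}\le\kappa_{v+1}^\diamond=\sigma+o(1)$ is active. When $\lambda\le\sigma$ it is still a valid lower bound, but it is too weak: the bracket is \emph{not} uniformly positive over $\sigma\in[0,1]$ and $\lambda\in[\delta,1-\delta]$. Take $\sigma$ close to $1$ and $\lambda=\delta$ small; then $-\ln(1-\lambda)\approx\delta$ while $\sigma\tfrac{\ln b}{2}$ is a fixed positive constant, so the bracket is negative. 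Your compactness claim fails.

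The repair, which is what the paper does, is to split into the two regimes $\lambda\le\sigma$ and $\lambda>\sigma$ and observe that both cases yield the common lower bound
\[
\varphi(\boldlambda)\;\ge\;(1-\lambda)\Bigl[-\ln(1-\lambda)-\lambda\,\tfrac{\ln b}{2}\Bigr]+o(1),
\]
with $\lambda$ in place of your $\sigma$. Since $-\ln(1-\lambda)>\lambda$ for $\lambda\in(0,1)$ and $\tfrac{\ln b}{2}<\tfrac12$ when $p<1-1/e$, this bracket is strictly positive on $(0,1)$ and hence bounded below by a constant on $[\delta,1-\delta]$.

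A minor aside: your remark about ``passing from $\Gnh$ to $\Gnm$'' is unnecessary and, if taken literally, problematic. The expectations in the definition of $k^*$ and in Lemma~\ref{lemmafrompaper} are already taken in $G_{n,m}$, so no conversion is needed; and a multiplicative cost of order $\exp(\Theta(\ln^2 n))$ would in fact overwhelm your lower bound of $\exp(\Theta(\ln n\ln\ln n))$.
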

\begin{proof}
It suffices to check that equitable $(k^*+1)$-colourings are $(\alpha-2)$-bounded, tame (as in Definition \ref{deftame} with $a=\alpha-2$) and fulfil \eqref{eq:lowerboundexpectation} and \eqref{eq:lowerboundbeta}. Then Theorem~\ref{maintheorem} implies that whp there exists an equitable $(k^*+1)$-colouring.  For the remainder of this proof, let
	\[
	k=k^*+1= \frac{n}{y+o(1)} \enspace .
	\]
Given $k$ colours, the colouring profile of an equitable $k$-colouring is uniquely defined: letting $\Delta={n}/{k}-\roundDown{n/k}$, an equitable $k$-colouring of $n$ vertices consists of exactly 
\[
    k_\text{L}=\Delta k
\]
larger colour classes of size $\roundUp{n/k}$ and 
\[
    k_\text{S}=(1-\Delta) k
\]
smaller colour classes of size $\roundDown{n/k}$. Note that if $k$ divides $n$, $\Delta=0$ and all $k$ colour classes are of size ${n}/{k}$. Now observe  that
\[\frac nk =y(n)+o(1) = \alpha_0 - 1 -\frac {2}{\ln b}+o(1).\]
Since $p<1-1/e$, we have $1 + {2}/{\ln b} >3$. So for $n$ large enough,
	\[
	\alpha- \left \lceil \frac nk \right \rceil = \roundDown{\alpha_0}- \roundUp{\alpha_0-1-\frac{2}{\ln b}+o(1)} \ge 2,
	\]
	and so an equitable $k$-colouring is $a$-bounded for $a=a(n)=\alpha-2$ as required. Next we check that $\bfk$ is tame (with $a=\alpha-2$) and fulfils \eqref{eq:lowerboundexpectation} and \eqref{eq:lowerboundbeta}. For this, note that
	\[
	\alpha- \left \lfloor \frac nk \right \rfloor  \le \alpha_0 - \left(\alpha_0-1-\frac{2}{\ln b}-2\right)= 3+\frac{2}{\ln b}. 
	\]
	So, writing $k_u$ for the number of colour classes of size $u$ in an equitable $k$-colouring, part a) of Definition~\ref{deftame} certainly holds since $k_u=0$ for $u < \alpha-3-{2}/{\ln b}$.  For \eqref{eq:lowerboundexpectation} (which implies part b) of Definition~\ref{deftame}), by Lemma~\ref{lemmafrompaper} we have
	\[
	\ln \E_m[\bar{Z}_k^=] \ge \ln \E_m[\bar{Z}_{k-1}^=] + \Theta(\ln n \ln \ln n) > \ln \Big(\frac{1}{\ln n } \Big)+ \Theta(\ln n \ln \ln n) \gg \ln n.
	\] 
	as required. It only remains to show \eqref{eq:lowerboundbeta}. 
So fix $\delta>0$ and let $\lS \le \kS$, $\lL \le \kL$ such that
\[
    \delta \le \lambdaS + \lambdaL \le 1-{\delta}
\]
where $\lambdaS= \lS \floornk /n$ and $\lambdaL= \lL \ceilnk /n$. Denote by $\bfl$ the colouring profile consisting of $\lS$ colour classes of size 
\[
    \floornk=\frac{n}{k}-\Delta = y-\Delta+o(1) = \alpha_0-1-\frac{2}{\ln b}-\Delta+o(1)
\]
and $\lL$ colour classes of size
\[
    \ceilnk = \frac nk -\Delta+1 = \alpha_0-\frac{2}{\ln b}-\Delta+o(1).
\]
By Lemma \ref{expectationlemma}, letting $\lambda= \lambdaS+\lambdaL$,
\begin{align}
    \ln \E_p [\bar{X}_\bfl] 	& = \Big(-(1-\lambda)\ln(1-\lambda)+\frac{\ln b}{2}(\Delta\lambdaS+(\Delta-1)\lambdaL )\Big)n+o(n) \nonumber \\
    &= \left(-(1-\lambda)\ln(1-\lambda)+\frac{\ln b}{2} \Delta \lambda -\frac{\ln b}{2} \lambdaL \right)n+o(n) \label{jty}.
\end{align}
Suppose first that $\lambda\le \Delta$. In that case, bounding $\lambda_L \le \lambda$, we get
\begin{align}
    \ln \E_p [\bar{X}_\bfl] &\ge \left(-(1-\lambda)\ln(1-\lambda)+\frac{\ln b}{2} \Delta \lambda -\frac{\ln b}{2} \lambda \right)n+o(n) \nonumber\\
    &\ge (1-\lambda)\Big(-\ln(1-\lambda)-\frac{\ln b}{2} \lambda\Big)n+o(n). \label{eq:bound1lambda}
\end{align}
Now suppose that $\lambda> \Delta$, then continuing from (\ref{jty}) and bounding $\lambda_L \le k_L \lceil n/k \rceil / n =\Delta+o(1) $, 
	\begin{align}
	\ln \E_p [\bar{X}_\bfl] &\ge \left(-(1-\lambda)\ln(1-\lambda)+\frac{\ln b}{2} \Delta \lambda -\frac{\ln b}{2} \Delta \right)n+o(n) \nonumber \\
	&\ge (1-\lambda)\Big(-\ln(1-\lambda)-\frac{\ln b}{2} \lambda\Big)n+o(n). \label{eq:bound2lambda}
\end{align}
So we get the same lower bound in both cases \eqref{eq:bound1lambda} and \eqref{eq:bound2lambda}. Now note that that $-\ln(1-\lambda)-\frac{\ln b}{2} \lambda>0$ for all $\lambda \in (0,1)$, since $\frac{\ln b}{2} < \frac 12 <1$ as $p<1-1/e$. In particular, $-\ln(1-\lambda)-\frac{\ln b}{2}\lambda $ is bounded away from $0$ for $\delta \le \lambda \le 1-\delta$, and so as $1-\lambda \ge \delta$,
	\begin{align*}
		\ln \E_p [\bar{X}_\bfl] &\ge \Theta (n) .
	\end{align*}
	On the other hand,
	$
	{\kS \choose \lS}^2{\kL \choose \lL}^2 \le 2^{2\kS+2\kL} = 2^{2k}= \exp \big( \bigO\left( n / \ln n \right) \big).
	$
	In particular, for $n$ large enough,
		\[
	\E_p [\bar{X}_\bfl] \ge \exp \left( \ln^6 n \right)  {\kS \choose \lS}^2{\kL \choose \lL}^2, \]
 giving \eqref{eq:lowerboundbeta}.
\end{proof}

	\bibliographystyle{plainnat}
	\addcontentsline{toc}{section}{References}

\pagebreak

\appendix
\section{Appendix: Omitted proofs}
	
\subsection{Proof of Lemmas~\ref{prop:basicboundsisets} and~\ref{prop:muurelativetomua}}

Recall that $\alpha_0= 2 \log_b n -2 \log_b \log_b n +2 \log_b (e/2) + 1$. Let $a=a(n)=\alpha_0-\bigO(1)$, and write $a=\alpha_0-y$ for some $y=y(n)= \bigO(1)$. Then we obtain that
\begin{equation}
\label{eq:qa-1}
    q^{a-1} = b^{-a+1} = \Big(\frac{2\log_b n}{en}\Big)^2 b^y.
\end{equation}
Let us begin with establishing~\eqref{eq:muinfinity}. By applying Stirling's formula we obtain that
\begin{align*}
    \mu_a
    &= \binom{n}{a}q^{\binom{a}{2}}
    \sim \frac{n^a}{a!}q^{a \choose 2}
    \sim \frac{1}{\sqrt{2 \pi a}}
        \left(\frac{en q^{(a-1)/2}}{a}\right)^a.
\end{align*}
Plugging in~\eqref{eq:qa-1} gives
\begin{equation}
\label{eq:muaasympt}
    \mu_a
    \sim \frac{1}{\sqrt{2 \pi a}} \left( \frac{2 b^{y/2} \log_b n }{a} \right)^a
    \sim \Big(\frac{n}{\log_bn}\Big)^y \exp \Big( 2\log_b \log_b n -\frac12 \ln\log_b n + \bigO(1)\Big). 
\end{equation}	 
If  $p < 1 - e^{-4}$, we have $2/\ln b - 1/2 > 0$. If we set $a=\alpha=\lfloor \alpha_0 \rfloor$, we have $y= \alpha_0-\alpha \ge 0$, so it follows that $\mu_\alpha = \omega(1)$, giving~\eqref{eq:muinfinity}. To see the other claims, write $u = a - x'$. Then, for $0.1 \alpha \le u \le 10\alpha$,
\[
    \frac{\mu_u}{\mu_a}
    = \binom{n}{u}\binom{n}{a}^{-1}  q^{\binom{u}2 - \binom{a}2}
    = \Theta\Big(\frac{\ln n}n\Big)^{x'} q^{-x'a + (x'+1)x'/2}.
\]
Since $q^{-a} = \Theta(n^2/\ln^2n)$, the claim~\eqref{expectationu} follows immediately and Lemma~\ref{prop:muurelativetomua} is established. In order to see~\eqref{eq:mut}, again set $a=\alpha$ and note that for $x=\alpha_0-u = \bigO(1)$ and so $x' = \alpha-u= \bigO(1)$, we readily obtain from~\eqref{expectationu} and~\eqref{eq:muaasympt} that
\[
    \mu_u
    = \mu_\alpha \cdot \Theta\left(\frac{n}{\ln n}\right)^{x'}
    = \Theta\left(\frac{n}{\ln n}\right)^{x'+y}\exp(\bigO(\ln \ln n)),
\]
and the claim follows from observing that $x'+y= x$.

\subsection{Proof of Lemmas \ref{expectationlemma}, \ref{lemmaupperbound} and \ref{expectationlowerbound}}
		
We begin with some general estimates. Let $U = \{u \mid \kappa_u \neq 0 \}$, then $U \subseteq [0.1 \alpha,  10\alpha]$. In particular, $u=\Theta(\alpha)=\Theta(\ln n)$ for $u \in U$. This implies $k = \Theta(n / \ln n)$. Furthermore, if we fix any partition $\Pi$ with profile $\boldkappa$ and consider the event that $\Pi$ is a colouring, then this forbids $f = \sum_u k_u{u \choose 2}= \bigO(n \ln n)$ edges. It follows from Lemma \ref{gnmlemma} that $\E_m[\bar X_\boldkappa]$ and $\E_p[\bar X_\boldkappa]$ agree up to a factor $\exp(\bigO(\ln^2 n))$, so it suffices to prove the three lemmas for $\E_p[\bar X_\boldkappa]$ only.

We begin with an auxiliary estimate. First, note that for any $1 \le u \le 10\alpha$,
\[
\frac{n!}{(n-u)!} = \prod_{i=0}^{u-1}(n-i) = n^{u} \exp\left(\bigO\Big(\frac{u^2}{n} \Big) \right)= n^{u} \exp \left( \bigO \Big( \frac{\ln^ 2n}{n} \Big)\right).
\]
As usual we let $\kappa=\sum_u \kappa_u$ and $k_u = \kappa_un / u$ so that $\sum_u u k_u = \kappa n$. Then
\begin{align*}
    \prod_{u \in U} \left(\frac{n!}{(n-u)!}\right)^{k_u} &= n^{\sum_{u \in U} uk_u} \exp \left( \bigO \left(  \frac{k \ln^ 2 n}{n} \right)\right)= n^{\kappa n} e^{\bigO( \ln n )}.
\end{align*}
Using this and writing as usual $\mu_u = {n \choose u}q^{u \choose 2}$, we obtain that
\begin{align}
\label{eq:wert}
    \E_p[\bar{X}_\bfk] =   \frac{n!}{(n-\kappa n)!\prod_{u \in U} (u!^{k_u}k_u!)}q^{\sum_{u \in U} k_u {u \choose 2}}  = \frac{n!}{(n-\kappa n)!n^{\kappa n}} \prod_{u \in U}\frac{\mu_u^{k_u}}{k_u!}   e^{\bigO( \ln n )}.
\end{align}
By applying Stirling's formula $n! \sim \sqrt{2 \pi} n^{n+1/2} / e^n$, 
\begin{equation}
\label{eq:wert2}
    \frac{n!}{(n-\kappa n)!n^{\kappa n}} = (1-\kappa)^{-(1-\kappa)n}e^{-\kappa n} e^{\bigO( \ln n )}; 
\end{equation}
	here we let $0^0:=1$ and note that the factor $\exp (\bigO(\ln n))$  takes care of the case $(1-\kappa)n \nrightarrow \infty$. As $|U|=\bigO(\ln n)$,  we again have by Stirling's formula (with $\Theta(1)$ taking care of the case $k_u \nrightarrow \infty$), and  as $k_u \le k \le n$,
	\[
	\prod_{u \in U} k_u! = \prod_{u \in U} \left( \Theta(1) \frac{k_u^{k_u + 1/2}}{e^{k_u}}\right)= \prod_{u\in U} \left( \frac{k_u}{e}\right)^{k_u} \exp \left(\bigO(\ln^2 n)\right).
	\]
	Suppose that $d=d(n)$ is an integer function such that $d(n)=\alpha_0-\bigO(1)$. Then using \eqref{eq:wert}, \eqref{eq:wert2} and \eqref{expectationu}, we have
	\begin{align}
		\E_p[\bar{X}_\bfk]
		&= (1-\kappa)^{-(1-\kappa)n}e^{-\kappa n}
		\prod_{u \in U}\left(\frac{\mu_d e}{k_u} \left(\Theta\left(\frac{n}{ \ln n} b^{-\frac{d-u}{2}}\right)\right)^{d-u}\right)^{k_u}  \exp \left(\bigO( \ln^2 n) \right).   \label{split}
	\end{align}
	Now set $d(n)=\alpha(n)$ and let $R=\sum_u (\alpha-u)k_u + \frac{n \ln \ln n}{\ln n}$, then with \eqref{eq:defx2} and \eqref{eq:theta} this gives	
\begin{align}
\label{cont}
    \E_p[\bar{X}_\bfk]
    = (1-\kappa)^{-(1-\kappa)n}e^{-\kappa n} \prod_{u \in U}\left(k_u^{-1} n^{\alpha_0-u} (\ln n )^{-\alpha+u}b^{-\frac{(\alpha-u)^2}{2}}\right)^{k_u}  \exp \left(\bigO (R) \right) .
\end{align}
As $k_u= \kappa_u  n/u = \kappa_u \Theta({n}/{\ln n} ) $ for $u \in U$ and $k = \Theta(n/\ln n)$,
\begin{align*}
    \prod_{u \in U} k_u^{-k_u}
    = \Theta \left( \frac{n}{\ln n} \right)^{-k} \prod_{u \in U} \kappa_u^{-k_u}
    = n^{-k} \exp\left[-\Theta \left( \frac{n}{\ln n}\right) \sum_{u \in U} \kappa_u \ln \kappa_u+\bigO\left( \frac{n\ln \ln n}{\ln n}\right)\right].
\end{align*}
By Jensen's inequality, since $-x \ln x$ is concave, $\sum_u \kappa_u =\kappa \in [0,1]$ and $-\kappa \log \kappa <1$, 
\[
    0 \le -\sum_{u \in U} \kappa_u \ln \kappa_u \le -|U| \frac{\kappa}{|U|} \ln \left( \frac{\kappa}{|U|} \right)= \kappa \left(\ln |U|-\ln \kappa\right) \le \ln|U|+1 = \bigO\left(  \ln \ln n\right),
\]
and so
\begin{align*}
    \prod_{u \in U} k_u^{-k_u} 
    = n^{-k} \exp\left(\bigO \Big(\frac{n \ln \ln n}{\ln n}\Big)\right).
\end{align*} 
Plugging this into~\eqref{cont} gives
\begin{align}
\label{cont2}
    \E_p[\bar{X}_\bfk]
    &= (1-\kappa)^{-(1-\kappa)n}e^{-\kappa n} \prod_{u \in U}\left( n^{\alpha_0-u-1}(\ln n)^{-\alpha+u}b^{-\frac{(\alpha-u)^2}{2}}\right)^{k_u}  \exp(\bigO(R) ). 
\end{align}
With this at hand we can prove Lemma \ref{expectationlemma}. By assumption $\sum (\alpha-u)^2\kappa_u$ is bounded, and so
\[
    \sum_u (\alpha-u)k_u
    = \bigO \left( n/\ln n\right)\quad \text{ and } \quad \sum_u (\alpha-u)^2k_u = \bigO \left( n/\ln n\right).
\] 
Plugging this into \eqref{cont2}, and using that $\alpha_0= \alpha+\bigO(1) = 2 \log_b n + \bigO(\ln\ln n)$ and thus \[\sum_{u \in U} (\alpha-u)k_u \ln n = \frac{\ln b}{2} \sum_{u \in U} (\alpha-u) u k_u + \bigO\Big(\sum_{u \in U} (\alpha-u)^2k_u + n  \frac{ \ln\ln n}{\ln n}\Big) ,\] we obtain
\begin{align*}
	\E_p[\bar{X}_\bfk]
	& = (1-\kappa)^{-(1-\kappa)n}e^{-\kappa n}
	\prod_{u \in U}\left( n^{\alpha_0-u-1}\right)^{k_u}  \exp \left(\bigO\Big( \frac{n \ln \ln n}{\ln n}\Big) \right) \\
	&= \exp\Big[(-(1-\kappa)\ln(1-\kappa) -\kappa)n +\sum_{u \in U} (\alpha_0-u-1)k_u  \ln n+\bigO\Big(\frac{n \ln \ln n}{\ln n}\Big)\Big] \\
	&= \exp\Big[\Big(-(1-\kappa)\ln(1-\kappa) -\kappa +\frac{\ln b}{2}\sum_{u \in U} (\alpha_0-u-1)\kappa_u \Big)n+\bigO\Big(\frac{n \ln \ln n}{\ln n}\Big)\Big]\\
	&= \exp\Big[\varphi(\bfk)n+\bigO\Big(\frac{n \ln \ln n}{\ln n} \Big)\Big],
\end{align*}
as claimed in Lemma \ref{expectationlemma}.  

We continue with the quick proof of Lemma \ref{lemmaupperbound}, where we assume that $\kappa=1$ and that the average colour class size $n/k$ is larger than $\alpha_0-1-{2}/{\ln b} + C$ for some $C>0$. Then
\[
    \sum_u k_u u
= n
>  k\left( \alpha_0-1-\frac2{\ln b} + C\right) 
= \sum_u k_u \left(\alpha_0-1-\frac2{\ln b} + C \right)  .
\]
This implies that
\[
\sum_u k_u \left(\alpha_0-u-1 \right)   \le \left(\frac{2}{\ln b} - C \right) k,
\]
and in particular $\sum_{u \in U} (\alpha-u)k_u  \le \bigO(n/ \ln n)$. So, continuing from (\ref{cont2}), as $\kappa=1$ and since $b^{-(\alpha-u)^2/2}\le 1$,
\begin{align*}
	\E_p[\bar{X}_\bfk]
	&\le e^{-n} n^{ \left(\frac{2}{\ln b} - C \right) k}  \exp \left(\bigO\Big( \frac{n \ln \ln n}{\ln n}\Big) \right).
\end{align*}
Using once more that
$k < n / (\alpha_0-1-2/{\ln b} + C)
= \frac{n}{2 \log_b n} \left(1+\bigO\left( \frac{\ln \ln n}{\ln n} \right) \right)
$, this gives for sufficiently large $n$ that
\begin{align*}
	\E_p[\bar{X}_\bfk]&\le n^{ - C  k}  \exp \left(\bigO\Big( \frac{n \ln \ln n}{\ln n}\Big) \right)
    \le \exp\left(-\frac{C\ln b}{2}n +o(n)\right),
\end{align*}
and Lemma \ref{lemmaupperbound} is established with room to spare.

We move on to Lemma  \ref{expectationlowerbound}. Recall that we assume that $k_u=0$ for all $u<u^* \sim 2 \log_b n \sim \alpha$ and for all $u>a$, where $a(n) = \alpha_0 - \bigO(1)$ is such that $\mu_{a}\ge n^{1+\epsilon}$.
Using~\eqref{eq:mut} together with the latter condition we obtain that $a \le \alpha_0-1-\epsilon+o(1)$. Set $\delta' = \epsilon - \delta >0$. We continue from \eqref{split}, setting $d= a+1$. By \eqref{eq:mut}, this implies that $\mu_d \ge n^{\epsilon - \delta'/2}$ if $n$ is large enough. Note that $k_u \le k= \bigO(n/\ln n)$, so we obtain
\begin{align*}\E_p[\bar{X}_\bfk]&\ge (1-\kappa)^{-(1-\kappa)n}e^{-\kappa n} \prod_{u \in U}\left(n^{\epsilon-\delta'/2}\Theta\Big(\frac{n}{\ln n}  \Big) ^{a-u} b^{-\frac{(a+1-u)^2}{2}}\right)^{k_u}   \exp \left(\bigO ( \ln^2 n)\right).
\end{align*}
For all $u \in U$, $u \sim 2 \log_b n \sim a$, so $a-u=o(\ln n)$. Whenever $0\le r = o(\ln n)$, we obtain the uniform estimate
\[
\Theta\Big(\frac{n}{\ln n}  \Big) ^{r} b^{-\frac{(r+1)^2}{2	}} = n^{r+o(r+1)}. 
\]
Therefore,
\begin{align*}
\ln	\E_p[\bar{X}_\bfk]
	& \ge -(1-\kappa) \ln (1-\kappa) n - \kappa n + \ln n  \sum_u \Big(\epsilon - \frac {\delta'} 2 + a-u+o(a-u+1)\Big) k_u  + \bigO(\ln^ 2n)  .
\end{align*}
Note that $k_u =\kappa_u n / u \ge \kappa_u n /{2 \log_ b n}$. Furthermore, as $u \le a$, we have (for $n$ large enough) $a-u +o(a-u+1)\ge- \delta'/4$, and so as $\epsilon -{3\delta'}/{4} >\delta$,
\begin{align*}
	\E_p[\bar{X}_\bfk]
	&\ge \exp  \Big(\big(-(1-\kappa) \ln (1-\kappa)  - \kappa + \frac{\delta \ln b}{2} \kappa \big)n + \bigO(\ln^ 2n)  \Big), 
\end{align*} 
completing the proof of Lemma~\ref{expectationlowerbound}.
\qed

\subsection{Proof of Lemma \ref{lemma:smallsubprofiles}}
Let $U= \{u: k_u \neq 0\}$. Our assumptions guarantee that $u \sim 2 \log_b n$ for all $u \in U$. Let $\boldlambda \le \boldkappa$. By applying Lemma \ref{expectationlowerbound} to $\boldlambda$ with $\delta=\epsilon/2$ we obtain, uniformly in $\boldlambda$,
\[
\E_p[\bar{X}_\bfl] \ge \exp \left( \tilde\varphi_{\epsilon/2}(\lambda) n + \bigO(\ln^ 2 n) \right).
\]
Note that $\tilde \varphi_{\epsilon/2}(0)=0$. Moreover, $\tilde \varphi_{\epsilon/2}'(x) = \ln (1-x) + (\epsilon \ln b)/4$
is strictly positive for $x\in[0, 1-b^{-\epsilon/4})$ and monotone decreasing. Thus, 
setting $C=C(\epsilon) = (1-b^{-\epsilon/4})/2 > 0$, we obtain that $\tilde \varphi_{\epsilon/2} (\lambda) n = \Theta(\lambda n)$ uniformly for  $\lambda \in (0, C)$. By assumption $\lambda \ge \ln^{-3} n$ and we obtain 
\begin{equation} \label{eq:lowerboundforlambda}
	\E_p[\bar{X}_\bfl] \ge   \exp \left( \Theta(\lambda n) \right) \ge \exp \left( \Theta(n/\ln^3 n) \right).
\end{equation}
Note that $k \le n/u^* \sim n / (2\log_b n)$, so letting $r=n/u^*$,
\[
\prod_u {k_u \choose \ell_u}^2 \le {k \choose \ell}^2 \le \left( \frac{ e k}{\ell} \right) ^{2\ell}\le \left( \frac{ e r}{\ell} \right) ^{2\ell}.
\]
Let $\lambda' = \ell/r = \sum_u \ell_u/r  = \sum_u n\lambda_u/(ur) \sim  \sum_u \lambda_u = \lambda$. Then
\[
    \prod_u {k_u \choose \ell_u}^2
    \le \exp \left(- 2r\lambda' \ln \lambda' + \bigO(r\lambda' ) \right).
\]
As $\lambda' \sim \lambda \ge \ln^{-3} n$, we have $0 < -\log \lambda' = \bigO(\ln \ln n)$, and so as $r = \bigO(n / \ln n)$,
\[
    \prod_u {k_u \choose \ell_u}^2 \le \exp \left(\bigO \Big(\lambda \frac{n \ln \ln n}{\ln n} \Big) \right). 
\]
Together with \eqref{eq:lowerboundforlambda},
\[
\frac{\prod_u {k_u \choose \ell_u}^2}{\E_p\left[ {\bar{X}_{\boldsymbol{\bfl}}} \right]} \le  \exp\left(-\Theta \big(\lambda n\big) \right)  \le \exp\left(-\Theta \Big(\frac{n}{\ln^3 n}\Big) \right).
\] \qed

\subsection{Proof of Lemma \ref{gnmlemma}} 

By Stirling's formula,
\begin{align*}
	\frac{{N-x \choose m}}{{N \choose m}} = \frac{(N-x)!(N-m)!}{(N-m-x)!N!} \sim \frac{(N-x)^{N-x}(N-m)^{N-m}}{(N-m-x)^{N-m-x}N^N}= \left(\frac{N-m}{N} \right)^x\frac{(1-\frac{x}{N})^{N-x}}{(1-\frac{x}{N-m})^{N-m-x}}.
\end{align*}
As $m= pN+\bigO(1)$ 
and $x=o(n^{4/3})=o(N)$, the first term is asymptotically equal to $q^x$. For the second term, we calculate slightly more carefully using $1-y=e^{-y-{y^2}/{2}+\bigO(y^3)}$; then $(1-\frac{x}{N})^{N-x}$ is asymptotically equal to $e^{-x+{x^2}/{2N}}$ and similarly $(1-\frac{x}{N-m})^{N-m-x} \sim e^{-x+{x^2}/{2(N-m)}}$. So the fraction is asymptotically
\begin{align*}
	\exp \left( \frac{x^2}{2N}-\frac{x^2}{2(N-m)}\right)  \sim \exp \left( -\frac{(b-1)x^2}{n^2}\right),\end{align*}
using $N-m=qN+\bigO(1)$, $b=1/q$ and $x=o \left(n^{4/3} \right)$.
\qed

\subsection{Proof of Lemma \ref{ablemma}} By Stirling's formula and as $1+x \le e^x$,
\[
\frac{(b-a)!}{b!} \lesssim \frac{(b-a)^{b-a}e^a}{b^b} = b^{-a}e^a\left(1-\frac ab \right)^{b-a} \le b^{-a}e^{a^2/b}.
\]

\qed 

\subsection{Proof of Lemma \ref{lemma:technicalsequence}}

First of all, note that $\min_{x \in [0,1]} s_i(x)$ exists because $[0,1]$ is compact and $s_i(x)$ is continuous. 
Let $f:[0,1]^2 \rightarrow [0,1]$ be such that $f(x,1)=1$ and $f(x, 1-1/i) = s_i$ for all $x \in [0,1]$ and $i \in \mathbbm{N}$, and interpolating linearly between these points for every $x$. Then $f(x,y)$ is continuous in both $x$ and $y$, and since $[0,1]^2$ is compact, $f$ is uniformly continuous. 
Let $\epsilon >0$, then there exists a $\delta >0$ so that for all $x$, and all $y>1-\delta$, 
\[
|f(x,1) - f(x, y)| = |f(x,y) -1| < \epsilon.
\]		
Thus, for all $y>1-\delta$,
\[
|\min_{x \in [0,1]} f(x, y) - 1|  < \epsilon
\]		
(this minimum exists because $f( \cdot, y)$ is continuous and $[0,1]$ compact).
So $\min_{x \in [0,1]} f(x, y) \rightarrow 1$ as $y \rightarrow 1$. It follows that
\[
\min_{x \in [0,1]} s_i(x)
= \min_{x \in [0,1]} f(x, 1-1/i) \rightarrow 1 \text{ as  $i \rightarrow \infty$.}
\]
\qed

\subsection{Proof of Lemma \ref{easylemma1}}

Part \eqref{lemparta} follows immediately from the facts that $g, \gid \le f = \bigO(n \ln n)$ and $\ntr, r_1 \le n$. For \eqref{lempartb1}, as $u^* \sim 2\log_b n \sim a$, we have $\ntr=\sum_u ut_u  \sim at$, and as $\eta = (\ntr - r_1)/\ntr \le 1$,
\[
T(2) = \frac{e^{2 \eta} \left(\sum_u t_u{u \choose 2} \right)^2}{{\ntr\choose 2} q} = \bigO \left(\frac{a^4 t^2}{a^2 t^2} \right)=\bigO(\ln^2 n).
\]
For \eqref{lempartb}, first note that
\begin{equation*}
	T(x) = \frac{e^{x \eta} \left(\sum_u t_u{u \choose x} \right)^2}{{\ntr\choose x} q^{x \choose 2}} \le \frac{e^{x \eta} \left(t\frac{a^x}{x!} \right)^2}{{\ntr\choose x} q^{x \choose 2}} =:T'(x).
\end{equation*}
Then, as $\ntr = (1-\lambda)n \ge n^{1-c}$ and $t \sim \ntr/a = \Theta(\ntr / \ln n)$ and $\eta<1$,
\[
T'(3) =  \bigO \left(\frac{t^2 a^6}{n_{\mathrm{tr}}^3} \right)= \bigO \left( \frac{\ln^3 n}{t}\right)=o(1).
\]
For $3 \le x \le 4(\alpha-u^*)=o(\log n)$, as $\ntr\ge n^{1-c}$ and $\alpha-u^*=o(\ln n)$, if $n$ is large enough, then
\[
\frac{ T'(x+1) }{T'(x)}=\frac{e^\eta a^2}{q^x (\ntr-x)(x+1)}= O \left( \frac{ \ln^2 n }{q^{4(\alpha-u^*)}\ntr}\right) \le n^{-1+2c}=o(1),
\]
and so for all $3 \le x \le 4(\alpha-u^*)$,
\[
T(x) \le T'(x) \le T'(3) =  \bigO \left( \frac{\ln^3 n}{t}\right)=o(1)
\]
and further
\[
\sum_{x=3}^{4(\alpha-u^*)} T(x) \le \sum_{x=3}^{4(\alpha-u^*)} T'(x) = \bigO(T'(3))=o(1).
\]
Finally, for \eqref{lempartc}, note that by Lemma~\ref{lem:ra-1},
\begin{equation}\label{eq:fewterms}
	\sum_{x > 4(\alpha-u^*), u,v}r^{u,v}_x = \sum_{x \ge u^*-1}r_x  \le 2 \ln^3 n.
\end{equation}
Furthermore, as $x,u,v \le a=\bigO(\ln n)$ and $\eta \le 1$,
\[
T(x,u,v)= \frac{{t_ut_v{{u \choose x}}{v \choose x}}e^{x \eta}}{{\ntr \choose x}q^{x \choose 2}} \le t^2 a^{2x}e^x q^{-{x \choose 2}}\le \exp \left( \bigO (\ln^2 n)\right).
\]
So together with \eqref{eq:fewterms}, we get
\begin{align*}\prod_{x > 4(\alpha-u^*), u,v}\frac{T(x,u,v)^{r^{u,v}_x}}{r^{u,v}_x!} \le \exp \left( \bigO (\ln^2 n) \right) ^ {\sum_{x \ge u^*-1, u,v}r^{u,v}_x}= \exp\left(\bigO(\ln ^5 n) \right).
\end{align*} 
\qed

\subsection{Proof of Lemma \ref{easylemma2}}
As $\lambda,\eta < {\ln^{-3}n}$, we have
\begin{equation}\label{eq:ntrbound2}
    \ntr = (1-\lambda)n > n - {n}/{\ln^ 3n}
    \quad
    \text{and}
    \quad
    {\ntr-r_1}=\eta \ntr \le \eta n < {n}/{\ln^3 n}.
    \end{equation}
Furthermore, as $u^* \sim a$, we have $\lambda n = \sum_u \ell_u u \sim a \ell$, and with $\lambda < {\ln^{-3} n}$,
\begin{align}
	{\gid} &= \sum_{u^* \le u \le a} \ell_u {u \choose 2} \le \ell a^2 = \bigO(\lambda na) = \bigO\left(\frac{n}{\ln^2 n}\right),\nonumber \\
	{\gt} &= \sum_{2 \le x \le a} r_x{x \choose 2} \le a \sum_{2 \le x \le a} x r_x=a(\ntr-r_1)= \bigO\left(\frac{n}{\ln^2 n}\right). \label{eq:gidbound}
\end{align}
So $g=\gid+\gt = \bigO ({n}/{\ln^2 n} )$, and as $f=\Theta(n \ln n)$, we obtain
\begin{align*}
    F_3 (\bfl, \bfr)
    &= -\frac{2(b-1)f \big(f-\bigO \big(\frac{n}{\ln^2 n}\big)\big)+2 \big(f - \bigO \big(\frac{n}{\ln^2 n}\big)\big)^2}{n^2}
    =-\frac{2bf^2}{n^2} + \bigO \left(\frac{1}{\ln n}\right), 
\end{align*}
establishing~\eqref{teil1}. For \eqref{teil2}, note that as $e^{2\eta}=1+\bigO(\eta)=1+\bigO(\ln^{-3} n)$, together with \eqref{eq:ntrbound2},
\[
T(2) =  \frac{ \left(\sum_u t_u{u \choose 2} \right)^2}{\frac{n^2}{2} q} \left(1+\bigO(\ln^{-3 }n)\right)=\frac{2b}{n^2}(f- \gid)^2 \left(1-\bigO(\ln^{-3 }n)\right).
\]
Using \eqref{eq:gidbound}, $\eta< {\ln^{-3} n} $ and $f= \Theta(n \ln n)$, we obtain
\begin{align*}
	T(2) = \frac{e^{\bigO (\ln^{-3}n)} \big(\sum_u t_u {u \choose 2}\big)^2}{{\ntr \choose 2}q} = \frac{2b e^{\bigO (\ln^{-3}n) }\big(f - \gid \big)^2  }{\ntr (\ntr-1)} = \frac{2bf^2}{n^2}+o(1),
\end{align*}
which is \eqref{teil2}. For \eqref{teil3}, by \eqref{eq:ntrbound2} and  as $a=O(\ln n)$ and $\eta< \ln^{-3} n$, we have
\begin{equation}
	\label{eq:Tbound}
	T(x) \le \frac{e^{\bigO \big(\frac{1}{\ln^2 n}\big)} \left(t {a \choose x} \right)^2}{{\ntr\choose x} q^{x \choose 2}} \sim  \frac{ t^2 {a \choose a-x}^2}{{n\choose x} q^{x \choose 2}} \le \frac{k^2 a^{2a-2x}}{\mu_x}.
\end{equation}
By \eqref{expectationu}, for any $u^*-1 \le x \le a-3$, 
\[  \mu_{x}= \mu_a \left(\Theta\Big( \frac{n}{\ln n}b^{-\frac{a-x}{2}}\Big)   \right)^{a-x} \ge \mu_a n^{(1-o(1))(a-x)},\]
using $a-x \le a-u^*= o (\ln (n))$. Plugging this into \eqref{eq:Tbound} we obtain
\[ \sum_{x= u^* -1}^{a-3} T(x) \lesssim \frac{k^2}{\mu_a} \sum_{x={u^*}-1}^{a-3} \left( n^{-1+o(1)}\right)^{a-x} = \frac{n^{-1+o(1)}}{\mu_a},\]
giving \eqref{teil3} --- note that this expression is $o(1)$ because $a \le \alpha$ and so $\mu_a \ge n^{o(1)}$ by \eqref{eq:mut}. Similarly as in \eqref{eq:Tbound},
\begin{align*}
	T(a-2)&\le \frac{e^{\bigO\left({1}/{\ln^ 2n}\right)} \left(t_{a-2} +(a-1) t_{a-1}+{a \choose a-2}t_a\right)^2 }{{\ntr \choose a-2} q^{{a-2 \choose 2}}} = \bigO \left( \frac{ (k_{a-2} + k_{a-1}\ln n+k_a \ln ^2 n)^2}{\mu_{a-2}} \right) \\
	&= \bigO \left(  \frac{ ( k_{a-2} \ln n + k_{a-1} \ln^2 n+k_a \ln^3 n)^2 }{\mu_{a}n^2} \right) ,
\end{align*}
noting that $\mu_{a-2} = \mu_a \Theta(n^2 /\ln^2 n) $ by \eqref{expectationu}, which is \eqref{teil4a}. Finally for \eqref{teil4}, 
\[
T(a-1) \le \frac{e^{\bigO\left(\frac{1}{\ln^ 2n}\right)} \left(a ^2 t_a^2 + 2at_at_{a-1} \right) }{{\ntr \choose a-1} q^{{a-1 \choose 2}}} \lesssim \frac{ a ^2 k_a^2 + 2ak_ak_{a-1} }{\mu_{a-1}} = \bigO \left(  \frac{  k_a^2 \ln^ 3n + k_ak_{a-1} \ln^2 n }{n\mu_{a}} \right) .
\]
\qed

\subsection{Proof of Lemmas~\ref{lemma:newlemma1} and~\ref{lemma:newlemma2}}
We prove Lemma~\ref{lemma:newlemma1} first. Let $C(\epsilon)$ be the constant from Lemma~\ref{lemma:smallsubprofiles}. Then, by that lemma we know that uniformly for all $\bfl \le \bfk$ such that ${\ln^{-3} n} \le \sum_u \lambda_u \le C(\epsilon)$,
		\begin{equation} 
	\E_p \left[ \bar{X}_\bfl\right]	\ge \exp \big(  \Theta \left(  n / \ln^ 3 n \right) \big) \prod_u {k_u \choose \ell_u}^2. \label{eq:bound1Xell}
\end{equation}
The claim now follows from Lemma~\ref{contributionlemma}, \eqref{eq:umform} and the fact that there at most $k^a= \exp(\bigO(\ln^2 n))$ possible sequences $\bfl \le \bfk$ in total (as there are at most $k^a$ ways to write $k$ as an ordered sum with at most $a$ summands). 

The proof of Lemma~\ref{lemma:newlemma2} is very similar, we only replace \eqref{eq:bound1Xell} by \eqref{eq:betadelta}. Indeed, the claim  follows with Lemma~\ref{contributionlemma}, \eqref{eq:umform}, \eqref{eq:betadelta} and the fact that there at most $k^a= \exp(\bigO(\ln^2 n))$ possible sequences $\bfl \le \bfk$ in total. \qed

\subsection{Proof of Lemma~\ref{lemma:newlemma3}} 
\label{ssec:proofmiddle3}
Let $\bfl \le \bfk$ be a sequence so that $1-\delta_0 \le \lambda \le 1-n^{-c_0}$, where $\delta_0 >0$ is a constant we will choose later. Set $\tau=1-\lambda$, so that
\begin{equation}\label{eq:eta}
 n^{-c_0} \le \tau \le \delta_0.
\end{equation} 
Our aim is to show that, if we choose $\delta_0$ appropriately, then uniformly for all such sequences~$\bfl$,
	\begin{equation}\label{eq:partialsequences3}
	\E_p \left[ \bar{X}_\bfl\right] \ge \exp \Big(\Theta(n^{1-c_0})\Big) \prod_u {k_u \choose \ell_u}^2,
\end{equation}
and then the claim will follow exactly as in the proof of Lemma~\ref{lemma:newlemma1} (or Lemma~\ref{lemma:newlemma2}), only replacing \eqref{eq:bound1Xell} (or \eqref{eq:betadelta}) by \eqref{eq:partialsequences3}.

Recall that we let
\[
    \ntr = n - \sum_{u}u\ell_u = (1-\lambda)n = \tau n.
\]
We start by rewriting $\E_p[X_\bfk]$ by first counting the partial colourings $X_\bfl$, then multiplying this with the expected number of colourings of the remaining $\ntr$ vertices with profile $\bfk - \bfl$:
\begin{align}
    \E_p \left[ {X}_\bfk \right] &= \frac{n!}{\prod_u u!^{k_u}}q^{\sum_u {u \choose 2}k_u} = \frac{n!}{\ntr!\prod_u u!^{\ell_u}}q^{\sum_u {u \choose 2}\ell_u} \cdot \frac{{\ntr}!}{\prod_u u!^{k_u-\ell_u}}q^{\sum_u {u \choose 2}(k_u-\ell_u)} \nonumber \\
	&= \E_p \left[ {X}_\bfl \right]\E_{\ntr,p} \left[ {Y}_{\bfk-\bfl}\right]\nonumber,
\end{align}
where $\E_{\ntr,p}$ denotes the expectation in $G_{\ntr,p}$ and ${Y}_{\bfk-\bfl}$ is the number of ordered colourings in $G_{\ntr,p}$ with exactly $k_u-\ell_u$ colour classes of size $u$ for all $u^* \le u \le a$; note that this is a complete colouring profile for $G_{\ntr,p}$. Letting 
\[
    \bar{Y}_{\bfk-\bfl} =  \frac{{Y}_{\bfk-\bfl}}{\prod_u (k_u-\ell_u)!},
\]
we obtain that
\[
\E_p \left[ \bar{X}_\bfl \right] = \frac{\E_p \left[ {X}_\bfk \right]}{\E_{\ntr,p} \left[ {Y}_{\bfk-\bfl}\right] \prod_u \ell_u!} = \frac{\E_p \left[ \bar{X}_\bfk \right]}{\E_{\ntr,p} \left[ \bar{Y}_{\bfk-\bfl}\right] } \prod_u {k_u \choose \ell_u}.
\]
As $\kp$ is tame, by  Definition \ref{deftame} we have $\E_p \left[ \bar{X}_\bfk \right] \ge \exp(-n^{1-c})$. We will show that, for an appropriate choice of $\delta_0$,
\begin{equation}\label{eq:lemmaaim}
	\E_{\ntr,p} \left[ \bar{Y}_{\bfk-\bfl}\right] \prod_u {k_u \choose \ell_u} \le \exp\big(-n^{1-c_0} \ln b \big)\end{equation} 
for large enough $n$, which implies \eqref{eq:partialsequences3}. To prove \eqref{eq:lemmaaim}, first note that
\begin{align}
\label{eq:boundbino}
    \prod_u {k_u \choose \ell_u} =\prod_u {k_u \choose k_u -\ell_u} \le \prod_u k_u^{k_u-\ell_u} \le n^{k-\ell}=n^{(1+o(1))\ntr / (2 \log_b n)}=b^{(\frac 12+o(1))\ntr}. 
\end{align}
The difficult part is to bound $\E_{\ntr,p} \left[ \bar{Y}_{\bfk-\bfl}\right] $. The idea here is that the average colour class size of a colouring with profile $\bfk - \bfl$ in $G_{\ntr,p}$ is significantly `too large', so that we can use Lemma \ref{lemmaupperbound} to obtain an upper bound on $\E_{\ntr,p} \left[ \bar{Y}_{\bfk-\bfl}\right]$ of the form $b^{-C \ntr }$ for any  constant $C$  we like by choosing $\delta_0$ appropriately. To check that we can apply Lemma \ref{lemmaupperbound}, let
\begin{align*}
	\bar{\alpha}_0 = \alpha_0(\ntr)= 2 \log_b \ntr -2 \log_b \log_b \ntr  + 2 \log_b (e/2)+1
        ~\text{ and }~
	\bar{\alpha} =\alpha(\ntr)= \left \lfloor \bar{\alpha}_0 \right \rfloor.
\end{align*} 
Then, as $\ntr = \tau n$, using $1 \ge \tau \ge n^{-c_0}$, for sufficiently large $n$,
\begin{equation}
	\alpha_0-\bar{\alpha}_0 = 2 \big((\log_b n -  \log_b \ntr) - (\log_b \log_b n-\log_b \log_b \ntr)\big) =-2 \log_b \tau +\bigO(1) \le 3c_0  \log_b n. \label{eq:alpha0diff}
\end{equation}
Our assumptions guarantee that $k_u = 0$ for all $u<u^*$ and $u>a$, where $u^* \sim a \sim \alpha$. Since $\alpha_0 \sim 2 \log_b n$ and as $c_0=c/3 < 1/3$, by \eqref{eq:alpha0diff} clearly $k_u - \ell_ u =k_u=0$ for all $u<0.1 \bar{\alpha}$ and $u>10 \bar{\alpha}$ if $n$ is large enough, meeting the first two conditions of Lemma~\ref{lemmaupperbound}. It remains to bound the average colour class size ${\ntr}/(k-\ell)$ of the colouring profile $\bfk-\bfl$ from below, i.e., we show that it is significantly larger than $\bar{\alpha}_0-1-{2}/{\ln b}$.

Recall $t_u = k_u - \ell_u$ and let $\boldsymbol{t} = \bfk-\bfl$. Given $\bfk$ and $\ntr$, what choice of $\boldsymbol{t}$ minimises the average colour class size? Clearly we want to pick the colour classes as small as possible, but subject to our constraints $t_u \le k_u$ for all $u$ and $\sum_u t_u u =\ntr$. Recall that $k_u = \kappa_u n/u \le b^{-(\alpha-u)\gamma(\alpha-u)}n/u^*$, where $\gamma$ is the function from Definition~\ref{deftame}. The idea is that the average colour class size in $\boldsymbol{t}$ is at least as much as that of a (hypothetical) colouring profile where we take exactly $b^{-(\alpha-u)\gamma(\alpha-u)}n/u^*$ colour classes of size $u$ for $u= u^*, u^*+1, ...$ until $\ntr$ vertices are reached. To formalise this idea, let 
\[\tilde k_u :=  b^{-(\alpha-u)\gamma(\alpha-u)} n / u^* \ge k_u,\]
and define a sequence of real numbers
\[0=n_{u^*-1}< n_{u^*}< n_{u^*+1} < \dots < n_a\]
by setting
\[
    n_u :=\sum_{u^* \le v \le u}  v \tilde k_v .
\]
As $\tilde k_v \ge k_v$ for all $v$, we clearly have $n_a \ge n > \ntr$. 
So let $u_0$ be the unique index such that $n_{u_0} \le \ntr < n_{u_0+1}$. 
With these definitions at hand, consider first the case $u_0 = u^*-1$. Then
\begin{equation}\label{eq:ustarbound}
	b^{-(\alpha-u^*)\gamma(\alpha-u^*)} n = n_{u^*}> \ntr = \tau n,
\end{equation}
and we simply bound the average colour class size in $\boldsymbol{t}$ from below by $u^*$. Recall that we defined $u^*$ so that $\gamma(\alpha-u^*) \ge 10$ (see the remarks at the end of \S\ref{section:tamecolourings}). Therefore \eqref{eq:ustarbound} implies that $\log_b \tau < -10 (\alpha-u^*)$, and so the average colour class size of $\boldsymbol t$ is at least
\begin{equation} \label{eq:bound1stcase}
	u^* \ge \alpha+\frac{\log_b \tau}{10}. 
\end{equation}
In the second case, if $u_0 \ge u^*$, we first want to show that, if we pick $\delta_0>0$ small enough, then we can ensure that 
	\begin{equation}\gamma(\alpha-u_0-1) \ge \max(\log_b 10, 10).
		\label{eq:gammabound}
			\end{equation}
For this, first note that as $u^* k_{u_0} \le u_0 k_{u_0} \le n_{u_0}\le \ntr = \tau n$, we have  $b^{-(\alpha-u_0)\gamma(\alpha-u_0)} \le \tau \le \delta_0$. Now, since $\gamma(x) \rightarrow \infty$ as $x \rightarrow \infty$, there is a constant\footnote{which only depends on the function $\gamma(x)$, i.e.\ on the choice of the tame sequence $\bfk$} $C_\gamma$  so that either $\alpha - u_0 \le C_\gamma$, or~\eqref{eq:gammabound} holds. But as $\gamma(x)$ is increasing, $\alpha - u_0 \le C_\gamma$ implies $\delta_0 \ge  b^{-(\alpha-u_0)\gamma(\alpha-u_0)} \ge b^{-C_\gamma \gamma(C_\gamma)}$. So by picking $\delta_0>0$ small enough we can exclude this possibility, and \eqref{eq:gammabound} follows.

Continuing with the second case $u_0 \ge u^*$, the average colour class size in $\boldsymbol t$ is bounded from below by 
\[
    \sum_{u^* \le u \le u_0} \left( u \cdot \frac{\tilde k_u}{\sum_{u^* \le v \le u_0}\tilde k_v} \right) = \sum_{u^* \le u \le u_0} \left(u \cdot \frac{x_u}{\sum_{u^* \le v \le u_0}x_v} \right),
    \quad
    \text{where}
    \quad
    x_u=b^{-(\alpha-u)\gamma(\alpha-u)}.
\]
Setting $y=b^{-\gamma(\alpha-u_0)}$, as $\gamma$ is increasing this is at least
\begin{equation*}
	\sum_{i=0}^{u_0-u^*} (u_0-i) \cdot \frac{x_{u_0-i}}{\sum_{v=u^*}^{u_0}x_v}= u_0-\sum_{i=0}^{u_0-u^*} i \cdot \frac{x_{u_0-i}}{\sum_{v=u^*}^{u_0}x_v} \ge u_0 - \sum_{i=0}^{u_0-u^*} i \cdot \frac{x_{u_0-i}}{x_{u_0}} \ge u_0 - \sum_{i=0}^{u_0-u^*} i y^{i}. \label{eq:adjks}
\end{equation*}
As $\gamma(x)$ is increasing, by  \eqref{eq:gammabound}  we have $\gamma(\alpha-u_0) \ge \log_b 10$ and so the average colour class size of $\boldsymbol{t}$ is at least
\begin{equation}\label{eq:gkjhjkdfgh}
	u_0 -\sum_{i \ge 0} i \cdot 10^{-i}> u_0-1.
\end{equation}
So now we need a lower bound on $u_0$. For this, note that as $n_{u_0+1} > \ntr = \tau n$, again letting $x_u=b^{-(\alpha-u) \gamma(\alpha-u)}$ and setting $z=b^{-\gamma(\alpha-u_0-1)}$, 
we have
\begin{align*}
	\tau &< \sum_{v=u^*}^{u_0+1}  \frac{\tilde k_v v}{n} \le \frac{u_0+1}{u^*}\sum_{u=u^*}^{u_0+1}x_u \le  \frac{u_0+1}{u^*}\sum_{u=u^*}^{u_0+1}z^{\alpha-u} \le 2 \cdot b^{-10(\alpha-u_0-1)}
\end{align*}
if $n$ is large enough, using in the last inequality that $u_0 \sim \alpha \sim u^*$ and that $z= b^{-\gamma(\alpha-u_0-1)} \le \min( b^{-10}, 1/{10})$ by \eqref{eq:gammabound}. It follows that
\[
    u_0 \ge \alpha+\frac{\log_{b}(\tau/2)}{10}+\bigO(1).
\]
Continuing from \eqref{eq:gkjhjkdfgh}, the average colour class size is bounded from below by
\[
u_0 - 1 \ge\alpha+\frac{\log_{b}(\tau/2)}{10}+\bigO(1).
\]
Putting this together with \eqref{eq:bound1stcase}, we see that in both cases the lower bound for the average colour class size in $\boldsymbol{s}$  is $ \alpha_0+{(\log_{b}\tau)}/{10}+\bigO(1)$. Comparing to \eqref{eq:alpha0diff}, we have $\bar \alpha_0 = \alpha_0+2 \log_b \tau +\bigO(1)$. Note that $\tau < \delta_0$, and so we now pick the constant $\delta_0 >0$ small enough so that the average colour class size is bounded from below by
\[
    \bar \alpha_0+(-2 + 1/ {10})\log_b \tau+\bigO(1) \ge \bar \alpha_0+10.
\]
Thus we may apply Lemma~\ref{lemmaupperbound}. Together with \eqref{eq:boundbino}, as by \eqref{eq:eta} $\ntr =\tau n \ge n^{1-c_0}$ we obtain
\[
\E_{\ntr,p} \left[ \bar{Y}_{\bfk-\bfl}\right] \prod_u {k_u \choose \ell_u}  \le b^{-2\ntr+(1/2+o(1))\ntr}<\exp\left( - n^{1-c_0} \ln b\right)
\]
if $n$ is large enough, which is \eqref{eq:lemmaaim} and implies \eqref{eq:partialsequences3}. As in the proofs of Lemma~\ref{lemma:newlemma1} and \ref{lemma:newlemma2}, the claim now follows from  \eqref{eq:partialsequences3} and Lemma~\ref{contributionlemma}, \eqref{eq:umform} and the fact that there at most $k^a= \exp(\bigO(\ln^2 n))$ possible sequences $\bfl$ in total (as there are at most $k^a$ ways to write $k$ as an ordered sum with at most $a$ summands).
\qed

\subsection{Proof of Lemma~\ref{lemma:checksle3}} 
We will distinguish several cases. In each case we derive a lower bound for $\varphi(s,x,i_0)$ that is a function of $(\zeta_i)_{i_0\le i \le s}$ and then use monotonicity and concavity properties and the approximations from \S\ref{section:numerics}  to show that we only need to check non-negativity of this function at a finite number of values.

\paragraph{Case 1: $i_0=s=1$ and $x \in [0.04,1]$.}
Note that
\[
\varphi(1,x,1) = - \Big(1-\zeta_1 \Big) \ln  \Big(1- \zeta_1 \Big)  + \zeta_1 \left(-1+ \frac{\ln 2}{2} x \right).
\]
By Lemma~\ref{lemma:monoxi}, $\zeta_1(x)$ is increasing for $x \in [0,1]$. We distinguish three cases.

\paragraph{Case 1.1: $x \in [0.04, 0.15]$.}
As $x \ge 0.04$ we obtain that $\varphi(1,x,1) \ge h_1(\zeta)$, where
\[
    h_1(y) =  - (1-y ) \ln (1- y)  +y \left(-1+ \frac{\ln 2}{2} \cdot 0.04 \right).
\]
Using the approximations in \S\ref{section:numerics} we have
\[
0 <	\zeta_1(0.04) \le \zeta_1(x) \le \zeta_1(0.15) < 0.026.
\]
As $h_1$ is concave, it takes its minima on $[0, 0.026]$  at the border points, so as $h_1(0)=0$ and $h_1(0.026)\approx 0.000019$, we have $h_1(\zeta_1)>0$.

\paragraph{Case 1.2: $x \in (0.15, {2}/{\ln 2}-2]$.}
As $x > 0.15$ we obtain that $\varphi(1,x,1) \ge h_2(\zeta)$, where
\[
    h_2(y) =  - (1-y ) \ln (1- y)  +y \left(-1+ \frac{\ln 2}{2} \cdot 0.15 \right).
\]
From the approximations in \S\ref{section:numerics} we know that $0< \zeta_1 <0.092$. As $h_2$ is concave and $h_2(0)=0$, and $h_2(0.092) \approx 0.00041$, we have $h_2(\zeta_1)>0$.

\paragraph{Case 1.3: $x \in ({2}/{\ln 2}-2, 1]$.}
As $x > 0.88$ we obtain that $\varphi(1,x,1) \ge h_3(\zeta)$, where
\[
    h_3(y) =  - (1-y ) \ln (1- y)  +y \left(-1+ \frac{\ln 2}{2} \cdot 0.88 \right).
\]
The approximations in \S\ref{section:numerics} guarantee that $0< \zeta_1 <0.11$. As $h_3$ is concave and $h_3(0)=0$, and $h_3(0.11) \approx 0.027$, we have $h_2(\zeta_1)>0$.

\paragraph{Case 2: $i_0=s=2$ and $x \in [0,1]$.}
Note that
\[
\varphi(2,x,2) = - (1-\zeta_2 ) \ln  (1- \zeta_2 )  + \frac{\ln 2}{2} \zeta_2 \big(1-\frac{2}{\ln 2 }+x \big) \ge - (1-\zeta_2 ) \ln  (1- \zeta_2 )  +  \zeta_2 \Big(\frac{\ln 2}{2}-1 \Big).
\]
It is not hard to check that this last expression is positive if $0<\zeta_2<0.5$. Note that by Lemma~\ref{lemma:monoxi} and the approximations in \S\ref{section:numerics},
\[
0<\zeta_2(x) \le \zeta_2(1) <0.4,
\]
and so $\varphi(2,x,2) > 0$ for all $x \in [0,1]$.

\paragraph{Case 3: $i_0=1$, $s=2$ and $x \in [0,1]$.}
Note that
{
\small
\[
\varphi(2,x,1) = 	- (1-\zeta_1-\zeta_2) \ln  (1-\zeta_1-\zeta_2)  + \frac{\ln 2}{2}\Big( \zeta_1 \big(-\frac{2}{\ln2}+x\big) + \zeta_2 \big(1-\frac{2}{\ln2}+x\big)\Big) \ge h_4(\zeta_1, \zeta_2),
\]
}
where
\[
h_4(y, z) = -(1-y-z)\ln(1-y-z) +\frac{\ln 2}{2} \Big( -y\frac{2}{\ln2} + z \big(1-\frac{2}{\ln2}\big)\Big).
\]
The function $h_4$ is concave --- all four of its partial second derivatives are $-\frac{1}{1-y-z} <0$, and in particular its Hessian matrix is negative semidefinite. Recall that by Lemma~\ref{lemma:monoxi}, $\zeta_1, \zeta_2$ are both increasing for $x \in [0, 1]$, so by the approximations in \S\ref{section:numerics}, $ 0.018 < \zeta_1 <  0.11$ and $0.09< \zeta_2 < 0.28 $. So $h_4(\zeta_1, \zeta_2)$ is positive if $h_4$ is non-negative on all four `border points' of the possible range of $(\zeta_1, \zeta_2)$, which is indeed the case: 
\begin{align*}
	h_4(0.018,0.09) &\approx 0.025,
	~~h_4(0.11, 0.09) \approx 0.0097, \\
	h_4(0.018, 0.28) & \approx 0.047, 
	~~h_4(0.11, 0.28) \approx 0.0086. 
\end{align*}

\paragraph{Case 4: $i_0=2$, $s=3$ and $x \in [0,1]$.}
Note that
\[
\varphi(3,x,2) = 	- (1-\zeta_2-\zeta_3) \ln  (1-\zeta_2-\zeta_3)  + \frac{\ln 2}{2}\Big( \zeta_2 \big(1-\frac{2}{\ln2}+x\big) + \zeta_3 \big(2-\frac{2}{\ln2}+x\big)\Big).
\]
We distinguish two cases.

\paragraph{Case 4.1: $x \in [0, 2/{\ln 2}-2]$.}
From the definition we obtain that $\varphi(3,x,2) \ge h_5 (\zeta_2, \zeta_3)$, where
\[
h_5 (y,z) = - (1-y-z) \ln  (1-y-z)  + \frac{\ln 2}{2}\Big( y  \big(1-\frac{2}{\ln2}\big) +z \big(2-\frac{2}{\ln2}\big)\Big).
\]
This function is concave --- all four of its partial second derivatives are $-\frac{1}{1-y-z} <0$, so in particular its Hessian matrix is negative semidefinite. Recall that by Lemma~\ref{lemma:monoxi}, $\zeta_2, \zeta_3$ are both increasing for $x \in [0,  2/{\ln 2}-2]$, so by the approximations in \S\ref{section:numerics}, $ 0 < \zeta_2 <  0.35$ and $ 0< \zeta_3 < 0.39 $. So $h_5 (\zeta_2, \zeta_3)$ is positive if $h_5$ is non-negative on all four `border points' of the possible range of $(\zeta_2, \zeta_3)$, which is indeed the case: clearly $h_5(0,0) = 0$, and
\begin{align*}
	h_5(0.35, 0) \approx 0.05,
	~~h_5(0, 0.39) &\approx 0.18,
	~~h_5(0.35, 0.39) \approx 0.002. 
\end{align*}

\paragraph{Case 4.2: $x \in (2/{\ln 2}-2, 1]$.}
As $x>\frac 2 {\ln 2}-2$, we bound $\varphi(3,x,2) \ge h_6 (\zeta_2, \zeta_3)$, where
\[
    h_6 (y,z) = - (1-y-z) \ln  (1-y-z)  - \frac{\ln 2}{2} y.
\]
As in the former cases, this function is concave (all four partial second derivatives are $-\frac{1}{1-y-z} <0$). By Lemma~\ref{lemma:monoxi}, $\zeta_2$ is increasing and $\zeta_3$ is decreasing for $x \in  [\frac 2{\ln 2}-2, 1]$, so by the approximations in \S\ref{section:numerics}, $ 0.34 < \zeta_2 <  0.4$ and $ 0.37< \zeta_3 < 0.39 $. So again to check that $h_6 (\zeta_2, \zeta_3)$ is positive, we only need to make sure $h_6$ is non-negative on all four `border points', which it is:
\begin{align*}
	h_6(0.34,0.37) &\approx 0.24,
	~~h_6(0.4, 0.37) \approx 0.20, \\
	h_6(0.34, 0.39) &\approx 0.24,
	~~h_6(0.4, 0.39) \approx 0.19. 
\end{align*}

\paragraph{Case 5: $i_0=1$, $s=3$ and $x \in [0,1]$.}
Note that
{
\small
\[
    \varphi(3,x,1)
    =
    - (1-\zeta_1-\zeta_2-\zeta_3) \ln  (1-\zeta_1-\zeta_2-\zeta_3)  + \frac{\ln 2}{2}\Big( \zeta_1 \big(-\frac{2}{\ln2}+x\big) + \zeta_2 \big(1-\frac{2}{\ln2}+x\big)+ \zeta_3 \big(2-\frac{2}{\ln2}+x\big)\Big).
\]
}
We distinguish two cases.

\paragraph{Case 5.1: $x \in [0,  2/{\ln 2}-2]$.}
We obtain that $\varphi(3,x,1) \ge h_7 (\zeta_1,\zeta_2, \zeta_3)$, where
{
\small
\[
    h_7 (y,z,v) = - (1-y-z-v) \ln  (1-y-z-v)  + \frac{\ln 2}{2}\Big( -\frac{2}{\ln2} y + z \big(1-\frac{2}{\ln2}\big)+ v \big(2-\frac{2}{\ln2}\big)\Big).
\]
}
As in previous cases, it is easy to establish this function is concave. By Lemma~\ref{lemma:monoxi}, $\zeta_1, \zeta_2, \zeta_3$ are all increasing for $x \in [0,  2/{\ln 2}-2]$, so by the approximations in \S\ref{section:numerics}, $0.018 < \zeta_1 <  0.092$, $ 0.098 < \zeta_2 <  0.25$ and $ 0.25< \zeta_3 < 0.34 $. So $h_7 (\zeta_1, \zeta_2, \zeta_3)$ is positive if $h_7$ is non-negative on all eight `border points', which it is: 
\begin{align*}
	h_7(0.018, 0.098, 0.25) &\approx 0.130,
	~~h_7(0.018, 0.098, 0.34) \approx 0.14, \\
	h_7(0.018, 0.25, 0.25) &\approx 0.094,
	~~h_7(0.018, 0.25, 0.34) \approx 0.081, \\
	h_7(0.092, 0.098, 0.25) &\approx 0.092,
	~~h_7(0.092, 0.098, 0.34) \approx 0.094, \\
	h_7(0.092, 0.25, 0.25) &\approx 0.034,
	~~h_7(0.092, 0.25, 0.34) \approx 0.0046.
\end{align*}

\paragraph{Case 5.2: $x \in (2/{\ln 2}-2, 1]$.} 
For $x >  2/{\ln 2}-2$ we obtain that $\varphi(3,x,1) \ge h_8 (\zeta_1,\zeta_2, \zeta_3)$, where
\[
    h_8 (y,z,v) = - (1-y-z-v) \ln  (1-y-z-v)  + \frac{\ln 2}{2}( -2y - z ).
\]
As in previous cases, this function is concave. By Lemma~\ref{lemma:monoxi} and the approximations in \S\ref{section:numerics} we obtain $0.091 < \zeta_1 <  0.11$, $ 0.24 < \zeta_2 <  0.28$ and $ 0.33 < \zeta_3 < 0.34 $, so for the final time, all that's left is to check positivity at the border points:
\begin{align*}
	h_8(0.091, 0.24, 0.33) &\approx 0.22, 
	~~h_8(0.091, 0.24, 0.34) \approx 0.22, \\
	h_8(0.091, 0.28, 0.33) &\approx 0.20, 
	~~h_8(0.091, 0.28, 0.34) \approx 0.20, \\
	h_8(0.11, 0.24, 0.33) &\approx 0.21, 
	~~h_8(0.11, 0.24, 0.34) \approx 0.20, \\
	h_8(0.11, 0.28, 0.33) &\approx 0.18, 
	~~h_8(0.11, 0.28, 0.34) \approx 0.18.
\end{align*}
\qed

\section{Appendix: Numerical approximations}
\label{app:numApprox}

In this section we explain how we obtained the numerical approximations from \S\ref{section:numerics}. The R code with the necessary computations and checks can be found here:

\vspace{4pt}

\url{https://gist.github.com/annikaheckel/8dd24d33f5e780c09feffcfdb13f5407}

\vspace{4pt}

\noindent Recall from \eqref{eq:defmusimpler} that $\mu=\mu(x)$ is the solution of
\begin{equation}
	f(\mu, x, i_0) :=	\sum_{i \ge i_0} \big(i-T(x)\big)e^{\mu i - \frac{\ln 2}{2} i^2} =0,
\end{equation}
where $T(x)=1 + {2}/{\ln 2} - x$. Of course, $\mu(x)$ can be approximated with computer assistance by simply truncating the infinite series after the first couple of terms --- since they decrease as $\exp\{-\frac{\ln 2}{2}i^2\}$, they become extremely small very quickly and one can obtain very accurate approximations. 
However, the error term --- the infinitely many terms of the series that were left out --- depends on $\mu$ itself, so making these approximations mathematically rigorous takes some care.
In order to achieve this, note that, if $\mu_2= \mu_2(x)$ satisfies
\begin{equation}\label{eq:mutrunc}
	\sum_{i_0 \le i \le 20} \big(i-T(x)\big)e^{\mu_2 i - \frac{\ln 2}{2} i^2} >0,
\end{equation}
then also $f(\mu_2, x, i_0)>0$. As argued in the proof of Lemma~\ref{lemma:monomu} (just after \eqref{eq:muroot}), this implies that  also $f(\tilde\mu, x, i_0)>0$ for all $\tilde \mu > \mu_2$. It follows that $\mu < \mu_2$. So to prove the upper bounds on $\mu(x)$ given in \S\ref{section:numerics}, we only need to check~\eqref{eq:mutrunc}.

Let us turn to the lower bound, where we need to consider the error term. Suppose that we have some $\mu_1<3$ so that
\begin{equation}\label{eq:mutrunc2}
	\sum_{i_0 \le i \le 20} \big(i-T(x)\big)e^{\mu_1 i - \frac{\ln 2}{2} i^2} < -e^{-83}.
\end{equation}
Then note that
\[
\sum_{i \ge 21} (i-T(x)) e^{\mu_1 i - \frac{\ln 2}{2} i^2} <  \sum_{i \ge 21} i e^{3i - \frac{\ln 2}{2} i^2} <  \sum_{i \ge 21} e^{(3.1- \frac{\ln 2}{2}i) i} \le \sum_{i \ge 21} e^{-4i}=\frac{e^{-84}}{1-e^{-4}}<e^{-83},
\]
and so  from \eqref{eq:mutrunc2} we infer that $f(\mu_1, x, i_0)<0$. Similarly to the previous arguments, this implies $f(\tilde \mu, x, i_0)<0$ for all $\tilde \mu \le \mu_1$, so we must have $\mu>\mu_1$. So to prove the lower bounds on $\mu(x)$ given in \S\ref{section:numerics}, we only need to check for that \eqref{eq:mutrunc2} holds (and that $\mu_1<3$, which is the case for all the lower bounds in in \S\ref{section:numerics}). 

Given upper and lower bounds $\mu_1<\mu <\mu_2<3$, we can deduce bounds on $\lambda=\lambda(x)$. From \eqref{eq:deflambdamu} we obtain that 
\[
    \lambda = - \ln  \Big( \sum_{i \ge i_0} e^{\mu i-\frac{\ln 2 }{2}i^2} \Big).
\]
Bounding $\sum_{i \ge 21}  e^{\mu' i-\frac{\ln 2 }{2}i^2}  < e^{-83}$ as above, we have
\[
- \ln  \Big( \sum_{i_0 \le i \le 20} e^{\mu_2 i-\frac{\ln 2 }{2}i^2} +e^{-83}\Big) < \lambda < - \ln  \Big( \sum_{i_0 \le i \le 20} e^{\mu_1 i-\frac{\ln 2 }{2}i^2}\Big).
\]
Finally, given that $\mu \in(\mu_1,\mu_2)$ and $\lambda \in (\lambda_1,\lambda_2)$, we readily obtain the following bounds on $\zeta_i=\zeta_i(x)$ from the definition \eqref{eq:defxi} of $\zeta_i$:
\[
e^{\lambda_1+\mu_1 i-\frac{\ln 2}{2}i^2}< \zeta_i < e^{\lambda_2+\mu_2 i-\frac{\ln 2}{2}i^2} \enspace.
\]

\end{document}